\def \R {{\mathbb R}}
\def \N {{\mathbb N}}
\def \Z {{\mathbb Z}}
\def \al {\alpha}
\def \la {\lambda}
\def \ve {\varepsilon}
\def \vt {\vartheta}
\def \vp {\varphi}
\newcommand{\cerchio}{\mathbb{S}^{1}}
\newcommand{\sset}{\subseteq}
\newcommand{\uguale}{\stackrel{.}{=}}
\newcommand{\Wzero}{W^0} % potenziale anisotropo dominante
\newcommand{\Veps}{V^\ve} % potenziale N-centri riscalato
\newcommand{\homx}{\hat{x}_\xi} % omotetica nello spazio delle configurazioni
\newcommand{\homy}[2]{\hat{y}_{#2}(#1)}
\newcommand{\homz}{\hat{z}_\xi} % omotetica in formalismo hamiltoniano
\newcommand{\wconv}{\rightharpoonup} % convergenza debole
\newcommand{\Mod}[1]{\ (\mathrm{mod}\ #1)} % classe di resto
\newcommand{\extarc}[4]{y_{ext}(#1;p_{#2},p_{#3};#4)} % arco esterno #1=tempo, #2=indice punto partenza, #3=indice punto arrivo, #4=\epsilon
\newcommand{\intarc}[4]{y_{int}(#1;p_{#2},p_{#3};#4)} % arco interno #1=tempo, #2=indice punto partenza, #3=indice punto arrivo, #4=\epsilon
\newcommand{\dextarc}[4]{\dot{y}_{ext}(#1;p_{#2},p_{#3};#4)} % derivata prima arco esterno 
\newcommand{\dintarc}[4]{\dot{y}_{int}(#1;p_{#2},p_{#3};#4)} % derivata prima arco interno
\newtheorem{theorem}{Theorem}[section]
\newtheorem{lemma}[theorem]{Lemma}
\newtheorem{definition}[theorem]{Definition}
\newtheorem{proposition}[theorem]{Proposition}
\newtheorem{remark}[theorem]{Remark}
\newtheorem{corollary}[theorem]{Corollary}
\title{Symbolic dynamics for the anisotropic $N$-centre problem at negative energies}
\author{Vivina Barutello, Gian Marco Canneori and Susanna Terracini}
\address{Dipartimento di Matematica ``G. Peano''
	\newline\indent
	Universit\`a degli Studi di Torino
	\newline\indent
	 Via Carlo Alberto 10, 10123 Torino, Italy\\}
\email{vivina.barutello@unito.it}
\email{gianmarco.canneori@unito.it}
\email{susanna.terracini@unito.it}
\date{\today} %%  this cancels date in article format
\keywords{$N$-centre problem, symbolic dynamics, variational methods}
\subjclass[2010] {
70F10, %n-body problems
70G75 %Variational methods
(70F16, %Collisions in celestial mechanics, regularization
37N05) %Dynamical systems in classical and celestial mechanics
}
\begin{document}

%	\tableofcontents
\begin{abstract}
The planar $N$-centre problem describes the motion of a  particle moving in the plane under the action of the force fields of $N$ fixed attractive centres:
\[
	\ddot{x}(t)=\sum_{j=1}^N\nabla V_j(x-c_j)
\]

 In this paper we prove symbolic dynamics at slightly negative energy for an $N$-centre problem where the potentials $V_j$ are positive, anisotropic and homogeneous of degree $-\alpha_j$:
\[
V_j(x)=|x|^{-\al_j}V_j\left(\frac{x}{|x|}\right).
\]
The proof is based on a broken geodesics argument and trajectories are extremals of the Maupertuis' functional.

Compared with the classical $N$-centre problem with Kepler potentials, a major difficulty arises from the lack of a regularization of the singularities.  We will consider both the collisional dynamics and the non collision one. Symbols describe geometric and topological features of the associated trajectory.
\end{abstract}
\maketitle

\section{Introduction and main results}

The aim of this paper is to describe the onset of chaos in the case of an  $N$-centre problem involving anisotropic attractive Kepler-like potentials. Anisotropic homogeneous singular potentials arise in the reduction by symmetry of the symmetric $N$-body problem of Celestial Mechanics, e.g., in the isosceles $3$-body problem.  Another relevant physical example occurs in the atomic theory of semiconductor crystals of silicon or germanium, due to the presence of impurities. An additional nuclear charge in the donor impurity causes a deformation that breaks the symmetry of the atoms lattice, ultimately resulting in an anisotropy of the mass tensor; this anisotropy can be referred to the potential as well  \cite[Chap. 11]{GutBook}. The case of one anisotropic attractive centre has been extensively explored in the Celestial Mechanics literature and in other physical systems, also in the search for connections between classical and quantum mechanics. As Gutzwiller highlighted in a series of pioneering papers  (\cite{Gutzwiller73,Gutzwiller77,Gutzwiller81}), compared to the isotropic Kepler problem, the anisotropic case may lose its integrability and  present chaotic trajectories. Moreover, collisions cease to be regularizable, as highlighted by Devaney in \cite{DevJDE}. Because of their homogeneity,  $-\alpha$-anisotropic potentials and their collision trajectories have been extensively studied  in the literature \cite{McG1974,DevInvMath1978,Deva1980, DevProgMath1981,BaSe,Saari84, BFT2,BTVplanar,BTV,BHPT,BCTmin} by analytical and geometrical methods.  While the problem of $N$-centres with isotropic Keplerian potentials is a great classic in the recent literature of Celestial Mechanics (cf. the end of this section), as far as we know this is the first work dealing with the case of anisotropic potentials. 

We consider an anisotropic planar $N$-centre problem, where we associate with each centre a positive, anisotropic potential $V_j\in\mathcal{C}^2(\R^2\setminus\{0\})$, homogeneous of degree $-\al_j$:
\[
V_j(x) =|x|^{-\al_j}V_j\left(\frac{x}{|x|}\right)=r^{-\al_j}U_j(\theta)\;,
\]
where $(r,\theta)$ are polar coordinates and $U_j\uguale V_j|_{\cerchio}$. Denoting by $c_1,\ldots,c_N\in\R^2$  the positions of the $N\geq 2$ centres, we introduce the total potential 
\[
V(x)=\sum_{j=1}^NV_j(x-c_j)=\sum_{j=1}^N|x-c_j|^{-\al_j}V_j\left(\frac{x-c_j}{|x-c_j|}\right).
\]
so that the equation of motion reads as
\begin{equation}\label{eq:moto}
	\ddot{x}(t)=\nabla V(x(t)),
\end{equation}
where $x=x(t)$ represents the position of the moving particle at time $t\in\R$.  The associated Hamiltonian being
\[
H(x,v)=\frac{1}{2}|v|^2-V(x),
\]
every solution of \eqref{eq:moto} verifies the energy conservation law
\begin{equation}\label{eq:energy}
	\frac{1}{2}|\dot{x}|^2-V(x)=-h.
\end{equation}

Given $h>0$, we are interested in those solutions of equation \eqref{eq:moto} which are confined in the 3-dimensional negative energy shell
\[
\mathcal{E}_h=\left\lbrace (x,v)\in(\R^2\setminus\{c_1,\ldots,c_N\})\times\R^2:\ \frac12|v|^2-V(x)=-h\right\rbrace,
\]
which projects on the configuration space onto the bounded \emph{Hill's region} 
\begin{equation}\label{def:hill_region}
	\mathcal{R}_h=\{x\in\R^2\setminus\{c_1,\ldots,c_N\}:\,V(x)\geq h\}.
\end{equation}

We are going to investigate the presence of chaotic behaviour at negative energies, through the detection of a subsystem displaying a symbolic dynamics.  In order to give rigour to these concepts, we need to recall some basic definitions. Consider a finite set $\mathcal{S}$, with at least two elements, endowed with the discrete metric ($\rho(s_j,s_k)=\delta_{jk}$, where $\delta_{jk}$ is the Kronecker delta and $s_j,s_k\in\mathcal{S}$). Consider the set of bi-infinite sequences of elements of $\mathcal{S}$
\[
\mathcal{S}^\Z\uguale\{(s_m)_{m\in\Z}:\,s_m\in \mathcal{S},\ \mbox{for all}\ m\in\Z\}
\]
and make it a metric space with the distance
\[
d((s_m),(t_m))\uguale\sum\limits_{m\in\Z}\frac{\rho(s_m,t_m)}{2^{|m|}},
\]
defined for every $(s_m),(t_m)\in \mathcal{S}^\Z$. Introduce also the \emph{Bernoulli right shift} as the discrete dynamical system $(\mathcal{S}^\Z,T_r)$, where
\[
\begin{aligned}
	T_r\colon &\mathcal{S}^\Z\longrightarrow \mathcal{S}^\Z \\
	&(s_m)\mapsto T_r((s_m))\uguale (s_{m+1}),
\end{aligned}
\]

The main features of the discrete dynamical system $(\mathcal{S}^\Z,T_r)$ are paradigmatic of a chaotic behaviour (see for instance \cite{Teschl_ode}): 
\begin{itemize}
	\item $(\mathcal{S}^\Z,T_r)$ has a dense countable set of periodic points (all the periodic sequences are periodic points);
	\item $(\mathcal{S}^\Z,T_r)$ displays high sensitivity with respect initial data, i.e., if we define as $T_r^k$ the $k$-th iteration of the Bernoulli shift, we have that for any $\varrho>0$ there exist two arbitrarily close sequences $(s_m),(t_m)\in\mathcal{S}^\Z$ such that
	\[
	\sup\limits_{k\in\Z}d(T_r^k((s_m)),T_r^k((t_m)))\geq\varrho;
	\]
	\item the previous property actually holds for a big set of initial data, therefore the dynamical system $(\mathcal{S}^\Z,T_r)$ has positive topological entropy.
\end{itemize}

The Bernoulli shift is our reference dynamical system in order to describe complex behaviour of solutions to the anisotropic $N$-centre problem. 

\begin{definition}
	Let $\mathcal{S}$ be a finite set, $\mathcal{E}$ be a metric space and $\mathfrak{R}\colon\mathcal{E}\to\mathcal{E}$ be a continuous map. Then, we say that the dynamical system $(\mathcal{E},\mathfrak{R})$ has a symbolic dynamics with set of symbols $\mathcal{S}$ if there exist a subset $\Pi\sset\mathcal{E}$ which is invariant through $\mathfrak{R}$ and a continuous and surjective map $\pi\colon\Pi\to\mathcal{S}^\Z$ such that the diagram 
	\[
	\begin{tikzcd}
		\Pi \arrow{r}{\mathfrak{R}} \arrow{d}{\pi} & \Pi \arrow{d}{\pi} \\
		\mathcal{S}^\Z \arrow{r}{T_r}	& \mathcal{S}^\Z
	\end{tikzcd}
	\]
	commutes. In other words, we are saying that the map $\mathfrak{R}|_{\Pi}$ is topologically semi-conjugate to the Bernoulli right shift $T_r$ in the metric space $(\mathcal{S}^\Z,d)$.	
\end{definition}

In addition to identifying the presence of a symbolic dynamic through semi-conjugation with the Bernoulli shift, we are very interested in giving a physical interpretation to the symbols, in terms of the geometric characteristics of the solutions. To both ends, we need to go further into the analysis of the elements of our system. At first, without loss of generality, we can assume 
\[
\al_1\leq\al_2\leq\ldots\leq\al_N.
\]

Clearly, the smallest degree of homogeneity $\al_1$ leads the overall potential at infinity. Hence, assuming that $\al_1=\al_2=\ldots=\al_k$ for some $1\leq k<N$, and denoting $\al\uguale\al_1$, it is convenient to gather all the $-\al$-homogeneous potentials in this way

\begin{equation}\label{def:potential}
	V(x)=W(x)+\sum\limits_{j=k+1}^N| x-c_j|^{-\al_j}V_j\left(\frac{x-c_j}{\lvert x-c_j\rvert}\right)\;,
\end{equation} 
where
\[
W(x)\uguale\sum\limits_{i=1}^kV_i(x-c_i)=\sum\limits_{i=1}^k| x-c_i|^{-\al}V_i\left(\frac{x-c_i}{\lvert x-c_i\rvert}\right),
\]
so that $W\in\mathcal{C}^2(\R^2\setminus\{c_1,\ldots,c_k\})$. We set:
\[
U(\vt)\uguale \sum\limits_{i=1}^kU_i(\vt),\quad\mbox{for}\ \vt\in\cerchio,
\]
so that $W(x)\simeq |x|^{-\alpha}U\left(\frac{x}{\lvert x\rvert}\right)$ when $|x|>>1$.

%We express our assumptions in terms of the homogeneity exponents and the restrictions to the $1$-dimensional sphere of every potential $V_j$ and $W$, defining $U_j\uguale V_j|_{\cerchio}$.
Any critical point of the potential $U$ will be termed a \emph{central configuration}. Our basic assumption on $V$ is about the number of its non-degenerate minimal central configurations.
\begin{equation}\label{hyp:V}
	\tag{$V$} 
	\fontsize{9pt}{\baselineskip}
	\begin{cases}
		\al<2; \\
		\exists\,(\vt_l^*)_{l=1}^{m}\sset\cerchio\,:\,\ \forall\ l=1,\ldots,m,\ m>0,\ U''(\vt_l^*)>0,\  U(\vt)\geq U(\vt_l^*)>0,\ \forall\ \vt\in\cerchio.\\		
	\end{cases}
\end{equation}

%\begin{remark}\label{rem:V}
%	The previous requirements on $V$ are referred to the strength of the first homogeneity degree $\al$ and to the extremality with respect to $U_j$ of some particular directions. To be precise, the assumption 	$\eqref{hyp:V}_1$ requires that the sum-potential $U$ suites the weak force case and this will play a fundamental role in the dynamics far from the centres. Moreover, recalling that a central configuration for a potential is a critical point of its restriction to the unit sphere, $\eqref{hyp:V}_2$ and $\eqref{hyp:V}_3$ guarantee the existence of a finite number of strictly minimal central configurations for every potential $U_j$, but also for the sum $U$ of the $-\al$-homogeneous potentials $U_1,\ldots,U_k$. 
%\end{remark}

\begin{remark}\label{rem:en_bound}
	From now on, without loss of generality, we will assume that $\max\limits_j|c_j|\leq1$ and we define
	\[
	\mathfrak{m}=\min_{j=1,\ldots,N} \min_{\cerchio}U_j,
	\]
	so that, for every $x\in\R^2\setminus\{c_1,\ldots,c_N\}$, we have
	\[
	\begin{aligned}
		V(x)\geq \mathfrak{m}\sum\limits_{j=1}^N|x-c_j|^{-\al_j}\geq \frac{\mathfrak{m}}{(|x|+1)^{\al}}.
	\end{aligned}
	\]
	Define \(
	\tilde{h}\uguale\mathfrak{m}/{2^\al}
	\): then, for every $h\in(0,\tilde{h})$, the Hill's region $\mathcal{R}_h$ contains the unit ball where the centres lie (see Figure \ref{fig:hill}).
\end{remark}

\begin{figure}
	\centering
	\includegraphics{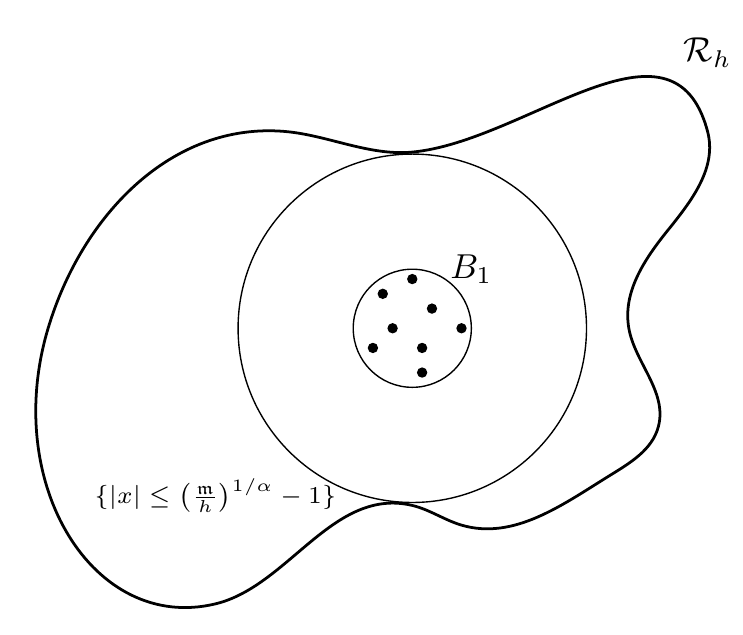}	
	\caption{An example of Hill's region for the anisotropic $N$-centre problem that includes a ball of radius greater than 1 (see Remark \ref{rem:en_bound}).}
	\label{fig:hill}
\end{figure}

For the purposes of this work, we need to take into account different definitions of solutions, allowing for collisions (cf. also \cite{BoOrZh,BoDaPa2}). 
\begin{definition}
We define a \emph{non-collision solution} of \eqref{eq:moto} as a $\mathcal{C}^2$-function $x\colon J\sset\R\to\R^2$ such that $x(t)\neq c_j$ for every $t\in J$ and for every $j=1,\ldots,N$ and that solves \eqref{eq:moto} and \eqref{eq:energy} in the classical sense. 
We say that $x$ defined on the same interval $J$ is a \emph{collision solution} of \eqref{eq:moto} if $x\in H^1(J)$ and there exists a collision instants set $T_c(x)\sset J$ such that:
\begin{itemize}
	\item the set $T_c(x)$ has null measure;
	\item for any $t\in T_c(x)$, it holds $x(t)=c_j$, for some $j=1,\ldots,N$;
	\item for any $(a,b)\sset J\setminus T_c(x)$, the restriction $x|_{(a,b)}$ is a non-collision solution of \eqref{eq:moto};
	\item for every $t\in J\setminus T_c(x)$, $x(t)$ verifies the energy equation \eqref{eq:energy}.
\end{itemize}
\end{definition}

In what follows, an infinite number of periodic solutions of \eqref{eq:moto} confined in the energy shell $\mathcal{E}_h$ will be provided through a variational method and we will relate the occurrence of collisions  to the homogeneity degrees of every potential $V_j$. 
%
%
%
%For this reason, the existence of a symbolic dynamics for a dynamical system $(\mathcal{E},\mathfrak{R})$ suggests a very complex behaviour of its trajectories. Indeed, the semi-conjugacy through the map $\pi$ relates the topological properties of the Bernoulli shift with the ones of the first return map $\mathfrak{R}$. We point out that this in general is not enough to show that the same topological properties listed above holds for $(\mathcal{E},\mathfrak{R})$, since the projection map $\pi$ is generally not invertible. However, it is clear that the existence of a symbolic dynamics is a necessary condition for the density of periodic orbits and the presence of chaos, so that one usually proves it as an intermediate step in this direction.

Motivated by this, our first main result states the existence of a (possibly collisional) symbolic dynamics in the presence of at least two centres and two minimal non-degenerate central configurations for $W$.

\begin{theorem}\label{thm:symb_dyn}
	Assume that $N\geq 2$, $m\geq 2$ and consider a function $V\in\mathcal{C}^2(\R^2\setminus\{c_1,\ldots,c_N\})$ defined as in \eqref{def:potential} and satisfying \eqref{hyp:V}. There exists $h^*>0$ and a finite set of symbols $\mathcal{S}$ such that, for every $h\in(0,h^*)$, there exist a subset $\Pi_h$ of the energy shell $\mathcal{E}_h$, a (possibly collisional) first return map $\mathfrak{R}\colon\Pi_h\to\Pi_h$ and a continuous and surjective map $\pi\colon\Pi_h\to\mathcal{S}^\Z$ such that the diagram 
	\[
	\begin{tikzcd}
		\Pi_h \arrow{r}{\mathfrak{R}} \arrow{d}{\pi} & \Pi_h \arrow{d}{\pi} \\
		\mathcal{S}^\Z \arrow{r}{T_r}	 & \mathcal{S}^\Z
	\end{tikzcd}
	\]
	commutes. In other words, for $h$ sufficiently small, the anisotropic $N$-centre problem at energy $-h$ admits a symbolic dynamics.
\end{theorem}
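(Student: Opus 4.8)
The plan is to construct the invariant set $\Pi_h$ and the semi-conjugacy $\pi$ by a broken-geodesics argument for the Jacobi--Maupertuis metric, producing, for each prescribed bi-infinite itinerary, a (possibly collisional) trajectory shadowing it. I would first fix $h\in(0,h^*)$ with $h^*\leq\tilde h$, so that by Remark~\ref{rem:en_bound} the Hill region $\mathcal{R}_h$ contains the unit ball carrying the centres, and recast the search for solutions at energy $-h$ as the search for extremals of the Maupertuis functional
\[
\mathcal{M}_h(u)\uguale\frac12\left(\int_0^1|\dot u|^2\,dt\right)\left(\int_0^1\bigl(V(u)-h\bigr)\,dt\right),
\]
whose critical points, suitably reparametrised, are geodesics of the degenerate Jacobi metric $2(V-h)\,|dx|^2$ on $\mathcal{R}_h$. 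The structural point that makes the whole scheme viable is the hypothesis $\al<2$: along a homothetic collision arc $r\sim(t_c-t)^{2/(2+\al)}$, the Jacobi length element behaves like $r^{-\al/2}\,dr$, which is integrable precisely when $\al<2$. Hence collision arcs carry finite action and the natural ambient space is an $H^1$ space of \emph{possibly collisional} paths, on which $\mathcal{M}_h$ is coercive and weakly lower semicontinuous.

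I would then fix the alphabet and the topological constraints encoding it. Using the $m\geq2$ minimal non-degenerate central configurations $\vt_l^*$ (the weakest directions of the far-field potential $U$, along which the homothetic ejection/collision solutions are directed) together with the $N\geq2$ centres, I would choose a Poincar\'e section --- a large circle $\{|x|=R\}$ with $R=R(h)$ cut by the rays toward the $\vt_l^*$ --- and let $\mathcal{S}$ index the finitely many homotopically distinct ``gates'' available between two consecutive returns, each symbol recording the combinatorial type of the arc, e.g.\ along which minimal direction it performs its excursion and which centre(s) its arc separates; with $m,N\geq2$ this gives $|\mathcal{S}|\geq2$. For each admissible transition $s\to s'$ I would minimise $\mathcal{M}_h$ over the class of paths joining the relevant section points in the prescribed free homotopy class of $\R^2\setminus\{c_1,\dots,c_N\}$, obtaining a minimising arc by the direct method. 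The broken-geodesics argument then concatenates, for any $(s_m)\in\mathcal{S}^{\Z}$, the corresponding minimising arcs and upgrades the concatenation to a genuine extremal $x_{(s_m)}$ of $\mathcal{M}_h$: the matching at the section must be $C^1$, since any corner of a geodesic could be smoothed to strictly decrease the Jacobi length.

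The heart of the matter, and the step I expect to be the main obstacle, is the behaviour of these minimisers near the singularities, since in the anisotropic setting collisions are not regularizable. I would perform a McGehee-type blow-up at each collision instant and exploit the non-degeneracy $U''(\vt_l^*)>0$, together with $\al<2$, to show that every collision arc is asymptotic to one of the homothetic solutions directed along a $\vt_l^*$. This classifies the admissible collisions and, crucially, excludes \emph{spurious} collisions with centres not prescribed by the itinerary, as well as degeneration of the minimising sequences (loss of homotopy type in the weak limit); consequently each $x_{(s_m)}$ faithfully realises its symbol sequence and is either a non-collision solution or a collision solution in the sense defined above.

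Finally I would assemble the dynamical data. Let $\Pi_h$ be the set of initial conditions on the section of the solutions $x_{(s_m)}$, let $\mathfrak{R}\colon\Pi_h\to\Pi_h$ be the (possibly collisional) first-return map to the section, and define $\pi\colon\Pi_h\to\mathcal{S}^{\Z}$ by reading off the itinerary. Surjectivity of $\pi$ holds by construction, since a solution has been produced for every sequence; commutativity $\pi\circ\mathfrak{R}=T_r\circ\pi$ holds because one application of $\mathfrak{R}$ advances the trajectory by exactly one gate and therefore shifts the recorded sequence; and continuity of $\pi$ follows from continuous dependence: initial data close on the section keep the same finite block of symbols for long times, so their images are close in the metric $d$. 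These estimates are clean precisely in the small-$h$ regime, where the long excursion times decouple consecutive building blocks, which is why $h^*$ must be taken small.
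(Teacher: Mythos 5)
Your overall architecture (Maupertuis extremals, a section on a large circle around the centres, broken-geodesic gluing, reading off symbols at returns, diagonal/compactness arguments for surjectivity and continuity) matches the paper's strategy for this theorem. But there is a genuine gap in the step where you define the alphabet and construct the inner arcs: you let each symbol record, besides the outgoing direction $\vt_l^*$, ``which centre(s) its arc separates'', and you propose to minimise $\mathcal{M}_h$ \emph{in a prescribed free homotopy class} of $\R^2\setminus\{c_1,\ldots,c_N\}$. Under hypothesis \eqref{hyp:V} alone this cannot work. The class of $H^1$ paths in a fixed homotopy class is not weakly closed; its weak closure contains collision paths, and nothing in \eqref{hyp:V} prevents the minimiser from being one of them. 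Ruling out collisions of topologically constrained minimisers is exactly what the additional hypothesis \eqref{hyp:V_al} (the thresholds $\bar{\al}_j$ of Lemma \ref{lem:BTV}, via Lemma \ref{lem:BCTmin}) is for, and it is the whole content of Section \ref{sec:inner}; once a minimiser collides with a centre, the prescribed winding/partition datum is destroyed, so it cannot serve as a symbol. Your attempted repair via a McGehee blow-up conflates two different objects: the non-degeneracy $U''(\vt_l^*)>0$ in \eqref{hyp:V} concerns the central configurations of the \emph{far-field} potential $U=\sum_{i\le k}U_i$ (equivalently, collisions with the origin of the limit potential $\Wzero$), whereas a collision with a centre $c_j$ is governed by the restricted potential $U_j$ and the exponent $\al_j$, about which \eqref{hyp:V} assumes nothing (no minimal central configuration, no non-degeneracy, no lower bound on $\al_j$). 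So the blow-up cannot classify those collisions, let alone exclude ``spurious'' ones or preserve the homotopy type in the weak limit. A clear warning sign: if your step were correct, you would have proved Theorem \ref{cor:symb} without \eqref{hyp:V_al}, which the paper carefully avoids claiming.

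What the paper does instead for this theorem is to \emph{give up} the partition data entirely: the alphabet is only $\mathcal{S}=\Xi=\{\vt_1^*,\ldots,\vt_m^*\}$ (whence the hypothesis $m\geq 2$), the inner arcs are unconstrained, possibly colliding Maupertuis minimisers in the full space $K^{p_1,p_2}$ (Lemma \ref{lem:inner_coll}), and symbols are distinguished solely by which neighbourhood $\mathcal{U}(\bar R e^{i\vt_{r}^*})$ the trajectory exits through on its outer excursions; the periodic building blocks are then given by Theorem \ref{thm:per_coll} and the semi-conjugacy follows as in the collision-free case. Your proposal becomes essentially the paper's proof if you delete the homotopy component of the symbols and keep only the excursion directions. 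Two smaller points you gloss over, which the paper handles with dedicated arguments: the $\mathcal{C}^1$ matching at the junctions is not just ``smooth a corner of a geodesic'', because the endpoints are constrained to small neighbourhoods of the $\xi_l^*$ where the outer shooting construction is valid, so one must exclude minimisation of the total Jacobi length at the boundary of those neighbourhoods (Theorems \ref{thm:var_ext} and \ref{thm:var_int}); and realising a \emph{bi-infinite} sequence requires a limiting procedure (the paper uses periodic solutions for finite blocks plus a diagonal compactness argument), not a direct minimisation over infinitely many junction points.
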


It is worthwhile noticing that the symbols here represent outer arcs shadowing break homothetic trajectories of the potential $r^{-\alpha}U(\theta)$, indexed on the set
\begin{equation}\label{eq:Xi}
\Xi\uguale\{\vt^*\in\cerchio:\ U'(\vt^*)=0\ \mbox{and}\ U''(\vt^*)>0\}=\{\vt_1^*,\ldots,\vt_m^*\}.
\end{equation}
This explains why we need at least two central configurations in Theorem \ref{thm:symb_dyn}. Despite the mildness of its assumptions, this theorem does not take into account the problem of collisions with the centres. As we shall use a minimization argument with topological constraints, we need a suitable argument to rule out the occurrence of collisions for topologically constrained minimizers. Following the strategy already introduced in \cite{BTV,BTVplanar} for the anisotropic Kepler problem, this step requires some additional  assumption on the homogeneity degrees of every potential $V_j$ at the centres $c_j$. As explained later on in Lemma \ref{lem:BTV}, there are thresholds $\bar{\al}_j\in(0,2)$ which depend only on the restricted potentials $U_j$ and on its non-degenerate central configurations, over which collision-less trajectories can be provided. For this purpose we introduce a further hypothesis on the restrictions $U_j$ of the $V_j$'s to the unit sphere  as follows
\begin{equation}\label{hyp:V_al}
	\tag{$V_\al$}
	\fontsize{9pt}{\baselineskip}
	\begin{cases}
	\forall\,j=1,\ldots,N\ \exists\,\vt_j\in\cerchio:\,U_j(\vt)\geq U_j(\vt_j)>0,\ \forall\,\vt\in\cerchio,\ U_j''(\vt_j)>0\\
	\forall\,j=1,\ldots,N,\ \al_j>\bar{\al}_j=\bar{\al}_j(U_j,\vt_j,\vt_j+4\pi)
	\end{cases}.
\end{equation}
Moreover, in order to give a characterization of the symbols naturally related with the collision-less trajectories of \eqref{eq:moto}, we shall adopt the strategy introduced in \cite{ST2012}, joining inner and outer arcs through a finite dimensional reduction. To this aim,
we need to introduce some further notations. As before, symbols to label the outer arcs are chosen to be the non-degenerate minimal central configurations of the $-\al$-homogeneous component of $V$.
Next, in order to parametrise the inner arcs, we consider all the possible partitions of the $N$ centres in two disjoint non-empty sets, which are exactly \label{intro:partitions} $2^{N-1}-1$,
%\[
%\frac12\left({\binom{N}{1}}+\ldots+{\binom{N}{N-1}}\right)=\frac12\left(\sum_{k=0}^N{\binom{N}{k}}-2\right)=2^{N-1}-1,
%\]
and we denote the set of such partitions as 
\[
\mathcal{P}=\{P_j:\ j=0,\ldots,2^{N-1}-2\}.
\]
Now, being $\Xi$ defined in \eqref{eq:Xi}, we collect all possible choices in the set
\[
\mathcal{Q}=\mathcal{P}\times\Xi=\{Q_j:\ j=0,\ldots,m(2^{N-1}-1)-1\}.
\]

\begin{remark}\label{rem:division}
	For $n\in\N_{\geq 1}$ and $(Q_{j_0},\ldots,Q_{j_{n-1}})\in\mathcal{Q}^n$, consider the element $Q_{j_k}$ for some $k\in\{0,\ldots,n-1\}$. It is useful to introduce the quotient and the remainder of the division of $j_k$ by $m$ in this way
	\begin{equation}\label{eq:division}
		j_k=l_km+r_k,
	\end{equation}
	so that we have $Q_{j_k}=(P_{l_k},\vt_{r_k}^*)$.
\end{remark}

%Moreover, define the following subsets
%\[
%\begin{aligned}
%	\mathcal{P}_0&=\{Q_0,\ldots,Q_{m-1}\}=\{Q_{0\cdot m+0},Q_{0\cdot m+1},\ldots,Q_{0\cdot m+(m-1)}\} \\
%	\mathcal{P}_1&=\{Q_m,\ldots,Q_{2m-1}\}=\{Q_{1\cdot m+0},Q_{1\cdot m+1},\ldots,Q_{1\cdot m+(m-1)}\} \\
%	\vdots \\
%	\mathcal{P}_l&=\{Q_{lm},\ldots,Q_{(l+1)m-1}\}=\{Q_{l\cdot m+0},Q_{l\cdot m+1},\ldots,Q_{l\cdot m+(m-1)}\} \\
%	\vdots \\
%	\mathcal{P}_{2^{N-1}-2}&=\{Q_{(2^{N-1}-2)m},\ldots,Q_{(2^{N-1}-1)m-1}\} \\
%	&=\{Q_{(2^{N-1}-2)\cdot m+0},Q_{(2^{N-1}-2)\cdot m+1},\ldots,Q_{(2^{N-1}-2)\cdot m+(m-1)}\}
%\end{aligned}
%\]
%where $\mathcal{P}_l$ represents all the possible combinations of the partition $P_l$ of the centres and a central configuration chosen in $\Xi$ and, of course, $\mathcal{Q}=\mathcal{P}_0\cup\ldots\cup\mathcal{P}_{2^{N-1}-2}$. In this way, it is easy to verify that, for $j\in\{0,\ldots,m(2^{N-1}-1)-1\}$, if $j\equiv r\mod m$ for  $r\in\{0,\ldots,m-1\}$ it means that $Q_j\in\mathcal{P}_l$, with $l=(j-r)/m$. In particular $Q_j=Q_{lm+r}$, so that, in other words, $Q_j$ \emph{represents} the pair $(P_l, \vt_r^*)$.

Adopting these notations, we can now state the following result on the existence of collision-less periodic solutions of \eqref{eq:moto} in negative energy shells.
\begin{theorem}\label{th:main1}
	Assume that $N\geq 3$ and $m\geq 1$ or $N\geq 2$ and $m\geq 2$. Consider a potential $V\in\mathcal{C}^2(\R^2\setminus\{c_1,\ldots,c_N\})$ defined as in \eqref{def:potential} and satisfying \eqref{hyp:V}-\eqref{hyp:V_al}. There exists $\bar h>0$ such that, for every $h\in(0,\bar{h})$, $n\in\N_{\geq1}$ and $(Q_{j_0},\ldots,Q_{j_{n-1}})\in\mathcal{Q}^n$, there exists a periodic collision-less and self-intersections-free solution $x=x(Q_{j_0},\ldots,Q_{j_{n-1}};h)$ of \eqref{eq:moto} satisfying \eqref{eq:energy}, which depends on $(Q_{j_0},\ldots,Q_{j_{n-1}})$ in this way: there exists $\bar R=\bar{R}(h)>0$ such that the solution $x$ crosses $2n$ times the circle $\partial B_{\bar R}$ in one period, at times $(t_s)_{s=0}^{2n-1}$, in such a way that, according to $\eqref{eq:division}$, for any $k=0,\ldots,n-1$:
	\begin{itemize}
		\item in the interval $(t_{2k},t_{2k+1})$ the solution stays outside $B_{\bar R}$ and there exists a neighbourhood $\mathcal{U}_{r_k}=\mathcal{U}(\bar R e^{i\vt_{r_k}^*})$ on $\partial B_{\bar{R}}$ such that
		\[
		x(t_{2k}),x(t_{2k+1})\in\mathcal{U}_{r_k};
		\]
		\item in the interval $(t_{2k+1},t_{2k+2})$ the solution stays inside $B_{\bar R}$ and separates the centres according to the partition $P_{l_k}$.
	\end{itemize}
\end{theorem}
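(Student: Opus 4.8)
The plan is to realise the desired trajectory as a concatenation of $2n$ geodesic arcs for the Jacobi--Maupertuis metric at energy $-h$, glued into a single $\mathcal{C}^2$ loop by a finite dimensional reduction in the spirit of \cite{ST2012}. Since $h>0$, the Hill region $\mathcal{R}_h$ of \eqref{def:hill_region} is bounded, so every trajectory at energy $-h$ is confined and, by the Maupertuis principle, is (up to time reparametrisation) a critical point of
\[
M_h(u)=\frac12\int_0^1|\dot u|^2\,dt\;\int_0^1\bigl(V(u)-h\bigr)\,dt
\]
on the relevant space of paths, the constraint $V(u)\ge h$ being encoded by the positivity of the second factor. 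First I would fix a radius $\bar R=\bar R(h)$ with $1<\bar R$ far inside $\mathcal{R}_h$ (so that $\bar R\to\infty$ as $h\to0^+$), and split the sought loop into $n$ \emph{outer arcs} lying in $\{|x|\ge\bar R\}$ and $n$ \emph{inner arcs} lying in $\{|x|\le\bar R\}$, meeting $\partial B_{\bar R}$ at the $2n$ crossing points $x(t_s)$.

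Next I would set up a constrained minimisation in each arc class, prescribing the symbol $Q_{j_k}=(P_{l_k},\vt^*_{r_k})$ via \eqref{eq:division}. For the outer arc on $(t_{2k},t_{2k+1})$ I impose both endpoints in the neighbourhood $\mathcal{U}_{r_k}$ of $\bar R e^{i\vt^*_{r_k}}$ and require the arc to be homotopic, in $\{|x|\ge\bar R\}$, to the self-similar homothetic bounce of the $-\al$-homogeneous potential $r^{-\al}U(\vt)$ in the direction $\vt^*_{r_k}\in\Xi$; here the non-degeneracy $U''(\vt^*_{r_k})>0$ from \eqref{hyp:V} singles out the homothetic solution as the minimiser to be shadowed. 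For the inner arc on $(t_{2k+1},t_{2k+2})$ I prescribe the endpoints on $\partial B_{\bar R}$ and fix the homotopy class in $B_{\bar R}\setminus\{c_1,\dots,c_N\}$ separating the centres according to $P_{l_k}$. The direct method then produces a minimiser in each class: coercivity follows from the boundedness of $\mathcal{R}_h$ and the lower bound on $V$ of Remark \ref{rem:en_bound}, while weak lower semicontinuity of $M_h$ is standard.

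The delicate point is that these minimisers are a priori only \emph{generalised}, possibly collisional, solutions, and here the lack of a regularisation of the anisotropic singularities is the central obstacle. To rule out collisions I would invoke Lemma \ref{lem:BTV}: under \eqref{hyp:V_al}, namely $\al_j>\bar\al_j(U_j,\vt_j,\vt_j+4\pi)$, any collision arc can be locally replaced, within the same topological class, by a collision-avoiding arc of strictly smaller Maupertuis action, because above the threshold it becomes strictly convenient to skirt a centre by a local rotation rather than to fall into it. This is exactly the mechanism exploited in \cite{BTV,BTVplanar} for the anisotropic Kepler problem, and it shows that the constrained minimisers are genuine collision-less solutions of \eqref{eq:moto}-\eqref{eq:energy} on the open arcs; the hypotheses $N\ge3,\,m\ge1$ or $N\ge2,\,m\ge2$ merely guarantee that the symbol set $\mathcal{Q}$ is non-trivial.

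Finally I would perform the finite dimensional reduction to glue the arcs smoothly. Writing $G(p_0,\dots,p_{2n-1})$ for the sum of the minimal arc actions as a function of the crossing points $p_s\in\partial B_{\bar R}$, I minimise $G$ over admissible configurations; at a minimiser the first-order conditions force the inner and outer velocities to agree at each crossing, so the corners disappear and the concatenation solves \eqref{eq:moto} across $\partial B_{\bar R}$ as well, the reparametrisation giving energy $-h$. For $h$ small, hence $\bar R$ large, the outer minimisers concentrate along the radial directions $\vt^*_{r_k}$, which keeps the optimal crossings inside the prescribed $\mathcal{U}_{r_k}$ and yields the qualitative description in the statement. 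Self-intersection freeness then follows from minimality and the prescribed homotopy classes, since a transversal self-crossing could be removed by a reconnection strictly decreasing the action without leaving the class, a contradiction. As anticipated, the hard part is the collision exclusion of Lemma \ref{lem:BTV}, on which the sharp threshold \eqref{hyp:V_al} is tailored; the matching step, although technical, is a by-now standard consequence of optimality in the crossing points.
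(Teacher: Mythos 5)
Your overall architecture (outer arcs, inner arcs with topological constraints, finite-dimensional gluing in the spirit of \cite{ST2012}) matches the paper's, and you correctly identify Lemma \ref{lem:BTV} and hypothesis \eqref{hyp:V_al} as the collision-exclusion mechanism for the inner minimizers. However, there is a genuine gap in your construction of the \emph{outer} arcs. You propose to obtain them by minimizing the Maupertuis functional among paths in $\{|x|\geq\bar R\}$ that are ``homotopic to the homothetic bounce''. This fails: the outer portion of the Hill's region is a topological annulus, and a path joining two nearby points $p_0,p_1\in\mathcal{U}_{r_k}$ that goes out to the zero-velocity curve and comes back is homotopic (with fixed endpoints) to the short arc of $\partial B_{\bar R}$ joining them. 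Hence the homotopy class does not single out the bounce, and the infimum in that class is approached by arcs hugging the obstacle $\partial B_{\bar R}$, not by trajectories shadowing the homothetic brake orbit; the minimizer is not a free solution of \eqref{eq:moto}. This is precisely why the paper does \emph{not} minimize outside: after rescaling (Proposition \ref{prop:scaling}) it treats the outer problem perturbatively, proving invertibility of the first-return map along the homothetic solution via the variational equation and a Sturm comparison argument (Lemma \ref{lem:invert_no_pert}, using $U''(\vt^*)\geq0$), and then applying the implicit function theorem (Theorems \ref{thm:implicit} and \ref{thm:outer_dyn}). This shooting construction also yields uniqueness and $\mathcal{C}^1$-dependence of the outer arc on its endpoints, which your gluing step needs and which minimization would not provide.

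A second, related gap is in the gluing. You assert that ``for $h$ small the outer minimisers concentrate along the radial directions, which keeps the optimal crossings inside the prescribed $\mathcal{U}_{r_k}$''. But the first-order conditions $\nabla\mathbf{L}(\mathbf{\bar p})=0$, which give the velocity matching, are only available if the minimizing configuration of crossing points is \emph{interior} to the constraint set; excluding boundary minimizers is the hardest part of the paper's Section \ref{sec:glueing} and cannot be obtained from a concentration heuristic. The paper needs quantitative, $\ve$-uniform estimates on how the initial velocity of an outer arc and the final velocity of an inner arc rotate under angular variations of the endpoints (Lemmas \ref{lem:var_ext} and \ref{lem:var_int}, the latter via McGehee coordinates and the hyperbolicity of the collision equilibrium, the former via a Sturm-type argument where the exponent $\gamma\in(0,1)$ is chosen using $U''(\vt^*)>0$), leading to Theorems \ref{thm:var_ext} and \ref{thm:var_int}. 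Relatedly, you never address property $(R)$ for the inner minimizers, i.e.\ that they do not touch the obstacle $\partial B_{\bar R}$ in the interior of their time interval: this is Theorem \ref{thm:R}, proved by a blow-up/compactness argument that crucially uses that the endpoints lie near minimal non-degenerate central configurations and that $h$ is small, and without it the inner ``minimizer'' need not solve the equation near the circle.
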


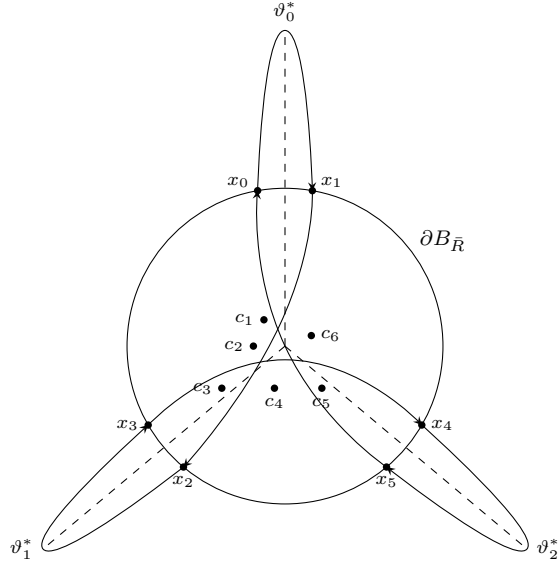
\begin{figure}
	\centering
	\begin{tikzpicture}[scale=1.4]
		\coordinate (0) at (0,0);
		\coordinate (p0) at (-0.26,1.477);
		\coordinate (p1) at (0.26,1.477);   		
		\coordinate (p2) at (-0.964,-1.149);   		
		\coordinate (p3) at (-1.299,-0.75); 
		\coordinate (p4) at (1.299,-0.75);
		\coordinate (p5) at (0.964,-1.149);
		\coordinate (c1) at (-0.2,0.25);
		\coordinate (c2) at (-0.3,0);   		
		\coordinate (c3) at (-0.6,-0.4);    		
		\coordinate (c4) at (-0.1,-0.4); 
		\coordinate (c5) at (0.35,-0.4);
		\coordinate (c6) at (0.25,0.1);
		
		% sfera
		\draw (0,0) circle (1.5cm);
		
		% c.c.
		
		\draw[dashed] (0)--(0,3);
		\draw[dashed] (0)--(2.298,-1.928);
		\draw[dashed] (0)--(-2.298,-1.928);
		
		% punti 		
		\fill (p0)
		circle[radius=1.0pt];  		
		\fill (p1)
		circle[radius=1.0pt];
		\fill (p2)
		circle[radius=1.0pt];
		\fill (p3)
		circle[radius=1.0pt];
		\fill (p4)
		circle[radius=1.0pt];
		\fill (p5)
		circle[radius=1.0pt];
		\fill (c1)
		circle[radius=1.0pt];
		\fill (c2)
		circle[radius=1.0pt];
		\fill (c3)
		circle[radius=1.0pt];
		\fill (c4)
		circle[radius=1.0pt];
		\fill (c5)
		circle[radius=1.0pt];
		\fill (c6)
		circle[radius=1.0pt];

		% archi esterni
		
		\draw[-stealth] plot [smooth, tension=1.5] coordinates {(p0) (0,3) (p1)}; 
		\draw[-stealth] plot [smooth, tension=1.5] coordinates {(p2) (-2.298,-1.928) (p3)};
		\draw[,-stealth] plot [smooth, tension=1.5] coordinates {(p4) (2.298,-1.928) (p5)};
		
		% archi interni
		
		\draw[-stealth] plot [smooth, tension=1] coordinates {(p1) (-0.05,0.2)  (p2)};           
		\draw[-stealth] plot [smooth, tension=1] coordinates {(p3) (0,-0.13) (p4)};
		\draw[-stealth] plot [smooth, tension=1] coordinates {(p5)  (0,0) (p0)};
		
		% scritte 
		\node[left] at (-0.25,1.56) {\scriptsize $x_0$};
		\node[right] at (0.25,1.56) {\scriptsize $x_1$};
		\node[below] at (p2) {\scriptsize $x_2$};
		\node[left] at (p3) {\scriptsize $x_3$};
		\node[right] at (p4) {\scriptsize $x_4$};   		
		\node[below] at (p5) {\scriptsize $x_5$};
		\node[above] at (0,3) {\scriptsize $\vt_0^*$};
		\node[left] at (-2.298,-1.928) {\scriptsize $\vt_1^*$};
		\node[right] at (2.298,-1.928) {\scriptsize $\vt_2^*$};
		\node[left] at (c1) {\scriptsize $c_1$};
		\node[left] at (c2) {\scriptsize $c_2$};
		\node[left] at (c3) {\scriptsize $c_3$};
		\node[below] at (c4) {\scriptsize $c_4$};
		\node[below] at (c5) {\scriptsize $c_5$};
		\node[right] at (c6) {\scriptsize $c_6$};
		\node at (1.5,1) {\footnotesize $\partial B_{\bar{R}}$};
	\end{tikzpicture}
	\caption{An example of classical periodic solution provided in Theorem \ref{th:main1}}
\end{figure}

As a consequence of the existence of collision-less periodic solutions, we are going to show that, assuming \eqref{hyp:V}-\eqref{hyp:V_al}, our system has a collision-free symbolic dynamics with set of symbols $\mathcal{Q}$. Differently from Theorem \ref{thm:symb_dyn}, in this case it is possible to include  also the case $m=1$, provided $N\geq 3$, so to have at least 2 elements in $\mathcal{Q}$. In facts, as highlighted in Theorem \ref{thm:symb_dyn2}, we can cover also the case $m=1$ and $N=2$.

\begin{theorem}\label{cor:symb} 
	In the same setting of Theorem \ref{th:main1}, take $h\in(0,\bar{h})$, with $\bar{h}>0$ therein defined. Then, there exists a subset $\Pi_h$ of the energy shell $\mathcal{E}_h$, a first return map $\mathfrak{R}\colon\Pi_h\to\Pi_h$ and a continuous and surjective map $\pi\colon\Pi_h\to\mathcal{Q}^\Z$, such that the diagram
	\[
	\begin{tikzcd}
		\Pi_h \arrow{r}{\mathfrak{R}} \arrow{d}{\pi} & \Pi_h \arrow{d}{\pi} \\
		\mathcal{Q}^\Z \arrow{r}{T_r}	 & \mathcal{Q}^\Z
	\end{tikzcd}
	\]
	commutes. In other words, for any $h$ sufficiently small, the anisotropic $N$-centre problem at energy $-h$ admits a collision-less symbolic dynamics, with sets of symbols $\mathcal{Q}$.
\end{theorem}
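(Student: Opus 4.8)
The plan is to realise the symbolic dynamics through a Poincar\'e first-return map on a cross-section transverse to the outward crossings of $\partial B_{\bar R}$, exploiting the block structure of the solutions produced in Theorem~\ref{th:main1}. Concretely, I would fix $\bar R=\bar R(h)$ as in that statement and set
\[
\Sigma\uguale\{(x,v)\in\mathcal{E}_h:\ |x|=\bar R,\ \langle x,v\rangle>0,\ x/|x|\in\textstyle\bigcup_{r=1}^m\mathcal{U}_r\},
\]
where $\mathcal{U}_r=\mathcal{U}(\bar R e^{i\vt_r^*})$ are the pairwise disjoint neighbourhoods of the central configurations appearing in Theorem~\ref{th:main1}. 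By construction the radial velocity is positive on $\Sigma$, so it is a genuine local section and the forward flow is well defined as long as the trajectory avoids the centres. Reading Theorem~\ref{th:main1}, each periodic solution crosses $\partial B_{\bar R}$ outward at the times $t_{2k}$ near $\vt_{r_k}^*$, performs an outer arc near $\vt_{r_k}^*$, re-enters at $t_{2k+1}$, and then runs an inner arc separating the centres according to $P_{l_k}$ before the next outward crossing at $t_{2k+2}$. Hence a single return to $\Sigma$ traverses exactly one outer--inner block and should be coded by the single symbol $Q_{j_k}=(P_{l_k},\vt^*_{r_k})$; this identification makes one return correspond to one shift.

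Next I would promote the finite itineraries of Theorem~\ref{th:main1} to bi-infinite ones by a compactness argument. Given $\mathbf{Q}=(Q_{j_m})_{m\in\Z}\in\mathcal{Q}^\Z$, consider for each $n$ the periodic solution $x_n\uguale x(Q_{j_{-n}},\ldots,Q_{j_n};h)$ provided by Theorem~\ref{th:main1}, time-shifted so that its $0$-th outward crossing sits on $\Sigma$. All these trajectories lie in the fixed Hill region $\mathcal{R}_h$ and, being collision-less with energy $-h$, satisfy uniform $\mathcal{C}^2$ bounds on every compact time interval on which they stay a fixed distance from the centres; by Ascoli--Arzel\`a and a diagonal extraction they converge in $\mathcal{C}^1_{loc}(\R)$ to a solution $x_{\mathbf{Q}}$ of \eqref{eq:moto}--\eqref{eq:energy}. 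The localisation of the outer arcs in the disjoint $\mathcal{U}_r$ and the separation class of the inner arcs are both closed conditions under $\mathcal{C}^1_{loc}$ convergence, and uniform upper/lower bounds on the inter-crossing times prevent arcs from collapsing or escaping; therefore $x_{\mathbf{Q}}$ still crosses $\partial B_{\bar R}$ with the block structure coded precisely by $\mathbf{Q}$. Defining $\Pi_h\sset\Sigma$ as the set of $0$-th outward crossings of all such $x_{\mathbf{Q}}$, the map $\mathfrak{R}$ given by the first return to $\Sigma$ sends $\Pi_h$ into itself, since following $x_{\mathbf{Q}}$ to its next crossing produces a solution realising the shifted sequence $T_r\mathbf{Q}$.

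It then remains to set $\pi\colon\Pi_h\to\mathcal{Q}^\Z$ to be the itinerary map, assigning to the crossing point of $x_{\mathbf{Q}}$ the sequence of symbols read off its successive blocks. By construction $\pi$ is surjective and makes the diagram commute, because advancing one return shifts the itinerary: $\pi\circ\mathfrak{R}=T_r\circ\pi$. The only substantial analytic point is the continuity of $\pi$ (and of $\mathfrak{R}$ on $\Pi_h$). I would obtain it from continuous dependence on initial data on compact time intervals away from the centres: two nearby points of $\Pi_h$ generate trajectories that remain uniformly close for a long time, hence share a long central block of symbols, so that $d(\pi(p),\pi(q))$ is small; the uniform lower bound on the return time guarantees that closeness over a long time interval forces agreement of many consecutive symbols. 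Since the symbols are read through the robust conditions ``outer arc in $\mathcal{U}_r$'' and ``inner arc in a fixed separation class'', the coding is locally constant along each block, whence $\pi$ is continuous.

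The main obstacle I anticipate is precisely this coding step in the singular, non-compact phase space: one must simultaneously (i) keep all the approximating and limiting trajectories uniformly away from the centres so that $\mathcal{C}^1_{loc}$ compactness and continuous dependence apply --- this is where the collision-less output of Theorem~\ref{th:main1} and the thresholds in \eqref{hyp:V_al} are essential --- and (ii) ensure that the passage to the limit neither merges two blocks nor loses one, i.e.\ that the number $2n$ of crossings and their localisation pass to the limit. Establishing the uniform two-sided bounds on inter-crossing times and on the distance to the centres, so that the itinerary is both well defined and continuous on $\Pi_h$, is the technical heart of the argument; everything else (invariance, commutation, surjectivity) follows formally once these estimates are in place.
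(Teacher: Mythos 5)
Your overall strategy coincides with the paper's: a Poincar\'e section on the outward crossings of $\partial B_{\bar R}$ inside the neighbourhoods $\mathcal{U}_r$, coding of one outer--inner block per return, surjectivity by taking the periodic solutions of Theorem \ref{th:main1} associated with the truncated words and passing to the limit via a diagonal argument, and continuity from the uniform two-sided bounds on inter-crossing times (Lemma \ref{lem:bound}) together with continuous dependence on initial data and the uniform distance $\delta$ from the centres coming from Theorem \ref{thm:inner_dyn}. The one methodological difference is in the compactness step: the paper extracts convergence of the crossing \emph{points} on the compact circle $\partial B_{\bar R}$ and then reconstructs the limit orbit by gluing the unique outer arcs of Theorem \ref{thm:outer_dyn} with weak-$H^1$ limits of inner Maupertuis minimizers (Lemma \ref{lem:compactness}), while you apply Ascoli--Arzel\`a directly to the trajectories. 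Both routes are viable; your uniform $\mathcal{C}^2$ bounds are indeed available precisely because the inner arcs stay at distance $\delta$ from the centres, and the paper's route has the extra benefit of carrying the minimality property of the inner arcs into the limit, which it needs for its definition of the coding set.

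There is, however, a genuine gap in your invariance claim, caused by your \emph{extrinsic} definition of $\Pi_h$ as ``the set of $0$-th outward crossings of all such $x_{\mathbf{Q}}$''. Since the inner arcs are Maupertuis minimizers and these are in general \emph{not unique}, the time-shift of $x_{\mathbf{Q}}$ (whose $0$-th crossing is $\mathfrak{R}(p)$) is a $\mathcal{C}^1_{loc}$ limit of time-shifted periodic orbits realizing cyclically shifted finite words, which are not the symmetric truncations of $T_r\mathbf{Q}$ your construction prescribes; hence $\mathfrak{R}(p)$ need not lie in the set you defined, and $\mathfrak{R}(\Pi_h)\sset\Pi_h$ does not follow as stated. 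The paper sidesteps this by defining the section set intrinsically: $\mathcal{E}_{h,\bar{R}}^{\mathcal{Q}}$ consists of all data whose next inner stretch is a reparametrized Maupertuis minimizer in some class $\hat{\mathcal{K}}_{P_l}$, and then $\Pi_h\uguale\bigcap_{j\in\Z}\mathcal{R}^j(\mathcal{E}_{h,\bar{R}}^{\mathcal{Q}})$, which is invariant by construction, with your limit argument used only to prove surjectivity of $\pi$. Your proof is repaired by adopting such an intrinsic definition (all points whose full orbit has the alternating outer/inner block structure with minimizing inner arcs), after which your commutation, surjectivity and continuity arguments go through. A second, smaller point you omit and the paper handles explicitly: the limit crossing points produced by the diagonal argument may land on the \emph{boundary} of the open neighbourhoods $\mathcal{U}_r$, outside the region where the outer/inner existence theorems apply; the paper shrinks the neighbourhoods beforehand so that the limits remain in the admissible region.
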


The last result of this work concerns a particular case of the $2$-centre problem, driven by a potential $V$ defined as in \eqref{def:potential}. We believe that this case deserves to be highly remarked since, for instance, due to the integrability, no results in this direction can be proven for Keplerian radial potential (see \cite{ST2012}), thus revealing a peculiar property of the anisotropic setting. In particular, in order to complete the treatment of all the possible cases not included in Theorems \ref{thm:symb_dyn}-\ref{cor:symb}, we will assume that $N=2$ and $m=1$. In this situation, the alphabet $\mathcal{Q}$ defined above would consist of a unique symbol and no symbolic dynamics could be proven to exist with such notation. For this reason, as a set of symbols, we will take the set $\mathcal{B}\uguale\{c_1,c_2\}$ of the 2 centres.  

\begin{theorem}\label{thm:symb_dyn2}
	Assume that $N=2$, $m=1$ and consider a function $V\in\mathcal{C}^2(\R^2\setminus\{{c_1,c_2}\})$, defined as in \eqref{def:potential} and satisfying \eqref{hyp:V}-\eqref{hyp:V_al}. There exists $\tilde{h}>0$, a set of two symbols $\mathcal{B}$ such that, for every $h\in(0,\tilde{h})$, there exist a subset $\Pi_h$ of the energy shell $\mathcal{E}_h$, a first return map $\mathfrak{R}\colon\Pi_h\to\Pi_h$ and a continuous and surjective map $\pi\colon\Pi_h\to\mathcal{B}^\Z$, such that the diagram 	
	\[
	\begin{tikzcd}
		\Pi_h \arrow{r}{\mathfrak{R}} \arrow{d}{\pi} & \Pi_h \arrow{d}{\pi} \\
		\mathcal{B}^\Z \arrow{r}{T_r}	 & \mathcal{B}^\Z
	\end{tikzcd}
	\]
	commutes. In other words, for any $h$ sufficiently small, the anisotropic 2-centre problem at energy $-h$ admits a collision-less symbolic dynamics, with set of symbols $\mathcal{B}$.
\end{theorem}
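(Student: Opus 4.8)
The plan is to re-run the broken-geodesic/Maupertuis construction of Theorems \ref{th:main1} and \ref{cor:symb}, compensating for the collapse of the alphabet $\mathcal{Q}$ to a single element (when $N=2$, $m=1$ there is one partition and one central configuration) by encoding the \emph{homotopy class of the inner arcs} rather than a partition of the centres. Since $m=1$ fixes a unique minimal central configuration $\vt^*\uguale\vt_1^*$, every outer excursion shadows the same homothetic solution of $r^{-\al}U(\theta)$ and every crossing of $\partial B_{\bar R}$ falls in one fixed neighbourhood $\mathcal{U}(\bar R e^{i\vt^*})$. The two symbols $c_1,c_2\in\mathcal{B}$ are then attached to the two homotopy classes, in $\overline{B_{\bar R}}\setminus\{c_1,c_2\}$, of an arc joining two points of $\mathcal{U}(\bar R e^{i\vt^*})$ which, together with the short boundary subarc between its endpoints, encloses exactly one of the two centres: the symbol names that centre.

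First I would fix $\bar R=\bar R(h)$ as in Theorem \ref{th:main1} and, for each periodic word generated by $(s_0,\dots,s_{n-1})\in\mathcal{B}^n$, set up the concatenation of $n$ outer arcs in the direction $\vt^*$ with $n$ inner arcs, where the $k$-th inner arc is constrained to lie in the homotopy class prescribed by $s_k$. Each inner arc is produced as a minimizer of the Maupertuis functional at energy $-h$ among $H^1$-paths contained in $\overline{B_{\bar R}}$, with endpoints on $\partial B_{\bar R}\cap\mathcal{U}(\bar R e^{i\vt^*})$ and the imposed winding; the outer arcs are treated exactly as before. A $C^1$ matching at the $2n$ junctions on $\partial B_{\bar R}$ is then imposed through the finite-dimensional reduction of \cite{ST2012}, by choosing the free endpoints on the circle so that the inner and outer momenta agree. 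This yields a genuine classical periodic solution of \eqref{eq:moto}--\eqref{eq:energy} realising the word.

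The central difficulty, and the point where the anisotropy is indispensable, is the absence of collisions for the constrained inner minimizers. An inner arc coded by $c_i$ must encircle $c_i$, so a priori it could be pushed into a collision with $c_i$ or with the remaining centre. This is precisely the scenario ruled out by Lemma \ref{lem:BTV}: under \eqref{hyp:V_al} each degree satisfies $\al_j>\bar\al_j=\bar\al_j(U_j,\vt_j,\vt_j+4\pi)$, the angular width $4\pi$ being exactly what is needed to host an arc performing up to a full turn around the encircled centre, and above this threshold a topologically non-trivial minimizer cannot be collisional. It is also here that the purely anisotropic nature of the problem is essential: for a radial (Keplerian) potential the integrability of the two-centre problem prevents the two classes from yielding distinct non-degenerate minimizers (cf. \cite{ST2012}), whereas the anisotropy of $U_i$ removes this obstruction and separates the two Maupertuis levels, making the coding well posed.

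Finally I would build the symbolic dynamics. As Poincaré section I take the inward crossings of $\partial B_{\bar R}\cap\mathcal{U}(\bar R e^{i\vt^*})$ inside $\mathcal{E}_h$, let $\Pi_h$ be the set of initial data of the solutions just constructed, and let $\mathfrak{R}\colon\Pi_h\to\Pi_h$ be the first-return map, which advances by one complete inner--outer cycle. The coding $\pi\colon\Pi_h\to\mathcal{B}^\Z$ records, at each return, which centre the corresponding inner arc encircles. Then $\pi$ is surjective (every periodic word is realised, and an arbitrary bi-infinite word is recovered by the customary diagonal and compactness argument resting on the uniform bounds for $\bar R$ and for the inner minimizers), it is continuous for the product topology (nearby initial data share arbitrarily long central blocks of symbols), and it intertwines $\mathfrak{R}$ with the Bernoulli shift $T_r$, since one return shifts the sequence by one index. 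The commuting diagram follows, giving a collision-less symbolic dynamics on $\mathcal{B}=\{c_1,c_2\}$ for every $h\in(0,\tilde h)$ with $\tilde h$ taken no larger than the $\bar h$ of Theorem \ref{th:main1}.
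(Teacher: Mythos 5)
Your proposal is correct and follows essentially the same route as the paper: the paper also codes the inner arcs by the centre they enclose (closing the arc with the chord between its endpoints inside a neighbourhood $\mathcal{U}$ of the unique central configuration chosen so that this closure is homotopically unambiguous, which is the same device as your boundary subarc), builds periodic orbits realising each finite word by the same outer-shooting/inner-Maupertuis gluing with collisions excluded via Lemma \ref{lem:BTV} under \eqref{hyp:V_al}, and then runs the identical first-return/diagonal-compactness argument for surjectivity and continuity of $\pi$. Your closing observation on why anisotropy is indispensable matches the paper's final remark on the integrable isotropic two-centre case.
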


As a consequence of our main results, the anisotropic $N$-centre problem is not integrable in slightly negative energy shells. Let us put our results in a context and compare them with the known case of the $N$-centres problem  with Kepler potentials. Non integrability and chaotic behaviour at \emph{non negative} (possibly large positive) energies has been proved by variational methods starting from \cite{Bol,BolNeg,BoKo,KK1992, Kn2002,KnTai}. We also observe that in \cite{KnTai2} the authors proved that, over a high energy threshold, the $N$-centre problem is completely integrable through $\mathcal{C}^\infty$-integrals both in $\R^2$ and $\R^3$. Compared to ours, the case of non negative energy is considerably simple, since the Hill's regions have no boundary. The negative energy case of the $N$-centres with Keplerian potentials has been tackled in \cite{BolNeg2,Dim} as a perturbation of the $2$-centre problem, and in \cite{ST2012} in full generality in the planar case and displays symbolic dynamics as well. We conclude this discussion on the $N$-centre problem observing that it is also possible to find $T$-periodic, parabolic and heteroclinic solutions without any information on the energy of the system. Recent interesting contributions in this direction can be find in \cite{BoDaTe,BoDaPa,Cas,Yu,ChenYu20} and in \cite{CannPrep} for anisotropic potentials. It has to be noticed that an additional difficulty specific of the anisotropic case is that the singularities can not be regularized.

\subsection{Outline of the proof}

The key idea is to consider a different $N$-centre problem starting from the dynamical system \eqref{eq:moto} and the energy equation \eqref{eq:energy}. Defining a suitable rescaled version of potential $V$, we end up with the problem
\begin{equation}\label{pb:rescaled}
	\begin{cases}
		\ddot{y}(t)=\nabla\Veps(y(t)) \\
		\frac12|\dot{y}(t)|^2-\Veps(y(t))=-1,
	\end{cases}
\end{equation}
where $\ve=h^{1/\al}>0$ and $\Veps$ takes into account the rescaled centres $c_j'=\ve c_j$. In this way, all the new centres are confined in the ball $B_\ve(0)$ and collapse to the origin as the energy $h$ of the original problem becomes very small, since $\ve\to 0^+$ as $h\to 0^+$. It turns out that it is equivalent to look for periodic solutions of \eqref{eq:moto}-\eqref{eq:energy} and periodic solutions of \eqref{pb:rescaled}. Moreover, when $\ve$ becomes very small, outside a ball of radius $R\gg\ve$ and centred in the origin, the potential $V_\ve$ is a small perturbation of a suitable anisotropic Kepler-like potential. This fact, which, together with the previous discussion, is the content of Section \ref{sec:scaling}, allows us to split the proof of the main results in two steps: the construction of solution arcs outside and inside the ball $B_R(0)$. 

In Section \ref{sec:outer} we prove the existence of pieces of solutions of \eqref{pb:rescaled}, starting in $\partial B_R(0)$ and lying outside $B_R(0)$.

In Section \ref{sec:inner} we show how to build solution arcs which start in $\partial B_R$ and go through the centres without collisions. 

In Section \ref{sec:glueing} we glue together the pieces of solutions obtained in the previous sections, in order to obtain periodic solutions of \eqref{pb:rescaled} and thus of \eqref{eq:moto}-\eqref{eq:energy}.

In Section \ref{sec:symb_dyn} we use all the previous results to prove that, in all the situations described in Theorems \ref{thm:symb_dyn}-\ref{cor:symb}-\ref{thm:symb_dyn2}, a symbolic dynamics exists, using suitable alphabets.

\section{A useful rescaling}\label{sec:scaling}

Given $\ve>0$ and $y\in\R^2\setminus\{c_1,\ldots,c_N\}$, let us introduce the rescaled potential
\begin{equation}\label{def:scaling}
	\Veps(y)\uguale W^\ve(y)+\sum\limits_{j=k+1}^N\ve^{\al_j-\al}V_j(y-\ve c_j),
\end{equation}
where 
\[
W^\ve(y)\uguale\sum\limits_{i=1}^kV_i(y-\ve c_i).
\]
Notice that, with this notations and recalling that we have assumed that $\max|c_j|\leq 1$ (see Remark \ref{rem:en_bound}), the new centres $\ve c_j$ will be included inside the ball $B_\ve$.
\begin{proposition}\label{prop:scaling}
	Let $V\in\mathcal{C}^2(\R^2\setminus\{c_1,\ldots,c_N\})$ be defined as in \eqref{def:potential} and $x\in\mathcal{C}^2((a,b);\R^2)$ be a classical solution of
	\begin{equation}\label{pb:unpert}
		\begin{cases}
			\ddot{x}(t)=\nabla V(x(t)) \\
			\frac{1}{2}|\dot{x}(t)|^2-V(x(t))=-h,\quad h>0.
		\end{cases}
	\end{equation}
	Then, in the interval $(h^{\frac{\al+2}{2\al}}a,h^{\frac{\al+2}{2\al}}b)$, the function 
	\[
	y(t)\uguale h^{1/\al}x(h^{-\frac{\al+2}{2\al}}t)
	\]
	solves the problem
	\begin{equation}\label{pb:pert}
		\begin{cases}
			\ddot{y}(t)=\nabla \Veps(y(t)) \\
			\frac{1}{2}|\dot{y}(t)|^2-\Veps(y(t))=-1,
		\end{cases}
	\end{equation}
	where $\ve=h^{1/\al}$ and $\Veps$ is defined as in \eqref{def:scaling}.
	
	Conversely, if $y\in\mathcal{C}^2((c,d);\R^2)$ is a solution of \eqref{pb:pert} then, taking $h=\ve^\al$, the function
	\[
	x(t)\uguale h^{-1/\al}y(h^{\frac{\al+2}{2\al}}t)
	\]
	is a solution of \eqref{pb:unpert} in the interval $(h^{-\frac{\al+2}{2\al}}c,h^{-\frac{\al+2}{2\al}}d)$.
\end{proposition}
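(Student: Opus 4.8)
The plan is to prove the statement by a direct change of variables that exploits the homogeneity of the potentials, with no analytic subtlety beyond the bookkeeping of exponents. Writing $\lambda\uguale h^{1/\al}=\ve$ for the spatial dilation and $\sigma\uguale h^{-(\al+2)/(2\al)}$ for the time contraction, the proposed map is simply $y(t)=\lambda\,x(\sigma t)$, so that $\dot y(t)=\lambda\sigma\,\dot x(\sigma t)$ and $\ddot y(t)=\lambda\sigma^2\,\ddot x(\sigma t)=\lambda\sigma^2\,\nabla V(x(\sigma t))$ by the first equation in \eqref{pb:unpert}.

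The key algebraic identity I would establish first is the scaling law for the rescaled potential, namely $\Veps(\lambda x)=\lambda^{-\al}V(x)$. This is checked term by term on \eqref{def:scaling}. For the indices $i\le k$, the potential $V_i$ is homogeneous of degree $-\al$, so $V_i(\lambda x-\ve c_i)=V_i(\lambda(x-c_i))=\lambda^{-\al}V_i(x-c_i)$; summing gives $W^\ve(\lambda x)=\lambda^{-\al}W(x)$. For $j\ge k+1$, the prefactor $\ve^{\al_j-\al}=\lambda^{\al_j-\al}$ is tuned precisely to absorb the homogeneity mismatch: since $V_j$ has degree $-\al_j$, one has $\lambda^{\al_j-\al}V_j(\lambda(x-c_j))=\lambda^{\al_j-\al}\lambda^{-\al_j}V_j(x-c_j)=\lambda^{-\al}V_j(x-c_j)$. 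Adding all contributions and comparing with \eqref{def:potential} yields the claimed identity. Differentiating it in $x$ and using the chain rule on the left gives $\lambda\,\nabla\Veps(\lambda x)=\lambda^{-\al}\nabla V(x)$, that is $\nabla\Veps(\lambda x)=\lambda^{-\al-1}\nabla V(x)$.

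With these two identities the verification is immediate. The equation of motion requires $\lambda\sigma^2=\lambda^{-\al-1}$, i.e. $\sigma^2=\lambda^{-\al-2}=h^{-(\al+2)/\al}$, which is exactly why the time exponent is forced to be $(\al+2)/(2\al)$; then $\ddot y(t)=\lambda^{-\al-1}\nabla V(x(\sigma t))=\nabla\Veps(y(t))$. For the energy, I would substitute $\tfrac12|\dot x(\sigma t)|^2=V(x(\sigma t))-h$ from the second line of \eqref{pb:unpert} into $\tfrac12|\dot y(t)|^2=\lambda^2\sigma^2\,\tfrac12|\dot x(\sigma t)|^2$; since $\lambda^2\sigma^2=\lambda^{-\al}$, the kinetic and potential $V$-contributions cancel against $\Veps(y)=\lambda^{-\al}V(x(\sigma t))$, leaving the constant $-\lambda^{-\al}h=-h^{-1}h=-1$, as desired. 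The converse follows by applying the inverse substitution $x(t)=\lambda^{-1}y(\sigma^{-1}t)$ with $h=\ve^\al$, governed by the same identities read backwards, and the stated interval endpoints come directly from the affine rescaling of time.

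The $\mathcal C^2$ regularity and the correspondence of the time intervals are preserved because the change of variables is affine in both space and time. The only point genuinely requiring care—and the one I would double check explicitly—is the matching of the exponents, in particular that the tuned prefactor $\ve^{\al_j-\al}$ in \eqref{def:scaling} is precisely what makes every centre scale homogeneously with the common exponent $-\al$; once this is in place there is no analytic obstacle.
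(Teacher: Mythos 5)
Your proposal is correct and is precisely the argument the paper has in mind: the paper's proof consists of the single line ``It is an easy computation based on the homogeneity of the potentials involved,'' and your verification of the scaling identity $\Veps(\lambda x)=\lambda^{-\al}V(x)$ (with the prefactor $\ve^{\al_j-\al}$ absorbing the homogeneity mismatch for $j\geq k+1$), followed by the exponent matching $\lambda\sigma^2=\lambda^{-\al-1}$ and $\lambda^{-\al}h=1$, is exactly that computation carried out in full.
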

\begin{proof}
	It is an easy computation based on the homogeneity of the potentials involved.
\end{proof}

In the rest of this section we show that, outside a ball of radius $R>\ve>0$, if $\ve$ is sufficiently small problem \eqref{pb:pert} can be seen as a perturbation of a Kepler problem, driven by a sum of $-\al$-homogeneous potentials. We start by showing a limiting behaviour for $\Veps$ as $\ve\to 0^+$.
\begin{proposition}\label{prop:pert}
	Let $\delta>0$ and $\Veps$ be defined as in \eqref{def:scaling}. Then, there exists $\gamma>0$ such that, for every $y\in\R^2\setminus B_\delta$ 
	\[
	\Veps(y)=\Wzero(y)+O(\ve^\gamma)\quad\mbox{as}\ \ve\to 0^+,
	\]
	where $\Wzero$ is a $-\al$-homogeneous potential and, in particular, according to \eqref{def:scaling}
	\begin{equation}\label{def:wzero}
		\Wzero(y)=\sum\limits_{i=1}^kV_i(y)=|y|^{-\al}\sum\limits_{i=1}^kV_i\left(\frac{y}{|y|}\right).
	\end{equation}
	Moreover, the potential $\Veps$ is smooth with respect to $\ve$ and $\Veps\to\Wzero$ uniformly as $\ve\to 0^+$ on every compact subset of $\R^2\setminus\{0\}$.
\end{proposition}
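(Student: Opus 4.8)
The plan is to estimate separately the two distinct ways in which $\Veps$ departs from $\Wzero$, using homogeneity to render every bound uniform on the unbounded region $\R^2\setminus B_\delta$. Recalling \eqref{def:scaling} and \eqref{def:wzero}, I would write
\[
\Veps(y)-\Wzero(y)=\underbrace{\sum_{i=1}^k\bigl(V_i(y-\ve c_i)-V_i(y)\bigr)}_{(\mathrm{I})}+\underbrace{\sum_{j=k+1}^N\ve^{\al_j-\al}\,V_j(y-\ve c_j)}_{(\mathrm{II})},
\]
and bound each piece. Throughout I would take $\ve$ so small that $\ve\max_j|c_j|\le\delta/2$; then for $|y|\ge\delta$ every shifted argument satisfies $|y-\ve c_j|\ge\delta/2>0$, so all potentials are evaluated away from their singularities and the whole segment joining $y$ to $y-\ve c_i$ stays in $\R^2\setminus B_{\delta/2}$.

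For the term $(\mathrm{I})$, since each $V_i\in\mathcal{C}^2(\R^2\setminus\{0\})$, the mean value theorem gives $V_i(y-\ve c_i)-V_i(y)=-\ve\,\nabla V_i(\xi_i)\cdot c_i$ for some $\xi_i$ on that segment. The essential point is uniformity in $y$ across the unbounded region: because $V_i$ is $-\al$-homogeneous, $\nabla V_i$ is $-(\al+1)$-homogeneous, whence $|\nabla V_i(\xi)|\le|\xi|^{-(\al+1)}\max_{\cerchio}|\nabla V_i|\le(\delta/2)^{-(\al+1)}\max_{\cerchio}|\nabla V_i|$ for every $|\xi|\ge\delta/2$. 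This yields $|(\mathrm{I})|\le C_1\ve$ with $C_1$ depending only on $\delta$, the $|c_i|$ and $\max_{\cerchio}|\nabla V_i|$, and not on $y$.

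For the term $(\mathrm{II})$, positivity and $-\al_j$-homogeneity give $0<V_j(y-\ve c_j)\le|y-\ve c_j|^{-\al_j}\max_{\cerchio}U_j\le(\delta/2)^{-\al_j}\max_{\cerchio}U_j$, so each summand is bounded by $C_j\,\ve^{\al_j-\al}$ uniformly in $y$. Since the reduction \eqref{def:potential} gathers \emph{all} the $-\al$-homogeneous pieces into $\Wzero$, we have $\al_j>\al$ for every $j\ge k+1$, so the exponents $\al_j-\al$ are strictly positive. Setting $\gamma\uguale\min\{1,\min_{j>k}(\al_j-\al)\}>0$ and combining the two bounds, I obtain $|\Veps(y)-\Wzero(y)|\le C\ve^\gamma$ for all $y\in\R^2\setminus B_\delta$, which is the claimed $O(\ve^\gamma)$ uniformly on the region. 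Smoothness in $\ve$ is then immediate: for fixed $y\ne0$ and $\ve$ small the maps $\ve\mapsto y-\ve c_j$ are affine with values in $\R^2\setminus\{0\}$, each $V_j$ is $\mathcal{C}^2$ there, and the scalar factors $\ve^{\al_j-\al}$ are smooth for $\ve>0$; finally uniform convergence on a compact $K\sset\R^2\setminus\{0\}$ follows from the previous estimate with $\delta\uguale\min_{y\in K}|y|>0$, since $\sup_{y\in K}|\Veps(y)-\Wzero(y)|\le C\ve^\gamma\to0$.

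I do not anticipate a genuine obstacle, as this is a perturbation estimate; the only point requiring care is the uniformity of $O(\ve^\gamma)$ over the \emph{unbounded} set $\R^2\setminus B_\delta$, which is exactly where homogeneity enters. It forces $\nabla V_i$ to decay away from the origin, so the first-order contribution in $(\mathrm{I})$ is controlled by its value on $\partial B_{\delta/2}$, and it makes the lower-order centres enter with the strictly positive powers $\ve^{\al_j-\al}$. The only mildly delicate wrinkle is the meaning of ``smooth at $\ve=0$'': since $\ve^{\al_j-\al}$ is in general only H\"older there, smoothness should be read on $\ve>0$ together with continuity up to $\ve=0$, which is all that is needed in the sequel.
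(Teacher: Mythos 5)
Your proposal is correct and follows essentially the same route as the paper: a first-order perturbation estimate in $\ve$, exploiting the homogeneity of the $V_j$'s for uniformity and arriving at the same exponent $\gamma=\min\{1,\al_{k+1}-\al\}$ (your $\min_{j>k}(\al_j-\al)$ equals $\al_{k+1}-\al$ by the ordering of the degrees). Your write-up is merely more explicit on two points the paper leaves implicit — the uniformity of the $O(\ve^\gamma)$ bound over the unbounded region $\R^2\setminus B_\delta$ (via the mean value theorem and the $-(\al+1)$-homogeneity of $\nabla V_i$), and the caveat that $\ve\mapsto\ve^{\al_j-\al}$ is only H\"older at $\ve=0$, so smoothness in $\ve$ should be read for $\ve>0$ with continuity up to $\ve=0$ — both of which are accurate refinements rather than deviations.
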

\begin{proof} As a starting point, since $\ve\to 0^+$, we can assume $\delta>\ve$; moreover, if we fix $j\in\{1,\ldots,N\}$ and $|y|>\delta$, for every $\sigma\in\R$ we have
\[
	|y-\ve c_j|^{-\sigma}=|y|^{-\sigma}+O(\ve).
\]
In this way, for every $j\in\{1,\ldots,N\}$, as $\ve\to 0^+$ we can write
\[
	V_j\left(\frac{y-\ve c_j}{|y-\ve c_j|}\right)=V_j\left(\frac{y}{|y|}+O(\ve)\right)=V_j\left(\frac{y}{|y|}\right)+O(\ve),
\]
so that
	\[
	\begin{aligned}
		\Veps(y)&=\sum\limits_{i=1}^k| y-\ve c_i|^{-\al}V_i\left(\frac{y-\ve c_i}{| y-\ve c_i|}\right)+\sum\limits_{j=k+1}^N\ve^{\al_j-\al}|y-\ve c_j|^{-\al_j}V_j\left(\frac{y-\ve c_j}{|y-\ve c_j|}\right) \\
		&=|y|^{-\al}\sum\limits_{i=1}^k V_i\left(\frac{y}{|y|}\right)+\sum\limits_{j=k+1}^N\ve^{\al_j-\al}|y|^{-\al_j}V_j\left(\frac{y}{|y|}\right)+O(\ve) \\
		&=\Wzero(y)+O(\ve^\gamma),
	\end{aligned}	
	\]
	where $\gamma=\min\{1,\al_{k+1}-\al\}>0$.
	
	To conclude, the uniform convergence on compact subsets of $\R\setminus\{0\}$ is an easy consequence of the fact that the singularity set reduces to the origin as $\ve\to 0^+$.
\end{proof}

\begin{remark}\label{rem:wzero}
	Observe that the potential $\Wzero$ defined in \eqref{def:wzero} is singular in the origin, while the potential $W^\ve$ has multiple poles at $\ve c_1,\ldots,\ve c_k$. Thus, it turns out that assumption $\eqref{hyp:V}$ on page \pageref{hyp:V} requires that $\Wzero$ admits $m$ strictly minimal central configurations, for some $m>0$. Indeed, potential $\Wzero$ is exactly the profile limit of $W(x)$, introduced in \eqref{def:potential}, as $|x|>> 1$.
\end{remark}

To conclude this section, we notice that the energy bound found in Remark \ref{rem:en_bound} for problem \eqref{pb:unpert} corresponds to the following bound on the parameter $\ve$ for problem \eqref{pb:pert}
\begin{equation}\label{def:ve_tilde}
	\ve\in(0,\tilde{\ve}),\quad\mbox{where}\ \tilde{\ve}=\tilde{h}^{1/\al}=\frac{\mathfrak{m}^{1/\al}}{2},
\end{equation}
where we recall that $\mathfrak{m}=\min\limits_{j=1,\ldots,N}\min\limits_{\cerchio}V_j|_{\cerchio}$. Naturally, this bound guarantees that the ball $B_\ve$ containing the rescaled centres is completely included in the Hill's region of problem \eqref{pb:pert}
\[ \mathcal{R}_\ve\uguale\{y\in\R^2:\,\Veps(y)\geq 1\}.
\]
Indeed, following the same computations of Remark \ref{rem:en_bound}, if $\ve\in(0,\tilde{\ve})$ and $|y|\leq\ve$, then
\[
\Veps(y)\geq\mathfrak{m}|y-\ve c_1|^{-\al}\geq 1.
\]

\section{Outer dynamics}\label{sec:outer} 

At this point, the idea is to exploit a perturbation argument suggested by Proposition \ref{prop:pert} and to build pieces of solutions for \eqref{pb:pert}, which lie far from the centres and that will be referred as \emph{outer arcs}. Note that, if $y\colon J\to\R^2$, with $J\sset\R$ is a solution of \eqref{pb:pert}, then 
\[
\Veps(y(t))\geq 1,\quad\mbox{for every}\ t\in J;
\]
for this reason, we need to show that there exists an $R>0$ such that, for every $\ve\in(0,\tilde{\ve})$,
\begin{equation}\label{eq:hill_ve}
	B_{\ve} \subset B_R\subset\{y\in\R^2:\,\Veps(y)\geq1\}=\mathcal{R}_\ve.
\end{equation}
Following the same approach of the end of the previous section, we have that, for any $\ve\in(0,\tilde{\ve})$,
\[
B_{\mathfrak{m}^{1/\al}-\ve}\subset\{y\in\R^2:\ \Veps(y)\geq 1\}.
\]
Hence, choosing 
\begin{equation}\label{eq:R}
	R\in(\tilde \ve,\mathfrak{m}^{1/\al}-\tilde \ve)
\end{equation}
the inclusions \eqref{eq:hill_ve} hold for any $\ve\in(0,\tilde{\ve})$.

Inspired by Propositions \ref{prop:scaling}-\ref{prop:pert}, we are going to look for solutions of the $\ve$-problem \eqref{pb:pert} which start in $\partial B_R(0)$ and travel in $\R^2\setminus B_R(0)$; note that, in this setting, $R$ will satisfy \eqref{eq:R}. These solution arcs will be found as perturbed solutions of an anisotropic Kepler problem driven by $\Wzero$; given $p_0,p_1\in\partial B_R(0)$, we are going to look for solutions of the following problem
\begin{equation}\label{pb:outer}
	\begin{cases}
		\begin{aligned}
			
			&\ddot{y}(t)=\nabla \Veps(y(t))  &t\in[0,T]  \\
			&\frac{1}{2}|\dot{y}(t)|^2-\Veps(y(t))=-1 &t\in[0,T]  \\
			&|y(t)|>R  &t\in(0,T) \\
			&y(0)=p_0,\quad y(T)=p_1, &	
			
		\end{aligned}
	\end{cases}
\end{equation}
for some $T>0$ possibly depending on $\ve$.

\subsection{Homothetic solutions for the anisotropic Kepler problem}\label{par:homothetic}

The core of our perturbation argument consists in focusing on some special trajectories of an anisotropic Kepler problem driven by $\Wzero$, in order to study the behaviour of the close-by orbits. For this reason, we take $\ve=0$ and we consider the problem
\begin{equation}\label{pb:alfa_Kepler}
	\begin{cases}    
		\ddot{x}=\nabla\Wzero(x) \\
		
		\frac{1}{2}|\dot{x}|^2-\Wzero(x)=-1
	\end{cases},
\end{equation}
recalling that $\Wzero\in\mathcal{C}^2(\R^2\setminus\{0\})$ is the $-\al$-homogeneous potential introduced in \eqref{def:wzero}. Note that, if we introduce polar coordinates $x=(r\cos\vt,r\sin\vt)$, the potential $\Wzero$ can be written as 
\[
\Wzero(x)=r^{-\al}\sum_{i=1}^kV_i(\cos\vt,\sin\vt)=r^{-\al}\sum_{i=1}^kU_i(\vt)=r^{-\al}U(\vt),
\]
where $U_i=V_i|_{\cerchio}$ and $U=\sum\limits_{i=1}^kU_i$. From the energy equation in \eqref{pb:alfa_Kepler}, the boundary of the Hill's region for this problem is the closed curve parametrized by polar coordinates in this way
\[
\partial\mathcal{R}_0\uguale\{(r\cos\vt,r\sin\vt):\,r=U(\vt)^{1/\al},\ \vt\in[0,2\pi)\}.
\]    
Let us now take $\xi=Re^{i\vt_\xi}$ in the interior of $\mathcal{R}_0$, i.e., such that
\[
	0<R< U(\vt_\xi)^{\frac{1}{\al}}.
\]
We aim to understand when a homothetic solution for \eqref{pb:alfa_Kepler} starting at $\xi$ exists, i.e., a solution of \eqref{pb:alfa_Kepler} of the form 
\[
\homx(t)=\la(t)\xi, 
\]
where $\la\colon[0,T_\xi]\to\R^+$ and
\[
	\begin{cases}
		\la(t)> 1\ \mbox{for every}\ t\in(0,T_\xi) \\ \la(0)=1=\la(T_\xi)
	\end{cases},
\]
for some $T_\xi>0$. To proceed, we need the following classical definition. 
%If we plug $\homx\uguale\la\xi$ into the motion equation $\ddot{x}=\nabla \Wzero(x)$, we obtain
%\begin{equation}\label{eq:homotetic}
%	\ddot{\la}(t)\xi=\la(t)^{-\al-1}\nabla \Wzero(\xi)
%\end{equation}
%and thus the Euler's theorem for homogeneous functions gives
%\[
%R^2\ddot{\la}(t)=-\al\la(t)^{-\al-1}\Wzero(\xi).
%\]
%In this way, we obtain the equation
%\begin{equation}\label{eq:alfa_kepler}
%	\ddot{\la}(t)=-\mu_\xi\la(t)^{-\al-1},
%\end{equation}
%with $\mu_\xi=\al R^{-\al-2}U(\vt_\xi)$, i.e., $\la$ solves a 1-dimensional $-\al$-Kepler problem.  Moreover, recalling that for $x\in\R^2$ the moment of inertia of $x$ is defined as $I(x)=\frac12|x|^2$ and $\nabla I(x)=x$, comparing \eqref{eq:homotetic} and \eqref{eq:alfa_kepler}, we note that $\xi$ solves the equation

\begin{definition}
	In general, a \emph{central configuration} for $\Wzero$ is a critical point of $\Wzero$ constrained to a level surface of the inertial moment $I(x)=\frac12|x|^2$. In other words, a central configuration is a vector $\xi=Re^{i\vt_\xi}$ that verifies
	\begin{equation}\label{eq:constraint}
		\nabla \Wzero(\xi)+\mu_\xi\nabla I(\xi)=0,\ \mbox{i.e.,}\ U'(\vt_\xi)=0,
	\end{equation}
	where 
	\[
	\mu_\xi=\al R^{-\al-2} U(\vt_\xi)>0.
	\]
    In this paper, following a common habit, we will both refer to $\vt_\xi\in\cerchio$ and $\xi\in\partial B_R$ as a central configuration for $\Wzero$.
\end{definition}

Classical computations then show that a homothetic solution $\homx=\la\xi$ for \eqref{pb:alfa_Kepler} exists if only if $\xi\in\partial B_R$ is a central configuration for $\Wzero$ and $\la$ solves on $[0,T_\xi]$ the 1-dimensional $\al$-Kepler problem:
\begin{equation}\label{pb:kep_la}
    \begin{cases}
    	\ddot{\la}(t)=-\mu_\xi\la(t)^{-\al-1} \\
    	\la(0)=1,\ \dot{\la}(0)=\frac{1}{R}\sqrt{2(\Wzero(\xi)-1)}
    \end{cases} 
\end{equation}
In conclusion, given a central configuration $\vt_\xi\in\cerchio$ for $\Wzero$, we can consider the following Cauchy problem
\begin{equation}\label{pb:cauchy_x}
	\begin{cases}
		\ddot{x}(t)=\nabla \Wzero(x(t)) \\
		x(0)=\xi,\quad\dot{x}(0)=v_\xi=\frac{1}{R}\sqrt{2(\Wzero(\xi)-1))}\xi,
	\end{cases}
\end{equation}
which admits as unique solution the homothetic trajectory $\homx$, that reaches again the position $\xi$ after a time $T_\xi>0$, with opposite velocity $-v_\xi$. We conclude this short section with a characterization of the eigenvalues and eigenvectors of the Hessian matrix of $\Wzero$ at a central configuration.

\begin{remark}\label{rem:hess}
	For a central configuration $\xi=Re^{i\vt_\xi}$, we define the unit vectors
	\[
	s_\xi\uguale R^{-1}\xi\in\cerchio,\quad s_\tau\uguale s_\xi^\perp\in\cerchio.
	\]
	Then, it turns out that $s_\xi$ and $s_\tau$ are the eigenvectors of the Hessian matrix $\nabla^2\Wzero(\xi)$
	\begin{equation}\label{eq:tang_hessian}
		\begin{aligned}
			\nabla^2\Wzero(\xi)s_\xi&=\al(\al+1)R^{-\al-2}U(\vt_\xi)s_\xi \\
			\nabla^2\Wzero(\xi)s_\tau&=R^{-\al-2}\left(-\al U(\vt_\xi)+U''(\vt_\xi)\right)s_\tau
		\end{aligned}
	\end{equation}
	which correspond to the eigenvalues
	\begin{align}
		\la_\xi&=\langle\nabla^2\Wzero(\xi)s_\xi,s_\xi\rangle=\al(\al+1)R^{-\al-2}U(\vt_\xi) \label{eq:tang_hessian_xi}\\
		\la_\tau&=\langle\nabla^2\Wzero(\xi)s_\tau,s_\tau\rangle=R^{-\al-2}\left(-\al U(\vt_\xi)+U''(\vt_\xi)\right). \label{eq:tang_hessian_tau}
	\end{align}
	Further details of these computations can be found in \cite{CannThesis}.
\end{remark}

\subsection{Shadowing homothetic solutions in the anisotropic Kepler problem}

In Proposition \ref{prop:pert} we have seen that $\Veps$ reduces to $\Wzero$ as $\ve\to 0^+$, together with all the $\ve$-centres collapsing to the origin. For this reason, the aim of this paragraph is to provide an intermediate result, i.e., to prove the existence of trajectories for problem \eqref{pb:alfa_Kepler} which start very close to a given homothetic trajectory $\hat x_{\xi}$. In other words, we investigate the existence of a solution for 
\[
\begin{cases}
	\begin{aligned}
		&\ddot{x}(t)=\Wzero(x(t)), &t\in[0,\overline T] \\
		&\frac{1}{2}|\dot{x}(t)|^2-\Wzero(x(t))=-1, &t\in[0,\overline T]\\
		&|x(t)|>R, &t\in (0,\overline{T}) \\
		&x(0)=p_0,\ x(\overline{T})=p_1,   &
	\end{aligned}
\end{cases}
\]
where $p_0$ and $p_1$ are chosen sufficiently close to a central configuration $\xi=Re^{i\vt_\xi}$ for $\Wzero$. We will follow the same strategy proposed in \cite{ST2012}, with the difference that in our context the argument does not work on the whole sphere $\partial B_R$ because of the anisotropy of $\Wzero$. For our convenience, we need a characterization of the homothetic trajectory $\homx$, the unique solution of the Cauchy problem \eqref{pb:cauchy_x}, in Hamiltonian formalism; hence, we introduce the Hamiltonian function 
\[
H(x,v)\uguale\frac12|v|^2-\Wzero(x)
\] 
and we reword \eqref{pb:cauchy_x} as
\begin{equation}\label{pb:homotetic}
	\begin{cases}
		\dot{z}=F(z) \\
		z(0)=z_\xi,
	\end{cases}
\end{equation}
where
\[
z=\begin{pmatrix}
	x \\
	v
\end{pmatrix},\ F(z)=\begin{pmatrix}
	v \\
	\nabla \Wzero(x)
\end{pmatrix}\ \mbox{and}\ z_\xi=\begin{pmatrix}
	\xi \\
	v_\xi
\end{pmatrix}.
\]
According to this, we restrict the domain of the vector field $F$ to the 3-dimensional energy shell
\[
\mathcal{E}=\left\lbrace(x,v)\in(\R^2\setminus\{0\})\times\R^2:\,H(x,v)=-1\right\rbrace=H^{-1}(-1)
\]   
and we term $\homz$ the homothetic solution in the Hamiltonian formalism, i.e., the unique solution of \eqref{pb:homotetic} defined on $[0,T_\xi]$. Introducing the flow associated to the differential equation in \eqref{pb:homotetic}
\[
\begin{aligned}
	\Phi\colon\Omega&\subset\R\times\mathcal{E}\to\mathcal{E} \\	&(t,z)\mapsto\Phi(t,z)=\Phi^t(z)
\end{aligned}
\] 
we notice that 
\[
\homz(t)=\Phi^t(z_\xi)\quad\mbox{and}\quad \homx(t)=\pi_x(\homz(t))=\pi_x\Phi^t(z_\xi),
\]
where $\pi_x(z)$ and $\pi_v(z)$ represent the two canonic projections of $z$. 
\begin{remark}\label{rem:var_eq}
	In the following we will work with the differential of the flow $\Phi$ with respect to the spatial variable $z$. In general it is not possible to give an explicit expression of the Jacobian matrix of $\Phi$ with respect to $z$, but it is well known (see for instance \cite{HirSmaDev,Teschl_ode}) that such Jacobian matrix satisfies the so called Variational Equation. In particular, if $\gamma_{z_0}(t)\uguale\Phi^t(z_0)$ is a solution curve from $z_0$, then
	\[
	\begin{cases}
		\displaystyle
		\frac{d}{dt}\left(\frac{d}{dz}\Phi^t(z)\Big\rvert_{z=z_0}\right)=JF(\gamma_{z_0}(t))\frac{d}{dz}\Phi^t(z)\Big\rvert_{z=z_0} \\
		\displaystyle
		\frac{d}{dz}\Phi^{t_0}(z)\Big\rvert_{z=z_0}=I_n.
	\end{cases}
	\]
\end{remark}

Now, if we introduce the 2-dimensional inertial surface
\[
\Sigma=\{(x,v)\in\mathcal{E}:\,2I(x)=R^2\}\sset\mathcal{E}
\]
it turns out that both the starting and ending points of the homothetic motion lie on $\Sigma$, i.e., $\hat{z}_\xi(0),\hat{z}_\xi(T_\xi)\in\Sigma$. For our purposes, it is useful to provide a characterization of the elements of the tangent bundle of the surface $\Sigma$. Since $\Sigma=H^{-1}\cap I^{-1}(R^2/2)$, for a point $(x,v)\in\Sigma$ we deduce that
\[
(q,w)\in\mathcal{T}_{(x,v)}\Sigma\Longleftrightarrow\begin{cases}
	\langle v,w\rangle-\langle\nabla \Wzero(x),q\rangle=0 \\
	\langle x,q\rangle=0
\end{cases},
\]
where $\mathcal{T}_{(x,v)}\Sigma$ is the fiber of the tangent bundle $\mathcal{T}\Sigma$ at $(x,v)$. In particular, if $\xi\in\partial B_R$ is a central configuration for $\Wzero$, from the constraint equation \eqref{eq:constraint} we get that 
\begin{equation}\label{eq:tangent}
	(q,w)\in\mathcal{T}_{(\xi,-v_\xi)}\Sigma\Longleftrightarrow\begin{cases}
		\langle v_\xi,w\rangle=0 \\
		\langle\xi,q\rangle=0
	\end{cases}.
\end{equation}  
Moreover, for the same reason, it is not difficult to observe that the field $F$ is \emph{transversal} to $\Sigma$ in $(\xi,v_\xi)$.

Inspired by this, it is easy to prove the following proposition and thus to define a first return map on $\Sigma$.
\begin{proposition}\label{prop:implicit}
	Given $\xi\in\partial B_R$ central configuration for $\Wzero$, there exists a neighbourhood $\mathcal{U}\times\mathcal{V}\sset\Sigma$ of $(\xi,v_\xi)$ and a function $T\in\mathcal{C}^1(\mathcal{U}\times\mathcal{V};\R^+)$ such that
	\begin{itemize}
		\item $T(\xi,v_\xi)=T_\xi$;
		\item for every $(x,v)\in\mathcal{U}\times\mathcal{V}$, for $t>0$ holds 
		\[
		\Phi^t(x,v)\in\Sigma\ \mbox{if and only if}\ t=T(x,v).
		\]    		
	\end{itemize}   	
\end{proposition}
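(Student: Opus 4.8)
The plan is to realise $T$ as a Poincar\'e first--return time via the implicit function theorem, taking as defining constraint the inertial surface $\Sigma$. First I would introduce the scalar function
\[
g(t,x,v)\uguale 2I(\pi_x\Phi^t(x,v))-R^2=|\pi_x\Phi^t(x,v)|^2-R^2,
\]
which is of class $\mathcal{C}^1$ wherever the flow is defined: since $\Wzero\in\mathcal{C}^2(\R^2\setminus\{0\})$ the field $F$ is $\mathcal{C}^1$, so $\Phi$ depends in a $\mathcal{C}^1$ way on $(t,x,v)$ by standard ODE theory (cf. Remark \ref{rem:var_eq}), and $I$ is smooth. By construction $\Phi^t(x,v)\in\Sigma$ exactly when $g(t,x,v)=0$, and because the homothetic motion returns to $\partial B_R$ at time $T_\xi$ with $\homz(T_\xi)=(\xi,-v_\xi)$, we have $g(T_\xi,\xi,v_\xi)=0$.

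Then I would verify the transversality condition that makes the implicit function theorem applicable in the variable $t$. Differentiating along the flow,
\[
\partial_t g(t,x,v)=2\langle\pi_x\Phi^t(x,v),\pi_v\Phi^t(x,v)\rangle,
\]
and evaluating at $(T_\xi,\xi,v_\xi)$ gives $2\langle\xi,-v_\xi\rangle=-2\langle\xi,v_\xi\rangle$. Since $v_\xi=\frac1R\sqrt{2(\Wzero(\xi)-1)}\,\xi$ is radial and $\xi$ lies in the interior of the Hill region, i.e. $\Wzero(\xi)=R^{-\al}U(\vt_\xi)>1$, this equals $-2R\sqrt{2(\Wzero(\xi)-1)}\neq 0$; this is precisely the transversality of $F$ to $\Sigma$ at the return point $(\xi,-v_\xi)$ already observed above. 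The implicit function theorem then yields a neighbourhood $\mathcal{U}\times\mathcal{V}\sset\Sigma$ of $(\xi,v_\xi)$ and a unique $T\in\mathcal{C}^1(\mathcal{U}\times\mathcal{V};\R^+)$ with $T(\xi,v_\xi)=T_\xi$ and $g(T(x,v),x,v)\equiv 0$, which establishes the first bullet and the implication $t=T(x,v)\Rightarrow\Phi^t(x,v)\in\Sigma$.

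For the converse implication I would exploit the strict exterior behaviour of the homothetic orbit together with continuous dependence on initial data. The homothetic solution satisfies $|\homx(t)|>R$ for every $t\in(0,T_\xi)$, hence $g(t,\xi,v_\xi)>0$ there, while at the two endpoints $g$ crosses zero transversally with $\partial_t g(0,\xi,v_\xi)>0>\partial_t g(T_\xi,\xi,v_\xi)$. Shrinking $\mathcal{U}\times\mathcal{V}$ if necessary and using continuity of the flow, every nearby orbit stays strictly outside $B_R$ on $(0,T(x,v))$ and enters $B_R$ immediately after $T(x,v)$, so that $T(x,v)$ is the only positive time at which $\Phi^t(x,v)$ meets $\Sigma$ in the relevant range. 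This yields the reverse implication and hence the second bullet.

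The step I expect to be the main obstacle is not the implicit function theorem itself but the global uniqueness asserted in the second bullet: the theorem only guarantees that $T(x,v)$ is the unique return time \emph{near} $T_\xi$, whereas the statement requires uniqueness among all admissible $t>0$. Controlling this needs the quantitative bound $|\homx(t)|\geq R+c$ on each compact subinterval of $(0,T_\xi)$ (so that the sign of $g$ is preserved under perturbation), combined with the two transversal crossings at the endpoints to exclude spurious intersections introduced by the perturbation. The anisotropy of $\Wzero$ plays no role here beyond ensuring $\Wzero(\xi)>1$; the whole argument is local around the single central configuration $\xi$, which is exactly the reason, as remarked in the text, why it cannot be carried out uniformly over the entire sphere $\partial B_R$.
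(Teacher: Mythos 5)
Your proposal is correct and takes essentially the same route as the paper: the paper's proof introduces exactly the same scalar constraint $f(t,x,v)\uguale|\Phi^t(x,v)|^2-R^2$, checks the same transversality condition $\frac{\partial}{\partial t}f\big\rvert_{(T_\xi,\xi,v_\xi)}=\left\langle (2\xi,0),(-v_\xi,\nabla \Wzero(\xi))\right\rangle=-2\langle\xi,v_\xi\rangle\neq 0$ (nonzero precisely because $v_\xi$ is radial and $\Wzero(\xi)>1$), and concludes by the Implicit Function Theorem. The only difference is that the paper declares the conclusion to follow "easily", while you additionally spell out the barrier argument ensuring $T(x,v)$ is the unique (first) crossing time for nearby orbits, which is a legitimate filling-in of a step the paper leaves implicit.
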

\begin{proof}
	Let us define the $\mathcal{C}^1$ map
	\[
	\begin{aligned}
		f\colon&\R\times\mathcal{E}\to\R \\
		&(t,x,v)\mapsto f(t,x,v)\uguale |\Phi^t(x,v)|^2-R^2.
	\end{aligned}
	\]
	Since $f(T_\xi,\xi,v_\xi)=0$ and 
	\[
		\dfrac{\partial }{\partial t}f(t,x,v)\Big\rvert_{(T_\xi,\xi,v_\xi)}=\left\langle (2\xi,0),(-v_\xi,\nabla \Wzero(\xi))\right\rangle\neq 0,
	\]
	the result in the statement easily follows from the Implicit Function Theorem. 
\end{proof}

From the previous proposition, given $\xi=Re^{i\vt_\xi}\in\partial B_R$ central configuration for $\Wzero$ and $\mathcal{U},\mathcal{V}$ as in Proposition \ref{prop:implicit}, the first return map
\[
\begin{aligned}
	g\colon&\mathcal{U}\times\mathcal{V}\to\Sigma \\
	&(x,v)\mapsto g(x,v)\uguale\Phi^{T(x,v)}(x,v).
\end{aligned}
\]
is well defined. In this way, if we fix $x_0\in\mathcal{U}$, we can define the arriving point $x_1$ as a function of $v\in\mathcal{V}$ as follows
\begin{equation}\label{map:shooting}
	x_1(v)\uguale\pi_x(g(x_0,v)).
\end{equation}
Our aim is to prove that the previous map is invertible, so that we would be able to build solution arcs starting in a point $x_0\in\partial B_R$ and arriving in another point $x_1\in\partial B_R$, with $x_0,x_1$ sufficiently close to $\xi$.

\begin{theorem}\label{thm:invert}
	Given $\xi=Re^{i\vt_\xi}$ central configuration for $\Wzero$ such that $U''(\vt_\xi)\geq 0$, the map $x_1$ defined in \eqref{map:shooting} is invertible in a neighbourhood of $v_\xi$.
\end{theorem}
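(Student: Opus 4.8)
The plan is to apply the Inverse Function Theorem to the $\mathcal{C}^1$ map $x_1$ defined in \eqref{map:shooting}, its regularity following from Proposition \ref{prop:implicit} and the smoothness of the flow $\Phi$. Fixing $x_0$ on the $2$-dimensional surface $\Sigma$ leaves the velocity free to vary only on a $1$-dimensional arc, since the energy relation fixes $|v|$; as $x_1$ takes values in $\partial B_R$, both domain and target are $1$-dimensional, so it suffices to show that $D_vx_1(v_\xi)$ does not vanish. I would compute it by differentiating $x_1(v)=\pi_x\Phi^{T(\xi,v)}(\xi,v)$ along a curve $\sigma\mapsto v(\sigma)\in\mathcal{V}$ with $v(0)=v_\xi$ and initial velocity $\delta v$. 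Two contributions appear: one from the return-time variation, proportional to $F$ at the return point $(\xi,-v_\xi)$, whose spatial part is the radial vector $-v_\xi$; and one equal to $\pi_x$ of the flow differential $D_z\Phi^{T_\xi}(z_\xi)(0,\delta v)$. Differentiating the defining relation $|\Phi^{T(\xi,v)}(\xi,v)|^2=R^2$ shows that the radial component of $\tfrac{d}{d\sigma}x_1$ vanishes, and since $\langle\xi,v_\xi\rangle\neq0$ this forces the return-time variation to be zero at $\sigma=0$. Hence $D_vx_1(v_\xi)\,\delta v$ coincides with the tangential position component of the variational solution at time $T_\xi$.

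To evaluate that component I would integrate the Variational Equation of Remark \ref{rem:var_eq} along the homothetic orbit $\homz$. The decisive simplification is the decoupling furnished by Remark \ref{rem:hess}: since $\homx(t)=\la(t)\xi$ keeps the eigendirections $s_\xi,s_\tau$ of $\nabla^2\Wzero$ fixed, and by homogeneity $\nabla^2\Wzero(\la\xi)=\la^{-\al-2}\nabla^2\Wzero(\xi)$, the position variational system $\ddot q=\nabla^2\Wzero(\homx(t))q$ splits into independent radial and tangential scalar equations. The admissible initial variation is $(0,\delta v)\in\mathcal{T}_{(\xi,v_\xi)}\Sigma$, because $x_0$ is fixed; arguing as in \eqref{eq:tangent} and using $v_\xi\parallel\xi$, the velocity variation is orthogonal to $\xi$, hence purely tangential, $\delta v=w_\tau s_\tau$. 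Thus the radial component stays identically zero and only $q_\tau$ survives, solving
\[
\ddot q_\tau(t)=\la(t)^{-\al-2}\la_\tau\,q_\tau(t),\qquad q_\tau(0)=0,\quad\dot q_\tau(0)=w_\tau\neq0,
\]
with $\la_\tau=R^{-\al-2}\big(U''(\vt_\xi)-\al U(\vt_\xi)\big)$ as in \eqref{eq:tang_hessian_tau}. The statement thereby reduces to showing $q_\tau(T_\xi)\neq0$.

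This non-vanishing is the main obstacle, and it is genuine because $\la_\tau$ may be negative: the hypothesis $U''(\vt_\xi)\geq0$ does not imply $U''\geq\al U$, so the tangential equation can be oscillatory. The key observation I would exploit is that in the borderline case $U''(\vt_\xi)=0$ the coefficient reduces to $-\mu_\xi\la^{-\al-2}$, and the equation $\ddot u=-\mu_\xi\la^{-\al-2}u$ is solved explicitly by $u=\la$, precisely because $\la$ satisfies \eqref{pb:kep_la}. Reduction of order then yields the solution vanishing at the origin,
\[
u_\star(t)=\la(t)\int_0^t\frac{ds}{\la(s)^2},
\]
which is strictly positive on $(0,T_\xi]$ since $\la\geq1$ throughout the homothetic motion. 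For general $U''(\vt_\xi)\geq0$ the tangential coefficient exceeds the borderline one by the nonnegative quantity $\la^{-\al-2}R^{-\al-2}U''(\vt_\xi)$, so a Sturm-type (Wronskian) comparison between $q_\tau$ (normalised so that $w_\tau>0$) and $u_\star$ — carried out by tracking the sign of $W=\dot q_\tau\,u_\star-q_\tau\,\dot u_\star$, whose derivative equals $\big(\la^{-\al-2}R^{-\al-2}U''(\vt_\xi)\big)\,q_\tau u_\star\geq0$ as long as $q_\tau>0$, while $W(0)=0$ — prevents $q_\tau$ from vanishing on $(0,T_\xi]$. Consequently $q_\tau(T_\xi)\neq0$, the differential $D_vx_1(v_\xi)$ is invertible, and the Inverse Function Theorem gives the local invertibility of $x_1$ near $v_\xi$. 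I expect the only delicate points to be the verification that the return-time variation vanishes at $\sigma=0$ and the precise sign bookkeeping in the comparison, where the assumption $U''(\vt_\xi)\geq0$ enters decisively.
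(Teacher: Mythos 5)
Your proposal is correct and follows essentially the same route as the paper: its Lemmas \ref{lem:diff_g}--\ref{lem:invert_no_pert} likewise reduce the invertibility of \eqref{map:shooting} to the non-vanishing at $T_\xi$ of the tangential component $q_\tau$ of the variational solution along the homothetic orbit (with $q_\tau(0)=0$, $\dot q_\tau(0)\neq 0$, the radial component vanishing identically), settled by comparing the tangential linearised equation with the radial one, which is exactly where $U''(\vt_\xi)\geq 0$ enters. The only difference is in implementation: the paper argues by contradiction and invokes the Sturm comparison theorem against the positive radial solution $u=\la R$ of \eqref{pb:u}, whereas you prove the comparison by hand via the Wronskian identity with the reduction-of-order solution $u_\star=\la(t)\int_0^t\la(s)^{-2}\,ds$, obtaining directly that $q_\tau$ cannot vanish on $(0,T_\xi]$ --- a cosmetic variant of the same mechanism.
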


The proof of Theorem \ref{thm:invert} is rather technical and relies on a series of lemmata which we state and prove below.

\begin{lemma}\label{lem:diff_g}
	In the same setting of Proposition \ref{prop:implicit}, the first return map $g$ is $\mathcal{C}^1$-differentiable over $\mathcal{U}\times\mathcal{V}$ and 
	\[
	dg(z_\xi)\zeta=\frac{d}{dz}\Phi^{T_\xi}(z)\Big\rvert_{z=z_\xi}\zeta+ F(\Phi^{T_\xi}(z_\xi))\langle\nabla T(z_\xi),\zeta\rangle,
	\]
	for every $\zeta\in\mathcal{T}_{z_\xi}\left(\mathcal{U}\times\mathcal{V}\right)$, where $\mathcal{T}_{z_\xi}\left(\mathcal{U}\times\mathcal{V}\right)$ is the fibre at $z_\xi$ of the tangent bundle $\mathcal{T}(\mathcal{U}\times\mathcal{V})$.
\end{lemma}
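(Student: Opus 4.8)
The plan is to recognize the first return map $g$ as a composition of the flow $\Phi$ with the return-time function $T$, and then to differentiate it by the chain rule. Writing $z=(x,v)$ and regarding the flow as the map $(t,z)\mapsto\Phi(t,z)=\Phi^t(z)$, the very definition of the first return map reads $g(z)=\Phi(T(z),z)$. The whole argument therefore reduces to establishing joint $\mathcal{C}^1$ regularity of $\Phi$, recalling that $T\in\mathcal{C}^1(\mathcal{U}\times\mathcal{V})$ from Proposition \ref{prop:implicit}, and then bookkeeping the two partial derivatives of $\Phi$.

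First I would address the regularity of the flow. Since $\Wzero\in\mathcal{C}^2(\R^2\setminus\{0\})$, the vector field $F(z)=(v,\nabla \Wzero(x))$ is of class $\mathcal{C}^1$, so by the standard theorem on differentiable dependence on initial data the map $z\mapsto\Phi^t(z)$ is $\mathcal{C}^1$, with $z$-derivative $\frac{d}{dz}\Phi^t(z)$ solving the Variational Equation recalled in Remark \ref{rem:var_eq}. Moreover, because $\Phi^t(z)$ solves $\dot z=F(z)$, its derivative with respect to $t$ is $\frac{\partial}{\partial t}\Phi(t,z)=F(\Phi^t(z))$, which is continuous in $(t,z)$; the $z$-derivative is continuous in $(t,z)$ as well, again by the Variational Equation. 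Both partials being continuous, $\Phi$ is jointly $\mathcal{C}^1$ on its domain $\Omega$. Combined with $T\in\mathcal{C}^1(\mathcal{U}\times\mathcal{V})$, the composition $g(z)=\Phi(T(z),z)$ is then $\mathcal{C}^1$ on $\mathcal{U}\times\mathcal{V}$, which gives the first assertion of the lemma.

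It remains to compute $dg(z_\xi)$. For $\zeta\in\mathcal{T}_{z_\xi}(\mathcal{U}\times\mathcal{V})$, the differential of the map $z\mapsto(T(z),z)$ sends $\zeta$ to $(\langle\nabla T(z_\xi),\zeta\rangle,\zeta)$, so the chain rule applied to $g(z)=\Phi(T(z),z)$ gives
\[
dg(z_\xi)\zeta=\frac{\partial}{\partial t}\Phi(T(z_\xi),z_\xi)\,\langle\nabla T(z_\xi),\zeta\rangle+\frac{d}{dz}\Phi^{t}(z)\Big\rvert_{t=T(z_\xi),\,z=z_\xi}\zeta.
\]
Recalling from Proposition \ref{prop:implicit} that $T(z_\xi)=T_\xi$, and using $\frac{\partial}{\partial t}\Phi(t,z)=F(\Phi^t(z))$, the first term becomes $F(\Phi^{T_\xi}(z_\xi))\langle\nabla T(z_\xi),\zeta\rangle$ and the second becomes $\frac{d}{dz}\Phi^{T_\xi}(z)\big\rvert_{z=z_\xi}\zeta$, which is exactly the claimed identity. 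There is no genuine obstacle here: the only points requiring care are the joint $\mathcal{C}^1$ regularity of the flow and the correct identification of $\frac{\partial}{\partial t}\Phi$ with the vector field evaluated along the homothetic trajectory, namely $F(\Phi^{T_\xi}(z_\xi))$; everything else is a routine application of the chain rule.
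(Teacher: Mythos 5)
Your proposal is correct and follows essentially the same route as the paper: both identify $g(z)=\Phi^{T(z)}(z)$, invoke the $\mathcal{C}^1$-dependence of the flow on initial data together with the $\mathcal{C}^1$ regularity of $T$ from Proposition \ref{prop:implicit}, and then apply the chain rule, using $\partial_t\Phi(t,z)=F(\Phi^t(z))$ and $T(z_\xi)=T_\xi$ to obtain the stated formula. Your write-up is in fact slightly more careful than the paper's (which presents the chain rule as a difference-quotient limit), since you explicitly justify the joint $\mathcal{C}^1$ regularity of $(t,z)\mapsto\Phi(t,z)$ before composing.
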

\begin{proof}
	First of all, observe that for the $\mathcal{C}^1$-dependence on initial data of the flow $\Phi^t$ and for Proposition \ref{prop:implicit}, the map $g$ is $\mathcal{C}^1$-differentiable over $\mathcal{U}\times\mathcal{V}$. Since $g(z_\xi)=(\xi,-v_\xi)$, then the differential of $g$ in the point $z_\xi\in\mathcal{U}\times\mathcal{V}$ is the linear map
	\begin{equation}\label{defn:diff_g}
		\begin{aligned}
			dg(z_\xi)\colon&\mathcal{T}_{z_\xi}(\mathcal{U}\times\mathcal{V})\to\mathcal{T}_{(\xi,-v_\xi)}\Sigma \\
			&\zeta\mapsto dg(z_\xi)\zeta=\frac{d}{dz}\left[\Phi^{T(z)}(z)\right]_{z=z_\xi}\zeta,
		\end{aligned}
	\end{equation}
	and 
	\[
	\begin{aligned}
		\frac{d}{dz}&\left[\Phi^{T(z)}(z)\right]_{z=z_\xi}=\lim\limits_{\|\eta\|\to0}\frac{\Phi^{T(z_\xi+\eta)}(z_\xi+\eta)-\Phi^{T(z_\xi)}(z_\xi)}{\eta}  \\
		&=\frac{d}{dz}\Phi^{T(z_\xi)}(z)\Big\rvert_{z=z_\xi}+F(\Phi^{T(z_\xi)}(z_\xi))\nabla T(z_\xi).
	\end{aligned}
	\]
	Then, the conclusion follows recalling that $T(z_\xi)=T_\xi$.
\end{proof}

\begin{lemma}\label{lem:qtau}
	In the same setting of Proposition \ref{prop:implicit}, given $\zeta\in\mathcal{T}_{z_\xi}(\mathcal{U}\times\mathcal{V})$ and $t\in(0,T_\xi)$, define
	\[
	q(t)\uguale\pi_x\frac{d}{dz}\Phi^t(z)\Big\rvert_{z=z_\xi}\zeta
	\]
	and consider $s_\xi,s_\tau\in\cerchio$ as in Remark \ref{rem:hess}. Then, the projection of $q$ over the direction $s_\tau$
	\[
	q_\tau(t)=\langle q(t),s_\tau\rangle s_\tau
	\]
	solves the linearised problem 
	\[
	\begin{cases}
		\ddot{q}_\tau=\langle\nabla^2\Wzero(\homx(t))s_\tau,s_\tau\rangle q_\tau \\
		q_\tau(0)=\langle\pi_x\zeta,s_\tau\rangle s_\tau,
	\end{cases}
	\]
	recalling that $\homx$ is the unique (homothetic) solution of \eqref{pb:cauchy_x}.
\end{lemma}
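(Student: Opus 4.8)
The plan is to read off the evolution of $q$ directly from the Variational Equation of Remark \ref{rem:var_eq}, and then to use the homogeneity of $\Wzero$ to decouple the component along $s_\tau$.

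First I would specialise the Variational Equation to the homothetic solution curve $\gamma_{z_\xi}(t)=\Phi^t(z_\xi)=\homz(t)$. Since $F(z)=(v,\nabla\Wzero(x))$, its Jacobian has the block form
\[
JF(z)=\begin{pmatrix} 0 & I \\ \nabla^2\Wzero(x) & 0 \end{pmatrix},
\]
so that, writing $\frac{d}{dz}\Phi^t(z)\Big\rvert_{z=z_\xi}\zeta=(q(t),p(t))$ with $p(t)\uguale\pi_v\frac{d}{dz}\Phi^t(z)\Big\rvert_{z=z_\xi}\zeta$, the Variational Equation reduces to the first order linear system $\dot q=p$, $\dot p=\nabla^2\Wzero(\homx(t))q$, with initial datum $(q(0),p(0))=\zeta$ (recall that $\frac{d}{dz}\Phi^0(z)\big\rvert_{z=z_\xi}=I$). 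Eliminating $p$ yields the second order equation $\ddot q=\nabla^2\Wzero(\homx(t))q$ on $(0,T_\xi)$, with $q(0)=\pi_x\zeta$.

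The crucial observation is then that the orthonormal frame $\{s_\xi,s_\tau\}$ diagonalises $\nabla^2\Wzero(\homx(t))$ for \emph{every} $t\in(0,T_\xi)$, and not only at $t=0$. Indeed, $\Wzero$ being $-\al$-homogeneous, its Hessian $\nabla^2\Wzero$ is $(-\al-2)$-homogeneous; since the homothetic motion $\homx(t)=\la(t)\xi$ runs along the ray spanned by $\xi$, we obtain
\[
\nabla^2\Wzero(\homx(t))=\la(t)^{-\al-2}\,\nabla^2\Wzero(\xi).
\]
By Remark \ref{rem:hess}, $s_\xi$ and $s_\tau$ are eigenvectors of $\nabla^2\Wzero(\xi)$, hence they remain eigenvectors of $\nabla^2\Wzero(\homx(t))$, with the rescaled eigenvalues $\la(t)^{-\al-2}\la_\xi$ and $\la(t)^{-\al-2}\la_\tau$ respectively. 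Projecting the second order equation onto the \emph{constant} unit vector $s_\tau$ and using that $s_\tau$ does not depend on $t$ gives $\frac{d^2}{dt^2}\langle q(t),s_\tau\rangle=\langle\nabla^2\Wzero(\homx(t))q(t),s_\tau\rangle$; by symmetry of the Hessian and the eigenvector property above, the right hand side equals $\langle\nabla^2\Wzero(\homx(t))s_\tau,s_\tau\rangle\,\langle q(t),s_\tau\rangle$. Multiplying through by $s_\tau$ reproduces exactly $\ddot q_\tau=\langle\nabla^2\Wzero(\homx(t))s_\tau,s_\tau\rangle\,q_\tau$, while the initial condition $q_\tau(0)=\langle\pi_x\zeta,s_\tau\rangle s_\tau$ is inherited from $q(0)=\pi_x\zeta$.

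The only genuinely delicate point is the decoupling in the last step: it succeeds \emph{only because} homogeneity keeps $s_\tau$ an eigenvector along the whole homothetic orbit, so that the tangential component $q_\tau$ never couples back to the radial one. Were $\homx$ not homothetic, $\nabla^2\Wzero(\homx(t))$ would generically fail to be diagonal in the fixed frame $\{s_\xi,s_\tau\}$, and $q_\tau$ would then solve a coupled system rather than the scalar Hill-type equation asserted here.
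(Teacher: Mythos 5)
Your proof is correct and follows essentially the same route as the paper: write the variational equation along the homothetic orbit, eliminate the velocity block to obtain $\ddot q=\nabla^2\Wzero(\homx(t))\,q$ with $q(0)=\pi_x\zeta$, and project onto $s_\tau$ using that it is an eigenvector of the Hessian along the whole orbit. The only difference is that you justify explicitly, via the $(-\al-2)$-homogeneity of $\nabla^2\Wzero$ and the fact that $\homx(t)=\la(t)\xi$ stays on the ray through $\xi$, why the eigenvector property of Remark \ref{rem:hess} persists for every $t$ --- a point the paper leaves implicit when it invokes that remark at $\homx(t)$.
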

\begin{proof}
	Following Remark \ref{rem:var_eq}, we know that the partial derivative of $\Phi$ with respect to $z$ satisfies the variational equation along the homothetic solution $\homx$, which gives us information about how the flow is sensible under variations made on the initial condition $z(0)=(x(0),\dot{x}(0))$. Since the Jacobian matrix of the vector field $F$ in $z$ reads
	\[
	JF(z)=\begin{pmatrix}
		0_2 & I_2 \\
		\nabla^2 \Wzero(x) & 0_2
	\end{pmatrix},
	\]
	again by Remark \ref{rem:var_eq}, the variational equation reads
	\[
	\begin{cases}
		\displaystyle
		\frac{d}{dt}\left(\frac{d}{dz}\Phi^t(z)\Big\rvert_{z=z_\xi}\zeta\right)=\begin{pmatrix}
			0_2 & I_2 \\
			\nabla^2 \Wzero(\homx(t)) & 0_2
		\end{pmatrix}\frac{d}{dz}\Phi^t(z)\Big\rvert_{z=z_\xi}\zeta, \\
		\displaystyle
		\frac{d}{dz}\Phi^0(z)\Big\rvert_{z=z_\xi}\zeta=\zeta,
	\end{cases}
	\]
	for every $\zeta\in\mathcal{T}_{z_\xi}(\mathcal{U}\times\mathcal{V})$. In this way, writing 
	\[\begin{pmatrix}
		q(t) \\
		w(t)\end{pmatrix}=\frac{d}{dz}\Phi^t(z)\Big\rvert_{z=z_\xi}\zeta,
	\]
	we see that $q(t)$ must satisfy the problem
	\begin{equation}\label{eq:lin_var}
		\begin{cases}
			\ddot{q}=\nabla^2 \Wzero(\homx(t))q \\
			q(0)=\pi_x\zeta.
		\end{cases}
	\end{equation}
	Now, we can decompose $q$ in the orthogonal components 
	\[
	q=q_\xi+q_\tau=\langle q,s_\xi\rangle s_\xi+\langle q,s_\tau\rangle s_\tau
	\]
	and so, by the first equation in \eqref{eq:lin_var}, we get
	\[
	\begin{aligned}
		\ddot{q}_\xi+\ddot{q}_\tau&=\nabla^2\Wzero(\hat{x}_\xi(t))q_\xi+\nabla^2\Wzero(\hat{x}_\xi(t))q_\tau \\
		&=\langle q,s_\xi\rangle\nabla^2 \Wzero(\hat{x}_\xi(t))s_\xi+\langle q,s_\tau\rangle\nabla^2 \Wzero(\hat{x}_\xi(t))s_\tau.
	\end{aligned}
	\]	
	Now, since $s_\tau$ is one of the eigenvectors of the matrix $\nabla^2\Wzero(\homx(t))$ (see Remark \ref{rem:hess}) with eigenvalue
	\[
	\la_\tau=\langle\Wzero(\homx(t))s_\tau,s_\tau\rangle,
	\]
	the orthogonality of $s_\xi$ and $s_\tau$ proves that $q_\tau$ verifies
	\[
	\ddot{q}_\tau=\langle\Wzero(\homx(t))s_\tau,s_\tau\rangle q_\tau
	\]
	and thus problem \eqref{eq:lin_var} can be projected along the direction $s_\tau$ to finally obtain the proof.
\end{proof}

\begin{lemma}\label{lem:invert_no_pert}
	Given $\xi=Re^{i\vt_\xi}$ central configuration for $\Wzero$ such that $U''(\vt_\xi)\geq 0$, the Jacobian matrix
	\begin{equation}\label{eq:jacobian}
		\pi_x\frac{\partial}{\partial v}g(x,v)
	\end{equation}
	is invertible in $(\xi,v_\xi)$.
\end{lemma}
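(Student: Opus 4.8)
The plan is to show that the Jacobian matrix $\pi_x \frac{\partial}{\partial v} g(\xi, v_\xi)$ is invertible by exploiting the decomposition into the radial direction $s_\xi$ and the tangential direction $s_\tau$ established in Lemma \ref{lem:qtau}, reducing the problem to a pair of one-dimensional second-order linear ODEs. First I would note that, by the formula for $dg(z_\xi)$ from Lemma \ref{lem:diff_g}, when we restrict to variations $\zeta$ living in the velocity fibre (i.e.\ $\pi_x \zeta = 0$, $\pi_v \zeta = \delta v$), the spatial projection $\pi_x$ of $dg(z_\xi)\zeta$ is governed by the solution $q(t)$ of the linearised problem \eqref{eq:lin_var} with initial data $q(0) = 0$, $\dot q(0) = \delta v$, evaluated at $t = T_\xi$, plus the correction term $\pi_x F(\Phi^{T_\xi}(z_\xi)) \langle \nabla T(z_\xi), \zeta\rangle$. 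Since $\pi_x F(\xi, -v_\xi) = -v_\xi$ is parallel to $s_\xi$, this correction term contributes only to the radial component, so the tangential component of the output is determined purely by $q_\tau(T_\xi)$.

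The key computation is therefore the analysis of the two scalar equations. Along the tangential direction, Lemma \ref{lem:qtau} gives $\ddot q_\tau = \la_\tau q_\tau$ with $\la_\tau = R^{-\al-2}(-\al U(\vt_\xi) + U''(\vt_\xi))$ as in \eqref{eq:tang_hessian_tau}, evaluated along the homothetic solution (so the coefficient carries the time-dependent factor $\la(t)^{-\al-2}$ from the homogeneity of $\nabla^2 \Wzero$). Along the radial direction one gets the companion equation $\ddot q_\xi = \la_\xi q_\xi$ with $\la_\xi = \al(\al+1)R^{-\al-2}U(\vt_\xi) > 0$ from \eqref{eq:tang_hessian_xi}. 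I would then argue that invertibility of the matrix \eqref{eq:jacobian} is equivalent to showing that the map $\delta v \mapsto (q_\xi(T_\xi), q_\tau(T_\xi))$, obtained by solving these scalar problems with initial data $q(0)=0$ and $\dot q(0) = \delta v$ decomposed along $s_\xi, s_\tau$, is an isomorphism of the tangent plane; this holds precisely when neither scalar problem has a nontrivial solution vanishing at both $t = 0$ and $t = T_\xi$, i.e.\ when $T_\xi$ is not conjugate to $0$ for either equation.

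The radial equation is straightforward: because $\la_\xi > 0$ the operator $\frac{d^2}{dt^2} - \la_\xi \la(t)^{-\al-2}$ has no interior zeros for a solution starting at $0$ (the corresponding potential is repulsive), so $q_\xi(T_\xi) \neq 0$. The main obstacle will be the tangential equation, where the sign of the coefficient $-\al U(\vt_\xi) + U''(\vt_\xi)$ need not be definite. This is exactly where the hypothesis $U''(\vt_\xi) \geq 0$ enters: I expect to combine it with a comparison/Sturm argument against the explicitly solvable homothetic variational equation (the one generated by the scaling symmetry $\la \mapsto \mu\la$ of the Kepler flow \eqref{pb:kep_la}, which furnishes an exact solution of the radial variational equation and fixes the length of the time interval $T_\xi$) to bound the first conjugate time of the tangential problem strictly below $T_\xi$ from above, ensuring $q_\tau(T_\xi) \neq 0$. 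Concretely, one shows via a Sturm comparison that under $U''(\vt_\xi)\ge 0$ the tangential solution cannot complete a half-oscillation within the homothetic ejection--collision time $T_\xi$, so it does not return to zero; together with the radial estimate this yields that \eqref{eq:jacobian} has trivial kernel and is thus invertible.
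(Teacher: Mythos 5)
Your overall skeleton --- decompose along the eigendirections $s_\xi,s_\tau$ of $\nabla^2\Wzero$ along the homothetic ray, observe that the return-time correction $\pi_xF(\Phi^{T_\xi}(z_\xi))\langle\nabla T(z_\xi),\zeta\rangle=-v_\xi\langle\nabla T(z_\xi),\zeta\rangle$ is purely radial, and conclude by excluding a conjugate point of the tangential equation at $T_\xi$ through a Sturm argument using $U''(\vt_\xi)\geq0$ --- is indeed the paper's strategy. But the step where everything must happen fails as you describe it. The comparison equation you propose, the one ``generated by the scaling symmetry of \eqref{pb:kep_la}'', is the \emph{radial variational equation} $\ddot z=\al(\al+1)U(\vt_\xi)|\homx(t)|^{-\al-2}z$, whose Sturm coefficient is $\tilde c(t)=-\al(\al+1)U(\vt_\xi)|\homx(t)|^{-\al-2}<0$. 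The tangential equation $\ddot q_\tau+c(t)q_\tau=0$ has $c(t)=(\al U(\vt_\xi)-U''(\vt_\xi))|\homx(t)|^{-\al-2}$, which under the lemma's hypothesis is in general \emph{larger} than $\tilde c$ (certainly whenever $U''(\vt_\xi)<\al(\al+2)U(\vt_\xi)$, in particular for $U''(\vt_\xi)=0$, a case the lemma must cover). Sturm comparison transfers disconjugacy only towards smaller coefficients, so disconjugacy of your comparison equation gives no information about the tangential one: with your choice the inequality points the wrong way and nothing follows. The comparison that closes the argument (and is the paper's) is against $u(t)\uguale|\homx(t)|=R\la(t)$ itself: $u$ is a \emph{positive} solution on $[0,T_\xi]$ of $\ddot z+d(t)z=0$ with $d(t)=\al U(\vt_\xi)u(t)^{-\al-2}$ --- this is the Kepler equation \eqref{pb:u} read as a linear equation, not its variational equation --- and $U''(\vt_\xi)\geq0$ gives exactly $c\leq d$; hence if $q_\tau$ vanished at $0$ and $T_\xi$, Sturm would force $u$ to vanish inside $(0,T_\xi)$, which is absurd. (Incidentally, ``bound the first conjugate time strictly below $T_\xi$'' states the opposite of what you need, and $T_\xi$ is the period of a brake orbit out to the Hill boundary and back, not an ejection--collision time.)

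There is also a structural error in your reduction. Having correctly noted that the correction term is radial, you then claim that invertibility of \eqref{eq:jacobian} is \emph{equivalent} to $\delta v\mapsto(q_\xi(T_\xi),q_\tau(T_\xi))$ being an isomorphism of the plane; this silently drops the correction from the radial component, and the resulting claim is false. For a radial variation $\delta v=\beta s_\xi$ the perturbed motion is still homothetic along the ray through $\xi$ (only its energy changes, since $\nabla\Wzero$ is radial along that ray), so its first return to $\partial B_R$ is exactly $\xi$: the unconstrained matrix $\pi_x\frac{\partial}{\partial v}g(\xi,v_\xi)$ annihilates $s_\xi$, i.e.\ the correction term exactly cancels $q_\xi(T_\xi)$. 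Your convexity argument does show $q_\xi(T_\xi)\neq0$, but that is not the radial component of the output, so the ``$2\times2$'' map you analyse is actually singular. The lemma survives because it is a statement about $g$ on $\mathcal{U}\times\mathcal{V}\subset\Sigma$: admissible velocity variations at fixed $\xi$ satisfy $\langle v_\xi,\delta v\rangle=0$, hence are tangential, and only the scalar $q_\tau(T_\xi)$ is relevant. This is precisely how the paper frames the contradiction: a kernel vector $\overline\zeta=(0,\overline w)$ tangent to $\Sigma$ necessarily has $\overline w\parallel s_\tau$, and since $q(T_\xi)$ comes out parallel to $v_\xi$ one gets $q_\tau(0)=q_\tau(T_\xi)=0$, feeding the Sturm--Liouville problem \eqref{pb:slp}. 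So the tangential half of your plan matches the paper, but both your reduction to a planar isomorphism and your choice of comparison equation are genuine gaps.
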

\begin{proof}
	Recalling the definition of the first return map $g$ and its differential in Lemma \ref{lem:diff_g}, assume by contradiction that there exists $\overline{\zeta}=(0,\overline{w})\in\mathcal{T}_{(\xi,v_\xi)}(\mathcal{U}\times\mathcal{V})$, with $\overline{w}\neq 0$ such that
	\[
	\pi_xdg(z_\xi)\overline{\zeta}=0.
	\]
	In this way, by Lemma \ref{lem:diff_g}, we have that
	\begin{equation}\label{eq:tang}
	\pi_x\frac{d}{dz}\Phi^{T_\xi}(z)\Big\rvert_{z=z_\xi}\overline{\zeta}=-\pi_x\left(F(\Phi^{T_\xi}(z_\xi))\langle\nabla T(z_\xi),\overline{\zeta}\rangle\right)=\langle\nabla T(z_\xi),\overline{\zeta}\rangle v_\xi\in\pi_x\mathcal{T}_{(\xi,-v_\xi)}\Sigma.
	\end{equation}
	At this point, since $\xi$ and $v_\xi$ are parallel, by \eqref{eq:tangent} and \eqref{eq:tang} we deduce that necessarily
	\[\langle\nabla T(z_\xi),\overline{\zeta}\rangle=0\quad\mbox{and thus}\quad\pi_x\frac{d}{dz}\Phi^{T_\xi}(z)\Big\rvert_{z=z_\xi}\overline{\zeta}=0.
	\]
	This means that, if we define as in Lemma \ref{lem:qtau}
	\[
	q(t)=\pi_x\frac{d}{dz}\Phi^t(z)\Big\rvert_{z=z_\xi}\overline{\zeta}
	\]
	then $q(T_\xi)=0$ and $q_\tau(T_\xi)=\langle q(T_\xi),s_\tau\rangle s_\tau=0$. Now, from Lemma \ref{lem:qtau}, we know that the projection $q_\tau(t)$ solves the Sturm-Liouville problem
	\begin{equation}\label{pb:slp}
		\begin{cases}
			\ddot{q}_\tau+c(t)q_\tau=0 \\
			q_\tau(0)=0=q_\tau(T_\xi),
		\end{cases}
	\end{equation}
	where by \eqref{eq:tang_hessian_tau} in Remark \ref{rem:hess}
	\[
	c(t)\uguale-\langle\nabla^2\Wzero(\homx(t))s_\tau,s_\tau\rangle=|\homx(t)|^{-\al-2}(\al U(\vt_\xi)-U''(\vt_\xi)).
	\]
	Let $u(t)\uguale|\homx(t)|=\la(t)R$, where $\la(t)$ solves the 1-dimensional $\al$-Kepler problem \eqref{pb:kep_la}; then $u$ solves 
	\begin{equation}\label{pb:u}
		\begin{cases}
			\ddot{u}+\al u^{-\al-2}U(\vt_\xi)u=0 \\
			u(0)=R=u(T_\xi).
		\end{cases}
	\end{equation}
	Now, since $U''(\vt_\xi)\geq 0$, we have that
	\[
	c(t)\leq\al u(t)^{-\al-2}U(\vt_\xi)
	\]
	and therefore, by the Sturm comparison theorem referred to \eqref{pb:slp} and \eqref{pb:u}, we have that there exists $\overline{T}\in(0,T_\xi)$ such that $u(\overline{T})=0$. This is finally a contradiction and concludes the proof, since $|\hat{x}_\xi(t)|$ can not be null in the interval $[0,T_\xi]$.
\end{proof}

%\begin{remark} 
%	Following the notations of the previous proof, since by \eqref{eq:tang_hessian_xi} in Remark \ref{rem:hess} we have that 
%	\[
%	c_1(t)\uguale-\langle\nabla^2\Wzero(\hat{x}_\xi(t))s_\xi,s_\xi\rangle=-u(t)^{-\al-2}\al(\al+1)U(\vt_\xi)
%	\]
%	one could think to study the Sturm-Liouville problem
%	\[
%	\begin{cases}
%		\ddot{q}_\xi+c_1(t)q_\xi=0 \\
%		q_\xi(0)=0=q_\xi(T_\xi)
%	\end{cases}
%	\]
%	instead of problem \eqref{pb:slp}. Since it is always true that
%	\[
%	c_1(t)\leq\al u(t)^{-\al-2}U(\vt_\xi),
%	\]
%	then we should drop the hypothesis $U''(\vt_\xi)\geq 0$.
%	However, this would not lead to a contradiction in our argument, since in this case $q_\xi(t)$ and $u(t)$ are proportional and so we would not deduce from the Sturm theorem the existence of a null point for $u$ in the interval $(0,T_\xi)$.
%\end{remark}
At this point the proof of Theorem \ref{thm:invert} follows from Lemma \ref{lem:invert_no_pert}.
\begin{proof}[Proof of Theorem \ref{thm:invert}]
	It is enough to observe that
	\[
	\frac{\partial}{\partial v}x_1(v)\big\rvert_{v=v_\xi}=\frac{\partial}{\partial v}\pi_x(g(\xi,v))\Big\rvert_{v=v_\xi}=\pi_x\frac{\partial}{\partial v}g(x,v)\Big\rvert_{(x,v)=(\xi,v_\xi)},
	\]
	which is invertible for Lemma \ref{lem:invert_no_pert}.
\end{proof}

Now, we are ready to prove the main result of this section, which concerns the existence of outer arcs for the anisotropic Kepler problem.
\begin{theorem}\label{thm:implicit}
	Given $\xi=Re^{i\vt_\xi}$ central configuration for $\Wzero$ such that $U''(\vt_\xi)\geq 0$, there exists a neighbourhood $\mathcal{U}_\xi$ of $\xi$ on $\partial B_{R}$ such that, for any $p_0,p_1 \in \mathcal{U}_\xi$ there exist $\overline{T}>0$ and a unique solution $x=x(t)$ of 
	\[
	\begin{cases}
		\begin{aligned}
			&\ddot{x}(t)=\nabla\Wzero(x(t)), &t\in[0,\overline T] \\
			&\frac{1}{2}|\dot{x}(t)|^2-\Wzero(x(t))=-1, &t\in[0,\overline T]\\
			&|x(t)|>R, &t\in (0,\overline{T}) \\
			&x(0)=p_0,\ x(\overline{T})=p_1.   &
		\end{aligned}
	\end{cases}
	\]
	Moreover, $x$ depends on a $\mathcal{C}^1$-manner on the endpoints $p_0,p_1$.
\end{theorem}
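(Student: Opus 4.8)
The plan is to promote the single-endpoint shooting result of Theorem~\ref{thm:invert} to a statement about both endpoints and then to manufacture the orbit directly as a flow line of $\Phi$, checking the constraint $|x|>R$ by a continuity argument. On the neighbourhood $\mathcal{U}\times\mathcal{V}\sset\Sigma$ of $(\xi,v_\xi)$ provided by Proposition~\ref{prop:implicit}, I would consider the $\mathcal{C}^1$ map
\[
\Psi(x_0,v)\uguale\bigl(x_0,\ \pi_x\, g(x_0,v)\bigr)\in\partial B_R\times\partial B_R .
\]
Since $g(z_\xi)=(\xi,-v_\xi)$ we have $\Psi(\xi,v_\xi)=(\xi,\xi)$, and in coordinates adapted to $\Sigma$ the differential $d\Psi(\xi,v_\xi)$ is block triangular, with diagonal blocks the identity in the $x_0$-variable and $\pi_x\frac{\partial}{\partial v}g(\xi,v_\xi)$ in the $v$-variable; the latter is invertible by Lemma~\ref{lem:invert_no_pert}, which is exactly where the hypothesis $U''(\vt_\xi)\geq 0$ is used. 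Hence $\Psi$ is a local $\mathcal{C}^1$-diffeomorphism between two-dimensional manifolds, and there is a neighbourhood $\mathcal{U}_\xi$ of $\xi$ on $\partial B_R$ so that for every $(p_0,p_1)\in\mathcal{U}_\xi\times\mathcal{U}_\xi$ the inverse yields a unique $v=v(p_0,p_1)$, depending in a $\mathcal{C}^1$ manner on $(p_0,p_1)$, with $(p_0,v)\in\Sigma$ and $\pi_x g(p_0,v)=p_1$.

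Next I would define the candidate arc by flowing,
\[
x(t)\uguale\pi_x\,\Phi^t\bigl(p_0,v(p_0,p_1)\bigr),\qquad t\in[0,\overline T],\quad \overline T\uguale T\bigl(p_0,v(p_0,p_1)\bigr),
\]
where $T$ is the first-return time of Proposition~\ref{prop:implicit}. By construction $x$ solves $\ddot x=\nabla\Wzero(x)$ with $x(0)=p_0$, $x(\overline T)=p_1$; since $\Phi$ preserves the energy shell $\mathcal{E}=H^{-1}(-1)$ and $(p_0,v)\in\Sigma\sset\mathcal{E}$, the energy identity holds automatically along $x$. The $\mathcal{C}^1$ dependence on the endpoints is inherited from $v(\cdot,\cdot)$, from the smooth dependence of the flow $\Phi$ on initial data, and from the $\mathcal{C}^1$ return time $T$. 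Uniqueness in the relevant class is precisely the injectivity of $\Psi^{-1}$: any nearby arc from $p_0$ to $p_1$ that first returns to $\Sigma$ corresponds to one such admissible initial velocity.

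The \emph{main point to verify} is the interior constraint $|x(t)|>R$ for $t\in(0,\overline T)$, which I expect to be the only delicate step. For the homothetic orbit $\homx=\la\xi$ one has $|\homx(t)|=\la(t)R>R$ on $(0,T_\xi)$, with strictly outward initial velocity $v_\xi=\tfrac1R\sqrt{2(\Wzero(\xi)-1)}\,\xi$ (recall $\Wzero(\xi)>1$ because $R<U(\vt_\xi)^{1/\al}$), so that $\tfrac{d}{dt}|x|^2\big\rvert_{t=0}=2\langle\xi,v_\xi\rangle>0$. For $(p_0,p_1)\in\mathcal{U}_\xi\times\mathcal{U}_\xi$ the first-return property of Proposition~\ref{prop:implicit} guarantees $\Phi^t(p_0,v)\notin\Sigma$, i.e. $|x(t)|\neq R$, for every $t\in(0,\overline T)$; moreover, by continuity the initial velocity still points strictly outward (the transversality of $F$ to $\Sigma$ at $(\xi,v_\xi)$ persists for nearby data). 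Thus the continuous function $|x(t)|$ starts above $R$ and can never regain the value $R$ before $\overline T$, which forces $|x(t)|>R$ throughout $(0,\overline T)$. Shrinking $\mathcal{U}_\xi$ once more, if needed, so that the invertibility of $\Psi$, the first-return property, and the outward transversality all hold simultaneously completes the argument.
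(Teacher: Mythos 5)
Your proposal is correct and follows essentially the same route as the paper: the paper defines the shooting map $\Psi(p_0,p_1,v_0)=x(T(p_0,v_0);p_0,v_0)-p_1$ and applies the Implicit Function Theorem in the $v_0$-variable, with invertibility of $\partial\Psi/\partial v_0$ supplied by Lemma \ref{lem:invert_no_pert}, which is equivalent to your inverse-function-theorem argument for the augmented map $(x_0,v)\mapsto(x_0,\pi_x g(x_0,v))$. Your explicit continuity argument for the constraint $|x(t)|>R$ on $(0,\overline{T})$ (outward initial velocity plus the first-return property of Proposition \ref{prop:implicit}) is a detail the paper leaves implicit in its use of the first return time, so it is a welcome, fully compatible addition rather than a divergence.
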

\begin{proof}
	Define the shooting map
	\[
	\begin{aligned}
		\Psi\colon\mathcal{U}&\times\mathcal{U}\times\mathcal{V}\to\R^2 \\
		&(p_0,p_1,v_0)\mapsto\Psi(p_0,p_1,v_0)\uguale x(T(p_0,v_0);p_0,v_0)-p_1,
	\end{aligned}
	\]
	where the sets $\mathcal{U}$ and $\mathcal{V}$ are respectively the neighbourhoods of $\xi$ and $v_\xi$ found in Proposition \ref{prop:implicit}, $T\colon\mathcal{U}\times\mathcal{V}\to\R^+$ is the $\mathcal{C}^1$ first return map defined in the same proposition and  $x(\cdot;p_0,v_0)$ is the unique solution of the Cauchy problem
	\begin{equation}\label{pb:cauchy}
		\begin{cases}
			\ddot{x}(t)=\nabla \Wzero(x(t)) \\
			x(0)=p_0,\quad\dot{x}(0)=v_0
		\end{cases}
	\end{equation}
	in the time interval $[0,T(p_0,v_0)]$. Note that, following the notations of Lemma \ref{lem:invert_no_pert}, we have
	\[
	x(t;p_0,v_0)=\pi_x\Phi^t(p_0,v_0),\quad\mbox{for every}\ t\in[0,T(p_0,v_0)].
	\]
	The map $\Psi$ is $\mathcal{C}^1$ in its domain both for the $\mathcal{C}^1$-dependence of the solutions of the Cauchy problem \eqref{pb:cauchy} on initial data and time and for the differentiability of the first return map $T$ (see Proposition \ref{prop:implicit}). Moreover, we have that
	\[
	\Psi(\xi,\xi,v_\xi)=x(T(\xi,v_\xi);\xi,v_\xi)-\xi=\pi_x\Phi^{T_\xi}(\xi,v_\xi)-\xi=0
	\]
	and
	\[
	\frac{\partial\Psi}{\partial v_0}(p_0,p_1,v_0)\Big\rvert_{(\xi,\xi,v_\xi)}=\frac{\partial}{\partial v_0}x(T(p_0,v_0);p_0,v_0)\Big\rvert_{(\xi,v_\xi)}=\pi_x\frac{\partial}{\partial v}\left[\Phi^{T(p,v)}(p,v)\right]_{(\xi,v_\xi)},
	\]
	which is invertible thanks to Lemma \ref{lem:invert_no_pert}. Therefore, by the Implicit Function Theorem, we have that there exist a neighbourhood $\mathcal{V}'\sset\mathcal{V}$ of $v_\xi$, a neighbourhood $\mathcal{U}_\xi\sset\mathcal{U}$ of $\xi$ and a unique $\mathcal{C}^1$ function $\eta\colon\mathcal{U}_\xi\times\mathcal{U}_\xi\to\mathcal{V}'$ such that
	$\eta(\xi,\xi)=v_\xi$ and
	\[
	\Psi(p_0,p_1,\eta(p_0,p_1))=0\quad \mbox{for every}\ (p_0,p_1)\in\mathcal{U}_\xi\times\mathcal{U}_\xi.
	\]
	This actually means that, if we fix $(p_0,p_1)\in\mathcal{U}_\xi\times\mathcal{U}_\xi$, we can find a solution $x$ of \eqref{pb:cauchy}, defined in the time interval $[0,\overline{T}]$, with $v_0=\eta(p_0,p_1)$ and $\overline{T}=T(p_0,\eta(p_0,p_1))=T(p_0,v_0)$. Furthermore, note that this solution has constant energy $-1$, since
	\[
	(p_0,\eta(p_0,p_1))=(p_0,v_0)\in\mathcal{U}_\xi\times\mathcal{V}'\subset\mathcal{U}\times\mathcal{V}\subset\Sigma\subset\mathcal{E}.
	\]
	The $\mathcal{C}^1$-dependence on initial data is a straightforward consequence of the Implicit Function Theorem.
\end{proof}

\subsection{Outer solution arcs for the \texorpdfstring{$N$}{}-centre problem}

We conclude this section with the proof of the existence of an outer solution arc for the anisotropic $N$-centre problem driven by $\Veps$. As a starting point, we recall that, by Proposition  \ref{prop:pert}, if $|y|>R>0$, then
\[
\Veps(y)=\Wzero(y)+O(\ve^\gamma),\quad\mbox{as}\ \ve\to 0^+
\]
for a suitable $\gamma>0$. This suggests to repeat the proof of Theorem \ref{thm:implicit}, this time taking into account the perturbation induced by the presence of the centres. Before we start with the proof, it is useful to recall the set of strictly minimal central configurations of $\Wzero$, defined as
\[
\Xi=\{\vt^*\in\cerchio:\ U'(\vt^*)=0\ \mbox{and}\ U''(\vt^*)>0\}=\{\vt_1^*,\ldots,\vt_m^*\}.
\]
Note that, as it is clear from the assumptions of Theorem \ref{thm:implicit}, it would be enough to require the (not necessarily strict) minimality of the above central configurations. Beside that, the non-degeneration of such critical points will be a fundamental requirement on Section \ref{sec:glueing} and however we decide to keep it since it is a natural assumption in anisotropic settings (see for instance \cite{BTV,BTVplanar,BCTmin}).
\begin{theorem}\label{thm:outer_dyn}
	Assume that $N\geq 1$ and $m\geq 1$ and consider a function $V\in\mathcal{C}^2(\R^2\setminus\{c_1,\ldots,c_N\})$ defined as in \eqref{def:potential} and satisfying assumptions \eqref{hyp:V} at page \pageref{hyp:V}. Fix $R>0$ as in \eqref{eq:R} at page \pageref{eq:R} and, for $0<\ve<R$ consider the potential $\Veps$ defined in \eqref{def:scaling} at page \pageref{def:scaling}. Then, there exists $\ve_{ext}>0$ such that, for any $\vt^*\in\Xi$ minimal non-degenerate central configuration for $\Wzero$, defining $\xi^*\uguale Re^{i\vt^*}$, there exists a neighbourhood $\mathcal{U}_{ext}(\xi^*)$ of $\xi^*$ on $\partial B_R$ with the following property:
	
	for every $\ve\in(0,\ve_{ext})$, for any pair of endpoints $p_0,p_1\in\mathcal{U}_{ext}(\xi^*)$, there exist $T_{ext}=T_{ext}(p_0,p_1;\ve)>0$ and a unique solution $y_{ext}(t)=y_{ext}(t;p_0,p_1;\ve)$ of the outer problem
	\[
	\begin{cases}
		\begin{aligned}
			&\ddot{y}_{ext}(t)=\nabla \Veps(y_{ext}(t))  &t\in[0,T_{ext}]  \\
			&\frac{1}{2}|\dot{y}_{ext}(t)|^2-\Veps(y_{ext}(t))=-1 &t\in[0,T_{ext}]  \\
			&|y_{ext}(t)|>R  &t\in(0,T_{ext}) \\
			&y_{ext}(0)=p_0,\quad y_{ext}(T_{ext})=p_1. &	
		\end{aligned}
	\end{cases}
	\]	
	Moreover, the solution depends on a $\mathcal{C}^1$-manner on its endpoints $p_0$ and $p_1$.
\end{theorem}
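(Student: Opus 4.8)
The plan is to reproduce the Implicit Function Theorem argument of Theorem \ref{thm:implicit}, now carrying the parameter $\ve$ along and treating $\ve=0$ as the base point where everything is already known. First I would promote the flow, the first return time and the first return map to $\ve$-dependent objects. For the $\ve$-vector field $F_\ve(z)\uguale(v,\nabla\Veps(x))$ with flow $\Phi_\ve^t$, Proposition \ref{prop:pert} guarantees that $\Veps$ is smooth in $\ve$ and that $\Veps\to\Wzero$ as $\ve\to0^+$, so $\Phi_\ve^t$ depends in a $\mathcal{C}^1$ manner jointly on $(t,z,\ve)$ and reduces at $\ve=0$ to the unperturbed flow $\Phi^t$ of Section \ref{par:homothetic}. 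Since $F$ is transversal to the level set $\{|x|=R\}$ at the return point $(\xi^*,-v_{\xi^*})$ and transversality is an open condition, the same Implicit Function Theorem used in Proposition \ref{prop:implicit}, applied to $f_\ve(t,x,v)\uguale|\Phi_\ve^t(x,v)|^2-R^2$, produces a first return time $T_\ve(x,v)$ and first return map $g_\ve(x,v)\uguale\Phi_\ve^{T_\ve(x,v)}(x,v)$ that are $\mathcal{C}^1$ jointly in $(x,v,\ve)$ on a fixed neighbourhood of $(\xi^*,v_{\xi^*})$, for all $\ve$ small.

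With these in hand I would set up the $\ve$-dependent shooting map
\[
\Psi(p_0,p_1,v_0,\ve)\uguale\pi_x\,g_\ve(p_0,v_0)-p_1,
\]
where $v_0$ is taken on the energy shell $\mathcal{E}_\ve=\{\tfrac12|v|^2-\Veps=-1\}$ (which itself moves smoothly with $\ve$), so that the resulting arc carries energy $-1$ automatically. At the base point one has $\Psi(\xi^*,\xi^*,v_{\xi^*},0)=0$, because $\hat{x}_{\xi^*}$ is the homothetic orbit returning to $\partial B_R$ at time $T_{\xi^*}$; moreover $\frac{\partial\Psi}{\partial v_0}$ there equals $\pi_x\frac{\partial g}{\partial v}(\xi^*,v_{\xi^*})$, which is invertible by Lemma \ref{lem:invert_no_pert}, whose hypothesis $U''(\vt^*)\geq0$ holds since $\vt^*\in\Xi$. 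The Implicit Function Theorem, now with $(p_0,p_1,\ve)$ as parameters, then yields $\ve_{ext}>0$, a neighbourhood $\mathcal{U}_{ext}(\xi^*)$ of $\xi^*$ on $\partial B_R$, and a unique $\mathcal{C}^1$ map $v_0=\eta(p_0,p_1,\ve)$ with $\eta(\xi^*,\xi^*,0)=v_{\xi^*}$ solving $\Psi=0$ for every $\ve\in(0,\ve_{ext})$ and every $p_0,p_1\in\mathcal{U}_{ext}(\xi^*)$; note that the IFT neighbourhood in $\ve$ is an interval around $0$, of which we keep the positive part. The outer arc is then $y_{ext}(t)\uguale\pi_x\Phi_\ve^t(p_0,\eta(p_0,p_1,\ve))$ on $[0,T_{ext}]$ with $T_{ext}\uguale T_\ve(p_0,\eta(p_0,p_1,\ve))$, and its $\mathcal{C}^1$-dependence on $p_0,p_1$ is immediate from that of $\eta$ and $T_\ve$.

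It remains to verify the confinement $|y_{ext}(t)|>R$ on $(0,T_{ext})$, which I would obtain by continuity. The homothetic orbit $\hat{x}_{\xi^*}$ leaves $\partial B_R$ with strictly outward radial velocity, stays in $\R^2\setminus B_R$ on $(0,T_{\xi^*})$, and crosses $\partial B_R$ transversally at both endpoints. Since $y_{ext}$ is $\mathcal{C}^1$-close to $\hat{x}_{\xi^*}$ uniformly on the (nearby) time interval for $\ve$ and $|p_i-\xi^*|$ small, after possibly shrinking $\ve_{ext}$ and $\mathcal{U}_{ext}(\xi^*)$ the transversal-crossing and outer-confinement structure persists; in particular $T_\ve$ is genuinely the first return to $\partial B_R$ and the arc remains outside in between.

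The main obstacle I anticipate is not conceptual but organisational: one must handle the moving energy shell $\mathcal{E}_\ve$ and the first-return surface consistently, so that the invertibility of Lemma \ref{lem:invert_no_pert} transfers to the constrained shooting problem, and one must check that the IFT neighbourhood in $\ve$ can be taken one-sided and uniform in $(p_0,p_1)$. Both points are settled by the smoothness of $\Veps$ in $\ve$ (Proposition \ref{prop:pert}) together with the fact that the $\ve=0$ data already satisfies the nondegeneracy hypothesis of Lemma \ref{lem:invert_no_pert}. Crucially, the genuinely hard, non-perturbative input — the Sturm-comparison argument guaranteeing invertibility of the shooting derivative — has already been carried out at $\ve=0$, so here it enters only as a black box.
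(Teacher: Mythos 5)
Your proposal is correct and follows essentially the same route as the paper: the paper's proof likewise extends the shooting argument of Theorem \ref{thm:implicit} by inserting $\ve$ as an additional parameter in the shooting map and applying the Implicit Function Theorem at $(\ve,p_0,p_1,v_0)=(0,\xi^*,\xi^*,v_{\xi^*})$, with Lemma \ref{lem:invert_no_pert} supplying the key invertibility exactly as a black box. If anything, you are more explicit than the paper on two bookkeeping points — constraining $v_0$ to the moving energy shell so the perturbed arc has energy $-1$, and verifying by $\mathcal{C}^1$-closeness to the homothetic orbit that the confinement $|y_{ext}(t)|>R$ persists — both of which the paper leaves implicit.
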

\begin{proof}
	The proof goes exactly as the proof of Theorem \ref{thm:implicit}, this time introducing the variable $\ve\in[0,\tilde{\ve})$, with $\tilde{\ve}$ defined in \eqref{def:ve_tilde}. Therefore, we define the shooting map
	\[
	\begin{aligned}
		\Psi\colon[0,\tilde{\ve})&\times\mathcal{U}\times\mathcal{U}\times\mathcal{V}\to\R^2 \\
		&(\ve,p_0,p_1,v_0)\mapsto\Psi(\ve,p_0,p_1,v_0)\uguale y(T(p_0,v_0);p_0,v_0;\ve)-p_1,
	\end{aligned}
	\]
	where the sets $\mathcal{U}$ and $\mathcal{V}$ are respectively the neighbourhoods of $\xi^*$ and $v_{\xi^*}$ as in Proposition \ref{prop:implicit}, $T\colon\mathcal{U}\times\mathcal{V}\to\R^+$ is the $\mathcal{C}^1$ first return map defined in the same proposition and $y(\cdot;p_0,v_0;\ve)$ is the unique solution of the Cauchy problem
	\[
		\begin{cases}
			\ddot{y}(t)=\nabla \Veps(y(t)) \\
			y(0)=p_0,\quad\dot{y}(0)=v_0,	
		\end{cases}
	\]
	in the time interval $[0,T(p_0,v_0)]$. In this way, by the Implicit Function theorem, we have that there exist a neighbourhood $\mathcal{V}'\subset\mathcal{V}$ of $v_{\xi^*}$, $\ve_{ext}\in(0,\tilde{\ve})$, a neighbourhood $\mathcal{U}_{ext}(\xi^*)\subset\mathcal{U}$ of $\xi^*$ and a unique $\mathcal{C}^1$ function $\eta\colon[0,\ve_{ext})\times\mathcal{U}_{ext}(\xi^*)\times\mathcal{U}_{ext}(\xi^*)\to\mathcal{V}'$ such that $\eta(0,\xi^*,\xi^*)=v_{\xi^*}$ and 
	\[
	\Psi(\ve,p_0,p_1,\eta(\ve,p_0,p_1))=0\quad\mbox{for every}\ (\ve,p_0,p_1)\in[0,\ve_{ext})\times\mathcal{U}_{ext}(\xi^*)\times\mathcal{U}_{ext}(\xi^*).
	\]
	This actually means that, if we fix $\ve\in[0,\ve_{ext})$ and $(p_0,p_1)\in\mathcal{U}_{ext}(\xi^*)\times\mathcal{U}_{ext}(\xi^*)$, we can find a unique solution $y_{ext}$ of \eqref{pb:outer} at page \pageref{pb:outer}, defined in the time interval $[0,T_{ext}]$, starting with velocity $v_0=\eta(\ve,p_0,p_1)$ and such that $T_{ext}=T(p_0,\eta(\ve,p_0,p_1))$ in the fashion of Proposition \ref{prop:implicit}. Finally, note that this solution has constant energy $-1$, since
	\[
	(p_0,\eta(\ve,p_0,p_1))\in\mathcal{U}_{ext}(\xi^*)\times\mathcal{V}'\subset\mathcal{U}\times\mathcal{V}\subset\Sigma\subset\mathcal{E}.
	\]
	To conclude, the $\mathcal{C}^1$-dependence on the endpoints is a straightforward consequence on the perturbation technique used in the proof.
\end{proof}

\begin{figure}
	\centering
	\begin{tikzpicture}[scale=1.2]
		\coordinate (O) at (0,0);
		\coordinate (x0) at (6,0);
		\coordinate (p0) at (5.223,-0.102);
		\coordinate (p1) at (6.776,-0.102);
		\coordinate (p2) at (5.686,-0.016);
		\coordinate (p3) at (6.3,-0.016);
		\coordinate (p4) at (4.73,-0.28);
		\coordinate (p5) at (8.5,1);
		
		% segmenti
		\draw [thick,-stealth] (O)--(0,3);
		\draw [thick,-stealth] (0,3)--(0,2);
		\draw [dashed] (0,-1)--(O);
		\draw [dashed] (6,-1)--(6,3);
		
		% archi 
		\draw [domain=60:120] plot ({3*cos(\x)}, {3*sin(\x)-3});
		\draw [domain=60:120] plot ({6+3*cos(\x)}, {3*sin(\x)-3});
		\draw [dashed,domain=60:120] plot ({5*cos(\x)}, {3*sin(\x)});
		\draw [dashed,domain=60:120] plot ({6+5*cos(\x)}, {3*sin(\x)});
		
		% punti
		\fill (O) circle[radius=1.5pt];
		\fill (x0)
		circle[radius=1.5pt];  		
		\fill (p0)
		circle[radius=1.0pt];  		
		\fill (p1)
		circle[radius=1.0pt];
		\fill (p2)
		circle[radius=1.0pt];  		
		\fill (p3)
		circle[radius=1.0pt];
		
		% curva
		\draw[thick,-stealth] (p0) parabola bend (6,2.7) (p1);
		\draw[thick,-stealth] (p2) parabola bend (6,2.9) (p3); 
		\draw[dashed] (p4) parabola bend (6,2) (p5);  
		
		% scritte
		\node[left] at (0,-0.65) {$\xi^*$};
		\node[left] at (6,-0.65) {$\xi^*$};
		\node[below] at (p0) {$p_0$};
		\node[below] at (p1) {$p_1$};
		\node at (6.7,2.6) {\small $y_{ext}(t)$};
		\node at (-2,3.1) {\small $\partial\mathcal{R}$};   		
		\node at (4,3.1) {\small $\partial\mathcal{R}$};
		\node[right] at (0,1.5) {\small{homothetic}};
		\node at (-1.7,-0.2) {\small $\partial B_R$};
		\node at (4.3,-0.2) {\small $\partial B_R$};
	\end{tikzpicture}
	\caption{The proof of Theorem \ref{thm:outer_dyn}: here $\partial\mathcal{R}$ denotes the boundary of the Hill's region for the rescaled $N$-centre problem driven by $\Veps$. On the left side we have drawn the homothetic trajectory through $\xi^*$: it is a 1-dimensional motion that starts in $\xi^*$, it reaches the boundary $\partial\mathcal{R}$ and then it hits again $\partial B_R$ in $\xi^*$. On the right we can see that, if we shoot with initial position sufficiently close to $\xi^*$, there will be a first return on the sphere, guaranteed by the transversality of the flow. On the other hand, the dashed trajectory on the right could never reach again the sphere since its starting point is outside the existence neighbourhood provided in the theorem.}	
\end{figure}
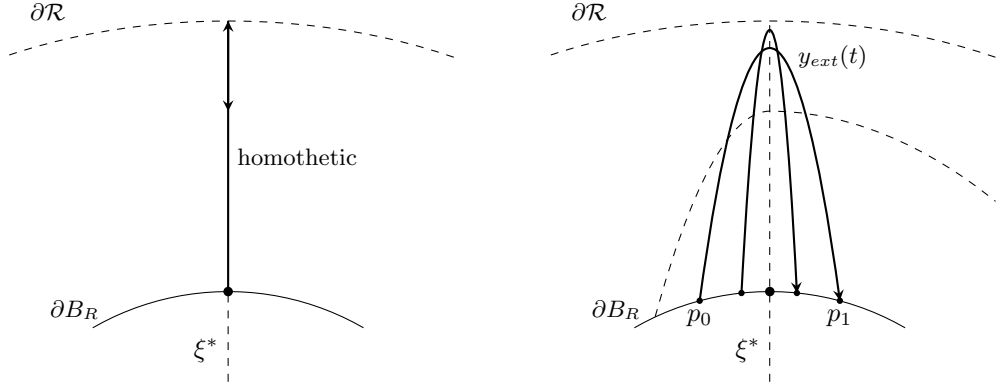

We conclude this section providing upper and lower bounds for the time interval in which an external solution is defined, that will be useful later in this work.

\begin{lemma}\label{lem:bound_ext}
	Let $\ve\in(0,\ve_{ext})$, let $\vt^*\in\cerchio$ be a minimal non-degenerate central configuration for $\Wzero$ and $\mathcal{U}_{ext}(\xi^*)$ be its neighbourhood on $\partial B_R$ found in Theorem \ref{thm:outer_dyn}. Let $p_0,p_1\in\mathcal{U}_{ext}(\xi^*)$ and let $y_{ext}(\cdot;p_0,p_1;\ve)$ be the unique solution  found in Theorem \ref{thm:outer_dyn}, defined in its time interval $[0,T_{ext}(p_0,p_1;\ve)]$. Then, there exist $c,C>0$ such that
	\[
	c\leq T_{ext}(p_0,p_1;\ve)\leq C.
	\]
	Such constants do not depend on the choice of $p_0,p_1$ inside the neighbourhood.
\end{lemma}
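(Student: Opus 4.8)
The plan is to recognise $T_{ext}$ as a composition of the $\mathcal{C}^1$ maps already built in Theorem \ref{thm:outer_dyn} and to extract both bounds from their continuity through a compactness argument. Recall from the proof of Theorem \ref{thm:outer_dyn} that the outer arc issued from $p_0$ with the selected velocity $v_0=\eta(\ve,p_0,p_1)\in\mathcal{V}'$ returns to $\partial B_R$ exactly at the first-return time provided by Proposition \ref{prop:implicit}; hence
\[
T_{ext}(p_0,p_1;\ve)=T\bigl(p_0,\eta(\ve,p_0,p_1)\bigr).
\]
Here $T\colon\mathcal{U}\times\mathcal{V}\to\R^+$ is the first-return map of Proposition \ref{prop:implicit}, which is $\mathcal{C}^1$ and \emph{defined on a neighbourhood $\mathcal{U}\times\mathcal{V}$ of $(\xi^*,v_{\xi^*})$ that does not depend on $\ve$}, while $\eta\colon[0,\ve_{ext})\times\mathcal{U}_{ext}(\xi^*)\times\mathcal{U}_{ext}(\xi^*)\to\mathcal{V}'$ is the $\mathcal{C}^1$ map delivered by the Implicit Function Theorem. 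In particular $T_{ext}$ is continuous in $(\ve,p_0,p_1)$ and, at the central value $(0,\xi^*,\xi^*)$, equals the time of the homothetic orbit through $\xi^*$.

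Both inequalities then follow from compactness. Possibly after shrinking the neighbourhoods $\mathcal{U}_{ext}(\xi^*)$ and $\mathcal{V}'$ at the construction stage, I may assume that their closures are compact and satisfy $\overline{\mathcal{U}_{ext}(\xi^*)}\sset\mathcal{U}$ and $\overline{\mathcal{V}'}\sset\mathcal{V}$. The decisive observation is that $\eta$ takes values in $\mathcal{V}'$ for \emph{every} $\ve\in[0,\ve_{ext})$ and every admissible pair $p_0,p_1$; therefore the argument $\bigl(p_0,\eta(\ve,p_0,p_1)\bigr)$ always lies in the fixed compact set $K\uguale\overline{\mathcal{U}_{ext}(\xi^*)}\times\overline{\mathcal{V}'}\sset\mathcal{U}\times\mathcal{V}$, whatever the value of $\ve$. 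Since $T$ is continuous and strictly positive on $\mathcal{U}\times\mathcal{V}$, it attains on $K$ a finite maximum $C\uguale\max_K T$ and a strictly positive minimum $c\uguale\min_K T>0$, whence
\[
0<c\leq T_{ext}(p_0,p_1;\ve)\leq C,
\]
with $c,C$ depending only on $K$ and thus neither on $\ve$ nor on $p_0,p_1$.

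The main obstacle is exactly the \emph{uniformity} of the bounds: because $\ve$ runs over the half-open interval $(0,\ve_{ext})$ and $p_0,p_1$ over open neighbourhoods, one must guarantee that nothing degenerates as $\ve\to\ve_{ext}^-$ or as $p_i$ approaches $\partial\mathcal{U}_{ext}(\xi^*)$. This is precisely what is secured by the two structural facts above: $T$ lives on a neighbourhood independent of $\ve$, and $\eta$ has range with compact closure inside $\mathcal{V}$, so the argument of $T$ never escapes the fixed compact set $K$.

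Finally, it is worth noting that the lower bound admits an independent dynamical proof yielding an explicit constant. Along any outer arc the energy relation gives $|\dot y|^2=2(\Veps(y)-1)$; since the arc stays in $\{|y|\ge R\}$, where every centre $\ve c_j$ is at distance at least $R-\ve_{ext}>0$, the potential $\Veps$ is bounded above by a constant $\mathfrak{M}$ independent of $\ve$, so $|\dot y|\le\sqrt{2(\mathfrak{M}-1)}\uguale v_{max}$. On the other hand, by $\mathcal{C}^1$ closeness to the homothetic solution $\homx$, whose apsis has radius $U(\vt^*)^{1/\al}>R$, every outer arc reaches a maximal radius bounded below by some $\rho>R$ (shrinking $\mathcal{U}_{ext}(\xi^*)$ and $\ve_{ext}$ if needed); hence its length is at least $2(\rho-R)$ and
\[
T_{ext}(p_0,p_1;\ve)\ge\frac{1}{v_{max}}\int_0^{T_{ext}}|\dot y|\,dt\ge\frac{2(\rho-R)}{v_{max}}>0 .
\]
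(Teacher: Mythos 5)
Your proposal is correct and follows essentially the same route as the paper, whose proof is a one-line appeal to ``the continuous dependence of the solution on initial data and of its perturbative nature'': you simply make this explicit by writing $T_{ext}(p_0,p_1;\ve)=T\bigl(p_0,\eta(\ve,p_0,p_1)\bigr)$ and extracting uniform bounds from the continuity of the $\ve$-independent first-return map $T$ on a compact set containing the range of $(p_0,\eta)$. The supplementary dynamical estimate for the lower bound is a nice, more quantitative addition, but it is not a different method for the lemma as a whole.
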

\begin{proof}
	The proof is a direct consequence of the continuous dependence of the solution on initial data and of its perturbative nature.
\end{proof}

\section{Inner dynamics}\label{sec:inner}

This section is named \emph{Inner dynamics} since we will look for solution arcs of the rescaled $N$-centre problem \eqref{pb:rescaled}, which bridge any pair of points of $\partial B_R$ ($R>>\ve>0$ already chosen in Section \ref{sec:outer}) and lie inside the ball $B_R$ along their motion. As far as we are looking for classical solutions with fixed end-points, we need to face the problem of collisions with the centres. Moreover, since we will be working inside $B_R$, we cannot make use of Proposition \ref{prop:pert} and thus perturbation techniques do not apply in this case. For this reason, following \cite{ST2012}, we opt for a variational approach and our inner solution arcs will be (reparametrizations of) minimizers of a suitable geometric functional. We briefly recall that, in the non-collisional case (see Theorem \ref{cor:symb} in the Introduction), partitions play a fundamental role in the alphabet of symbolic dynamics. Keeping this in mind, we will define a suitable topological constraint that forces every inner arc to separate the centres according to a prescribed partition. To be clear, the main result of this section is to prove that, for $\ve>0$ sufficiently small and for any $p_1,p_2\in\partial B_R$, there exists a solution $y(\cdot;p_1,p_2;\ve)$ of the following problem
\begin{equation}\label{pb:inner}
	\begin{cases}
		\begin{aligned}
			&\ddot{y}(t)=\nabla \Veps(y(t))  &t\in[0,T] \\
			&\frac{1}{2}|\dot{y}(t)|^2-\Veps(y(t))=-1   &t\in[0,T] \\
			&|y(t)|<R   &t\in(0,T) \\
			&y(0)=p_1,\quad y(T)=p_2, &
		\end{aligned}
	\end{cases}
\end{equation}
for some $T>0$, possibly depending on $\ve$, and such that the trajectory $y$ separates the centres according to a chosen partition.  

\subsection{Functional setting and variational principles}

We build our variational setting referring to the starting equations \eqref{eq:moto}-\eqref{eq:energy} and thus we take into account again the potential $V$ and the energy $-h<0$ is fixed. However, we notice that a scaling on the centres, and thus on the whole problem, does not affect the following discussion. We fix $p_1,p_2$ inside the open Hill's region $\mathring{\mathcal{R}}_h$ (see \eqref{def:hill_region} at page \pageref{def:hill_region}) and we define
\[
\hat{H}=\hat{H}_{p_1,p_2}([a,b])\uguale\left\lbrace u\in H^1([a,b];\R^2)\ \middle\lvert\ \begin{aligned}
&u(a)=p_1,\ u(b)=p_2, \\ 
&u(t)\neq\ c_j\ \forall\ t\in[a,b],\ \forall\ j
\end{aligned}
\right\rbrace,
\]
i.e., all the $H^1$-paths that join $p_1,p_2$ and do not collapse on the centres, and also the $H^1$-collision paths
\[
\mathfrak{Coll}=\mathfrak{Coll}_{p_1,p_2}([a,b])\uguale\left\lbrace u\in H^1([a,b];\R^2)\ \middle\lvert\ \begin{aligned} &u(a)=p_1,\ u(b)=p_2,\ \exists\ t\in[a,b], \\ &\exists\ j\in\{1,\ldots,N\}\ \mbox{s.t.}\ u(t)=c_j\end{aligned}\right\rbrace.
\]
We introduce also the set 
\[
\begin{aligned}
H= H_{p_1,p_2}([a,b])&\uguale\hat{H}_{p_1,p_2}([a,b])\cup\mathfrak{Coll}_{p_1,p_2}([a,b]) \\
&=\{u\in H^1([a,b];\R^2):\ u(a)=p_1,\ u(b)=p_2\}
\end{aligned}
\]
and it is easy to check that $H$ is the closure of $\hat{H}$ with respect to the weak topology of $H^1([a,b];\R^2)$. Let us define the Maupertuis' functional as
\[
\begin{aligned}
	\mathcal{M}_h(\cdot)\uguale\mathcal{M}_h([a,b];\cdot)\colon&H_{p_1,p_2}([a,b])\longrightarrow\R\cup\{+\infty\} \\
	&\qquad u\longmapsto\mathcal{M}_h(u)\uguale\frac12\int_a^b|\dot{u}(t)|^2\,dt\int_a^b(-h+V(u(t)))\,dt
\end{aligned}
\]
which is differentiable over the non-collision paths space $\hat{H}$. The next classical result, known as the \emph{Maupertuis' principle}, establishes a link between classical solutions of the equation $\ddot{x}=\nabla V(x)$ at energy $-h$ and critical points at a positive level of $\mathcal{M}_h$ in the space $\hat{H}$. Note that, if $\mathcal{M}_h(u)>0$ for some $u\in H$, then we can define the positive quantity
\begin{equation}\label{defn:omega}
	\omega^2\uguale\frac{\int_a^b(-h+V(u))}{\frac12\int_a^b|\dot{u}|^2},
\end{equation}
that plays an important role in the next classical result (see \cite[Theorem 4.1]{AC-Z}).

\begin{theorem}[The Maupertuis' principle]\label{thm:maupertuis}
	Let $u\in\hat{H}_{p_1,p_2}([a,b])$ be a critical point of $\mathcal{M}_h$ at a positive level and let $\omega>0$ be defined by \eqref{defn:omega}. Then, $x(t)\uguale u(\omega t)$ is a classical solution of the fixed-end problem
	\[
	\begin{cases}
		\ddot{x}(t)=\nabla V(x(t))  & t\in[a/\omega,b/\omega] \\
		\frac12|\dot{x}(t)|^2-V(x(t))=-h  & t\in[a/\omega,b/\omega] \\
		x(a/\omega)=p_1,\ x(b/\omega)=p_2 &
	\end{cases}
	\]
	while $u$ itself is a classical solution of 
	\[
	\begin{cases}
		\omega^2\ddot{u}(t)=\nabla V(u(t))  & t\in[a,b] \\
		\frac{\omega^2}{2}|\dot{u}(t)|^2-V(u(t))=-h  & t\in[a,b] \\
		u(a)=p_1,\ u(b)=p_2. &
	\end{cases}
	\]
	The converse holds also true, i.e., if $x$ is a classical solution of the fixed-end problem above in a certain interval $[a',b']$, then, setting $\omega=1/(a'-b')$, $u(t)=x(t/\omega)$ is a critical point of $\mathcal{M}_h([a,b];\cdot)$ at a positive level, for some suitable values $a,b$.
\end{theorem}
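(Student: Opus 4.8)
The plan is to treat $\mathcal{M}_h=A\cdot B$ as a product of the two functionals $A(u)=\frac12\int_a^b|\dot u|^2\,dt$ and $B(u)=\int_a^b(-h+V(u))\,dt$, and to extract the Euler--Lagrange equation from the vanishing of its first variation. First I would fix an admissible variation $\varphi\in H^1_0([a,b];\R^2)$: since $u\in\hat H$ is a non-collision path on the compact interval $[a,b]$, one has $\mathrm{dist}(u([a,b]),\{c_1,\dots,c_N\})>0$, so $u+s\varphi$ remains in $\hat H$ for $|s|$ small and $\mathcal{M}_h$ is differentiable there. Computing $\frac{d}{ds}\big|_{s=0}\mathcal{M}_h(u+s\varphi)=B(u)\int_a^b\langle\dot u,\dot\varphi\rangle\,dt+A(u)\int_a^b\langle\nabla V(u),\varphi\rangle\,dt$ and using that the level $\mathcal{M}_h(u)=A(u)B(u)$ is positive (hence $A(u)>0$, since $A\ge0$ always), I would divide by $A(u)$ and recognise $\omega^2=B(u)/A(u)$ from \eqref{defn:omega}. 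This yields the weak identity $\omega^2\int_a^b\langle\dot u,\dot\varphi\rangle\,dt+\int_a^b\langle\nabla V(u),\varphi\rangle\,dt=0$ for every admissible $\varphi$, which is exactly the weak form of $\omega^2\ddot u=\nabla V(u)$.

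The next step is a routine regularity bootstrap: the weak equation says $\dot u$ has weak derivative $-\omega^{-2}\nabla V(u)$; since $u$ is continuous (one-dimensional Sobolev embedding) and avoids the centres, $t\mapsto\nabla V(u(t))$ is continuous, so $\ddot u$ is continuous and $u\in\mathcal{C}^2$ solves $\omega^2\ddot u=\nabla V(u)$ classically. Setting $x(t)=u(\omega t)$ then gives $\ddot x(t)=\omega^2\ddot u(\omega t)=\nabla V(u(\omega t))=\nabla V(x(t))$, and $x$ is defined on $[a/\omega,b/\omega]$ with the prescribed endpoints, as claimed.

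It remains to pin the energy to the value $-h$, which is the conceptual heart of the statement. I would first show that $E(t)=\frac{\omega^2}{2}|\dot u(t)|^2-V(u(t))$ is constant, since $\dot E=\langle\dot u,\omega^2\ddot u-\nabla V(u)\rangle=0$. To identify the constant I would integrate over $[a,b]$: using $\omega^2A(u)=B(u)=-h(b-a)+\int_a^bV(u)\,dt$ one finds $\int_a^b E\,dt=\omega^2A(u)-\int_a^bV(u)\,dt=-h(b-a)$, whence $E\equiv-h$; this relation transfers verbatim to $x$ through the reparametrization. The converse direction then amounts to reversing these computations: starting from a classical solution $x$ of the fixed-end energy problem on $[a',b']$, I would set $u(t)=x(t/\omega)$ with $\omega$ chosen so that $[a',b']=[a/\omega,b/\omega]$, check that $u$ satisfies $\omega^2\ddot u=\nabla V(u)$ and that the prescribed energy forces $\omega^2=B(u)/A(u)$, and conclude via the computation above that $u$ is a critical point of $\mathcal{M}_h$ at the positive level $A(u)B(u)=\omega^2A(u)^2>0$.

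The main obstacle I anticipate is not analytic but structural: because $\mathcal{M}_h$ is a \emph{product}, its critical point equation determines $u$ only up to the time-rescaling constant $\omega$, so the Euler--Lagrange equation alone does not fix the energy. The delicate point is therefore the identification of the conserved energy with precisely $-h$, which hinges on the specific definition \eqref{defn:omega} of $\omega^2$ as the ratio $B(u)/A(u)$; by comparison, the admissibility of variations (staying away from the collision set) and the $H^1\to\mathcal{C}^2$ bootstrap are comparatively routine.
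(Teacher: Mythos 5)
Your proof is correct and complete. In fact, the paper does not prove this theorem at all: it is quoted as a classical result with a pointer to \cite[Theorem 4.1]{AC-Z}, so there is no internal proof to diverge from, and your argument supplies exactly the standard one that the authors take for granted. The chain is the right one: first variation of the product $A\cdot B$, division by $A(u)$ (correctly justified, since a positive level forces $A(u)>0$ and $B(u)>0$), identification of $\omega^2=B(u)/A(u)$ in the resulting weak identity, the one-dimensional $H^1\to\mathcal{C}^2$ bootstrap using that $u$ stays at positive distance from the centres, conservation of $E(t)=\frac{\omega^2}{2}|\dot u|^2-V(u)$, and the integral identification $\int_a^b E\,dt=\omega^2A(u)-\int_a^bV(u)\,dt=-h(b-a)$, which pins $E\equiv-h$; this last step is indeed, as you say, the only place where the specific definition \eqref{defn:omega} enters. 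Your converse also silently repairs a small typo in the statement: the paper's $\omega=1/(a'-b')$ would be negative, and the correct normalisation, which you adopt, is $\omega=1/(b'-a')$; moreover your observation that the pointwise energy identity, once integrated, automatically yields $\omega^2A(u)=B(u)$ is precisely the self-consistency needed to conclude criticality at the positive level $A(u)B(u)=\omega^2A(u)^2$ (positive provided the solution is not an equilibrium, a degenerate case that both you and the classical statement implicitly exclude).
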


In order to apply direct methods of the Calculus of Variations to $\mathcal{M}_h$ we will work in $H$, which is weakly closed in $H^1$. As a first step we recall a standard result that shows that a (possibly colliding) minimizer of $\mathcal{M}_h$ in $H$ preserves the energy almost everywhere. 

\begin{lemma}\label{lem:energy}
	If $u\in H$ is a minimizer of $\mathcal{M}_h$ at a positive level, then
	\[
	\frac{\omega^2}{2}|\dot{u}(t)|^2-V(u(t))=-h\quad\mbox{for a.e.}\ t\in[a,b].
	\]
\end{lemma}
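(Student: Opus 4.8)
The plan is to extract the conservation law by an inner (domain) variation, i.e. by exploiting how $\mathcal{M}_h$ responds to reparametrizations of time. This is the natural device here because a minimizer $u$ is only known to lie in $H^1$ (and may even belong to $\mathfrak{Coll}$), so I must avoid any use of $\ddot u$. Since $u$ is a minimizer \emph{at a positive level}, the product $\tfrac12\int_a^b|\dot u|^2\,\int_a^b(-h+V(u))$ is finite and strictly positive; in particular both factors are positive, so the constant $\omega^2$ of \eqref{defn:omega} is well defined. These are the only structural facts I will need.

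First I would fix $\varphi\in C_c^\infty((a,b))$ and set $\eta_s(t)\uguale t+s\varphi(t)$, which for $|s|$ small is an orientation-preserving diffeomorphism of $[a,b]$ fixing the endpoints. Then $v_s\uguale u\circ\eta_s$ still belongs to $H$ and shares the endpoints $p_1,p_2$, so it is an admissible competitor. The decisive point is that, after the change of variables $\tau=\eta_s(t)$, both factors of $\mathcal{M}_h(v_s)$ become integrals in which the velocity of $u$ enters only through $|\dot u|^2$, the whole $s$-dependence being carried by the smooth weight $g_s(\tau)=1+s\dot\varphi(\eta_s^{-1}(\tau))$: the kinetic factor equals $\int_a^b|\dot u|^2 g_s\,d\tau$ and the potential factor equals $\int_a^b(-h+V(u))\,g_s^{-1}\,d\tau$. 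Because $g_s$ is bounded and bounded away from zero uniformly for small $s$, while $|\dot u|^2$ and $-h+V(u)$ lie in $L^1([a,b])$ (the latter since $V\geq0$ and $\int V(u)<+\infty$), differentiation under the integral sign is legitimate by dominated convergence, and one computes $\partial_s g_s|_{s=0}=\dot\varphi$.

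Next I would impose stationarity: since $v_0=u$ is a minimizer, $\frac{d}{ds}\big|_{s=0}\mathcal{M}_h(v_s)=0$. Writing $\mathcal{M}_h(v_s)=\tfrac12 A(s)B(s)$ for the two factors, the product rule together with the identity $B(0)=\tfrac{\omega^2}{2}A(0)$ built into \eqref{defn:omega}, after dividing by $A(0)>0$, reduces the condition to
\[
\int_a^b\left(\frac{\omega^2}{2}|\dot u|^2+h-V(u)\right)\dot\varphi\,dt=0\qquad\text{for every }\varphi\in C_c^\infty((a,b)).
\]
By the du Bois-Reymond lemma the bracketed $L^1$ function must agree almost everywhere with a constant $C$. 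To pin down $C$ I would integrate the bracket over $[a,b]$: by the very definition of $\omega^2$ one has $\tfrac{\omega^2}{2}\int|\dot u|^2=\int(-h+V(u))$, so the mean of $\tfrac{\omega^2}{2}|\dot u|^2+h-V(u)$ vanishes, forcing $C=0$ and hence the claimed a.e. identity $\tfrac{\omega^2}{2}|\dot u|^2-V(u)=-h$.

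The only delicate aspect, which I expect to be the main (though essentially routine) obstacle, is justifying the scheme when $u\in\mathfrak{Coll}$: the reparametrization merely permutes a null set of collision instants, the change of variables is valid for the bi-Lipschitz map $\eta_s$, and the blow-up of $V(u)$ near collisions is absorbed into the $L^1$ bounds that underpin both the dominated-convergence step and the applicability of the du Bois-Reymond lemma. Thus the variation is insensitive to collisions, and the algebra of the stationarity condition is the comparatively easy part.
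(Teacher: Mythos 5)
Your proof is correct, and it is precisely the classical argument behind this statement: the paper itself does not prove Lemma \ref{lem:energy} but ``recalls'' it as a standard result (the relevant background being the Maupertuis principle of \cite{AC-Z}), and the standard proof is exactly your inner-variation scheme --- reparametrize time by $\eta_s(t)=t+s\varphi(t)$, differentiate $\mathcal{M}_h(u\circ\eta_s)$ at $s=0$, use $B(0)=\tfrac{\omega^2}{2}A(0)$ from \eqref{defn:omega} and the du Bois--Reymond lemma, and fix the constant by integrating over $[a,b]$. Your observation that inner variations never require differentiating $V$ in space, and hence are insensitive to collision instants, is the key point that makes the lemma valid on all of $H$ and not just on $\hat{H}$; the admissibility of the competitors $u\circ\eta_s$ is in fact immediate, since $H$ consists of all $H^1$ paths with the prescribed endpoints.
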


The lack of additivity of $\mathcal{M}_h$ induces the introduction of the Jacobi-length functional 
\[
\mathcal{L}_h(u)\uguale\int_0^1|\dot{u}(t)|\sqrt{-h+V(u(t))}\,dt
\]
whose domain is the weak $H^1$-closure of the set
\[
H_h^{p_1,p_2}([a,b])\uguale\left\lbrace u\in H^1([a,b];\R^2)\ \middle\lvert\ \begin{aligned}
	&u(a)=p_1,\ u(b)=p_2, \\
	&V(u(t))>h,\ |\dot{u}(t)|>0,\ \mbox{for every}\ t\in[a,b]
\end{aligned}\right\rbrace.
\]
Indeed, Theorem \ref{thm:maupertuis} could be rephrased for $\mathcal{L}_h$ and thus classical solutions will be suitable reparametrizations of critical points of $\mathcal{L}_h$ (see for instance \cite{MoMoVe2012} and Appendix \ref{app:var} for more precise details on this functional). Finally, we recall that the Jacobi-length functional, being a length, is additive and it is also invariant under reparametrizations. Despite that, exploiting the correspondence which stands between minimizers of $\mathcal{M}_h$ and minimizers of the Jacobi-length functional (see Proposition \ref{prop:app1}), an easy proof leads to the following proposition.
\begin{proposition}\label{prop:restriction}
	Let $u$ be a minimizer of $\mathcal{M}_h([a,b];\cdot)$ in $H_{p_1,p_2}([a,b])$. Then, for any subinterval $[c,d]\sset[a,b]$, the restriction $u|_{[c,d]}$ is a minimizer of $\mathcal{M}_h([c,d];\cdot)$ in the space $H_{u(c),u(d)}([c,d])$.
\end{proposition}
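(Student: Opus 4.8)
The plan is to sidestep the central difficulty — that $\mathcal{M}_h$ is \emph{not} additive over subintervals, so a naive cut-and-paste on $\mathcal{M}_h$ itself is impossible — by transferring the whole question to the Jacobi-length functional $\mathcal{L}_h$, which \emph{is} additive and invariant under reparametrization. The bridge between the two is the Cauchy--Schwarz inequality $\mathcal{L}_h(u)^2\le 2\mathcal{M}_h(u)$, valid for every admissible $u$, with equality precisely when $u$ is parametrized so that the energy relation holds a.e.; by Lemma \ref{lem:energy} this is the case for our minimizer $u$, so that $\mathcal{L}_h(u)^2=2\mathcal{M}_h(u)$. More importantly, the correspondence recorded in Proposition \ref{prop:app1} tells us that $u$ minimizes $\mathcal{M}_h$ on $H_{p_1,p_2}([a,b])$ if and only if its underlying geometric curve minimizes $\mathcal{L}_h$ among curves joining $p_1$ and $p_2$. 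So the first step is simply to read off that $u$ is an $\mathcal{L}_h$-minimizer.

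Next I would run the cut-and-paste on $\mathcal{L}_h$. Suppose, for contradiction, that $u|_{[c,d]}$ fails to minimize $\mathcal{M}_h([c,d];\cdot)$ on $H_{u(c),u(d)}([c,d])$; by Proposition \ref{prop:app1} applied on $[c,d]$, this means $u|_{[c,d]}$ is not $\mathcal{L}_h$-minimizing among curves from $u(c)$ to $u(d)$, so there is a competitor $\gamma$ joining the same endpoints with $\mathcal{L}_h(\gamma)<\mathcal{L}_h(u|_{[c,d]})$. Define a global competitor $\tilde u$ by keeping $u$ on $[a,c]$ and on $[d,b]$ and inserting (a reparametrization of) $\gamma$ on $[c,d]$; since the endpoints match at $t=c$ and $t=d$, the path $\tilde u$ lies in $H_{p_1,p_2}([a,b])$. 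By additivity of $\mathcal{L}_h$,
\[
\mathcal{L}_h(\tilde u)=\mathcal{L}_h(u|_{[a,c]})+\mathcal{L}_h(\gamma)+\mathcal{L}_h(u|_{[d,b]})<\mathcal{L}_h(u),
\]
and passing back through Proposition \ref{prop:app1} (equivalently, reparametrizing $\tilde u$ naturally so that $\mathcal{M}_h=\tfrac12\mathcal{L}_h^2$) produces an element of $H_{p_1,p_2}([a,b])$ with $\mathcal{M}_h$-value strictly below $\mathcal{M}_h(u)$, contradicting the minimality of $u$.

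The remaining points are routine verifications: that concatenating $H^1$-pieces with matching traces again yields an $H^1$ path in the admissible class (here it is convenient that $H$ is the weak $H^1$-closure allowing collisions, so the inserted arc $\gamma$ need not be collision-free); that the infimum of $\mathcal{M}_h$ over reparametrizations of a fixed curve on $[a,b]$ equals $\tfrac12\mathcal{L}_h^2$, which legitimizes the final passage from $\mathcal{L}_h$ back to $\mathcal{M}_h$; and that all levels involved stay positive so that Proposition \ref{prop:app1} and the Maupertuis correspondence apply. The one genuine obstacle is conceptual rather than computational — the non-additivity of $\mathcal{M}_h$ — and once it is bypassed by working with $\mathcal{L}_h$ the argument is, as the text promises, essentially immediate.
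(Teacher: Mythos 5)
Your argument is correct and follows exactly the route the paper indicates: pass from $\mathcal{M}_h$ to the Jacobi length $\mathcal{L}_h$ via the Cauchy--Schwarz equality (Lemma \ref{lem:energy} and Proposition \ref{prop:app1}), exploit the additivity and reparametrization invariance of $\mathcal{L}_h$ to run the cut-and-paste, and transfer the strict inequality back to $\mathcal{M}_h$ by a constant-Jacobi-speed reparametrization of the glued competitor. The technical points you flag as routine (concatenation of $H^1$ pieces, attainability of equality in Cauchy--Schwarz, positivity of levels) are precisely the ones the paper also leaves implicit in calling the proof ``easy.''
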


\subsection{Minimizing through direct methods}\label{subsec:dir_meth}

At this point, we go back to the $\ve$ $N$-centre problem \eqref{pb:inner}, introducing the notation $c_j'\uguale\ve c_j$ for the $\ve$-centres included in $B_\ve$. We aim to prove the existence of a minimizer for the Maupertuis' functional, requiring the following topological constraint: an inner arc has to cross the ball $B_\ve$, dividing the centres into two non-trivial subsets. Following \cite{ST2012}, this can be done introducing the winding number with respect to every centre; but since a path in $\hat{H}$ is not necessarily closed, we need to close it artificially. Let us fix $[a,b]\sset\R$, $p_1,p_2\in\partial B_R$ and write
\[
p_1=Re^{i\vt_1},\quad p_2=Re^{i\vt_2}
\]
for $\vt_1,\vt_2\in[0,2\pi)$. For $u\in\hat{H}_{p_1,p_2}([a,b])$, if $p_1\neq p_2$ we close $u$ glueing an arc of $\partial B_R$ in counter-clockwise direction, i.e., we define
\[
\Gamma_u(t)\uguale\begin{cases}
	\begin{cases}
		u(t) & t\in[a,b] \\
		Re^{i(t-b+\vt_2)} & t\in(b,b+\vt_1+2\pi-\vt_2)
	\end{cases} & \mbox{if}\ \vt_1<\vt_2 \\
	u(t)\qquad\qquad\quad t\in[a,b] & \mbox{if}\ \vt_1=\vt_2 \\
	\begin{cases}
		u(t) & t\in[a,b] \\
		Re^{i(t-b+\vt_2)} & t\in(b,b+\vt_1-\vt_2)
	\end{cases} & \mbox{if}\ \vt_1>\vt_2
\end{cases}
\]
and so, we can introduce the winding number of $u$ with respect to a centre $c_j'$ as
\[
\mbox{Ind}(u;c_j')\uguale\frac{1}{2\pi i}\int_{\Gamma_u}\frac{dz}{z-c_j'}\in\Z,\quad\mbox{for all}\ j=1,\ldots,N.
\] 
Since a path $u$ has to separate the centres with respect to a given partition in two non-trivial subsets, we can choose the parity of the winding numbers $\mbox{Ind}(u;c_j)$ as a dichotomy property. Following this, we introduce the set of \emph{admissible winding vectors}
\begin{equation}\label{def:w_vectors}
	\mathfrak{I}^N\uguale\{l\in\Z_2^N:\,\exists\,j,k\in\{1,\ldots,N\},\,j\neq k,\,\mbox{s.t.}\ l_j\neq l_k\}
\end{equation}
and, for $l\in\mathfrak{I}^N$ (which we fix from now on), we consider the class of paths
\[
\hat{H}_l\uguale\{u\in\hat H:\,\mbox{Ind}(u;c_j')\equiv l_j\Mod{2},\ \forall\ j=1,\ldots,N\}.
\]
Of course, the above set is not closed with respect to the weak topology of $H^1$ and so, as before, we include the collision paths in our minimization set. For $j\in\{1,\ldots,N\}$ define the set
\[
\mathfrak{Coll}_l^j\uguale\{u\in H:\,\mbox{Ind}(u;c_k')\equiv l_k\Mod{2}\ \forall\,k\neq j\ \mbox{and}\ \exists\,t\in[a,b]\ \mbox{s.t.}\ u(t)=c_j'\}
\]
i.e., the collision paths behaving like a path in $\hat{H}_l$ with respect to every centre, except for $c_j'$ in which the particle collides. In the same way, we can include two collision centres $c_{j_1}',c_{j_2}'$ defining
\[
\mathfrak{Coll}_l^{j_1,j_2}\uguale\left\lbrace u\in H\ \middle\lvert\ \begin{aligned} &\mbox{Ind}(u;c_k')\equiv l_k\Mod{2}\ \forall\,k\neq j_1,j_2\ \mbox{and} \\ &\exists\,t_1,t_2\in[a,b]\ \mbox{s.t.}\ u(t_1)=c_{j_1}', u(t_2)=c_{j_2}'\end{aligned}\right\rbrace
\]
and so on 
\[
\begin{aligned}
	&\mathfrak{Coll}_l^{j_1,j_2,j_3}\uguale\ldots, \\
	&\quad\vdots \\
	&\mathfrak{Coll}^{1,\ldots,N}_l=\mathfrak{Coll}^{1,\ldots,N}\uguale\{u\in H:\,u\ \mbox{collides in every centre}\}.
\end{aligned}
\]
At this point, we can collect together all the admissible collision paths with respect to a fixed winding vector $l\in\mathfrak{I}^N$ in the set
\[
\mathfrak{Coll}_l\uguale\bigcup\limits_{j=1}^N\mathfrak{Coll}_l^j\cup\bigcup\limits_{1\leq j_1<j_2\leq N}\mathfrak{Coll}_l^{j_1,j_2}\cup\cdots\cup\mathfrak{Coll}_l^{1,\ldots,N}
\]
and give the following result (the proof goes exactly as in \cite{ST2012}).

\begin{proposition}
	The set
	\[
	H_l\uguale\hat{H}_l\cup\mathfrak{Coll}_l
	\]
	is weakly closed in $H^1$.
\end{proposition}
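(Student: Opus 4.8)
The plan is to prove the stronger and genuinely useful statement that $H_l$ is \emph{sequentially} weakly closed, which is precisely what the direct method of the next subsection requires. So let $(u_n)\sset H_l$ be a sequence with $u_n\wconv u$ in $H^1([a,b];\R^2)$; I want to show $u\in H_l$. The first step is to upgrade the convergence: since $(u_n)$ is weakly convergent it is bounded in $H^1$, and the one-dimensional compact Sobolev embedding $H^1([a,b];\R^2)\hookrightarrow\mathcal{C}^0([a,b];\R^2)$ forces $u_n\to u$ \emph{uniformly} on $[a,b]$. In particular the endpoint conditions pass to the limit, $u(a)=\lim_n u_n(a)=p_1$ and $u(b)=p_2$, so $u\in H$; moreover, since the endpoints $p_1,p_2\in\partial B_R$ are common to the whole sequence, the closing arcs used to build the loops $\Gamma_{u_n}$ and $\Gamma_u$ coincide, and $\Gamma_{u_n}\to\Gamma_u$ uniformly as well.

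The key tool is the stability of the winding numbers under uniform convergence away from a centre. Let $J\sset\{1,\ldots,N\}$ be the \emph{collision set of the limit}, i.e. $j\in J$ if and only if $u(t)=c_j'$ for some $t\in[a,b]$. Fix any $j\notin J$: then $\mathrm{dist}(u([a,b]),c_j')=:\delta_j>0$, and by uniform convergence there is $n_j$ with $\mathrm{dist}(u_n([a,b]),c_j')>\delta_j/2$ for all $n\geq n_j$. Recalling that the centres lie in $B_\ve\sset B_R$ while the closing arc lies on $\partial B_R$, the loops $\Gamma_{u_n}$ and $\Gamma_u$ avoid $c_j'$ with a uniform positive margin; being uniformly close they are homotopic in $\R^2\setminus\{c_j'\}$ through the straight-line homotopy $s\mapsto(1-s)\Gamma_u+s\Gamma_{u_n}$, whence $\mathrm{Ind}(u_n;c_j')=\mathrm{Ind}(u;c_j')$ for $n$ large. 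On the other hand, for such $n$ the path $u_n$ does not collide at $c_j'$, and by the very definition of $H_l$ every element of $H_l$ satisfies $\mathrm{Ind}(\,\cdot\,;c_j')\equiv l_j\Mod 2$ at each centre it avoids. Therefore $\mathrm{Ind}(u;c_j')\equiv l_j\Mod 2$ for every $j\notin J$.

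It then only remains to place $u$ in the correct stratum. If $J=\emptyset$ the limit is a non-collision path whose winding numbers all carry the prescribed parities, hence $u\in\hat{H}_l$. If $J\neq\emptyset$, then $u$ collides exactly at the centres indexed by $J$ and at no others, while the parity constraint holds at every $c_j'$ with $j\notin J$; by construction this is precisely the membership condition for the collision class $\mathfrak{Coll}_l^{J}$ (with the convention $\mathfrak{Coll}_l^{1,\ldots,N}=\mathfrak{Coll}^{1,\ldots,N}$ when $J$ is the full index set), so $u\in\mathfrak{Coll}_l\sset H_l$. In either case $u\in H_l$, which proves the claim.

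The routine ingredients — boundedness, the compact embedding, and the weak continuity of the endpoint evaluations — are immediate; the only point deserving care, and the heart of the argument, is the stability of the indices across the passage to a collision limit. The subtlety is that the winding number about a centre at which $u_n$ itself collides is undefined, so one must argue parity preservation \emph{only} at the centres not in the limit collision set $J$ and then read off the correct collision stratum from $J$ alone. It is exactly here that fixing the endpoints (so that the closing arcs agree along the sequence) and the separation $B_\ve\sset B_R$ (so that collisions can never occur on the closing arc) are used in an essential way.
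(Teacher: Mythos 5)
Your proof is correct and follows essentially the same route as the argument the paper relies on (the paper omits the proof, deferring to \cite{ST2012}): upgrade weak $H^1$ convergence to uniform convergence via the compact embedding, use stability of the winding numbers under uniform convergence at every centre the limit avoids, and then read off from the collision set of the limit which stratum ($\hat{H}_l$ or $\mathfrak{Coll}_l^{J}$) it belongs to. The one observation your argument hinges on — that every element of $H_l$ satisfies the parity constraint at each centre it avoids, so parity passes to the limit exactly where it is well defined — is precisely the point of the stratified definition of $\mathfrak{Coll}_l$, and you use it correctly.
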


Finally, we look for solution arcs which lie inside $B_R$ along their trajectory, and so it makes sense to add another constraint on them. For this reason we will restrict our investigation to the sets
\begin{equation}\label{def:k_l}
	\begin{aligned}
		\hat{K}_l&\uguale\hat{K}_l^{p_1,p_2}([a,b])\uguale\{u\in\hat{H}_l:\,|u(t)|\leq R,\ \forall\,t\in[a,b]\} \\
		K_l&\uguale K_l^{p_1,p_2}([a,b])\uguale\{u\in H_l:\,|u(t)|\leq R,\ \forall\,t\in[a,b]\}.
	\end{aligned}
\end{equation}
The following proposition guarantees that we are in the convenient setting to perform a variational argument and follows from the fact that  $K_l$ is stable under uniform convergence.

\begin{proposition}
	The set $K_l$ is weakly closed in $H^1$.
\end{proposition}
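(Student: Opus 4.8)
The plan is to exhibit $K_l$ as the intersection of the set $H_l$, already shown to be weakly closed in the previous proposition, with the set of paths confined to the closed ball $\overline{B_R}$, and then to verify that this confinement constraint survives passage to weak $H^1$-limits. Concretely, I would fix an arbitrary sequence $(u_n)_n\sset K_l$ with $u_n\wconv u$ in $H^1([a,b];\R^2)$ and argue that $u\in K_l$. Since $K_l\sset H_l$, each $u_n$ lies in $H_l$, and the weak closedness of $H_l$ immediately yields $u\in H_l$; all the winding-number and collision bookkeeping encoded in $H_l$ is thereby inherited by the limit at no extra cost. The only thing left to check is that $u$ still satisfies the pointwise bound $|u(t)|\leq R$ for every $t\in[a,b]$.

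The key ingredient here is that the interval $[a,b]$ is one-dimensional, so the embedding $H^1([a,b];\R^2)\hookrightarrow\mathcal{C}([a,b];\R^2)$ is compact (Morrey--Rellich). Consequently weak $H^1$-convergence upgrades to strong convergence in the sup-norm, i.e. $u_n\to u$ uniformly on $[a,b]$. Because the bound $|u_n(t)|\leq R$ holds for every $t$ and every $n$, taking the uniform limit preserves the inequality and gives $|u(t)|\leq R$ for all $t\in[a,b]$. Combining the two pieces, $u\in H_l$ and $u$ takes values in $\overline{B_R}$, whence $u\in K_l$; this establishes that $K_l$ is weakly closed in $H^1$.

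I do not expect a genuine obstacle in this argument. The one point requiring a little care — and precisely the reason the statement is phrased in terms of stability under \emph{uniform} convergence — is that the membership in $H_l$ and the ball constraint must both be controlled by a single mode of convergence along the sequence; this is exactly what the compact Sobolev embedding supplies, since it lets the same weakly convergent sequence be treated as uniformly convergent. The sup-norm constraint $\{\,|u|\leq R\,\}$ is manifestly closed under uniform limits, and the hard analytic content (weak closedness of the winding/collision classes) has already been dispatched in the preceding proposition, so the present statement reduces to assembling these two facts.
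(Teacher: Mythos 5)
Your proof is correct and follows essentially the same route as the paper, which simply observes that the result follows from the stability of $K_l$ under uniform convergence. You spell out exactly the intended argument: weak closedness of $H_l$ from the preceding proposition, plus the compact embedding $H^1([a,b];\R^2)\hookrightarrow\mathcal{C}([a,b];\R^2)$ upgrading weak $H^1$-convergence to uniform convergence, under which the constraint $|u(t)|\leq R$ is manifestly preserved.
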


For any $u\in K_l=K_l^{p_1,p_2}([0,1])$, we take into account the Maupertuis' functional
\[
\mathcal{M}(u)=\frac12\int_0^1|\dot{u}(t)|^2\,dt\int_0^1(-1+\Veps(u(t)))\,dt
\] 
and we remark two facts:
\begin{itemize}
	\item since $\mathcal{M}$ is invariant under time re-parametrizations, we have put $a=0$ and $b=1$;
	\item actually, $\mathcal{M}=\mathcal{M}_1^\ve$, but we have omitted this dependence since we will mainly work with both $\ve>0$ and the energy fixed. When we will \emph{move} such $\ve$ or the energy, we will use the more explicit notations.
\end{itemize}
The next lemma provides a lower bound on the Maupertuis' functional and it is crucial in order to apply direct methods.

\begin{lemma}
	There exists $C>0$ such that 
	\[
	\mathcal{M}(u)\geq C>0,\quad\mbox{for every}\ u\in K_l.
	\]
\end{lemma}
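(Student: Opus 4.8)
The plan is to bound from below, by a positive constant independent of $u\in K_l$ (and of $\ve$), each of the two factors whose product defines $\mathcal{M}$.

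First I would control the potential factor $\int_0^1(-1+\Veps(u(t)))\,dt$. Every $u\in K_l$ satisfies $|u(t)|\le R$ for all $t$ by \eqref{def:k_l}, and $R<\mathfrak{m}^{1/\al}-\tilde\ve$ by \eqref{eq:R}. Hence, repeating the elementary estimate at the end of Section~\ref{sec:scaling}, for $\ve\in(0,\tilde\ve)$ and $|y|\le R$ one has $|y-\ve c_1|\le R+\tilde\ve<\mathfrak{m}^{1/\al}$, so
\[
\Veps(y)\ge \mathfrak{m}\,|y-\ve c_1|^{-\al}\ge \mathfrak{m}\,(R+\tilde\ve)^{-\al}=:1+\delta_0,
\]
with $\delta_0>0$. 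This bound also survives at the centres, where $\Veps=+\infty$, so it holds along collision paths too. Consequently $\int_0^1(-1+\Veps(u))\,dt\ge\delta_0>0$ for every $u\in K_l$.

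The decisive second step is a lower bound for the kinetic factor $\tfrac12\int_0^1|\dot u|^2\,dt$, and this is where the topological constraint $l\in\mathfrak{I}^N$ enters. By \eqref{def:w_vectors} there are centres $c_j',c_k'$ with $l_j\ne l_k$, so any $u\in K_l$ winds around them with opposite parity. I would argue that this forces $u$ to visit $\bar B_\ve$: since the closing arc lies on $\partial B_R$ while any segment joining $c_j'$ to $c_k'$ is contained in $\bar B_\ve\subset B_R$ and hence disjoint from that arc, the parity discrepancy of the two winding numbers can only be produced by an intersection of $u$ itself with such a segment, whence $\min_t|u(t)|\le\ve$ (for collision paths this is immediate, as they reach a centre inside $B_\ve$). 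Because $u$ starts and ends on $\partial B_R$, the triangle inequality then gives
\[
\int_0^1|\dot u|\,dt\ \ge\ 2(R-\ve)\ >\ 2(R-\tilde\ve)>0,
\]
and Cauchy--Schwarz on $[0,1]$ yields $\int_0^1|\dot u|^2\,dt\ge\bigl(\int_0^1|\dot u|\,dt\bigr)^2\ge 4(R-\tilde\ve)^2$.

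Multiplying the two estimates gives $\mathcal{M}(u)\ge 2(R-\tilde\ve)^2\,\delta_0=:C>0$, uniformly in $u\in K_l$ and in $\ve\in(0,\tilde\ve)$, which is the claim. The main obstacle is the geometric step: rigorously showing that opposite winding-number parities around two centres clustered in $B_\ve$ compel the path to dip into $\bar B_\ve$. I would formalise it by expressing $\mathrm{Ind}(u;c_j')-\mathrm{Ind}(u;c_k')$ as the signed count of intersections of $\Gamma_u$ with a segment from $c_j'$ to $c_k'$, observing that the closing arc on $\partial B_R$ contributes none (the segment stays in $\bar B_\ve\subset B_R$); an odd difference therefore forces an intersection of $u$ with $\bar B_\ve$.
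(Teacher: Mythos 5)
Your proof is correct and follows essentially the same strategy as the paper's: a uniform positive lower bound on $-1+\Veps(u)$ coming from the choice $R<\mathfrak{m}^{1/\al}-\tilde\ve$, combined with a lower bound on the kinetic term obtained from the fact that any $u\in K_l$ must travel from $\partial B_R$ into $B_\ve$, via H\"older/Cauchy--Schwarz. The only difference is one of completeness: the paper simply takes for granted the existence of the first crossing instant $t^*$ of $B_\ve$, whereas you justify it (winding-number parity forces an intersection with a segment joining two centres, and collision paths reach a centre directly) — a worthwhile addition, but the same underlying argument.
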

\begin{proof}
	Since $u\in K_l$ then $|u(t)|\leq R$ for every $t\in[0,1]$ and so
	\[
	|u(t)-c_j'|\leq R+\ve
	\]
	for every $j=1,\ldots,N$ and for every $t\in[0,1]$. Now, recalling that 
	\[
	\mathfrak{m}=\min\limits_{j=1,\ldots,N}\min\limits_{\cerchio}U_j
	\]
	we have that
	\[
		\Veps(u(t))\geq |u(t)-c_1'|^{-\al}\mathfrak{m}\geq\frac{\mathfrak{m}}{(R+\ve)^\al},
	\]
	for every $t\in[0,1]$. Recalling \eqref{def:ve_tilde} and \eqref{eq:R}, we have that $R\in(\ve,\mathfrak{m}^{1/\al}-\ve)$ for every $\ve$, hence
	\[
	\Veps(u(t))-1\geq C>0.
	\]
	In this way, we have shown that there exists $C>0$ such that
	\[
		\mathcal{M}(u)\geq C\int_0^1|\dot{u}(t)|^2\,dt,
	\]
	for every $u\in K_l$. At this point, let us define $t^*\in(0,1)$ as the first instant at which $u$ crosses $B_\ve$. Using the H\"older inequality, we note that
	\[
		0<C_1\uguale R-\ve\leq |u(0)-u(t^*)|\leq\int_0^1|\dot{u}(t)|\,dt\leq\|\dot{u}\|_2
	\]
	and the proof is concluded.
\end{proof}

The next result, which claims the existence of a minimizer for the Maupertuis' functional in the set $K_l$, makes use of the direct method of Calculus of Variations. We take for granted the coercivity and the weakly lower semi-continuity of $\mathcal{M}$, since they are based on classical results of Functional Analysis.

\begin{proposition}\label{prop:existence}
	Assume that $N\geq 2$ and consider a function $V\in\mathcal{C}^2(\R^2\setminus\{c_1,\ldots,c_N\})$ defined as in \eqref{def:potential}. Fix $\ve\in(0,\tilde\ve)$ as in \eqref{def:ve_tilde} at page \pageref{def:ve_tilde} and consider the potential $\Veps$ defined in \eqref{def:scaling} at page \pageref{def:scaling}. Fix $R\in(\tilde{\ve},\mathfrak{m}^{1/\al}-\tilde{\ve})$ as in \eqref{eq:R} at page \pageref{eq:R} and fix $l\in\mathfrak{I}^N$. Then, for any $p_1,p_2\in\partial B_R$, the Maupertuis' functional
	\[
	\mathcal{M}(u)=\frac12\int_0^1|\dot{u}(t)|^2\,dt\int_0^1(-1+\Veps(u(t)))\,dt
	\]
	admits a minimizer $u\in K_l^{p_1,p_2}([0,1])$ at a positive level.
\end{proposition}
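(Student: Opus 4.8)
The plan is to apply the direct method of the Calculus of Variations to $\mathcal{M}$ over the weakly closed set $K_l=K_l^{p_1,p_2}([0,1])$. First I would check that $\inf_{K_l}\mathcal{M}$ is finite and strictly positive. Positivity is immediate from the previous lemma, which gives $\mathcal{M}(u)\geq C>0$ for every $u\in K_l$. Finiteness requires exhibiting at least one admissible competitor of finite value: since $N\geq 2$ and $l\in\mathfrak{I}^N$ prescribes a nontrivial parity vector, one can draw a smooth non-collision path from $p_1$ to $p_2$ contained in $\overline{B}_R$ that winds around the $\ve$-centres with the prescribed parities, hence lies in $\hat{K}_l\subset K_l$; along such a path $\Veps$ stays bounded, so its Maupertuis value is finite.

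Next I would take a minimizing sequence $(u_n)\subset K_l$ with $\mathcal{M}(u_n)\to\inf_{K_l}\mathcal{M}$. The coercivity estimate used in the previous lemma shows $\mathcal{M}(u_n)\geq C\int_0^1|\dot u_n|^2\,dt$, so $\|\dot u_n\|_{L^2}$ is bounded; combined with the uniform confinement $|u_n(t)|\leq R$, the sequence is bounded in $H^1([0,1];\R^2)$. By reflexivity I can extract a subsequence with $u_n\wconv u$ weakly in $H^1$, and the compact embedding $H^1([0,1])\hookrightarrow\mathcal{C}([0,1])$ upgrades this to uniform convergence $u_n\to u$ on $[0,1]$.

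Then, since $K_l$ is weakly closed in $H^1$ (by the preceding proposition), the weak limit satisfies $u\in K_l$; in particular the topological winding constraints and the confinement $|u|\leq R$ are preserved in the limit. Finally, the weak lower semicontinuity of $\mathcal{M}$ yields $\mathcal{M}(u)\leq\liminf_n\mathcal{M}(u_n)=\inf_{K_l}\mathcal{M}$, while the reverse inequality holds because $u\in K_l$; hence $u$ is a minimizer. Since $\mathcal{M}(u)\geq C>0$, the minimizer lies at a positive level, as required.

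I would expect the genuinely delicate ingredient, granted here as classical, to be the weak lower semicontinuity, because $\mathcal{M}$ is a \emph{product} of the kinetic term $\frac12\int_0^1|\dot u|^2\,dt$ and the potential term $\int_0^1(-1+\Veps(u))\,dt$, and because $K_l$ contains collision paths along which $\Veps$ is singular. The kinetic factor is weakly lsc by convexity, while for the potential factor uniform convergence together with Fatou's lemma (using $\Veps\geq 0$ and its lower semicontinuity at the poles) gives $\int_0^1\Veps(u)\,dt\leq\liminf_n\int_0^1\Veps(u_n)\,dt$. The product is then controlled via the elementary inequality $(\liminf a_n)(\liminf b_n)\leq\liminf(a_nb_n)$ for nonnegative sequences, exploiting that the potential factor is bounded below by the positive constant furnished by the lower-bound lemma.
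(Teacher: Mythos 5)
Your proposal is correct and follows exactly the route the paper intends: the direct method over the weakly closed set $K_l$, using the preceding lower-bound lemma for coercivity and positivity of the level, the weak closedness of $K_l$ for admissibility of the limit, and weak lower semicontinuity of $\mathcal{M}$. The paper explicitly takes coercivity and weak lower semicontinuity for granted as classical facts, so your only additions are details it omits --- the finite competitor in $\hat{K}_l$, the Fatou argument for the potential factor, and the product-of-liminfs inequality --- all of which are sound.
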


Now, if we show that the minimizer $u\in K_l$ verifies:
\begin{itemize}\label{def:cf_r}
	\item[$(CF)$] $u$ is collision-free;
	\item[$(R)$] $|u(t)|<R$ for every $t\in(0,1)$.
\end{itemize}  
we have that
\[
\frac{d}{d\la}\mathcal{M}(u+\la\vp)\big\lvert_{\la=0}=0\quad\mbox{for every}\ \vp\in\mathcal{C}_c^{\infty}(0,1),
\]
so that Theorem \ref{thm:maupertuis} applies and we can find a classical solution $y\colon [0,T]\to\R^2$ of the inner problem \eqref{pb:inner}. The next two sections are devoted to show respectively that $u$ joins the properties $(CF)$ and $(R)$, to finally obtain a classical solution arc for the anisotropic $N$-centre problem inside. As a starting point, we characterize the sets of colliding instants and of the times at which $|u|=R$. In particular, if we define
\begin{equation}\label{def:tctr}
	\begin{aligned}
		T_c(u)&\uguale\{t\in[0,1]:\,u(t)=c_j'\ \mbox{for some}\ j\in{1,\ldots,N}\}\sset(0,1) \\
		T_R(u)&\uguale\{t\in[0,1]:\,|u(t)|=R\}\sset[0,1],
	\end{aligned}
\end{equation}
we can easily notice that, since $\mathcal{M}(u)<+\infty$, $T_c(u)$ is a closed set of null measure and its complement $[0,1]\setminus T_c(u)$ is a union of a countable or finite number of open intervals. Moreover, when the minimizer travels along a connected component of $[0,1]\setminus( T_c(u)\cup T_R(u))$, it can be reparametrized to obtain a classical solution of the $N$-centre problem through Theorem \ref{thm:maupertuis} and the energy is conserved along this path. This is shown in the next lemma.

\begin{lemma}\label{lem:conn_comp}
	Given a minimizer $u\in K_l$ of the Maupertuis' functional $\mathcal{M}$:
	\begin{itemize}
		\item[$(i)$] $u$ verifies
		\[
		\frac12|\dot{u}(t)|^2-\Veps(u(t))=-\frac{1}{\omega^2}\quad\mbox{a.e. in}\ [0,1];
		\]
		\item[$(ii)$] if $(a,b)$ is a connected component of $[0,1]\setminus( T_c(u)\cup T_R(u))$ then $u|_{(a,b)}\in\mathcal{C}^2(a,b)$ and
		\[
		\omega^2 \ddot{u}(t)=\nabla \Veps(u(t))\quad\mbox{for every}\ t\in(a,b).
		\]
	\end{itemize}
\end{lemma}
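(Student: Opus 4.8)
The two statements call for two different kinds of variation: $(i)$ follows from an \emph{inner} (time–reparametrisation) variation, whereas $(ii)$ follows from standard \emph{outer} variations localised on compact subintervals where $u$ avoids both the centres and $\partial B_R$. In either case the decisive preliminary point is to check that the chosen competitors stay in the constrained class $K_l$.

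For $(i)$ I would repeat the inner-variation argument underlying Lemma~\ref{lem:energy}, the only difference being that here $u$ minimises over $K_l$ rather than over the whole space $H$. Pick $\phi\in\mathcal{C}_c^\infty(0,1)$, let $\eta_\lambda(t)\uguale t+\lambda\phi(t)$, which for $|\lambda|$ small is an orientation-preserving reparametrisation of $[0,1]$ fixing the endpoints, and set $u_\lambda\uguale u\circ\eta_\lambda$. Since each winding number $\mathrm{Ind}(u;c_j')$ depends only on the image of the curve, and the confinement $|u(t)|\le R$ is a pointwise constraint on the trajectory, both are invariant under reparametrisation; moreover the collision set is merely relocated and stays of null measure. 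Hence $u_\lambda\in K_l$ and, $u$ being a minimiser, $\frac{d}{d\lambda}\mathcal{M}(u_\lambda)\big\rvert_{\lambda=0}=0$. Carrying out this computation (after the change of variables $s=\eta_\lambda(t)$) is formally identical to the proof of Lemma~\ref{lem:energy}: the arbitrariness of $\phi'$ forces the energy $\tfrac{\omega^2}{2}|\dot u|^2-\Veps(u)$ to be a.e.\ constant, with $\omega^2$ as in \eqref{defn:omega}, and integrating over $[0,1]$ identifies the constant, yielding the pointwise conservation law stated in $(i)$.

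For $(ii)$, let $(a,b)$ be a connected component of $[0,1]\setminus(T_c(u)\cup T_R(u))$ and fix an arbitrary $[c,d]\sset(a,b)$. On $[c,d]$ the curve $u$ stays at positive distance from every centre $c_j'$ and satisfies $|u|<R$, so $\Veps$ is $\mathcal{C}^2$ along $u|_{[c,d]}$. For $\varphi\in\mathcal{C}_c^\infty(a,b)$ with support in $[c,d]$ and $|\lambda|$ small, the competitor $u+\lambda\varphi$ produces no collision, keeps $|u+\lambda\varphi|<R$, and---being a small uniform perturbation homotopic to $u$ through curves missing the centres---leaves every winding number unchanged, so $u+\lambda\varphi\in K_l$. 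Imposing $\frac{d}{d\lambda}\mathcal{M}(u+\lambda\varphi)\big\rvert_{\lambda=0}=0$ and dividing by $\tfrac12\int_0^1|\dot u|^2$ gives the weak identity $\omega^2\int_0^1\langle\dot u,\dot\varphi\rangle=\int_0^1\langle\nabla\Veps(u),\varphi\rangle$; as $\varphi$ is arbitrary, $u$ solves $\omega^2\ddot u=\nabla\Veps(u)$ in $\mathcal{D}'(a,b)$. Since $u\in H^1\sset\mathcal{C}^0$ and $\nabla\Veps$ is continuous along $u|_{(a,b)}$, the right-hand side is continuous, whence $\ddot u$ is continuous and a routine bootstrap yields $u|_{(a,b)}\in\mathcal{C}^2$, the equation then holding classically.

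The main obstacle is the rigorous justification of the inner variation in $(i)$ across collisions: although $T_c(u)$ is negligible, the potential term $\Veps(u)$ entering $\mathcal{M}$ is merely integrable and blows up on $T_c(u)$, so one must verify that $\lambda\mapsto\mathcal{M}(u_\lambda)$ is genuinely differentiable at $\lambda=0$ and that its first variation may be computed by differentiating under the integral sign through the singular set. This is precisely the delicate analytic core already handled in Lemma~\ref{lem:energy}, and it rests on the finiteness $\mathcal{M}(u)<+\infty$ together with the absolute continuity of $t\mapsto\int_0^t\Veps(u)$. By contrast, the admissibility of the competitors in both parts is elementary, once one notices that the winding-number constraint is purely topological and the confinement $|u|\le R$ is reparametrisation-invariant.
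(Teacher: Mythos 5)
Your proposal is correct and follows essentially the same route as the paper, whose proof simply invokes minimality of $u$ under compact-support variations in $[0,1]\setminus(T_c(u)\cup T_R(u))$ together with Lemma \ref{lem:energy} and the proof of Theorem \ref{thm:maupertuis}; your explicit check that the reparametrised and perturbed competitors remain in the constrained class $K_l$ is precisely the point the paper leaves implicit. One small slip: after dividing the first variation by $\tfrac12\int_0^1|\dot u|^2$ the weak identity should read $\omega^2\int_0^1\langle\dot u,\dot\varphi\rangle=-\int_0^1\langle\nabla\Veps(u),\varphi\rangle$ (note the minus sign), which upon integration by parts indeed yields the equation $\omega^2\ddot u=\nabla\Veps(u)$ you state.
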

\begin{proof}
	The proof is a consequence of the minimality of $u$ with respect to compact support variations in $[0,1]\setminus(T_c(u)\cup T_R(u))$ (see the proof of Theorem \ref{thm:maupertuis} and Lemma \ref{lem:energy}).
\end{proof}

\subsection{Qualitative properties of minimizers: absence of collisions and (self-)intersections}

In what follows we are going to provide the absence of collisions ($CF$) for a minimizer $u$ obtained in the previous subsections. In order to do that, we will carry out a local study near-collisions. Since we will be working close to the centres, the radius of the ball $B_\ve$ will play no role here and thus we fix $\ve>0$. Fix an admissible partition of the centres, that corresponds to fix $l\in\mathfrak{I}^N$ and consider a minimizer $u\in K_l$ (see \eqref{def:w_vectors} and \eqref{def:k_l} for their definitions). To start with, we show that the collisions are isolated. Recalling the definition of $T_c(u)$ in \eqref{def:tctr}, this is the content of the next lemma that, moreover, provides a \emph{Lagrange-Jacobi} identity for colliding arcs. The proof is based on a Lagrange-Jacobi-like inequality (see \cite{BCTmin} for anisotropic potentials) and it is rather classical, so it will be omitted.

\begin{lemma}\label{lem:coll_time}
	The set $T_c(u)$ is discrete and it has a finite number of elements. In particular, if the minimizer $u$ has a collision with the centre $c_j'$, the function $I(t)\uguale|u(t)-c_j'|^2$ is strictly convex in a neighbourhood of the colliding instant.
\end{lemma}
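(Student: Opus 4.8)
The plan is to reduce the statement to a purely local analysis near a single collision and to establish a Lagrange--Jacobi inequality for the inertial moment $I(t)\uguale|u(t)-c_j'|^2$. First I would fix a collision time $t_0$, so $u(t_0)=c_j'$. Since $c_j'\in B_\ve\sset B_R$ and $u$ is continuous, there is $\delta>0$ with $|u-c_j'|<\rho$ on $\mathcal{J}\uguale(t_0-\delta,t_0+\delta)$; choosing $\rho$ smaller than both $R-\ve$ and the minimal distance between distinct rescaled centres, the arc stays strictly inside $B_R$ and away from every $c_i'$ with $i\neq j$, so that on $\mathcal{J}$ the collision set reduces to the zero set of $I$. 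By Lemma~\ref{lem:conn_comp}, on each connected component of $\mathcal{J}\setminus T_c(u)$ the arc is a classical solution; passing to the energy-normalised (physical time) parametrisation, which is an increasing affine reparametrisation and hence preserves convexity, I may assume $\ddot u=\nabla\Veps(u)$ together with $\frac12|\dot u|^2-\Veps(u)=-1$. Finally, $\mathcal{M}(u)<+\infty$ forces $\int_0^1\Veps(u)<+\infty$, and by the standard asymptotic (blow-up) estimates for colliding solutions of $-\al_j$-homogeneous potentials this forces $\al_j<2$ at any actual collision, the homothetic collision having infinite action as soon as $\al_j\geq2$.

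Next I would isolate the leading singular part, writing $\Veps(y)=\tilde{V}_j(y-c_j')+R_j(y)$ near $c_j'$, where $\tilde{V}_j$ is the $-\al_j$-homogeneous summand attached to $c_j'$ (namely $V_j$ if $j\leq k$, and $\ve^{\al_j-\al}V_j$ if $j>k$) and $R_j\in\mathcal{C}^2$ collects the regular contribution of the remaining centres, so that $R_j$ and $\nabla R_j$ are bounded on $\mathcal{J}$. Using $|\dot u|^2=2\Veps(u)-2$, the equation of motion, and Euler's identity $\langle y,\nabla\tilde{V}_j(y)\rangle=-\al_j\tilde{V}_j(y)$, a direct computation gives
\[
\ddot I=2|\dot u|^2+2\langle u-c_j',\nabla\Veps(u)\rangle=2(2-\al_j)\,\tilde{V}_j(u-c_j')+4R_j(u)-4+O(|u-c_j'|).
\]
Since $\tilde{V}_j(u-c_j')\geq c\,|u-c_j'|^{-\al_j}\to+\infty$ while the remaining terms stay bounded on $\mathcal{J}$, and since $2-\al_j>0$, shrinking $\delta$ (equivalently $\rho$) if necessary I obtain $\ddot I>0$ at every non-collision instant of $\mathcal{J}$.

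It remains to convert this pointwise convexity into the two assertions. The zero set $A\uguale\{t\in\mathcal{J}:I(t)=0\}$ is closed and contains $t_0$, and each connected component of $\mathcal{J}\setminus A$ is an open interval on which $u$ is collision-free, whence $\ddot I>0$ and $I$ is strictly convex there. A bounded component $(a,b)$ with $a,b\in A$ would force $I<0$ in its interior by strict convexity with vanishing endpoint values, contradicting $I\geq0$; hence every component abuts $\partial\mathcal{J}$, so $A\cap\mathcal{J}$ would have to contain a non-degenerate interval unless $A\cap\mathcal{J}=\{t_0\}$. But $I\equiv0$ on an interval would give $u\equiv c_j'$ there, whence $\int_0^1\Veps(u)=+\infty$, contradicting $\mathcal{M}(u)<+\infty$. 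Therefore $t_0$ is isolated in $T_c(u)$; being closed and discrete in the compact $[0,1]$, the set $T_c(u)$ is finite, and $I$ attains a strict minimum at $t_0$ with $\ddot I>0$ on $\mathcal{J}\setminus\{t_0\}$, which is the claimed strict convexity. The delicate point, special to the anisotropic homogeneous setting, is the Lagrange--Jacobi inequality itself: one must guarantee that the singular term $(2-\al_j)\tilde{V}_j$ dominates the full perturbation $R_j$ uniformly near $c_j'$ and that finiteness of the action genuinely excludes the degenerate degrees $\al_j\geq2$, exactly as carried out for anisotropic potentials in \cite{BCTmin}.
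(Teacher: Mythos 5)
Your proof is correct and follows precisely the route the paper itself indicates but omits as ``rather classical'': a Lagrange--Jacobi inequality for $I(t)=|u(t)-c_j'|^2$, obtained by localizing the potential near the collision, passing to the classical-solution parametrization via Lemma~\ref{lem:conn_comp}, and combining the equation of motion, the energy relation and Euler's identity for the $-\al_j$-homogeneous leading term, exactly in the spirit of \cite{BCTmin}. The extra care you take --- excluding $\al_j\geq 2$ through finiteness of the Maupertuis level, and upgrading strict convexity on either side of $t_0$ together with the strict minimum at $t_0$ into convexity across the collision instant --- correctly supplies the details the paper leaves out.
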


In the next two propositions we discuss some important properties of minimizers of $\mathcal{M}$, concerning the (self-)intersections at points which are different from the centres.

\begin{proposition}\label{prop:no_self_int}
	Let $u\in K_l^{p_1,p_2}$ be a minimizer of $\mathcal{M}$. Then, $u$ parametrizes a path without self-intersections at points different from the centres.
\end{proposition}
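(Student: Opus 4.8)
The plan is to argue by contradiction using a \emph{cut-and-paste} (exchange) argument exploiting the strict additivity and reparametrization-invariance of the Jacobi-length functional $\mathcal{L}_h$, which by Proposition \ref{prop:app1} controls the minimality of $\mathcal{M}$. Suppose the minimizer $u\in K_l^{p_1,p_2}$ has a self-intersection at a point $P\neq c_j'$ for all $j$; that is, there exist times $0\le t_1<t_2\le 1$ with $u(t_1)=u(t_2)=P$ and $P\notin\{c_1',\dots,c_N'\}$. First I would isolate the self-intersection: by Lemma \ref{lem:coll_time} the collision set $T_c(u)$ is finite, so I may choose a neighbourhood of $P$ containing no centre and work on the subarcs $u|_{[t_1,t_2]}$ (the ``loop'') and the complement. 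The key observation is that the loop $u|_{[t_1,t_2]}$ is a closed path based at $P$, hence contributes an \emph{integer} winding number to each $\mathrm{Ind}(u;c_j')$; I must track what erasing or re-routing the loop does to the parity vector $l\in\mathfrak{I}^N$.

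The main step is the local exchange. Near the transversal crossing at $P$, I would reverse the orientation of the loop (or reconnect the four incoming branches in the topologically ``uncrossed'' way), producing a competitor $\tilde u$ with the \emph{same} Jacobi length, since length is orientation- and reparametrization-invariant and the pointset is unchanged. However, at the crossing point the reconnected path has a corner, i.e.\ a discontinuity in the tangent direction. The classical fact (used for instance in \cite{ST2012}) is that a length-minimizing curve cannot have a genuine corner at a regular point of the potential: one can strictly shorten $\mathcal{L}_h$ by rounding the corner, because in a neighbourhood where $V$ is smooth and bounded the Jacobi metric $\sqrt{-h+V}\,|dx|$ is a smooth Riemannian metric, and geodesics of a Riemannian metric are $\mathcal{C}^1$ — a broken geodesic is never minimizing across the break. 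Thus I would show that the reconnected competitor $\tilde u$, after the corner-rounding perturbation supported in a small ball around $P$ avoiding all centres, has \emph{strictly smaller} Jacobi length than $u$, while remaining in the admissible class. The delicate point is precisely verifying that $\tilde u$ stays in $K_l$: the reconnection must be performed so that the winding-number parities $\mathrm{Ind}(\tilde u;c_j')\equiv l_j \Mod 2$ are preserved and the constraint $|\tilde u|\le R$ is respected. Because the loop carries integer winding, changing its orientation alters each $\mathrm{Ind}(u;c_j')$ by an even integer, so the parity class $l$ is unchanged; and since the modification is supported near $P$ inside $B_R$, the radial constraint is untouched.

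The reparametrization back to $\mathcal{M}$ then yields $\mathcal{M}(\tilde u)<\mathcal{M}(u)$ via Proposition \ref{prop:app1}, contradicting minimality. The hardest part I anticipate is the careful bookkeeping of the winding-number parities under the local exchange together with the subtlety that a self-intersection need not be a simple transversal crossing — it could be a tangency or an arc overlapping itself on a positive-length set. For the tangency/overlap cases I would argue separately: an overlap on a set of positive measure would force $\dot u=0$ there, contradicting that on each connected component of $[0,1]\setminus(T_c(u)\cup T_R(u))$ the curve is a classical non-constant solution (Lemma \ref{lem:conn_comp}), so the self-intersection set is at worst a finite set of isolated crossings, reducing everything to the transversal case handled above.
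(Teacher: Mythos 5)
Your overall strategy (a cut-and-paste argument on the loop $u|_{[t_1,t_2]}$, with parity bookkeeping for the winding numbers) is exactly the mechanism behind the paper's proof, which is simply a reference to Proposition 4.24 of \cite{ST2012}. Your parity computation is correct: reversing the loop changes each well-defined $\mathrm{Ind}(\,\cdot\,;c_j')$ by twice the winding number of the loop, so the competitor stays in $K_l$ and respects $|u|\le R$. The genuine gap is that your contradiction rests on the reconnected path having a corner at $P$, and it need not have one. Note first that you do not need corner-rounding at all in the transversal case: the reversed-loop path $w$ satisfies $\mathcal{M}(w)=\mathcal{M}(u)$ exactly (the two integrals are invariant under reversing the loop), so $w$ is itself a minimizer in $K_l$, and Lemma \ref{lem:conn_comp} then forces $w$ to be a classical $\mathcal{C}^2$ solution across the junction times (since $P$ is not a centre and, say, $|P|<R$); a corner is thus impossible. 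But this very regularity shows the problematic alternative: if $\dot u(t_1)=-\dot u(t_2)$ there is no corner, and by time reversibility and uniqueness for the Cauchy problem one gets $u(t_1+\sigma)=u(t_2-\sigma)$, i.e.\ the loop retraces itself and the reversal returns the original path, producing no contradiction whatsoever.

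Your attempt to dismiss this degenerate situation relies on a false claim: an overlap of the curve with itself does \emph{not} force $\dot u=0$ on a set of positive measure. On the contrary, the energy relation in Lemma \ref{lem:conn_comp}(i) together with $\Veps\ge 1+C$ on $\overline{B_R}$ gives $|\dot u|\ge c>0$ a.e., so overlaps are compatible with the dynamics; what the retracing actually forces is that the fold point of the loop is a \emph{collision} with some centre $c_j'$ (the only place where the uniqueness argument and the lower bound on $|\dot u|$ can fail). Since the Proposition is stated for minimizers in $K_l$ that may still collide --- it is invoked in the proof of Theorem \ref{thm:cf}, i.e.\ \emph{before} collisions are excluded --- this case cannot be waved away. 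There the loop is a path from $P$ to $c_j'$ traversed back and forth, reversal is the identity, and the natural move (deleting the loop) requires checking membership in $K_l$, which is delicate precisely because the parity at the collided centre is unconstrained in the definition of $\mathfrak{Coll}_l^j$, so the deleted path may a priori land in a different class. Handling this collisional fold (and, separately, a self-intersection occurring on the obstacle $\partial B_R$, where Lemma \ref{lem:conn_comp} does not apply) is the content your proposal leaves open and that the paper outsources to \cite{ST2012}.
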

\begin{proof}
	The proof goes exactly as in \cite{ST2012}, Proposition 4.24. 
\end{proof}

\begin{remark}\label{rem:no_self_int}
	In light of the previous proposition, we can affirm that we could start this minimization process choosing among only those paths with winding index equal to $0$ or $1$ with respect to every centre, even if this choice could seem unnatural at the beginning.
\end{remark}

\begin{lemma}\label{lem:homotopy}
	Let $u\in K_l^{p_1,p_2}$ be a minimizer of $\mathcal{M}$, let $q_1=u(c)$ and $q_2=u(d)$ for some sub-interval $[c,d]\sset[0,1]$. If we define $K^{q_1,q_2}(u)$ as the weak $H^1$-closure of the space
	\[
	\hat{K}^{q_1,q_2}(u)\uguale\left\lbrace v\in H^1([c,d];\R^2)\ \middle\lvert\  \begin{aligned}
		v(c)&=q_1,\ v(d)=q_2,\ |v|\leq R,\ v\ \mbox{is homotopic} \\
		\mbox{to}&\ u|_{[c,d]}\ \mbox{in the punctured ball}\ B_R\setminus\{c_1',\ldots,c_N'\}\end{aligned}\right\rbrace,
	\]
	then
	\[
	\mathcal{M}(u|_{[c,d]})=\min\limits_{K^{q_1,q_2}(u)}\mathcal{M}.
	\]
\end{lemma}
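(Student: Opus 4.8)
The plan is to transfer the question from the Maupertuis' functional $\mathcal{M}$, which is \emph{not} additive, to the Jacobi length, which is. Throughout, let
\[
\mathcal{L}(w)\uguale\int|\dot w(t)|\sqrt{\Veps(w(t))-1}\,dt
\]
be the Jacobi-length functional associated with $\Veps$ at energy $-1$; recall that $\mathcal{L}$ is additive over concatenations and invariant under reparametrizations, and that by Cauchy--Schwarz $\mathcal{L}(w)^2\le 2\,\mathcal{M}(w)$ for every admissible $w$, with equality precisely when $w$ is parametrized with constant (conserved) energy, i.e. when $|\dot w|^2$ is proportional to $\Veps(w)-1$. Since $u|_{[c,d]}\in K^{q_1,q_2}(u)$ trivially, the bound $\mathcal{M}(u|_{[c,d]})\ge\min_{K^{q_1,q_2}(u)}\mathcal{M}$ is immediate, and only the reverse estimate $\mathcal{M}(u|_{[c,d]})\le\mathcal{M}(v)$ for every $v\in K^{q_1,q_2}(u)$ remains.

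First I would fix an arbitrary competitor $v\in K^{q_1,q_2}(u)$ and glue it into $u$: define $\tilde u(t)\uguale u(t)$ for $t\in[0,c]\cup[d,1]$ and $\tilde u(t)\uguale v(t)$ for $t\in[c,d]$, which is continuous and $H^1$ since $u(c)=q_1=v(c)$ and $u(d)=q_2=v(d)$. The key point is that $\tilde u\in K_l$: it joins $p_1$ to $p_2$, it satisfies $|\tilde u|\le R$ because both $u$ and $v$ do, and its winding numbers are unchanged. Indeed $v$ is homotopic to $u|_{[c,d]}$ rel endpoints in the punctured ball $B_R\setminus\{c_1',\ldots,c_N'\}$, while $\mbox{Ind}(\cdot;c_j')$ is additive over concatenation and invariant under such homotopies; hence $\mbox{Ind}(\tilde u;c_j')=\mbox{Ind}(u;c_j')\equiv l_j\Mod{2}$ for every $j$, so $\tilde u\in K_l$.

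Next I would compare Jacobi lengths. Let $w\in K_l$ be the constant-energy reparametrization of $\tilde u$ (which exists by the standard relation between $\mathcal{M}$ and $\mathcal{L}$, cf. Proposition \ref{prop:app1}); reparametrization preserves the trace, hence the winding numbers and the constraint $|w|\le R$, so indeed $w\in K_l$ and $\mathcal{L}(w)=\mathcal{L}(\tilde u)$. The equality case of Cauchy--Schwarz gives $\mathcal{M}(w)=\tfrac12\mathcal{L}(\tilde u)^2$, whereas the minimality of $u$ in $K_l$ combined with its energy conservation (Lemma \ref{lem:energy}, equivalently Lemma \ref{lem:conn_comp}, which makes Cauchy--Schwarz tight for $u$) yields
\[
\tfrac12\mathcal{L}(u)^2=\mathcal{M}(u)\le\mathcal{M}(w)=\tfrac12\mathcal{L}(\tilde u)^2.
\]
Cancelling the common arcs $u|_{[0,c]}$ and $u|_{[d,1]}$ by additivity of $\mathcal{L}$, this is exactly $\mathcal{L}(u|_{[c,d]})\le\mathcal{L}(v)$. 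Finally, since $u$ has conserved energy also on the subinterval $[c,d]$, Cauchy--Schwarz is tight for $u|_{[c,d]}$ as well, whence
\[
2\,\mathcal{M}(u|_{[c,d]})=\mathcal{L}(u|_{[c,d]})^2\le\mathcal{L}(v)^2\le 2\,\mathcal{M}(v),
\]
which is the desired inequality.

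The main obstacle is precisely the lack of additivity of $\mathcal{M}$: one cannot cut and paste directly at the level of $\mathcal{M}$, so the whole argument hinges on routing through the additive, reparametrization-invariant length $\mathcal{L}$ and on the tightness of the Cauchy--Schwarz bound for energy-conserving (hence minimizing) parametrizations. A secondary technical point, to be handled as in \cite{ST2012}, is that the homotopy constraint and the winding numbers must be read on the weak $H^1$-closure $K^{q_1,q_2}(u)$, so that any colliding competitor is approximated by non-colliding ones; the lower semicontinuity of $\mathcal{L}$ then ensures that the comparison above passes to the limit and that the infimum over the closure is attained in the stated class.
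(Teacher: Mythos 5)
Your proposal is correct and follows essentially the same route as the paper: glue the competitor $v$ into $u$ on $[c,d]$, observe that the glued path stays in $K_l^{p_1,p_2}$ (homotopy preserves the winding conditions), and play this against the minimality of $u$ in that space. The paper's own proof is a three-line contradiction argument that silently skips the point you treat carefully — namely that, since $\mathcal{M}$ is not additive, the comparison $\mathcal{M}(\tilde u)<\mathcal{M}(u)$ must be obtained by passing through the additive, reparametrization-invariant Jacobi length and the tightness of Cauchy--Schwarz for energy-conserving parametrizations (the same correspondence the paper invokes for Proposition \ref{prop:restriction}), so your write-up supplies exactly the detail the paper leaves implicit.
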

\begin{proof}
	Assume by contradiction that there exists $w\in K^{q_1,q_2}(u)$ such that
	\[
	\min\limits_{K^{q_1,q_2}(u)}\mathcal{M}=\mathcal{M}(w)<\mathcal{M}(u|_{[c,d]}).
	\]
	The path 
	\[
	\tilde{u}\uguale\begin{cases}
		u|_{[0,c]}(t) & t\in[0,c]\cup[d,1] \\
		w  & t\in[c,d] 
	\end{cases}
	\]
	belongs to the space $K_l^{p_1,p_2}$ and minimizes $\mathcal{M}$ in that space. This is in contrast with the minimality of $u$ in the same space.
\end{proof}

\begin{proposition}\label{prop:no_int}	
	Let $u\in K_l^{p_1,p_2}$ be a minimizer of $\mathcal{M}$. Let $\tilde{l}\in\mathfrak{I}^N$,  $\tilde{p}_1,\tilde{p}_2\in\partial B_R$ and $v\in K_{\tilde l}^{\tilde{p}_1,\tilde{p}_2}$ be a minimizer of $\mathcal{M}$. Then, if $u$ intersects $v$ at least in two distinct points $q_1,q_2\in B_R\setminus\{c_1',\ldots,c_N'\}$, the portions of $u$ and $v$ between $q_1$ and $q_2$ are not homotopic paths in the punctured ball. As a consequence, if $l=\tilde{l}$, then $u$ cannot intersect $v$ more than once.
\end{proposition}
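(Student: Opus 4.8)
The plan is to argue by contradiction, turning a double intersection into a non-smooth minimizer. Suppose $u$ meets $v$ at the two distinct points $q_1,q_2$ and that the sub-arcs $\gamma_u\uguale u|_{[c,d]}$ and $\gamma_v\uguale v|_{[c',d']}$ joining $q_1$ to $q_2$ \emph{are} homotopic in $B_R\setminus\{c_1',\ldots,c_N'\}$ with common endpoints. The first step is to record, via Lemma \ref{lem:homotopy}, that each of $\gamma_u,\gamma_v$ minimizes $\mathcal{M}$ in the common homotopy class $K^{q_1,q_2}$: since $\gamma_u\simeq\gamma_v$ the two sets $K^{q_1,q_2}(u)$ and $K^{q_1,q_2}(v)$ coincide, so both arcs minimize the \emph{same} functional over the \emph{same} set and hence $\mathcal{M}(\gamma_u)=\mathcal{M}(\gamma_v)$. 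Passing to the Jacobi length $\mathcal{L}$, which (unlike $\mathcal{M}$) is additive and reparametrization invariant, this yields $\mathcal{L}(\gamma_u)=\mathcal{L}(\gamma_v)$.

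Next I would perform the surgery: define $\hat u$ by keeping $u$ outside $[c,d]$ and replacing its central portion $\gamma_u$ by (a reparametrization of) $\gamma_v$. Because $\gamma_u$ and $\gamma_v$ are homotopic rel endpoints, $\hat u$ is homotopic to $u$ in the punctured ball, so $\mathrm{Ind}(\hat u;c_j')\equiv\mathrm{Ind}(u;c_j')\Mod{2}$ for every $j$ and hence $\hat u\in K_l^{p_1,p_2}$. By additivity and the length equality above, $\mathcal{L}(\hat u)=\mathcal{L}(u)$, so $\hat u$ is again a minimizer of $\mathcal{L}$ (equivalently of $\mathcal{M}$) in $K_l$; by Theorem \ref{thm:maupertuis} and Lemma \ref{lem:conn_comp} it is, after reparametrization, a classical solution of $\omega^2\ddot{y}=\nabla\Veps(y)$ on each connected component of the complement of its collision/boundary set, and in particular its trace is of class $\mathcal{C}^1$ near the interior point $q_1$. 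The contradiction comes from the junction: at $q_1$ the path $\hat u$ switches from following $u$ to following $v$. Reparametrizing $u$ and $v$ as solutions at energy $-1$, both reach $q_1$ with speed $\sqrt{2(\Veps(q_1)-1)}$; were their velocities parallel, uniqueness for the Cauchy problem would force $u\equiv v$, which is excluded. Hence $u$ and $v$ cross $q_1$ with distinct tangent lines, the trace of $\hat u$ has a genuine corner there, and this contradicts its $\mathcal{C}^1$-regularity. Thus $\gamma_u\not\simeq\gamma_v$.

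For the consequence, assume $l=\tilde l$ and, toward a contradiction, that $u$ and $v$ meet at two points; choosing two \emph{consecutive} intersection points $q_1,q_2$, the simple arcs $\gamma_u,\gamma_v$ between them (recall $u,v$ are self-intersection-free by Proposition \ref{prop:no_self_int}) meet only at their endpoints and bound a Jordan domain $D\sset B_R$. By the main claim $\gamma_u\not\simeq\gamma_v$, so $D$ must contain at least one centre $c_{j_0}'$. A winding count then closes the argument: crossing $\gamma_u$ toggles the parity of the winding number about $c_{j_0}'$ for the closed-up curve $\Gamma_u$, and crossing $\gamma_v$ toggles it for $\Gamma_v$, so a point of $D$ lies in the interior of exactly one of $\Gamma_u,\Gamma_v$; in the normalized $\{0,1\}$-winding setting of Remark \ref{rem:no_self_int} this gives $\mathrm{Ind}(u;c_{j_0}')\not\equiv\mathrm{Ind}(v;c_{j_0}')\Mod{2}$, against $l=\tilde l$. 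Hence $D$ contains no centre, forcing $\gamma_u\simeq\gamma_v$ and contradicting the main claim, so $u$ and $v$ intersect at most once.

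I expect the delicate point to be the middle step: certifying that the spliced path $\hat u$ is genuinely a \emph{minimizer} (and therefore smooth), not merely admissible. This rests on using together the homotopy invariance of the winding classes (so that $\hat u$ remains in $K_l$) and the additivity of $\mathcal{L}$ with $\mathcal{L}(\gamma_u)=\mathcal{L}(\gamma_v)$ (so that no length is lost), together with the correspondence between minimizers of $\mathcal{M}$ and of $\mathcal{L}$. Once the smoothness of $\hat u$ is secured, the corner produced at $q_1$ by uniqueness for the Cauchy problem is what makes the contradiction rigorous.
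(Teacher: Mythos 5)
Your proof of the main claim is essentially the paper's own argument: invoke Lemma \ref{lem:homotopy} to see that the two sub-arcs minimize $\mathcal{M}$ over the \emph{same} set $K^{q_1,q_2}(u)=K^{q_1,q_2}(v)$, splice $v$'s arc into $u$, observe the spliced path is still a minimizer in $K_l^{p_1,p_2}$, and derive a contradiction because a minimizer is (a reparametrization of) a classical solution, while uniqueness for the Cauchy problem forces a genuine corner at the junction. Your detour through the Jacobi length $\mathcal{L}$ (additive, reparametrization-invariant, equal to $\sqrt{2\mathcal{M}}$ on minimizers by Proposition \ref{prop:app1}) to certify that the spliced path loses no action is in fact a more careful justification than the paper's direct assertion $\mathcal{M}(\tilde u)=\mathcal{M}(u)$, since $\mathcal{M}$ itself is not additive; this part of your proposal is correct and matches the paper.

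The gap is in the consequence ($l=\tilde l$ implies at most one intersection), which the paper dismisses as trivial given Proposition \ref{prop:no_self_int}. Your reduction is the right start: pick intersection points $q_1,q_2$ consecutive along $u$, so that $\gamma_u$ and $\gamma_v$ meet only at their endpoints, bound a Jordan domain $D$, and by the main claim $D$ must contain some centre $c_{j_0}'$. But the parity step --- ``crossing $\gamma_u$ toggles the parity for $\Gamma_u$, crossing $\gamma_v$ toggles it for $\Gamma_v$, so a point of $D$ lies inside exactly one of $\Gamma_u,\Gamma_v$'' --- is a non sequitur as written. The indices $\mathrm{Ind}(u;c_{j_0}')$ and $\mathrm{Ind}(v;c_{j_0}')$ are \emph{global} quantities: the portions $u_1,u_2$ of $u$ outside $[q_1,q_2]$, the portions $v_1,v_2$ of $v$, and the two closing arcs on $\partial B_R$ all contribute. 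Consecutiveness along $u$ only forbids further intersections on the open arc $\gamma_u$; it does \emph{not} forbid $u_1,u_2$ from crossing $\gamma_v$ and entering $D$, nor does it a priori prevent the complementary arcs of $\Gamma_u$ and $\Gamma_v$ from winding around $\bar D$ on opposite sides, in which case $c_{j_0}'$ would lie inside both or neither of the two Jordan curves and no parity contradiction results. Ruling these scenarios out is precisely where the simplicity of $u$ and $v$ (Proposition \ref{prop:no_self_int}) must be used quantitatively --- e.g.\ by showing that the swapped path $v_1\gamma_u v_2$ is again simple (this \emph{does} follow from consecutiveness along $u$), that its winding about $c_{j_0}'$ differs from that of $v$ by exactly $\pm1$, and that its complementary arc cannot separate $q_2$ from $\partial B_R$ without forcing a self-intersection. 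So your topological strategy is the natural one, but the ``exactly one'' claim needs this additional planar-topology work before the consequence is actually proved.
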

\begin{proof}
	Since the Maupertuis' functional is invariant under time-reparametrizations, to prove the assertion we can assume that there exist $q_1,q_2\in B_R\setminus\{c_1',\ldots,c_N'\}$ such that
	\[
	\begin{aligned}
		&u(c)=q_1=v(c) \\
		&u(d)=q_2=v(d),
	\end{aligned}
	\] 
	for some interval $[c,d]\sset[0,1]$. Assume by contradiction that the paths $u|_{[c,d]}$ and $v|_{[c,d]}$ are homotopic in the punctured ball $B_R\setminus\{c_1',\ldots,c_N'\}$; this means in particular that $K^{q_1,q_2}(u)=K^{q_1,q_2}(v)$ (for their definitions see the statement of Lemma \ref{lem:homotopy}). Now, again from Lemma \ref{lem:homotopy}, we deduce that
	\[
	\mathcal{M}(u;[c,d])=\min\limits_{K^{q_1,q_2}(u)}\mathcal{M}=\min\limits_{K^{q_1,q_2}(v)}\mathcal{M}=\mathcal{M}(v;[c,d]).
	\]
	For this reason, if we define the path (see Figure \ref{fig:no_int})
	\[
	\tilde{u}(t)\uguale\begin{cases}
		\begin{aligned}
			u(t)\ &\ \mbox{if}\ t\in[0,c)\cup(d,1] \\
			v(t)\ &\ \mbox{if}\ t\in[c,d]
		\end{aligned}
	\end{cases}
	\]
	we clearly have that $\tilde{u}\in K_l^{p_1,p_2}$ and
	\[
	\mathcal{M}(\tilde{u})=\mathcal{M}(u)=\min\limits_{K_l^{p_1,p_2}}\mathcal{M}.
	\]
	By Lemma \ref{lem:coll_time} the instants $c$ and $d$ belong to two connected components of $[0,1]\setminus (T_c(u)\cup T_R(u))$ and therefore Lemma \ref{lem:conn_comp} applies too. This is finally a contradiction since the path $\tilde{u}$ cannot be differentiable in $c$ and $d$ (note that $\dot{u}(c)\neq\dot{v}(c)$ for the uniqueness of solutions of Cauchy problems; the same holds at $d$ for time-reversibility). 
	
	For the case $l=\tilde{l}$ the proof is trivial, once provided Proposition \ref{prop:no_self_int}.
\end{proof}

\begin{figure}
	\begin{center}
		\begin{tikzpicture}
			\coordinate (0) at (0,0);
			\coordinate (p0) at (-0.26,1.477);
			\coordinate (p1) at (-1.5,2.59);   		
			\coordinate (p2) at (1.5,-2.59);    	
			\coordinate (tp1) at (1.92,2.29);   		
			\coordinate (tp2) at (-2.81,-1.02);   
			\coordinate (c1) at (-0.4,0.2);
			\coordinate (c2) at (0,0.5);   		
			\coordinate (c3) at (-0.4,-0.3);    		
			\coordinate (c4) at (0.1,-0.4); 
			\coordinate (c5) at (0.5,0);
			\coordinate (c6) at (0.3,0.4);
			
			% sfera
			\draw (0) circle (3cm);

			% archi interni
			
			\draw[name path=arco 1] plot [smooth, tension=1] coordinates {(p1) (0.4,0) (-0.7,0)  (p2)};
			
			\draw[name path=arco 2] plot [smooth, tension=1] coordinates {(tp1) (1.2,0.7)  (tp2)};

			% punti 		
			
			\fill (p1)
			circle[radius=1.5pt] node[above] {$p_1$};
			\fill (p2)
			circle[radius=1.5pt] node[right] {$p_2$};
			\fill (tp1)
			circle[radius=1.5pt] node[right] {$\tilde{p}_1$};
			\fill (tp2)
			circle[radius=1.5pt] node[left] {$\tilde{p}_2$};
			\fill (c1)
			circle[radius=1.0pt];
			\fill (c2)
			circle[radius=1.0pt];
			\fill (c3)
			circle[radius=1.0pt];
			\fill (c4)
			circle[radius=1.0pt];
			\fill (c5)
			circle[radius=1.0pt];
			\fill (c6)
			circle[radius=1.0pt];

			% intersezioni 
			
			\fill[name intersections={of=arco 1 and arco 2}]
			(intersection-1) circle (1.5pt) node[right] {$q_1$}
			(intersection-2) circle (1.5pt) node[below] {$q_2$}
			(intersection-3) circle (1.5pt);
			
			\draw[blue,thick] plot [smooth, tension=1] coordinates {(p1) (-0.2,1) (intersection-1)};
			\draw[blue,thick] plot [smooth, tension=1] coordinates {(intersection-1) (intersection-2)};
			\draw[blue,thick] plot [smooth, tension=1] coordinates {(intersection-2) (intersection-3)};		
			\draw[blue,thick] plot [smooth, tension=1] coordinates {(intersection-3) (0,-0.98) (p2)};

			% scritte 
			
			\node at (3,1.6) {$\partial B_R$};
			\node at (1.8,0.9) {$v$};
			\node at (-0.9,0.3) {$u$};
			\node[blue] at (0,-1.4) {$\tilde{u}$};
			
		\end{tikzpicture} 
		\caption{Illustration of the case of Proposition \ref{prop:no_int}}\label{fig:no_int}
	\end{center}	
\end{figure}
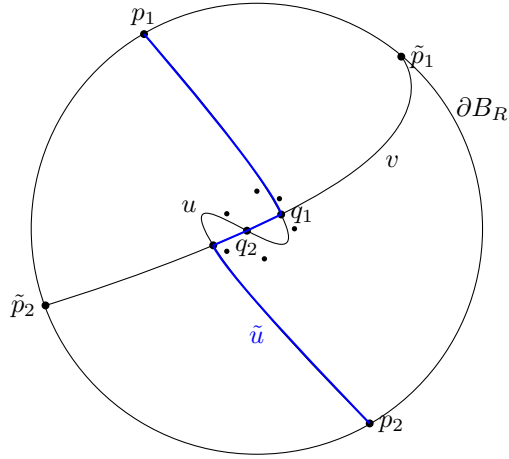

At this point we are ready to start a local analysis in order to rule out the presence of collisions with the centres. Let us now assume that the minimizer $u$ has a collision with the centre $c_j'$ at time $t_0$. By means of Lemma \ref{lem:coll_time}, we have that there exist $c,d\in[0,1]$ such that
\begin{itemize}
	\item $c<t_0<d$ and $t_0$ is the unique instant of collision of $u$ in $[c,d]$;
	\item the inertial moment $I(t)=|u(t)-c_j'|^2$ is strictly convex in $[c,d]$.
\end{itemize}
We define $\bar{p}_1=u(c)$ and $\bar{p}_2=u(d)$. Since $u\in\mathcal{C}([c,d];\R^2)$, then there exists $r^*>0$ such that 
\begin{equation}\label{eq:continuity}
	|u(t)-c_k'|\geq r^*>0\quad \mbox{for every}\ t\in[c,d]\ \mbox{and for every}\ k\neq j
\end{equation}
and, without loss of generality, we can assume that $\bar{p}_1,\bar{p}_2\in\partial B_r(c_j')$, for some $r<r^*$.

Since we are getting close to the collision, it makes sense to \emph{localize} the potential and to write
\[
\Veps(y)=V_j(y- c_j')+f^j(y),\quad\mbox{with}\ f^j(y)\uguale\sum\limits_{k\neq j}V_k(y-c_k').
\] 
Notice that inside the ball $B_r(c_j')$ the quantity $f^j(y)$ is smooth and bounded.

Let us introduce the space
\[
\hat{\mathcal{K}}_l^{\bar{p}_1,\bar{p}_2}\uguale\left\lbrace v\in H^1([c,d];\R^2)\ \middle\lvert\ \ \begin{aligned}
	&v(c)=\bar{p}_1,\ v(d)=\bar{p}_2,\ v(t)\neq c_j'\ \forall\ t\in[c,d],\ \forall\ j& \\
	&\mbox{the function}\ G_v(t)\uguale\begin{cases}
		u(t)\quad\mbox{if}\ t\in[0,c)\cup(d,1] \\
		v(t)\quad\mbox{if}\ t\in[c,d]
	\end{cases}& \\
	&\mbox{belongs to}\ \hat{K}_l^{p_1,p_2}&
\end{aligned}\right\rbrace
\]
and its weak $H^1$-closure
\[
\mathcal{K}_l^{\bar{p}_1,\bar{p}_2}\uguale\hat{\mathcal{K}}_l^{\bar{p}_1,\bar{p}_2}\cup\left\lbrace v\in H^1([c,d];\R^2):\,v(c)=\bar{p}_1,\ v(d)=\bar{p}_2,\ G_v\in\mathfrak{Coll}_l\right\rbrace
\]
and restrict the Maupertuis' functional to $\mathcal{K}_l^{\bar{p}_1,\bar{p}_2}$ in this way
\[
\mathcal{M}_l^{\bar{p}_1,\bar{p}_2}\colon\mathcal{K}_l^{\bar{p}_1,\bar{p}_2}\to\R\cup\{+\infty\},\quad\mathcal{M}_l^{\bar{p}_1,\bar{p}_2}(v)=\frac12\int_c^d|\dot{v}(t)|^2\,dt\int_c^d\left(-1+\Veps(v(t))\right)\,dt.
\]
We can repeat the proof of Subsection \ref{subsec:dir_meth} and show that $\mathcal{M}_l^{\bar{p}_1,\bar{p}_2}$ admits a minimizer in $\mathcal{K}_l^{\bar{p}_1,\bar{p}_2}$ at a positive level. Moreover, from Lemma \ref{lem:homotopy}, this minimizer is nothing but $v\uguale u|_{[c,d]}$.

For the sake of simplicity assume $c_j'=0$. 

\begin{proposition}\label{prop:pert_coll}
	The following behaviour holds:
	\[
	\Veps(y)=V_j(y)+C+\mathcal{O}(|y|),\ \mbox{as}\ y\to 0^+,
	\]
	for some constant $C>0$. In particular, when $|y|$ is sufficiently small, the problem is a small perturbation of an anisotropic Kepler problem driven by the $-\al_j$-homogeneous potential $V_j$.
\end{proposition}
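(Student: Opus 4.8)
The plan is to read off the claim from the localization $\Veps(y)=V_j(y)+f^j(y)$ recalled just before the statement, where $f^j(y)=\sum_{k\neq j}V_k(y-c_k')$ and where we have normalised $c_j'=0$. The whole point is that $f^j$, far from its own poles, is a smooth and bounded perturbation in a full neighbourhood of the singularity of $V_j$ at the origin; so after a first-order Taylor expansion of $f^j$ at $0$ only the constant term survives at leading order, and it is positive.

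First I would check that $f^j$ is of class $\mathcal{C}^2$ near the origin. Since the centres are pairwise distinct, for every $k\neq j$ we have $c_k'\neq c_j'=0$, and therefore there exists $\rho>0$ such that, for $|y|<\rho$, the argument $y-c_k'$ stays in a compact subset of $\R^2\setminus\{0\}$. As each $V_k\in\mathcal{C}^2(\R^2\setminus\{0\})$, the map $y\mapsto V_k(y-c_k')$ is of class $\mathcal{C}^2$ on $B_\rho$, and hence so is the finite sum $f^j$. (Any positive constants $\ve^{\al_k-\al}$ carried by the terms associated with the centres of higher homogeneity degree are, for $\ve$ fixed, irrelevant to this regularity.)

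Then I would Taylor-expand $f^j$ at the origin to first order. Setting $C\uguale f^j(0)=\sum_{k\neq j}V_k(-c_k')$, the $\mathcal{C}^2$-regularity gives $f^j(y)=C+\nabla f^j(0)\cdot y+\mathcal{O}(|y|^2)=C+\mathcal{O}(|y|)$ as $y\to 0$; since every $V_k$ is a positive potential, each summand $V_k(-c_k')$ is strictly positive and thus $C>0$. Substituting back yields $\Veps(y)=V_j(y)+C+\mathcal{O}(|y|)$, which is exactly the assertion, and the interpretation as a small perturbation of an anisotropic Kepler problem driven by the $-\al_j$-homogeneous $V_j$ follows immediately, the correction $C+\mathcal{O}(|y|)$ remaining bounded while $V_j(y)\to+\infty$ as $y\to 0$. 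I do not expect any real obstacle here: the entire content is the elementary observation that, away from its own singularity, each $V_k$ with $k\neq j$ contributes a smooth term near $c_j'$, so the only point to record carefully is the distinctness of the centres, which is precisely what guarantees the $\mathcal{C}^2$-regularity of $f^j$ on a whole neighbourhood of the origin.
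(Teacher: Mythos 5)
Your proposal is correct and follows essentially the same route as the paper: both isolate the singular term $V_j$ and exploit the smoothness of the remaining part $f^j(y)=\sum_{k\neq j}V_k(y-c_k')$ near $c_j'=0$ (guaranteed by the distinctness of the centres), expanding it as a positive constant $C=f^j(0)$ plus a remainder of order $|y|$. The paper phrases the remainder as $rG_r(|y|)$ with $G_r$ uniformly bounded for $|y|\leq r$, which is exactly your Taylor-expansion estimate $\mathcal{O}(|y|)$.
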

\begin{proof}
    The proof follows the approach of Proposition \ref{prop:pert}. In particular, defining $r^*>0$ as in \eqref{eq:continuity}, for  $r\in(0,r^*)$ and for $|y|\leq r$ we can write
	\[
	\Veps(y)=V_j(y)+C+rG_r(|y|),
	\]
	with $G_r$ uniformly bounded with respect to $r$.
\end{proof}

At this point we need a result from \cite{BCTmin} on the properties of minimal collision orbits for a perturbed anisotropic Kepler problem. In order to take it into account, we need to introduce some further notations. Let $r^*$ be as in \eqref{eq:continuity}; for $r\in(0,r^*)$ and $q\in\partial B_r$ we define the set of $H^1$-colliding paths on a generic real interval $[c,d]\sset\R$
\[
H_{coll}^q\uguale\left\lbrace w\in H^1([c,d];\R^2):\, w(c)=q,\ w(d)=0,\ |w(t)|\leq r,\ \forall\,t\in[c,d]\right\rbrace.
\]
Moreover, for a potential $\Veps\in\mathcal{C}^2(B_r\setminus\{0\})$ which is a perturbation of an anisotropic potential as in Proposition \ref{prop:pert_coll}, consider the Maupertuis' functional
\[
\mathcal{M}(w)=\frac12\int_c^d|\dot{w}(t)|^2\,dt\int_c^d(-1+\Veps(w(t)))\,dt
\]
for $w\in H_{coll}^q$. Up to choose a smaller $r^*$, the authors proved the following result; in order to ease the notation, we will denote a minimal non-degenerate central configuration for $U_j$ as $\vt^*$ instead of $\vt_j$ (see \eqref{hyp:V}, page \pageref{hyp:V}).

\begin{lemma}[Theorem 5.2,\cite{BCTmin}]\label{lem:BCTmin}
	Let $\vt^*\in\cerchio$ be a minimal non-degenerate central configuration for $U_j$. There exists $r^*>0$ and $\delta>0$ such that, for every $q=re^{i\vt}$ with $r<r^*$ and $\vt\in(\vt^*-\delta,\vt^*+\delta)$ there exists a unique minimizer of the Maupertuis' functional in the set of colliding paths $H_{coll}^q$. In particular, this path cannot leave the cone emanating from the origin and bounded by the arc-neighbourhood $(\vt^*-\delta,\vt^*+\delta)$.
\end{lemma}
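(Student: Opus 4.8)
The plan is to treat the collision problem, for $r$ small, as a genuine perturbation of the purely $-\al_j$-homogeneous anisotropic Kepler problem --- as licensed by Proposition \ref{prop:pert_coll}, whereby $\Veps(y)=V_j(y)+C+\mathcal{O}(|y|)$ near the collision --- and to establish existence, confinement to the cone, and uniqueness in that order. Note that the regime is the weak-force one, $\al_j<2$ coming from \eqref{hyp:V}, so that colliding paths carry finite Maupertuis' action and the blow-up asymptotics below are available.

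First I would prove existence by the direct method, exactly as in Subsection \ref{subsec:dir_meth}. The constraint set $H_{coll}^q$ is weakly closed in $H^1([c,d];\R^2)$, being cut out by the fixed endpoints $w(c)=q$, $w(d)=0$ and the uniform bound $|w|\le r$; on it $\mathcal{M}$ is coercive and weakly lower semi-continuous, and a lower bound $\mathcal{M}(w)\ge C>0$ follows from $|w|\le r$ together with $-1+\Veps(w)>0$ near the centre. Hence a minimizer exists at a positive level, and away from the collision instant it reparametrizes to a classical solution by (the local analogue of) Lemma \ref{lem:conn_comp}.

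The second step fixes the collision direction. Using a McGehee-type blow-up and the strict convexity of the inertial moment $I(t)=|w(t)|^2$ near collision (the Lagrange--Jacobi inequality, cf.\ Lemma \ref{lem:coll_time}), I would show that a minimizing colliding orbit is asymptotic to a homothetic ejection solution of the homogeneous problem driven by $V_j$, whose angular coordinate converges to a critical point of $U_j$. The decisive point is that for a \emph{minimizer} this asymptotic central configuration must be a \emph{minimal} one: were it a non-minimal critical point, an angular variation sliding the tail of the path toward the nearby well $\vt^*$ would strictly decrease the potential factor while leaving the kinetic factor under control, contradicting minimality. This selects $\vt^*$ as the only admissible collision direction.

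Confinement and uniqueness are where the non-degeneracy $U_j''(\vt^*)>0$ is indispensable, and I expect this to be the main obstacle. In the homogeneous limit the minimizer is the homothetic solution through $\vt^*$; linearising $\mathcal{M}$ along it decouples the second variation into a radial part and an angular Sturm--Liouville operator $\ddot q_\tau+c(t)q_\tau=0$ whose weight is governed by $-\al_j U_j(\vt^*)+U_j''(\vt^*)$, exactly as in Remark \ref{rem:hess} and Lemma \ref{lem:invert_no_pert}. Non-degeneracy makes this operator coercive --- no interior conjugate point along the collision orbit --- so the homothetic minimizer is strictly stable and locally unique. I would then run a continuation/implicit-function argument in the parameters $(r,\vt)$, using Proposition \ref{prop:pert_coll} to dominate the perturbation uniformly: strict stability persists for $r<r^*$, which rules out bifurcation and yields a unique minimizer depending continuously on $q$. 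Finally, confinement to the arc $(\vt^*-\delta,\vt^*+\delta)$ follows from a barrier/comparison argument in the angular variable, exploiting that $U_j$ has a strict well at $\vt^*$: any excursion of the angular coordinate outside the cone could be projected back at no kinetic cost and with strictly smaller potential integral. The genuinely delicate point throughout is to make the stability and uniqueness estimates \emph{uniform} as $r\to0^+$, since the collision renders the angular Sturm--Liouville problem singular at the endpoint; controlling this singular linearised problem --- and hence shrinking $\delta$ and $r^*$ so that the perturbation never destroys coercivity --- is the heart of the argument.
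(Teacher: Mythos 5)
You should first note that the paper does not prove this statement at all: it is imported verbatim as Theorem 5.2 of \cite{BCTmin}, so there is no internal proof to compare against. The relevant benchmark is the strategy of \cite{BCTmin}, which the present paper itself recalls in the proof of Lemma \ref{lem:var_int}: one performs a McGehee blow-up, identifies $(0,\vt^*,\vt^*+\pi)$ as a \emph{hyperbolic} equilibrium of the rescaled flow \eqref{eq:mcg} with a two-dimensional stable manifold, and then identifies minimal collision arcs with orbits on that stable manifold, parametrized monotonically by the radial variable. Uniqueness, smooth dependence on $q$ (e.g., $\vp(0)$ being a $\mathcal{C}^2$ function of $\vt(0)$), and confinement are all read off from this hyperbolic structure. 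Your plan replaces this dynamical mechanism with a purely variational second-variation/continuation argument, and that substitution is where it breaks down.

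Two steps in your outline would fail as stated. First, the confinement ``barrier'' argument: clamping the angular variable to the boundary of the cone is claimed to strictly decrease the potential factor, but the hypotheses only give $U_j(\vt)\geq U_j(\vt^*)$ on all of $\cerchio$; nothing prevents $U_j$ from having \emph{other} minimal wells at the same level outside $(\vt^*-\delta,\vt^*+\delta)$, in which case an excursion toward such a well \emph{lowers} the potential and the pointwise comparison with $U_j(\vt^*\pm\delta)$ goes the wrong way. (Your ``slide the tail toward the well'' selection of the asymptotic direction suffers from the same multi-well objection.) In \cite{BCTmin} confinement is not obtained by a barrier at all, but from the stable-manifold structure and the monotonicity of $r$ along minimal arcs. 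Second, the uniqueness step defers rather than solves the actual difficulty: the linearization along a collision arc is a Sturm--Liouville operator whose coefficient behaves like $|x(t)|^{-\al_j-2}$ and blows up at the collision endpoint, so coercivity and the implicit function theorem in a fixed function space are not available; moreover, even granted local uniqueness near the homothetic orbit, uniqueness of the minimizer in $H_{coll}^q$ requires a compactness/blow-up argument showing that \emph{every} minimizer lies in that neighbourhood, with estimates uniform as $r\to 0^+$. You explicitly flag this as ``the heart of the argument'' but supply no mechanism for it --- and it is precisely the step that the McGehee/stable-manifold approach of \cite{BCTmin} is designed to handle, by converting the singular variational problem into the smooth, hyperbolic asymptotics of an equilibrium at infinity in rescaled time.
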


The presence of this \emph{foliation} of minimal arcs in a cone spanned by $\vt_j$, together with Proposition \ref{prop:no_int}, suggests to choose one of this paths and to use it as a \emph{barrier}, in order to determine a region of the ball $B_r$ in which a minimizer with end-points on $\partial B_r$ has to be confined. Indeed, in order to rule out the presence of collisions for a minimizer in $\mathcal{K}_l^{\bar{p}_1,\bar{p}_2}$, we aim to follow the ideas contained in a result from \cite{BTVplanar}, which holds true for minimizers that do not leave a prescribed angular sector. For a $r\in(0,r^*)$, a potential $\Veps\in\mathcal{C}^2(B_r\setminus\{0\})$ as in Proposition \ref{prop:pert_coll} and $T>0$, introduce the action functional $\mathcal{A}_T\colon H^1([0,T];\R^2)\to \R\cup\{+\infty\}$ such that
\[
\mathcal{A}_T(x)\uguale\int_0^T\left(\frac12|\dot{x}(t)|^2+\Veps(x(t))-1\right)\,dt.
\]

\begin{definition}
	We say that $x\in H^1([0,T];\R^2)$ is a \emph{fixed-time Bolza minimizer} associated with the endpoints $x_1=x(0),x_2=x(T)$, if, for every $y\in H^1([0,T];\R^2)$ there holds
	\[
	y(0)=x_1,\ y(T)=x_2\implies \mathcal{A}_T(x)\leq\mathcal{A}_T(y).
	\]
\end{definition}

As announced, we recall an important result from \cite{BTVplanar}. Note that Proposition \ref{prop:pert_coll} guarantees the applicability of Theorem 2 in \cite{BTVplanar} to problems driven by our family of potentials $\Veps$. For this reason, we restate below such result in our setting.

\begin{lemma}[Theorem 2, \cite{BTVplanar}]\label{lem:BTV}
	Fix $\ve>0$ and $r\in(0,r^*)$, with $r^*>0$ defined in \eqref{eq:continuity}. For a minimal non-degenerate central configuration $\vt^*\in\cerchio$ for $V_j$, define the set
	\[
	\Theta_j\uguale\left\lbrace \vt\in\R:\,\vt=\vt^*+2k\pi,\ \mbox{for some}\ k\in\Z\right\rbrace.
	\]
	Then, for every $\vt^-<\vt^+\in\Theta$ there exists $\bar{\al}(U_j,\vt^-,\vt^+)\in(0,2)$ such that if $\al_j>\bar{\al}$ all the fixed-time Bolza minimizers in the angular sector $[\vt^-,\vt^+]$ are collision-less.
\end{lemma}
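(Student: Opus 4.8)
The plan is to argue by contradiction: assuming that a fixed-time Bolza minimizer $x$ in the angular sector $[\vt^-,\vt^+]$ collides with the centre at some instant $t_0$, I would rule out this possibility by producing a non-colliding competitor of strictly smaller action $\mathcal{A}_T$, under the hypothesis $\al_j>\bar\al$. Since the statement is local, I would first restrict attention to a short time window around $t_0$, where by Proposition \ref{prop:pert_coll} the potential $\Veps$ is a perturbation of the purely $-\al_j$-homogeneous potential $V_j$.

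The first step is the asymptotic analysis of the collision. Combining the strict convexity of $I(t)=|x(t)|^2$ near $t_0$ (the Lagrange-Jacobi inequality, as in Lemma \ref{lem:coll_time}) with the classical McGehee blow-up and the Sundman--Sperling asymptotic estimates for homogeneous potentials, I would show that the collision is parabolic, with $|x(t)|\sim c\,|t-t_0|^{2/(2+\al_j)}$, and that the normalized configuration $x(t)/|x(t)|$ tends to a direction $e^{i\vt^*}$ which must be a central configuration of $U_j$. The minimality of $x$ together with the non-degeneracy of $\vt^*$ then forces the limiting direction to be precisely the minimal central configuration, so that near $t_0$ the arc shadows a homothetic ejection-collision solution along the ray $\vt^*$.

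The second step is the construction of a better competitor. Because the arc is asymptotic to $e^{i\vt^*}$ and $\vt^*$ lies (modulo $2\pi$) strictly inside the admissible sector $[\vt^-,\vt^+]$, there is angular room to push the trajectory to one side of the centre. On a small window $[t_0-\sigma,t_0+\sigma]$ I would replace the collision arc by an arc passing at positive distance from the origin, still contained in the sector, and estimate the corresponding variation of $\mathcal{A}_T$. The cost of this angular detour is governed by the transverse behaviour of the potential, encoded in the Sturm--Liouville problem \eqref{pb:slp} and in the sign of $\al U(\vt^*)-U''(\vt^*)$ from Remark \ref{rem:hess}, while the gain obtained by avoiding the singularity scales with the homogeneity degree $\al_j$. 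Balancing the two contributions produces an explicit threshold $\bar\al(U_j,\vt^-,\vt^+)\in(0,2)$, depending both on $U_j$ and on the width $\vt^+-\vt^-$ of the sector (which measures the available winding), above which the detour strictly lowers the action and contradicts minimality.

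The main obstacle is exactly this quantitative comparison: one must exhibit a competitor whose action is smaller by a definite amount, which requires sharp asymptotic estimates on the colliding arc beyond its leading order, together with a careful Sturm-type analysis of the transverse oscillation to guarantee that the resulting threshold satisfies $\bar\al<2$. This is the technical core of Theorem~2 in \cite{BTVplanar}, whose applicability to the perturbed potentials $\Veps$ is ensured by Proposition \ref{prop:pert_coll}.
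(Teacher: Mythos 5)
You have attempted to reconstruct a proof of a statement that the paper itself never proves: Lemma \ref{lem:BTV} is imported verbatim as Theorem 2 of \cite{BTVplanar}, and the only verification carried out in this paper is that the result \emph{applies} to the potentials $\Veps$, which is exactly the content of Proposition \ref{prop:pert_coll} (showing $\Veps$ is an admissible perturbation of the $-\al_j$-homogeneous potential $V_j$ near the centre). So there is no ``same approach'' to compare against; the question is whether your blind reconstruction actually closes, and it does not. Your outline follows the broad strategy of the cited literature (blow-up at the collision, asymptotic convergence to a homothetic ejection--collision motion, then an action comparison with a deviating competitor), but the decisive step --- exhibiting the competitor, carrying out the quantitative balance between the cost of the angular detour and the gain from avoiding the singularity, and thereby \emph{defining} the threshold $\bar{\al}(U_j,\vt^-,\vt^+)$ and proving $\bar{\al}<2$ --- is exactly what you defer, as you yourself acknowledge (``the main obstacle \dots the technical core of Theorem~2''). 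That threshold \emph{is} the theorem: a proposal that leaves it to the cited reference proves nothing beyond restating the claim.

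Two further points would need repair even as an outline. First, the ingredients you invoke for the ``transverse cost'' are misdirected: the Sturm--Liouville problem \eqref{pb:slp} and the eigenvalue $\al U(\vt^*)-U''(\vt^*)$ of Remark \ref{rem:hess} concern the Hessian of $\Wzero$, i.e.\ of $U=\sum_i U_i$ with degree $\al$, along homothetic motions at radius $R$; they belong to the \emph{outer} dynamics of Section \ref{sec:outer}. Near a collision with $c_j$ the governing potential is $U_j$ with degree $\al_j$ and its own minimal configuration $\vt_j$, so those quantities play no role there (at best one would need the analogous Hessian data for $U_j$). Second, the claim that the limiting direction of the colliding arc ``must be precisely the minimal central configuration'' does not follow just from Lagrange--Jacobi convexity plus non-degeneracy: Sundman--McGehee asymptotics give convergence to the set of central configurations, and singling out a \emph{minimal} one from minimality of the arc is itself part of the delicate analysis in \cite{BTVplanar} and \cite{BCTmin} (compare Lemma \ref{lem:BCTmin}), not a free consequence.
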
 

As a first step, we show that the previous lemma can be extended for those $H^1$-paths with fixed ends which minimize the Maupertuis' functional instead of the action functional. Note that the previous result holds for every fixed energy. Moreover, with the same proof of Subsection \ref{subsec:dir_meth}, one can prove the existence of a minimizer for the Maupertuis' functional 
\[
\mathcal{M}_h(u)=\frac12\int_0^1|\dot{u}|^2\int_0^1(-h+V(u))
\]
in the space of the $H^1$-paths which join two points within the sector $[\vt^-,\vt^+]$, for $h\in\R$.

\begin{lemma}\label{lem:BTV_maup}
	In the same setting of Lemma \ref{lem:BTV}, if $h\in\R$ and if $\al_j>\bar\al(U_j,\vt^-,\vt^+)$, then all the minimizers of the Maupertuis' functional $\mathcal{M}_h$ within the sector $[\vt^-,\vt^+]$ are collision-less.
\end{lemma}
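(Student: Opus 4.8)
The plan is to deduce this from Lemma \ref{lem:BTV} by showing that an energy-conserving reparametrization of a Maupertuis minimizer constrained to the sector is a \emph{fixed-time Bolza minimizer} of the action, so that the collision-less conclusion of Lemma \ref{lem:BTV} applies verbatim. Concretely, let $u$ be a minimizer of $\mathcal{M}_h$ among the $H^1$-paths joining two fixed endpoints and taking values in the angular sector $[\vt^-,\vt^+]$. Since the minimal level is positive, the Maupertuis' principle (Theorem \ref{thm:maupertuis}) produces $\omega>0$ such that $x(t)\uguale u(\omega t)$ solves $\ddot x=\nabla V(x)$ at energy $-h$ on an interval $[0,T]$, and by Lemma \ref{lem:energy} it conserves the energy, $\frac12|\dot x|^2-V(x)=-h$ a.e. I will compare $x$ against competitors for the action at energy $-h$, $\mathcal{A}_T(y)=\int_0^T\left(\frac12|\dot y(t)|^2+V(y(t))-h\right)\,dt$, which is exactly the functional governed by Lemma \ref{lem:BTV} (recall that the cited result holds for every fixed energy).

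First I would record the two elementary inequalities that bridge the three functionals. For any path $y$ with $V(y)\geq h$ along its image, the pointwise arithmetic--geometric inequality $\frac12|\dot y|^2+(V(y)-h)\geq\sqrt2\,|\dot y|\sqrt{V(y)-h}$ integrates to $\mathcal{A}_T(y)\geq\sqrt2\,\mathcal{L}_h(y)$, with equality precisely when $y$ conserves the energy $-h$; in particular $\mathcal{A}_T(x)=\sqrt2\,\mathcal{L}_h(x)$. Moreover $\mathcal{L}_h$ is invariant under reparametrization, and by the correspondence between minimizers of $\mathcal{M}_h$ and of the Jacobi-length functional (Proposition \ref{prop:app1} and Appendix \ref{app:var}) the minimality of $u$ for $\mathcal{M}_h$ in the sector transfers to minimality of $x$ for $\mathcal{L}_h$ among sector-valued paths with the same endpoints. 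Combining these, for every competitor $y\in H^1([0,T];\R^2)$ with the same endpoints and values in $[\vt^-,\vt^+]$ one obtains $\mathcal{A}_T(y)\geq\sqrt2\,\mathcal{L}_h(y)\geq\sqrt2\,\mathcal{L}_h(x)=\mathcal{A}_T(x)$, so $x$ is a fixed-time Bolza minimizer in the sector. Since $\al_j>\bar\al(U_j,\vt^-,\vt^+)$, Lemma \ref{lem:BTV} then forces $x$, and hence $u$ (which has the same image), to be collision-less.

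The step I expect to be the most delicate is the sign requirement $V-h\geq0$ needed to validate the action--Jacobi inequality, since it is what keeps the square root real and legitimizes the arithmetic--geometric estimate. This is harmless in the present application: exactly as in the setting of Lemma \ref{lem:BTV}, the whole analysis takes place inside a small ball $B_r$ about the colliding centre, where the singular term of $V$ dominates and $V>h$ throughout $B_r\setminus\{0\}$ once $r$ is chosen small, so all admissible competitors automatically satisfy $V-h>0$. A second point to check is that the $\mathcal{M}_h$--$\mathcal{L}_h$ correspondence is invoked within the constrained class and not the free one; this is legitimate because passing from $\mathcal{M}_h$ to $\mathcal{L}_h$ only rearranges the parametrization without altering the image of a path, hence it maps sector-valued paths to sector-valued paths and preserves the constrained minimality. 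The normalization constants (the factors $\sqrt2$ above and the identity $\mathcal{L}_h^2=2\mathcal{M}_h$ at the optimal parametrization) are routine and only need to be tracked for consistency.
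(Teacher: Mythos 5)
Your proof is correct and follows essentially the same route as the paper: both pass through the energy-conserving reparametrization $x(t)=u(\omega t)$ of the Maupertuis minimizer, use the identity $\mathcal{A}_T(x)=2\sqrt{\mathcal{M}_h(u)}$ together with the fact that every sector competitor's action dominates this value, and then conclude via Lemma \ref{lem:BTV}. The only difference is organizational: you argue directly that $x$ is a fixed-time Bolza minimizer in the sector (via the pointwise AM--GM inequality and the transfer of minimality to the Jacobi length), whereas the paper assumes a collision and reaches a contradiction by comparing $\mathcal{A}_T(x)$ with the action of the collision-free Bolza minimizer $\bar{x}$ --- the underlying inequalities are identical.
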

\begin{proof}
	Assume that $u\in H^1([0,1];\R^2)$ minimizes the Maupertuis' functional in the set of the $H^1$-paths which join two points $q_1,q_2$ within the sector $[\vt^-,\vt^+]$ and assume also that $u$ has a collision with the origin. If we define $x(t)\uguale u(\omega t)$, with 
	\[
	\omega\uguale\left(\frac{\int_0^1(-h+\Veps(u))}{\frac12\int_0^1|\dot{u}|^2}\right)^{1/2}>0
	\]
	then, from Theorem \ref{thm:maupertuis}, we know that $x$ solves
	\[
	\begin{cases}
		\ddot{x}(t)=\nabla\Veps(x(t))  & t\in[0,1/\omega] \\
		\frac12|\dot{x}(t)|^2-\Veps(x(t))=-h  & t\in[0,1/\omega] \\
		x(0)=q_1,\ x(1/\omega)=q_2. &
	\end{cases}
	\]
	At this point we define $T=1/\omega$ and we find the fixed-time Bolza minimizer of the action functional associated with the sector $[\vt^-,\vt^+]$ and we define 
	\[
	H_T\uguale\left\lbrace y\in H^1([0,T];\R^2):\,y(0)=q_1,\ y(T)=q_2,\ y(t)\in[\vt^-,\vt^+]\ \forall\,t\in[0,T]\right\rbrace.
	\] 
	We call this minimizing path
	\[
	\bar{x}=\arg\min\limits_{y\in H_T}\mathcal{A}_T(y)
	\]
	and, by Lemma \ref{lem:BTV}, we know that it cannot collide with the origin; this also proves that $x\neq\bar{x}$.
	
	Now, we know that $x(t)=u(t/ T)$ for all $t\in[0,T]$, and so we can compute
	\[
	\begin{aligned}
		\mathcal{A}_T(x)&=\int_0^T\left(\frac12|\dot{x}(t)|^2+\Veps(x(t))-h\right)\,dt \\
		&=\int_0^1\left(\frac{1}{2 T}|\dot{u}(s)|^2+T\Veps(u(s))-Th\right)\,ds
	\end{aligned}
	\]
	and, from the conservation of the energy for $u$ and the definition of $\omega=1/T$, we can find that (see also Proposition \ref{prop:app1} in Appendix \ref{app:var})
	\[
	\mathcal{A}_{T}(x)=\int_0^1\frac{1}{T}|\dot{u}(s)|^2\,ds=\sqrt{2}\left(\int_0^1|\dot{u}(s)|^2\,ds\int_0^1(-h+\Veps(u(s)))\,ds\right)^{1/2}=2\sqrt{\mathcal{M}_h(u)}.
	\]
	At this point, since the Maupertuis functional is invariant over time-reparametrizations we have
	\[
	2\sqrt{\mathcal{M}_h(u)}=\min\limits_{T>0}\min\limits_{y\in H_T}\mathcal{A}_T(y)\leq\min\limits_{y\in H_T}\mathcal{A}_T(y)=\mathcal{A}_T(\bar x)<\mathcal{A}_T(x)=2\sqrt{\mathcal{M}_h(u)},
	\]
	which is a contradiction since $\bar{x}$ is collision-less.
\end{proof}

In the next result we show that it is possible to improve the previous lemma. In particular, a sequence of minimal paths cannot accumulate to a collision path, thanks to a uniform bound on the distance from the origin.

\begin{lemma}\label{lem:uniform}
	In the same setting of Lemma \ref{lem:BTV}, there exists $\bar r>0$ such that, for any $r_0\in(0,\bar r)$, for every $k\in\N_{\geq 1}$, for every sector $[\vt^-,\vt^+]$ such that $\vt^+-\vt^-=2k\pi$, for every $\al_j>\bar\al(U_j,\vt^-,\vt^+)$, there exists $\delta>0$ such that, for every $q_1,q_2\in\partial B_{r_0}$ the Maupertuis' minimizer $u$ considered in Lemma \ref{lem:BTV_maup} from $q_1$ to $q_2$ in the sector $[\vt^-,\vt^+]$ is such that
	\[
	\min\limits_{t\in[0,1]}|u(t)|>\delta r_0.
	\]
\end{lemma}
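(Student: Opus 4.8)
The plan is to argue by contradiction via a compactness argument. Fix $r_0\in(0,\bar r)$, with $\bar r\le r^*$ chosen so small that the perturbative expansion of Proposition \ref{prop:pert_coll} holds on $B_{r_0}$ and the collision-free conclusion of Lemma \ref{lem:BTV_maup} is available; fix also $k$, the sector $[\vt^-,\vt^+]$ and $\al_j>\bar\al(U_j,\vt^-,\vt^+)$. That the statement fails means that the infimum of $\min_{t\in[0,1]}|u(t)|$, taken over all endpoint pairs $q_1,q_2\in\partial B_{r_0}$ and all associated Maupertuis minimizers $u$ in the sector, equals $0$. Hence there are endpoints $q_1^n,q_2^n\in\partial B_{r_0}$ and minimizers $u_n$ with $\min_{t}|u_n(t)|\to 0$; by compactness of $\partial B_{r_0}$ we may assume $q_i^n\to q_i^*$.

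First I would establish a uniform bound $\mathcal{M}(u_n)\le M$. Taking as a competitor the arc of $\partial B_{r_0}$ joining $q_1^n$ to $q_2^n$ inside the sector (possible since its width is $2k\pi\ge 2\pi$), whose Maupertuis action is controlled in terms of $r_0$ and $k$ alone, minimality gives the bound. Since we are localized at $c_j'=0$ and $\Veps(y)\ge \tfrac12\mathfrak{m}|y|^{-\al_j}>1$ on $B_{r_0}$ for $r_0$ small, the factor $\int_0^1(-1+\Veps(u_n))$ is bounded below by a positive constant, so $\int_0^1|\dot u_n|^2$ is uniformly bounded. Together with $|u_n|\le r_0$ this gives a uniform $H^1$ bound, and up to a subsequence $u_n\to u^*$ uniformly and weakly in $H^1$, with $u^*$ joining $q_1^*,q_2^*$ inside the closed sector and inside $\overline{B_{r_0}}$.

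Next I would identify $u^*$ as a Maupertuis minimizer between $q_1^*$ and $q_2^*$. Passing to a further subsequence along which $\tfrac12\int_0^1|\dot u_n|^2$ and $\int_0^1(-1+\Veps(u_n))$ converge separately, weak lower semicontinuity of the kinetic factor and Fatou's lemma for the (nonnegative) potential factor yield $\mathcal{M}(u^*)\le\liminf_n\mathcal{M}(u_n)$. For minimality, any finite-action competitor $v$ from $q_1^*$ to $q_2^*$ can be corrected near its endpoints by short arcs on $\partial B_{r_0}$ to produce $v_n$ from $q_1^n$ to $q_2^n$ with $\mathcal{M}(v_n)\to\mathcal{M}(v)$; from $\mathcal{M}(u_n)\le\mathcal{M}(v_n)$ one gets $\mathcal{M}(u^*)\le\mathcal{M}(v)$. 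Thus $u^*$ minimizes, and Lemma \ref{lem:BTV_maup} forces it to be collision-free, i.e. $\min_t|u^*(t)|>0$. Since $u\mapsto\min_t|u(t)|$ is $1$-Lipschitz for the uniform norm, $\min_t|u_n(t)|\to\min_t|u^*(t)|>0$, contradicting $\min_t|u_n(t)|\to0$. This yields a uniform $\delta'>0$ for the fixed data, and one sets $\delta:=\delta'/r_0$; the homogeneity of $V_j$ is what makes the scaling $\delta r_0$ natural, and a rescaling $y\mapsto y/r_0$ reducing to the purely $-\al_j$-homogeneous problem would even give $\delta$ independent of $r_0$.

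I expect the crux to be the identification of the weak limit $u^*$ as a genuine minimizer. Because the Maupertuis functional is a product of two integrals rather than a single convex one, lower semicontinuity must be obtained by extracting subsequences along which each factor converges, and one must secure the strictly positive lower bound of the potential factor near the singularity in order to upgrade the uniform action bound to an $H^1$ bound. One should also verify that no minimizing mass is lost to the collision set in the weak $H^1$-closure, which is precisely what the collision-free conclusion of Lemma \ref{lem:BTV_maup} rules out for the limit.
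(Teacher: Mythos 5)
Your proposal is correct and shares the paper's overall skeleton --- argue by contradiction, extract a weakly convergent sequence of minimizers with uniform $H^1$ bounds, identify the limit as a \emph{colliding} Maupertuis minimizer via Fatou plus a competitor comparison, and contradict Lemma \ref{lem:BTV_maup} --- but it diverges at one essential point: you work at a \emph{fixed} scale $r_0$, whereas the paper negates the statement along sequences $r_n\to0^+$, $\delta_n\to0^+$ and performs the blow-up $v_n:=u_n/r_n$. After that rescaling the paper's limit object is a colliding minimizer of the zero-energy Maupertuis functional of the \emph{pure} homogeneous potential $V_j$ on the unit ball, while in your argument the limit is a colliding minimizer of the same functional $\mathcal{M}$ (potential $\Veps$, energy $-1$) at scale $r_0$; both situations fall under Lemma \ref{lem:BTV_maup}, so both contradictions are legitimate. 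What each approach buys: yours is more elementary (no rescaling bookkeeping) and suffices for the statement as quantified, since ``$\exists\,\delta$'' follows ``$\forall\,r_0$'' and $\delta$ may therefore depend on $r_0$; the paper's blow-up proves the stronger, scale-uniform fact that a single $\delta$ works for all $r_0\in(0,\bar r)$ simultaneously, which is the natural reading of the bound $\min|u|>\delta r_0$ and is exactly the technique recycled afterwards (Lemma \ref{lem:uniform_curved}, Theorem \ref{thm:R}). Two caveats. First, your closing remark that a rescaling $y\mapsto y/r_0$ ``would even give $\delta$ independent of $r_0$'' is too quick: by Proposition \ref{prop:pert_coll} the rescaled potential is $V_j+r_0^{\al_j}C+\mathcal{O}(r_0^{\al_j+1})$, not exactly homogeneous, so removing the perturbation requires precisely a limit $r_n\to0^+$, i.e.\ the paper's blow-up. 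Second, your endpoint-correction step (gluing short arcs of $\partial B_{r_0}$ so that a competitor with the limit endpoints becomes admissible for the approximating endpoints) is genuinely needed, and you spell it out more carefully than the paper does, since the paper compares $\bar{\mathcal{M}}(v_n)\leq\bar{\mathcal{M}}(v_0)$ even though $v_0$ has the limit endpoints and is, strictly speaking, not admissible in $\bar{H}^n$.
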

\begin{proof}
	We argue by contradiction; it is not restrictive to assume instead the following: 
	\begin{itemize}
		\item there exists $r_n\to 0^+$ sequence of positive real numbers;
		\item fix $k\in\N_{\geq 1}$;
		\item fix $\vt^*\in\cerchio$ minimal non-degenerate central configuration for $U_j$ (this is not restrictive since $U_j$ admits just a finite number of them);
		\item there exists $\al_j>\bar\al(U_j,\vt^*,\vt^*+2k\pi)$;
		\item take $\delta_n\to 0^+$ sequence of positive real numbers;
		\item take two sequences of points $(q_1^n),(q_2^n)\sset(\partial B_{r_n})$;
		\item consider the sequence of minimizers $(u_n)$ of the Maupertuis' functional
		\[
		\mathcal{M}(u_n)=\frac12\int_0^1|\dot{u}_n|^2\int_0^1(-1+\Veps(u_n)),
		\]
		every one of them respectively in the space
		\[
		H^n\uguale\{u_n\in H^1([0,1];\R^2):\,u_n(0)=q_1^n,\ u_n(1)=q_2^n,\ |u_n|\leq r_n \}
		\]
		and within the sector $[\vt^*,\vt^*+2k\pi]$,
	\end{itemize}
	such that
	\[
	\min\limits_{t\in[0,1]}|u_n(t)|\leq\delta_nr_n.
	\]
	Define the blow-up sequence
	\[
	v_n(t)\uguale\frac{1}{r_n}u_n(t)\quad\mbox{for}\ t\in[0,1],\ \mbox{for every}\ n\in\N
	\]
	which, for every $n\in\N$, verifies the following:	
	\begin{equation}\label{lem:uniform_step}
		\begin{cases}
			\bar{q}_1^n\uguale v_n(0),\ \bar{q}_2^n\uguale v_n(1)\in\partial B_1; \\
			|v_n(t)|\leq 1\ \mbox{for every}\ t\in[0,1]; \\
			\min\limits_{t\in[0,1]}|v_n(t)|\leq\delta_n.
		\end{cases}
	\end{equation}	
	Recalling the behaviour of $\Veps$ (see Proposition \ref{prop:pert_coll}), observe that, if we fix $y\in\R^2\setminus\{0\}$ we can compute
	\[
	\Veps(r_ny)=r_n^{-\al_j}V_j(y)+C+\mathcal{O}(r_n)=r_n^{-\al_j}\left(V_j(y)+r_n^{\al_j}C+\mathcal{O}(r_n^{\al_j+1})\right)
	\]
	as $n\to+\infty$. In this way we have
	\[
	\begin{aligned}
		\mathcal{M}(u_n)&=\mathcal{M}(r_nv_n)=\frac12\int_0^1|r_n\dot{v}_n|^2\int_0^1\left(-1+\Veps(r_nv_n)\right) \\
		&=r_n^2\frac12\int_0^1|\dot{v}_n|^2\int_0^1r_n^{-\al_j}\left(-r_n^{\al_j}+V_j(v_n)+r_n^\al C+\mathcal{O}(r_n^{\al_j+1})\right) \\
		&=r_n^{2-\al_j}\frac12\int_0^1|\dot{v_n}|^2\int_0^1\left(V_j(v_n)+\mathcal{O}(r_n^{\al_j})\right)
	\end{aligned}
	\]
	and so, if we define 
	\[
	\bar{\mathcal{M}}(v_n)\uguale\frac12\int_0^1|\dot{v}_n|^2\int_0^1\left(V_j(v_n)+\mathcal{O}(r_n^{\al_j})\right)
	\]
	we have shown that
	\[
	\bar{\mathcal{M}}(v_n)=r_n^{\al_j-2}\mathcal{M}(u_n),\quad\mbox{for every}\ n\in\N.
	\]
	Now, since $\bar{\mathcal{M}}(v_n)$ and $\mathcal{M}(u_n)$ are proportional and $u_n$ minimizes $\mathcal{M}$ in $H^n$, if we define 
	\[
	\bar{H}^n\uguale\{v_n\in H^1([0,1];\R^2):\,v_n(0)=\bar{q}_1^n,\ v_n(1)=\bar{q}_2^n,\ |v_n|\leq 1\}
	\]
	we easily deduce that 
	\[
	\bar{\mathcal{M}}(v_n)=\min\limits_{\bar{H}^n}\bar{\mathcal{M}}.
	\]
	At this point, we want to show that $(v_n)$ admits a weak limit in the $H^1$ topology. Since $V_j$ is bounded from below in $\cerchio$ we have that there exists $C_1>0$
	\[
	\bar{\mathcal{M}}(v_n)\geq C_1\int_0^1|\dot{v}_n|^2,\quad\mbox{for every}\ n\in\N.
	\]
	On the other hand, since $\mathcal{O}(r_n^{\al_j})$ is uniformly bounded when $n\to+\infty$ by a constant $C_2>0$, we have that there exists $C_3>0$ such that
	\[
	\bar{\mathcal{M}}(v_n)=\min\limits_{\bar{H}^n}\bar{\mathcal{M}}\leq\min\limits_{v\in\bar{H}^n}\frac12\int_0^1|\dot{v}|^2\int_0^1V_j(v)+C_2\leq C_3.
	\]
	Moreover, the sequence $(v_n)$ is uniformly bounded by 1 and so its $L^2$-norm is too. For this reason, we deduce that there exists $v_0\in H^1$ such that $v_n\wconv v_0$ in the $H^1$-topology and thus uniformly; in particular, from \eqref{lem:uniform_step} and the uniform convergence we have that
	\[
	\begin{cases}
		\bar{q}_1\uguale v_0(0),\ \bar{q}_2\uguale v_0(1)\in\partial B_1; \\
		|v_0(t)|\leq 1,\ \mbox{for every}\ t\in[0,1]; \\
		\min\limits_{t\in[0,1]}|v_0(t)|=0.
	\end{cases}
	\]
	In other words, we have shown that the blow-up limit $v_0$ is a collision path in the space 
	\[
	\bar H\uguale\{v\in H^1([0,1];\R^2):\,v(0)=\bar{q}_1,\ v(1)=\bar{q}_2,\ |v|\leq 1\}
	\]
	in the sector $[\vt^*,\vt^*+2\bar{k}\pi]$. For this reason, it is enough to show that $v_0$ minimizes the Maupertuis functional
	\[
	\bar{\mathcal{M}}_0(v_0)=\frac12\int_0^1|\dot{v}_0|^2\int_0^1V_j(v_0)
	\]
	in the space $\bar H$; indeed, we would reach a contradiction thanks to Lemma \ref{lem:BTV_maup}, since $\al_j>\bar\al(U_j,\vt^*,\vt^*+2k\pi)$ and a minimizer cannot have collisions.
	
	From the Fatou's lemma we have that
	\[
	\bar{\mathcal{M}}_0(v_0)=\frac12\int_0^1|\dot{v}_0|^2\int_0^1V_j(v_0)\leq\liminf\limits_{n\to+\infty}\bar{\mathcal{M}}(v_n);
	\]
	on the other hand, since $v_n$ minimizes $\bar{\mathcal{M}}$ in $\bar{H}_n$ for every $n\in\N$, we have that
	\[
	\bar{\mathcal{M}}(v_n)\leq\bar{\mathcal{M}}(v_0)\leq\frac12\int_0^1|\dot{v}_0|^2\int_0^1V_j(v_0)+C_4r_n^\al,
	\]
	for some $C_4>0$ and for every $n\in\N$. In this way, we also have that
	\[
	\liminf\limits_{n\to+\infty}\bar{\mathcal{M}}(v_n)\leq\frac12\int_0^1|\dot{v}_0|^2\int_0^1V_j(v_0)=\bar{\mathcal{M}}_0(v_0)
	\]
	and so $v_n\to v_0$ strongly in $H^1$. This shows that $v_0$ is a minimizer in $\bar H$ and concludes the proof.
\end{proof}

At this point, we want to prove something stronger than the previous lemma, which will involve Lemma \ref{lem:BCTmin}. Indeed, our idea is to show that it is possible to extend Lemma \ref{lem:uniform} to those \emph{sectors} that are determined by two minimal arcs of the \emph{foliation} provided in Lemma \ref{lem:BCTmin}. We are interested in those \emph{curved sectors} which have as barriers one minimal arc and its $2k\pi$-copy for some $k\in\N_{\geq 1}$. Note that in \cite{BCTmin}, the authors give a particular characterization of such foliation: it is possible to parametrize every minimal arc with respect to its distance from the origin, thanks to a monotonicity property of the radial variable (see \cite[Lemma 4.3]{BCTmin}). Recalling that $\vt^*$ is a minimal non-degenerate central configuration for $U_j$, we consider the unique minimal arc $\gamma^*$, parametrized as the polar curve $\gamma^*(r)=(r,\vp^*(r))$, such that $\vp^*(r_0)=\vt^*$. For $k\in\N_{\geq 1}$, we can define
\[
\Sigma(\vt^*,k)\uguale\left\lbrace (r,\vt(r)):\,\vp^*(r)\leq\vt(r)\leq\vp^*(r)+2k\pi,\ \mbox{for}\ 0\leq r\leq r_0\right\rbrace
\]
and we are able to prove the following result.

\begin{lemma}\label{lem:uniform_curved}
	In the same setting of Lemma \ref{lem:BTV}, there exists $r^*>0$ such that, for every $r_0\in(0,r^*)$, for every $k\in\N_{\geq 1}$, for every $\al_j>\bar{\al}(U_j,\vt^*,\vt^*+2k\pi)$, there exists $\delta>0$ such that, for every $q_1,q_2\in\Sigma(\vt^*,k)\cap\partial B_{r_0}$, the Maupertuis' minimizer $u$ which connects $q_1$ and $q_2$ is such that:
	\begin{itemize}
		\item[$(i)$] $u$ belongs pointwisely to the sector $\Sigma(\vt^*,k)$;
		\item[$(ii)$] $u$ verifies
		\[
		\min\limits_{t\in[0,1]}|u(t)|>\delta r_0.
		\]	
	\end{itemize}
\end{lemma}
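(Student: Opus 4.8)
The plan is to treat the two assertions separately, establishing the confinement $(i)$ by a barrier argument and the uniform distance bound $(ii)$ by a blow-up procedure analogous to the one in Lemma \ref{lem:uniform}. For $(i)$ I would use the two boundary arcs of the curved sector, namely $\gamma^*$ and its $2k\pi$-copy, as barriers: by Lemma \ref{lem:BCTmin} each of them is itself a Maupertuis' minimizer among colliding paths lying in the cone around $\vt^*$. Suppose the minimizer $u$ left $\Sigma(\vt^*,k)$, say by dipping below the leaf $\gamma^*$; since both endpoints $q_1,q_2$ lie in the sector, $u$ would have to meet $\gamma^*$ at (at least) two points $A,B$, away from the origin. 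The excursion of $u$ between $A$ and $B$ and the corresponding sub-arc of $\gamma^*$ bound a region containing no centre, hence they are homotopic in the punctured ball. By the restriction property (Lemma \ref{lem:homotopy}) the sub-arc of $\gamma^*$ is minimal in that homotopy class, so replacing the excursion of $u$ by it does not increase $\mathcal{M}$; the resulting path would be another minimizer with corners at $A,B$, contradicting the smoothness of minimizers granted by Lemma \ref{lem:conn_comp} (this is exactly the mechanism of Proposition \ref{prop:no_int}). Thus $u$ cannot cross either barrier and $(i)$ follows.

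For $(ii)$ I would argue by contradiction, reproducing the blow-up of Lemma \ref{lem:uniform}. Assume there are $r_n\to 0^+$, $\delta_n\to 0^+$ and endpoints $q_1^n,q_2^n\in\Sigma(\vt^*,k)\cap\partial B_{r_n}$ whose connecting minimizers $u_n$ satisfy $\min_t|u_n(t)|\leq\delta_n r_n$. Setting $v_n\uguale u_n/r_n$ and using Proposition \ref{prop:pert_coll} to expand $\Veps(r_n y)=r_n^{-\al_j}(V_j(y)+o(1))$, the rescaled functionals $\bar{\mathcal M}$ are minimized by $v_n$ over paths in $B_1$, and exactly as in Lemma \ref{lem:uniform} one extracts a strong $H^1$-limit $v_0$ which is a colliding path ($\min_t|v_0(t)|=0$) and a minimizer of the purely $-\al_j$-homogeneous Maupertuis' functional $\bar{\mathcal M}_0$. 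The decisive new point is the behaviour of the constraint: by $(i)$ each $v_n$ is confined to the rescaled curved sector $\tfrac{1}{r_n}\Sigma(\vt^*,k)$, whose boundary is the curve $\rho\mapsto(\rho,\vp^*(\rho r_n))$ for $\rho\in[0,1]$. Since the leaves of the foliation of Lemma \ref{lem:BCTmin} reach the origin tangentially to the central direction, one has $\vp^*(r)\to\vt^*$ as $r\to0^+$, whence $\sup_{\rho\in[0,1]}|\vp^*(\rho r_n)-\vt^*|\to0$ and the rescaled curved sectors converge uniformly to the straight sector $[\vt^*,\vt^*+2k\pi]$. Consequently $v_0$ is a colliding Maupertuis' minimizer inside the straight sector of amplitude $2k\pi$, which contradicts Lemma \ref{lem:BTV_maup} because $\al_j>\bar\al(U_j,\vt^*,\vt^*+2k\pi)$; this contradiction yields the uniform bound $(ii)$.

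The main obstacle I anticipate is precisely the passage to the limit under the varying curved constraint. Two things must be checked carefully: first, the asymptotically radial behaviour of the barrier $\gamma^*$ at collision, i.e. $\vp^*(r)\to\vt^*$, which rests on the collision asymptotics for anisotropic homogeneous potentials used in \cite{BCTmin}; second, that $v_0$ is a genuine minimizer \emph{over the straight sector}, not merely a weak limit. For the latter I would build a recovery sequence: given any competitor $w$ in the straight sector, the uniform convergence of the sectors lets me produce $w_n\to w$ with $w_n$ admissible for the $v_n$-problem, so that $\bar{\mathcal M}_0(v_0)\leq\liminf_n\bar{\mathcal M}(v_n)\leq\liminf_n\bar{\mathcal M}(w_n)=\bar{\mathcal M}_0(w)$, the first inequality by lower semicontinuity (Fatou), the second by the minimality of $v_n$, and the final equality by the convergence $w_n\to w$. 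This $\Gamma$-convergence-type estimate, combined with the collision asymptotics, is the delicate heart of the proof; everything else follows the template already set up in Lemmata \ref{lem:BTV_maup} and \ref{lem:uniform}.
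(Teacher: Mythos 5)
Your proposal is correct and follows essentially the same route as the paper: claim $(i)$ is obtained by using the foliation leaf through $\vt^*$ and its $2k\pi$-copy as barriers via the intersection/homotopy mechanism of Proposition \ref{prop:no_int} combined with Lemma \ref{lem:homotopy}, and claim $(ii)$ by the blow-up of Lemma \ref{lem:uniform}, the uniform convergence of the rescaled curved sectors $\bar\Sigma_n$ to the straight sector $[\vt^*,\vt^*+2k\pi]$ (using $\vp^*(r)\to\vt^*$ from Lemma \ref{lem:BCTmin}), and the resulting contradiction with Lemma \ref{lem:BTV_maup}. Your recovery-sequence argument showing that the blow-up limit is a genuine minimizer over the straight sector spells out a step the paper compresses into ``with the same technique of Lemma \ref{lem:uniform}'', but this is a refinement of the same method, not a different one.
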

\begin{proof}
	We start with the proof of ($ii$). Following the same technique used in the proof of Lemma \ref{lem:uniform}, assume by contradiction that:
	\begin{itemize}
		\item there exists $r_n\to 0^+$ sequence of positive real numbers and, without loss of generality, assume that $r_n\leq r^*$ for $n$ sufficiently large, with $r^*>0$ as in Lemma \ref{lem:BCTmin};
		\item fix $k\in\N$;
		\item fix $\vt^*\in\cerchio$ minimal non-degenerate central configuration for $U_j$ (this is not restrictive since $U_j$ admits just a finite number of them);
		\item there exists $\al_j>\bar\al(U_j,\vt^*,\vt^*+2k\pi)$;
		\item take $\delta_n\to 0^+$ sequence of positive real numbers;
		\item define the sequence of curved sectors
		\[
		\Sigma_n\uguale\left\lbrace (r,\vt(r)):\,\vp^*(r)\leq\vt(r)\leq\vp^*(r)+2k\pi,\ \mbox{for}\ 0\leq r\leq r_n\right\rbrace,
		\]
		where $\gamma^*(r)=(r,\vp^*(r))$ is the polar curve which parametrizes the unique minimal arc of the foliation provided in Lemma \ref{lem:BCTmin}, such that $\vp^*(r^*)=\vt^*$;
		\item take two sequences of points $(q_1^n),(q_2^n)\sset(\Sigma_n\cap\partial B_{r_n})$;		
		\item consider the sequence of minimizers $(u_n)$ of the Maupertuis' functional
		\[
		\mathcal{M}(u_n)=\frac12\int_0^1|\dot{u}_n|^2\int_0^1(-1+\Veps(u_n)),
		\]
		every one of them respectively in the space
		\[
		H^n\uguale\{u_n\in H^1([0,1];\R^2):\,u_n(0)=q_1^n,\ u_n(1)=q_2^n,\ |u_n|\leq r_n \}
		\]
		and within the curved sector $\Sigma_n$, requiring that every $u_n$ satisfies
		\[
		\min\limits_{t\in[0,1]}|u_n(t)|\leq\delta_nr_n.
		\] 
	\end{itemize}
	Define the blow-up sequence $(v_n)$ as in the proof of Lemma \ref{lem:uniform}, which again verifies \eqref{lem:uniform_step} for every $n\in\N$. In the same way, one can prove that every $v_n$ (at least for $n$ large) minimizes the functional 
	\[
	\bar{\mathcal{M}}(v_n)\uguale\frac12\int_0^1|\dot{v}_n|^2\int_0^1(V_j(v_n)+\mathcal{O}(r_n^{\al_j}))
	\]
	in the space
	\[
	\bar{H}^n\uguale\{v_n\in H^1([0,1];\R^2):\,v_n(0)=\bar{q}_1^n,\ v_n(1)=\bar{q}_2^n,\ |v_n|\leq 1\}.
	\]
	Moreover, defining the angular variable $\vp_n^*(r)\uguale\vp^*(r_nr)$ and the blow-up sector
	\[
	\bar{\Sigma}_n\uguale\{(r,\vt(r)):\,\vp_n^*(r)\leq\vt(r)\leq\vp_n^*(r)+2k\pi,\ \mbox{for}\ 0\leq r\leq 1\},
	\]
	one can easily verify that $v_n\in\bar{\Sigma}_n$ for every $n\in\N$.
	
	At this point, with the same technique of Lemma \ref{lem:uniform}, one can prove that $v_n\to v_0$ uniformly in $[0,1]$, with $v_0$ minimizer of the functional 
	\[
	\bar{\mathcal{M}}_0(v_0)=\frac12\int_0^1|\dot{v}_0|^2\int_0^1V_j(v_0)
	\]
	in the space
	\[
	\bar H\uguale\{v\in H^1([0,1];\R^2):\,v(0)=\bar{q}_1,\ v(1)=\bar{q}_2,\ |v|\leq 1\},
	\]
	for some $\bar{q}_1,\bar{q}_2\in\partial B_1$ and such that
	\begin{equation}\label{lem:uniform_curved_step1}
		\min\limits_{t\in[0,1]}|v_0(t)|=0.
	\end{equation}
	Moreover, from Lemma \ref{lem:BCTmin}, since $r_n\to 0^+$, we have that the sequence of functions $\vp_n^*=\vp_n^*(r)$ uniformly converges to $\vt^*$ on the $r$-interval $[0,1]$	and so
	\[
	\bar{\Sigma}_n\to[\vt^*,\vt^*+2k\pi]\quad\mbox{as}\ n\to+\infty.
	\]
	This means that $v_0$ minimizes $\bar{\mathcal{M}}$ in $\bar{H}$ within the sector $[\vt^*,\vt^*+2k\pi]$ and, thanks to \eqref{lem:uniform_curved_step1}, has a collision. This is a contradiction for Lemma \ref{lem:BTV_maup} and proves ($ii$).
	
	In order to prove ($i$) it is enough to observe that a minimizer of the Maupertuis' functional $\mathcal{M}$ with endpoints in the sector $\Sigma(\vt^*,k)$ cannot leave this sector. Indeed, $\Sigma(\vt^*,k)$ has a minimal collision arc and its $2k\pi$-copy as boundary; this arcs act as a barrier, since Proposition \ref{prop:no_int} applies also in this context and a Bolza minimizer cannot intersect another minimal arc more than once.
\end{proof}

We now we extend the previous local study to a global setting,which takes into account all the other centres. In order to do this, we need to show that the local minimization process provides two minimizers which do not collide in $c_j'$ and such that, if juxtaposed, have winding number equal to 1 with respect to $c_j'$. In this way, if one takes a minimizer $u\in K_l$ and assumes that $u$ collides in $c_j'$, then a contradiction arises. Indeed, the portion of $u$ close enough to $c_j'$ must correspond to one of the two local minimizers above, depending on if $l_j=0$ or $l_j=1$.
\begin{theorem}\label{thm:indice_uno}
	In the same setting of Lemma \ref{lem:BTV}, there exists $r^*>0$ such that, for every $r_0\in(0,r^*)$, for every $\al>\bar{\al}(U_j,\vt^*,\vt^*+4\pi)$, there exists $\delta>0$ such that, for every $q_1,q_2\in\partial B_{r_0}$, there exist two Maupertuis' minimizers $u_1$ and $u_2$ which connect $q_1$ and $q_2$ such that
	\begin{itemize}
		\item[$(i)$] for every $i=1,2$ we have
		\[
		\min\limits_{t\in[0,1]}|u_i(t)|>\delta r_0;
		\]
		\item[$(ii)$] the juxtaposition $u$ of $u_1$ and $u_2$ is a closed path which has winding number 1 with respect to the origin, up to choose a suitable time-parametrization.
	\end{itemize}
\end{theorem}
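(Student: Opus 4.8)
The plan is to construct the two minimizers $u_1$ and $u_2$ by using the foliation of minimal collision arcs from Lemma \ref{lem:BCTmin} as barriers that separate the two sides of the centre $c_j'$. The key geometric idea is that a minimal collision arc $\gamma^*$ emanating from the origin along the central configuration $\vt^*$, together with its $2\pi$-rotated copy $\gamma^* + 2\pi$ and $\gamma^* + 4\pi$, partitions a neighbourhood of the origin into curved sectors of the type $\Sigma(\vt^*,2)$ introduced before Lemma \ref{lem:uniform_curved}. Given $q_1, q_2 \in \partial B_{r_0}$, I would route $u_1$ through one such sector (say the one spanned between $\gamma^*$ and $\gamma^* + 2\pi$, passing on one side of the origin) and route $u_2$ through the complementary sector (between $\gamma^* + 2\pi$ and $\gamma^* + 4\pi$, passing on the other side), so that the two arcs approach $\vt^*$ from opposite angular directions. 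Then $u_i$ is defined as the Maupertuis' minimizer connecting $q_1$ to $q_2$ inside the chosen curved sector.

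\emph{First} I would fix $r^* > 0$ small enough that both Lemma \ref{lem:BCTmin} and Lemma \ref{lem:uniform_curved} apply with the angular width $4\pi$ (i.e. $k = 2$), and I would take $\al > \bar\al(U_j,\vt^*,\vt^*+4\pi)$ precisely so that the collision-less conclusion of Lemma \ref{lem:uniform_curved} is available. Property $(i)$ is then \emph{immediate}: it is exactly conclusion $(ii)$ of Lemma \ref{lem:uniform_curved} applied to each $u_i$ separately inside its curved sector, which yields a common $\delta > 0$ with $\min_{t}|u_i(t)| > \delta r_0$ for $i = 1,2$. The existence of each minimizer and its confinement to the prescribed sector follow from the direct-method argument of Subsection \ref{subsec:dir_meth} together with conclusion $(i)$ of Lemma \ref{lem:uniform_curved}.

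\emph{For} property $(ii)$ I would reason topologically. Since $u_1$ lies in the sector $\Sigma(\vt^*,2)$ bounded by $\gamma^*$ and $\gamma^*+2\pi$ while $u_2$ lies in the $2\pi$-shifted sector bounded by $\gamma^*+2\pi$ and $\gamma^*+4\pi$, the two arcs leave the common endpoints on angularly opposite sides of the barrier $\gamma^*+2\pi$ and hence, upon a suitable time reparametrization, the closed concatenation $u = u_1 * u_2^{-1}$ encircles the origin exactly once. More precisely, I would track the continuous lift of the angular coordinate along the juxtaposed loop: the net angular increment over one full traversal equals $2\pi$ because the two arcs sweep complementary halves of the $4\pi$ angular span, giving $\mathrm{Ind}(u; 0) = 1$. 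I would use Proposition \ref{prop:no_int} here as well, which guarantees the barriers cannot be crossed more than once, to ensure the arcs genuinely occupy distinct sectors and the winding number is neither $0$ nor larger than $1$.

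\emph{The main obstacle} I expect is verifying rigorously that the two minimizers can be taken in \emph{distinct} adjacent curved sectors for \emph{arbitrary} endpoint positions $q_1, q_2 \in \partial B_{r_0}$, rather than collapsing into the same arc or degenerating so that their juxtaposition has trivial winding. The delicate point is that the endpoints are prescribed on the full circle $\partial B_{r_0}$, not merely within a narrow angular neighbourhood of $\vt^*$, so one must argue that the foliation of Lemma \ref{lem:BCTmin}—which is only guaranteed near $\vt^*$—still provides barriers positioned so as to separate the two homotopy classes realized by $u_1$ and $u_2$. This requires carefully choosing which $2k\pi$-copies of the minimal arc bound each sector and checking, via the uniqueness and non-crossing properties (Lemma \ref{lem:BCTmin} and Proposition \ref{prop:no_int}), that the resulting loop is forced into the winding-number-one class regardless of the endpoint configuration.
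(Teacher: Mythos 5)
Your treatment of property $(i)$ is fine and matches the paper: once each arc is confined to a curved sector, the uniform lower bound $\min_t|u_i(t)|>\delta r_0$ is exactly conclusion $(ii)$ of Lemma \ref{lem:uniform_curved}. The gap is in property $(ii)$, and it breaks the construction as you describe it. You take \emph{both} minimizers to connect $q_1$ to $q_2$, with $u_1$ confined to the sector bounded by $\gamma^*$ and $\gamma^*+2\pi$ and $u_2$ confined to the sector bounded by $\gamma^*+2\pi$ and $\gamma^*+4\pi$. But these two sectors are not complementary regions of the punctured disc: the curves $\gamma^*$, $\gamma^*+2\pi$, $\gamma^*+4\pi$ are the \emph{same} Euclidean curve (the paper stresses in Figure \ref{fig:cur_sec} that the picture is not drawn in Euclidean space), and each $2\pi$-wide sector projects onto the whole punctured disc. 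Confining $u_2$ to $[\gamma^*+2\pi,\gamma^*+4\pi]$ forces its angular lift to run from $\vt_1+2\pi$ to $\vt_2+2\pi$, so its angular sweep is $\vt_2-\vt_1$, exactly the same as that of $u_1$. The deck transformation $\vt\mapsto\vt+2\pi$ carries one minimization problem onto the other, so $u_2$ is homotopic to $u_1$ relative to the endpoints (and, granting the uniqueness implicit in Lemma \ref{lem:uniform_curved}, it is the very same Euclidean arc); hence the loop $u_1\ast u_2^{-1}$ has winding number $0$, not $1$. Your claim that the two arcs ``sweep complementary halves of the $4\pi$ angular span'' is precisely the step that fails.

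The missing idea is that the \emph{second} arc must have lifted endpoints that straddle the barrier $\gamma^*+2\pi$. The paper achieves this by introducing $\tilde q_1\uguale r_0e^{i(\vt_1+2\pi)}$ --- the same Euclidean point as $q_1$, but the next lift up --- and taking $u_2$ to be the minimizer joining $q_2$ (lift $\vt_2$) to $\tilde q_1$ (lift $\vt_1+2\pi$) inside the single $4\pi$-wide sector $\Sigma(\vt^*,2)$; this is also the real reason the hypothesis $\al>\bar{\al}(U_j,\vt^*,\vt^*+4\pi)$, rather than the $2\pi$-threshold your construction would require, appears in the statement. With this choice the concatenation $u_1\ast u_2$ (not $u_1\ast u_2^{-1}$) runs from lift $\vt_1$ to lift $\vt_1+2\pi$, i.e.\ its total angular sweep is $(\vt_2-\vt_1)+(\vt_1+2\pi-\vt_2)=2\pi$, giving winding number $1$, while Lemma \ref{lem:uniform_curved} keeps both arcs at distance $\delta r_0$ from the origin. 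Incidentally, the obstacle you flag about ``arbitrary endpoints on $\partial B_{r_0}$ versus a foliation only defined near $\vt^*$'' is a non-issue: the sectors are determined by the single arc $\gamma^*$ through $\vt^*$ and its $2k\pi$-lifts, and every point of $\partial B_{r_0}$ has a lift in each $2\pi$-window, which is exactly why the paper may assume without loss of generality that $q_1,q_2\in\Sigma(\vt_j,1)$ with $|\vt_1-\vt_2|<2\pi$.
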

\begin{proof}
	Take $q_1=r_0e^{i\vt_1},q_2=r_0e^{i\vt_2}\in\partial B_{r_0}$ and, without loss of generality, assume that $q_1,q_2\in\Sigma(\vt_j,1)$ so that, in particular $|\vt_1-\vt_2|<2\pi$. Moreover, it is not restrictive to assume that $\vt_1<\vt_2$. By Lemma \ref{lem:uniform_curved} there exists a Maupertuis' minimizer $u_1$ which connects $q_1$ and $q_2$ and verifies properties (i) and (ii) of such lemma. At this point, define $\tilde{q}_1\uguale r_0e^{i(\vt_1+2\pi)}$ which, of course, coincides with $q_1$ in the Euclidean space, but not with respect to the curved sectors. Indeed, we have that $\tilde{q}_1\in\Sigma(\vt_j,2)\setminus\Sigma(\vt_j,1)$ and, of course, also $q_2\in\Sigma(\vt_j,2)$ (see Figure \ref{fig:cur_sec}). For this reason, again from Lemma \ref{lem:uniform_curved}, we deduce the existence of the second minimal arc $u_2$, which connects $q_2$ and $\tilde{q}_1$ with the same properties of $u_1$. Consider the juxtaposition $u$ of $u_1$ and $u_2$, which, of course, is a closed curve from $q_1$ to itself. Since both $u_1$ and $u_2$ are collision-less, the winding number of $u$ with respect to the origin is 1.	
\end{proof}

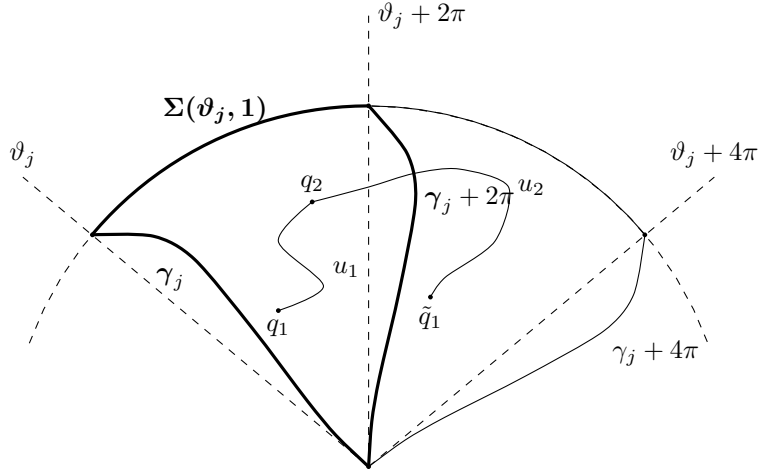
\begin{figure}
	\centering
	\begin{tikzpicture}[scale=0.6]
		\coordinate (O) at (0,0);
		\coordinate (x) at (0,8);
		\coordinate(x1) at (0.85,6.95);
		\coordinate (x2) at (1.04,5.9);
		\coordinate (x3) at (0.69,3.94);
		\coordinate (x4) at (0.24,1.98);
		\coordinate (x5) at (0.07,1); 
		\coordinate (-delta) at (-7.66,6.42);
		\coordinate (+delta) at (7.66,6.42);
		\coordinate (delta) at (0,10);
		\coordinate (p0) at (-6.12,5.14);
		\coordinate (p01) at (-4.77,5.11);
		\coordinate (p02) at (-3.86,4.6);
		\coordinate (p03) at (-2.57,3.06);	
		\coordinate (p04) at (-1.36,1.46);
		\coordinate (p05) at (-0.72,0.69);	
		\coordinate (p00) at (6.12,5.14);
		\coordinate (p001) at (5.87,3.81);
		\coordinate (p002) at (5.2,3);
		\coordinate (p003) at (3.46,2);
		\coordinate (p004) at (1.68,1.09);
		\coordinate (p005) at (0.81,0.59);
		\coordinate (u1) at (-2,3.46);
		\coordinate (u11) at (-1,4);
		\coordinate (u12) at (-2,5);
		\coordinate (ut1) at (1.37,3.76);
		\coordinate (u2) at (-1.25,5.87);
		\coordinate (u21) at (0,6.2);
		\coordinate (u22) at (1,6.5);
		\coordinate (u23) at (2,6.6);
		\coordinate (u24) at (3.1,6.2);
		\coordinate (u25) at (2.8,5);
		\coordinate (u26) at (1.7,4.2);
		
		% segmenti
		\draw[dashed] (O)--(-delta);
		\draw[dashed] (0)--(delta);
		\draw[dashed] (0)--(+delta);	
		
		% archi 
		\draw [domain=-50:50] plot ( {8*sin(\x)},{8*cos(\x)});
		\draw [dashed,domain=-70:70] plot ( {8*sin(\x)},{8*cos(\x)});  
		
		% punti
		\fill (O) circle[radius=1.5pt];
		\fill (p0) circle[radius=1.5pt];
		\fill (p00) circle[radius=1.5pt];
		\fill (x) circle[radius=1.5pt];
		\fill (u1) circle[radius=1.5pt];
		\fill (ut1) circle[radius=1.5pt];
		\fill (u2) circle[radius=1.5pt];
		
		% scritte 	
		\node[above] at (-delta) {$\vartheta_j$}; 	
		\node[right] at (delta) {$\vartheta_j+2\pi$}; 
		\node[above] at (+delta) {$\vartheta_j+4\pi$}; 	
		\node[below left] at (p02) {$\boldsymbol\gamma_j$};
		\node[right] at (x2) {$\boldsymbol\gamma_j+2\pi$};
		\node[below right] at (p002) {$\gamma_j+4\pi$};
		\node[above] at (-3.4,7.4) {$\boldsymbol{\Sigma(\vt_j,1)}$};
		\node[below] at (u1) {$q_1$};
		\node[above] at (u2) {$q_2$};
		\node[below] at (ut1) {$\tilde{q}_1$};
		\node[above right] at (u11) {$u_1$};
		\node[right] at (u24) {$u_2$};

		% curve		
		\draw[very thick] plot [smooth] coordinates {(x) (x1) (x2) (x3) (x4) (x5) (0)};
		
		\draw[very thick] plot [smooth] coordinates {(p0) (p01) (p02) (p03) (p04) (p05) (0)};
		
		\draw [very thick,domain=-50:0] plot ( {8*sin(\x)},{8*cos(\x)});
		
		\draw plot [smooth] coordinates {(p00) (p001) (p002) (p003) (p004) (p005) (0)};
		
		\draw plot [smooth] coordinates {(u1) (u11) (u12) (u2)};
		
		\draw plot [smooth] coordinates {(u2) (u21) (u22) (u23) (u24) (u25) (u26) (ut1)};

	\end{tikzpicture}
	\caption{Situation occurring in Theorem \ref{thm:indice_uno}. We remark that the picture is not referred to the Euclidean space. Indeed, here we denote by $\gamma_j$ the unique collision minimizer which starts from $r_0e^{i\vt_j}$ in the fashion of Lemma \ref{lem:BCTmin} and by  $\gamma_j+2\pi$ and $\gamma_j+4\pi$ its $2\pi$ and $4\pi$ copies respectively. This minimal arcs determine the curved sectors used in the proof, while the juxtaposition of $u_1$ and $u_2$ is a closed path which winds around the origin.}
	\label{fig:cur_sec}
\end{figure}

At this point, we are ready to prove that a minimizer $u\in K_l$ for the Maupertuis' functional joins property $(CF)$.
\begin{theorem}\label{thm:cf}
	Assume that $N\geq 2$, $m\geq 1$ and consider a function $V\in\mathcal{C}^2(\R^2\setminus\{c_1,\ldots,c_N\})$ defined as in \eqref{def:potential} and satisfying \eqref{hyp:V}-\eqref{hyp:V_al}. Fix $\ve\in(0,\tilde\ve)$ as in \eqref{def:ve_tilde} at page \pageref{def:ve_tilde} and consider the potential $\Veps$ defined in \eqref{def:scaling} at page \pageref{def:scaling}. Fix $R\in(\tilde{\ve},\mathfrak{m}^{1/\al}-\tilde{\ve})$ as in \eqref{eq:R} at page \pageref{eq:R} and fix $l\in\mathfrak{I}^N$. Then, there exists $\delta>0$ such that, for every $p_1,p_2\in\partial B_R$, every minimizer $u$ of the Maupertuis' functional
	\[
	\mathcal{M}(u)=\frac12\int_0^1|\dot{u}|^2\int_0^1\left(-1+\Veps(u)\right)
	\]
	in the space $K_l^{p_1,p_2}$ found in Proposition \ref{prop:existence} joins the following properties:
	\begin{itemize}
		\item[$(i)$] $u$ is free of self-intersections;
		\item[$(ii)$] $u$ satisfies
		\[
		\min\limits_{t\in[0,1]}|u(t)-c_j'|>\delta,\quad\mbox{for every}\ j=1,\ldots,N.
		\]
	\end{itemize}
	Therefore, in particular $u$ is collision-less.
\end{theorem}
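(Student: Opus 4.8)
The plan is to treat the two assertions separately, with (i) essentially free and (ii) carrying all the weight. For (i), I would simply invoke Proposition \ref{prop:no_self_int}, which already rules out self-intersections at points different from the centres; once (ii) guarantees that $u$ stays uniformly away from every $c_j'$, there are no remaining intersection points to worry about, so (i) holds on all of $[0,1]$. For (ii) I would argue by contradiction, assuming that some minimizer $u\in K_l^{p_1,p_2}$ collides with a centre $c_j'$ at an instant $t_0$. First I would isolate the collision: by Lemma \ref{lem:coll_time} the set $T_c(u)$ is finite and the inertial moment $|u-c_j'|^2$ is strictly convex near $t_0$, so I can pick $c<t_0<d$ with $t_0$ the only collision in $[c,d]$ and endpoints $\bar p_1=u(c),\bar p_2=u(d)$ on a small sphere $\partial B_r(c_j')$ with $r<r^*$, so that by \eqref{eq:continuity} all other centres remain at distance $\geq r^*$ from $u|_{[c,d]}$. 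By Proposition \ref{prop:restriction} and Lemma \ref{lem:homotopy} the restriction $u|_{[c,d]}$ is then itself a minimizer of the localized Maupertuis' functional in its own homotopy-constrained class, and by Proposition \ref{prop:pert_coll} the governing potential there is a perturbation of the purely $-\al_j$-homogeneous anisotropic Kepler potential $V_j$, so all the machinery of the previous subsection becomes available.

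The heart of the argument is to confine $u|_{[c,d]}$ to one of the curved sectors of Lemma \ref{lem:uniform_curved} and then contradict the uniform lower bound it provides. The asymptotic analysis of minimizing collision arcs (Lemma \ref{lem:BCTmin}, cf. \cite{BCTmin}) forces the two branches of $u|_{[c,d]}$ to approach $c_j'$ along minimal non-degenerate central configurations of $U_j$, which exist and satisfy $\al_j>\bar\al_j$ by \eqref{hyp:V_al}; shrinking $r$ if needed, the endpoints $\bar p_1,\bar p_2$ then fall into the angular neighbourhood of such a $\vt^*$, i.e. into a curved sector $\Sigma(\vt^*,k)$. At this point I would apply Theorem \ref{thm:indice_uno} to produce two collision-less Maupertuis minimizers joining $\bar p_1$ and $\bar p_2$, staying at distance $>\delta r$ from $c_j'$ and whose juxtaposition winds exactly once around $c_j'$; by Proposition \ref{prop:no_int} any other minimizer crosses each of these arcs at most once, so they act as barriers trapping $u|_{[c,d]}$ inside $\Sigma(\vt^*,k)$, with $k$ dictated by the prescribed parity $l_j$. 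Once $u|_{[c,d]}$ is recognized as a Maupertuis minimizer with endpoints inside $\Sigma(\vt^*,k)$, part (ii) of Lemma \ref{lem:uniform_curved} yields $\min_t|u(t)-c_j'|>\delta r>0$, contradicting the collision. Choosing $\delta$ as the minimum of the finitely many thresholds arising from the finitely many centres and their finitely many minimal central configurations produces a single $\delta$ valid for all $p_1,p_2$.

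The hard part will be precisely this confinement step. The global minimizer $u\in K_l$ is not a priori contained in any angular or curved sector, so the whole difficulty is to transfer the sectorial collision-exclusion of Lemma \ref{lem:BTV_maup} and Lemma \ref{lem:uniform_curved} to it; this rests entirely on constructing the correct barrier arcs from the foliation of Lemma \ref{lem:BCTmin} and on reconciling the global winding-parity constraint encoded by $l\in\mathfrak{I}^N$ with the local homotopy class of $u|_{[c,d]}$ around the single centre $c_j'$. Making this bookkeeping consistent, so that the non-colliding comparison arc provided by Theorem \ref{thm:indice_uno} genuinely lies in the same class as the allegedly colliding branch and can therefore serve as an impassable barrier, is the delicate point where the winding-number-one juxtaposition construction and the monotone radial parametrization of the minimal arcs are indispensable.
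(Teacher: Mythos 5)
Your proposal follows essentially the same route as the paper's proof: argue by contradiction, localize the collision via Lemma \ref{lem:coll_time} and \eqref{eq:continuity}, pass to local minimality via Lemma \ref{lem:homotopy} and Proposition \ref{prop:pert_coll}, and then invoke the sector/barrier machinery of Theorem \ref{thm:indice_uno}, Lemma \ref{lem:uniform_curved} and Proposition \ref{prop:no_int} to force the localized arc to stay at distance $\delta r$ from $c_j'$, with claim (i) obtained from Proposition \ref{prop:no_self_int} exactly as in the paper. The only inessential deviation is your appeal to collision asymptotics from Lemma \ref{lem:BCTmin} to place $\bar{p}_1,\bar{p}_2$ in a curved sector: this is automatic, since $\Sigma(\vt^*,k)$ has angular width $2k\pi$ and hence contains every point of $\partial B_r$, and the genuine issue (which you correctly identify) is only the parity bookkeeping selecting $\Sigma(\vt_j,1)$ versus $\Sigma(\vt_j,2)\setminus\Sigma(\vt_j,1)$ according to $l_j$.
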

\begin{proof}
	Fix $\ve$ and $R$ as in the statement, fix $l\in\mathfrak{I}^N$ and $p_1,p_2\in\partial{B}_R$. Assume by contradiction that a minimizer $u\in K_l^{p_1,p_2}$ of the Maupertuis' functional $\mathcal{M}$ has a collision with the centre $c_j'$ for some $j\in\{1,\ldots,N\}$; for the sake of simplicity we will assume again $c_j'=0$. Then, $u\in\mathfrak{Coll}_l^j$, there exists $t_0\in(0,1)$ such that $u(t_0)=0$ and in particular
	\[
	\mbox{Ind}(u;c_k')\equiv l_k\Mod{2},\quad\forall\,k\neq j.
	\]
	Then, localizing the collision as in the beginning if this section, we can find an interval $[c,d]\sset[0,1]$ such that:
	\begin{itemize}
		\item $t_0\in[c,d]$ and the collision is isolated therein;
		\item $\bar{p}_1\uguale u(c),\bar{p}_2\uguale u(d)\in\partial B_r$, with $r<r^*$ and $r^*>0$ as in Lemma \ref{lem:BCTmin}.
	\end{itemize}
	Then, by means of Lemma \ref{lem:homotopy}, the restriction $v\uguale u|_{[c,d]}$ is a minimizer of the Maupertuis' functional
	\[
	\mathcal{M}_l^{\bar{p}_1,\bar{p}_2}(v)=\frac12\int_c^d|\dot{v}|^2\int_c^d\left(-1+\Veps(v)\right)
	\]
	in the weak $H^1$-closure $\mathcal{K}_l^{\bar{p}_1,\bar{p}_2}$ of the $H^1$ restricted paths
	\[
	\hat{\mathcal{K}}_l^{\bar{p}_1,\bar{p}_2}\uguale\left\lbrace v\in H^1([c,d];\R^2)\,\middle\lvert\, \begin{aligned}
		&v(c)=\bar{p}_1,\ v(d)=\bar{p}_2,\ v(t)\neq c_j'\ \forall\ t\in[c,d],\ \forall\ j& \\
		&\mbox{the function}\ G_v(t)\uguale\begin{cases}
			u(t)\quad\mbox{if}\ t\in[0,c)\cup(d,1] \\
			v(t)\quad\mbox{if}\ t\in[c,d]
		\end{cases}& \\
		&\mbox{belongs to}\ \hat{K}_l^{p_1,p_2}&
	\end{aligned}\right\rbrace.
	\]
	Since $v$ solves a Bolza problem for the Maupertuis' functional inside $B_r$, by Theorem \ref{thm:indice_uno}, we know that, up to time reparametrizations, $v$ connects $\bar{p}_1$ and $\bar{p}_2$ belonging to $\Sigma(\vt_j,1)$ or to $\Sigma(\vt_j,2)\setminus\Sigma(\vt_j,1)$, depending on the value of the index $l_j$. Therefore, by claim (i) of Theorem \ref{thm:indice_uno}, a contradiction arises both if $l_j=0$ or $l_j=1$. Thus $u$ cannot have a collisions and in particular, again from Theorem \ref{thm:indice_uno}, (ii) is proved. Claim (i) follows from this property and Proposition \ref{prop:no_self_int}.
\end{proof}

\subsection{Classical solution arcs}

In this section we will conclude the proof of the existence of internal arcs, finally showing that the minimizer of the Maupertuis' functional satisfies property ($R$) introduced at page \pageref{def:cf_r}. In particular, in the next result we show that, given a minimizer $u$ provided in Proposition \ref{prop:existence}, if $u$ has endpoints sufficiently close to minimal non-degenerate central configurations of $\Wzero$, then $|u(t)|<R$ whenever $t\in(0,1)$. Recall that $\Wzero$ is $-\al$-homogeneous (see \eqref{def:wzero} at page \pageref{def:wzero}) and it is the \emph{leading} component of the total potential $\Veps(y)$ when $\ve\to 0^+$ and $|y|$ becomes very large. This suggests to use compactness properties of sequences of minimizers and their convergence to a minimal collision arc for the anisotropic Kepler problem driven by $\Wzero$. We will consider again Lemma \ref{lem:BCTmin} from \cite{BCTmin}, in which the authors show that all the collision minimizers starting sufficiently close to minimal non-degenerate central configurations describe a foliation which is strictly contained in a given cone. Recall that, from the second line of assumption \eqref{hyp:V} at page \pageref{hyp:V}, as already observed in Remark \ref{rem:wzero} at page \pageref{rem:wzero}, the sum potential $\Wzero$ admits a finite number of minimal non-degenerate central configurations
\[
\Xi\uguale\{\vt^*\in\cerchio:\ U'(\vt^*)=0\ \mbox{and}\ U''(\vt^*)>0\}=\{\vt_1^*,\ldots,\vt_m^*\},
\]
where, in polar coordinates $y=(r,\vt)$
\[
\Wzero(r,\vt)=r^{-\al}U(\vt)=r^{-\al}\sum\limits_{i=1}^kU_i(\vt).
\]
Therefore, it is not restrictive to work with two of this central configurations $\vt^*,\vt^{**}\in\cerchio$, since we are solving a Bolza problem, but it is clear that the result holds choosing any pair (not necessarily distinct) of central configurations.

\begin{theorem}\label{thm:R}
	Assume that $N\geq 2$, $m\geq 1$ and consider a function $V\in\mathcal{C}^2(\R^2\setminus\{c_1,\ldots,c_N\})$ defined as in \eqref{def:potential} and satisfying \eqref{hyp:V}. Fix $R>0$ as in \eqref{eq:R} at page \pageref{eq:R}. Then, there exists $\ve_{int}>0$ such that, for any $\vt^*,\vt^{**}\in\Xi$ minimal non degenerate central configurations for $\Wzero$, defining $\xi^*\uguale Re^{i\vt^*},\xi^{**}\uguale Re^{i\vt^{**}}\in\partial B_R$, there exist two neighbourhoods $\mathcal{U}_{\xi^*},\,\mathcal{U}_{\xi^{**}}$ on $\partial B_R$ with the following property:
	\[
	\forall\,\ve\in(0,\ve_{int}),\ \forall l\in\mathfrak{I}^N,\ \forall\,p_1\in\mathcal{U}_{\xi^*},\ \forall\,p_2\in\mathcal{U}_{\xi^{**}}\ \mbox{there holds}\quad |u(t)|<R,\ \mbox{for all}\ t\in(0,1),
	\]
	where $u$ is the minimizer of the Maupertuis' functional in the space $K_l^{p_1,p_2}$ provided in Proposition \ref{prop:existence}.
\end{theorem}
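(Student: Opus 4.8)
The plan is to argue by contradiction through a compactness argument as $\ve\to 0^+$, exploiting the fact that, by Proposition \ref{prop:pert}, the rescaled potential $\Veps$ converges to the purely $-\al$-homogeneous potential $\Wzero$ while all the centres $\ve c_j$ collapse to the origin. First I would fix the neighbourhoods $\mathcal{U}_{\xi^*},\mathcal{U}_{\xi^{**}}$ on $\partial B_R$ and suppose the conclusion fails for every candidate value of $\ve_{int}$. This produces sequences $\ve_n\to 0^+$, endpoints $p_1^n\in\mathcal{U}_{\xi^*}$, $p_2^n\in\mathcal{U}_{\xi^{**}}$, a winding vector $l\in\mathfrak{I}^N$ (constant along a subsequence, since $\mathfrak{I}^N$ is finite) and minimizers $u_n\in K_l^{p_1^n,p_2^n}$ of $\mathcal{M}^{\ve_n}$ with $|u_n(\bar t_n)|=R$ for some $\bar t_n\in(0,1)$; that is, the radial constraint is active in the interior.

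The next step is to pass to the limit. Using a fixed competitor that runs from $p_i^n$ towards the origin, performs a short loop of radius $\sim\ve_n$ around the cluster $B_{\ve_n}$ to realise the prescribed parities, and exits again to the other endpoint, one checks that its Jacobi length stays bounded (the loop contributes $\sim\ve_n^{1-\al/2}\to0$ because $\al<2$), whence $\mathcal{M}^{\ve_n}(u_n)\le C$. Together with the coercivity estimate of Subsection \ref{subsec:dir_meth} this yields a uniform $H^1$ bound, so that, up to a subsequence, $u_n\wconv u_0$ weakly in $H^1$ and uniformly. The crucial point is the identification of $u_0$: since $l\in\mathfrak{I}^N$ forces $u_n$ to separate the centres $\{\ve_n c_j\}$ into two nonempty groups whose mutual distance tends to $0$, the limit $u_0$ cannot avoid the origin, hence it is a collision path. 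Combining the lower semicontinuity of the Maupertuis functional with the locally uniform convergence $\Veps\to\Wzero$ on $\R^2\setminus\{0\}$ and a recovery-sequence comparison based on the minimality of each $u_n$, one shows that $u_0$ minimizes the limiting functional $\mathcal{M}_0(u)=\frac12\int_0^1|\dot u|^2\int_0^1(-1+\Wzero(u))$ among collision paths joining the limit endpoints $p_1^0\in\overline{\mathcal{U}_{\xi^*}}$, $p_2^0\in\overline{\mathcal{U}_{\xi^{**}}}$, with a collision at the origin.

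Now I would invoke Lemma \ref{lem:BCTmin} for the $-\al$-homogeneous potential $\Wzero$, whose restricted potential is $U$: since $\vt^*,\vt^{**}$ are minimal non-degenerate central configurations for $U$ and the neighbourhoods are small, $u_0$ is the concatenation of two pieces of the unique cone-confined collision minimizer. By the radial monotonicity of such minimal arcs (\cite[Lemma 4.3]{BCTmin}) the radial coordinate decreases monotonically from $R$ to $0$ and then increases back to $R$, so that $|u_0(t)|<R$ for every $t\in(0,1)$, the value $R$ being attained only at the two endpoints. Passing to the limit in $|u_n(\bar t_n)|=R$ and using uniform convergence gives $|u_0(\bar t_0)|=R$ with $\bar t_0=\lim\bar t_n$, which forces $\bar t_0\in\{0,1\}$.

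The main obstacle is therefore to exclude that the active constraint escapes to an endpoint, i.e. $\bar t_0\in\{0,1\}$. This is a transversality issue: the limiting arc enters $B_R$ radially at $\xi^*$ and leaves radially at $\xi^{**}$, with strictly negative radial velocity, so that $|u_0|$ strictly decreases right after $t=0$. I expect to upgrade the convergence near the endpoints in order to rule out that $u_n$ hugs $\partial B_R$ on a vanishing time interval, for instance by a further local blow-up at the endpoint, or by a barrier argument on the connected component of $[0,1]\setminus(T_c(u_n)\cup T_R(u_n))$ issuing from $t=0$, where $u_n$ solves the equation with conserved energy by Lemma \ref{lem:conn_comp}. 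A second delicate point, already implicit in the limiting step, is the control of the functional across the collapsing singularity: since $\Wzero$ diverges at the origin, the locally uniform convergence of $\Veps$ must be complemented by the integrability of the collision (guaranteed by $\al<2$) to secure both the $\liminf$ inequality and the strong $H^1$ convergence needed to transfer minimality to $u_0$.
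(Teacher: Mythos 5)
Your proposal is correct and follows essentially the same route as the paper's proof: contradiction plus compactness as $\ve_n\to 0^+$, a competitor whose detour around the collapsed centres costs $\mathcal{O}(\ve_n^{(2-\al)/2})\to 0$ (the same scaling you compute), identification of the weak/uniform limit as the collision minimizer of $\mathcal{M}_0$ through the two-sided comparison of the levels, and Lemma \ref{lem:BCTmin} (cone confinement/radial monotonicity) to exclude interior contact with $\partial B_R$. The endpoint case that you flag as the remaining obstacle is closed in the paper exactly along the lines you anticipate: energy conservation plus a bootstrap give $\mathcal{C}^1$-convergence of $u_n$ to $u_0$ away from the collision instant, so $t_n\to 0^+$ would force $\dot{u}_0(0)$ to be tangent to $\partial B_R$, contradicting the fact that, after shrinking $\mathcal{U}_{\xi^*}$, the unique collision arc issuing from $p_1$ has initial velocity close to that of the homothetic motion from $\xi^*$, which is normal to the sphere.
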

\begin{proof}
	Assume by contradiction that there exist the following sequences:
	\begin{itemize}
		\item $(\ve_n)\sset\R^+$, with $\ve_n\to 0^+$,
		\item $(p_1^n)\sset\mathcal{U}_{\xi^*}$ and $(p_2^n)\sset\mathcal{U}_{\xi^{**}}$ such that, up to subsequences, $p_1^n\to p_1\in\mathcal{U}_{\xi^*}$ and
		$p_2^n\to p_2\in\mathcal{U}_{\xi^{**}}$;
		\item $(t_n)\sset(0,1)$ such that $t_n\to\bar{t}\in[0,1]$;
		\item a sequence of minimizers $(u_n)\sset (K_l^{p_1^n,p_2^n})$ for the sequence of functionals $(\mathcal{M}_n)$ defined by
		\[
		\mathcal{M}_n(u_n)\uguale\frac12\int_0^1|\dot{u}_n|^2\int_0^1\left(-1+V^{\ve_n}(u_n)\right),
		\]	
	\end{itemize}
	such that
	\[
	|u_n(t_n)|=R,\quad\mbox{for all}\ n\in\N.
	\]
	Recalling the limiting behaviour of $\Veps$ as $\ve\to 0^+$ (see Proposition \ref{prop:pert} at page \pageref{prop:pert}), if we define
	\[
	\mathcal{M}_0(u)\uguale\frac12\int_0^1|\dot{u}|^2\int_0^1\left(-1+\Wzero(u)\right),
	\]
	from Lemma \ref{lem:BCTmin} we know that there exists a unique $u^*\in H_{coll}^{p_1}$ and a unique $u^{**}\in H_{coll}^{p_2}$ such that
	\[
	\mathcal{M}_0(u^*)=\min_{H_{coll}^{p_1}}\mathcal{M}_0,\quad\mathcal{M}_0(u^{**})=\min_{H_{coll}^{p_2}}\mathcal{M}_0.
	\]
	In particular, from Proposition \ref{prop:restriction}, we have that there exists a unique $u_0\in H^{p_1,p_2}$, where 
	\[
	H^{p_1,p_2}\uguale\left\lbrace u\in H^1([0,1];\R^2):\,u(0)=p_1,\ u(1)=p_2,\ u(t_0)=0,\ \mbox{for some}\ t_0\in(0,1)\right\rbrace,
	\]
	such that 
	\[
	\mathcal{M}_0(u_0)=\min\limits_{H^{p_1,p_2}}\mathcal{M}_0,
	\]
	where this path $u_0$ is nothing but the juxtaposition of $u^*$ and $u^{**}$.
	We claim that
	\begin{equation}\label{thm:R_claim}
		\lim_{n\to+\infty}\mathcal{M}_n(u_n)=\mathcal{M}_0(u_0)
	\end{equation}
	and we start by showing that
	\begin{equation}\label{thm:R_stima}
		\liminf\limits_{n\to+\infty}\mathcal{M}_n(u_n)\leq\mathcal{M}_0(u_0).
	\end{equation}
	For every $n\in\N$ let us introduce the Jacobi-length functionals
	\[
	\mathcal{L}_n(u_n)\uguale\int_0^1|\dot{u}_n|\sqrt{-1+V^{\ve_n}(u_n)},\quad\mathcal{L}_0(u_0)\uguale\int_0^1|\dot{u}_0|\sqrt{-1+\Wzero(u_0)}
	\]
	and, since $u_n$ and $u_0$ are minimizers, we have
	\[
	\mathcal{L}_n(u_n)=\sqrt{2\mathcal{M}_n(u_n)},\quad\mbox{for all}\ n\in\N,\quad \mathcal{L}_0(u_0)=\sqrt{2\mathcal{M}_0(u_0)}
	\]
	(see Proposition \ref{prop:app1} in Appendix \ref{app:var}). Our idea is to provide an explicit variation $w_n\in K_l^{p_1^n,p_2^n}$ such that, for large $n$
	\begin{equation}\label{thm:R_stima1}
		\mathcal{L}_n(u_n)\leq\mathcal{L}_n(w_n)\leq\mathcal{L}_0(u_0)+\mathcal{O}(\ve_n^\beta),
	\end{equation}
	for some $\beta >0$, which would prove \eqref{thm:R_stima}. In order to build such $w_n$ we need to define some points inside $B_R$:
	\begin{itemize}
		\item define $\hat{p}_1^n\in\ve_n\partial B_R$ as the first intersection between $u_0$ and the circle $\ve_n\partial B_R$;
		\item define $\hat{p}_2^n\in\ve_n\partial B_R$ as the second intersection between $u_0$ and the circle $\ve_n\partial B_R$;
		\item define $q_1^n\uguale\ve_nR\frac{\xi^*}{|\xi^*|}\in\ve_n\partial B_R$ and $q_2^n\uguale\ve_nR\frac{\xi^{**}}{|\xi^{**}|}\in\ve_n\partial B_R$.
	\end{itemize}
	Note that, once $n$ is fixed, the points $\hat{p}_1^n,\hat{p}_2^n$ are uniquely determined since both $u^*$ and $u^{**}$ are strictly decreasing with respect to $t$ thanks to the Lagrange-Jacobi inequality (see \cite[Lemma 4.3]{BCTmin}). Moreover, we define the building blocks of $w_n$ in this way:
	\begin{itemize}
		\item define $\mbox{arc}(p_1^n,p_1)$ as the shorter (in the
		Euclidean metric) parametrized arc of $\partial B_R$, connecting $p_1^n$ to $p_1$ with constant angular velocity;
		\item define $\gamma_n^*$ as the portion of $u_0$ that goes from $p_1$ to $\hat{p}_1^n$;
		\item define $\mbox{arc}(\hat{p}_1^n,q_1^n)$ as as the shorter (in the
		Euclidean metric) parametrized arc of $\partial B_R$, connecting $\hat{p}_1^n$ to $q_1^n$ with constant angular velocity;
		\item define $\vp_n$ as the minimizer of $\mathcal{L}_n$ in the space $K_l^{q_1^n,q_2^n}$;
		\item define as above the analogous path composed by the pieces $\mbox{arc}(q_2^n,\hat{p}_2^n)$, $\gamma_n^{**}$, $\mbox{arc}(p_2,p_2^n)$, which goes from $\hat{p}_2^n$ to $p_2^n$. 
	\end{itemize}
	At this point, we build $w_n$ as the juxtaposition of the previous pieces with a suitable time parametrization
	\[
	w_n=\begin{cases}
		\begin{aligned}
			& \mbox{arc}(p_1^n,p_1) & \mbox{from}\ p_1^n\ \mbox{to}\ p_1 \\
			& \gamma_n^* & \mbox{from}\ p_1\ \mbox{to}\ \hat{p}_1^n \\
			& \mbox{arc}(\hat{p}_1^n,q_1^n) & \mbox{from}\ \hat{p}_1^n\ \mbox{to}\ q_1^n \\
			& \vp_n & \mbox{from}\ q_1^n\ \mbox{to}\ q_2^n \\
			& \mbox{arc}(q_2^n,\hat{p}_2^n) & \mbox{from}\ q_2^n\ \mbox{to}\ \hat{p}_2^n \\
			& \gamma_n^{**} & \mbox{from}\ \hat{p}_2^n\ \mbox{to}\ p_2 \\
			& \mbox{arc}(p_2,p_2^n) & \mbox{from}\ p_2\ \mbox{to}\ p_2^n 
		\end{aligned}
	\end{cases}.
	\]
	(see Figure \ref{fig:R}). Now, since $\mathcal{L}_n$ is additive, the length of $w_n$ is exactly the sum of the length of every piece and, in particular, since $w_n\in K_l^{p_1^n,p_2^n}$ and $u_n$ is a minimizer of $\mathcal{L}_n$ in the same space, we have
	\[
	\mathcal{L}_n(u_n)\leq\mathcal{L}_n(w_n).
	\]	
	The next estimates on the arch lengths easily follow:
	\[
	\begin{aligned}
		& p_1^n\to p_1,\ p_2^n\to p_2 &\implies &\mathcal{L}_n(\mbox{arc}(p_1^n,p_1))=\mathcal{O}(1),\ \mathcal{L}_n(\mbox{arc}(p_2,p_2^n))=\mathcal{O}(1) \\
		& \hat{p}_1^n,\hat{p}_2^n,q_1^n,q_2^n\in\ve_n\partial B_R &\implies & \mathcal{L}_n(\mbox{arc}(\hat{p}_1^n,q_1^n))=\mathcal{O}(\ve_n),\ \mathcal{L}_n(\mbox{arc}(q_2^n,\hat{p}_2^n))=\mathcal{O}(\ve_n)				
	\end{aligned}
	\]	
	as $n\to+\infty$. From Proposition \ref{prop:pert}, we know that, if $y\in\R^2\setminus B_\delta$ with $\delta>\ve_n$, then
	\[
	V^{\ve_n}(y)=\Wzero(y)+\mathcal{O}(\ve_n^{\min\{1,\al_{k+1}-\al\}}),\quad\mbox{as}\ n\to+\infty
	\]
	hence
	\[
	\mathcal{L}_n(\gamma_n^*)+\mathcal{L}_n(\gamma_n^{**})\leq\mathcal{L}_0(u_0)+\mathcal{O}(\ve_n^{\min\{1,\al_{k+1}-\al\}/2}),\quad\mbox{as}\ n\to+\infty.
	\]
	Therefore, to prove the claim \eqref{thm:R_stima1}, we need to provide an estimate on $\mathcal{L}_n(\vp_n)$; to do that, let us define the blow-up sequence 
	\[
	\tilde{\vp}_n(t)\uguale\frac{1}{\ve_n}\vp_n(t),\quad\mbox{for}\ t\in[0,1]
	\]
	and note that
	\begin{equation}\label{thm:R_def}
		\tilde{\vp}_n(0)=\frac{\xi^*}{|\xi^*|}R\uguale q^*\in\partial B_R,\quad\tilde{\vp}_n(1)=\frac{\xi^{**}}{|\xi^{**}|}R\uguale q^{**}\in\partial B_R.
	\end{equation}
	Moreover, recalling the definition \eqref{def:scaling} of $V^{\ve_n}$ at page \pageref{def:scaling}, for $n\in\N$ and $y\in\R^2\setminus\{c_1,\ldots,c_N\}$ we can compute
	\[
		V^{\ve_n}(\ve_ny)=\sum\limits_{i=1}^kV_i(\ve_ny-\ve_nc_i)+\sum\limits_{j=k+1}^N\ve_n^{\al_j-\al}V_j(\ve_ny-\ve_nc_j)=\ve_n^{-\al}V(y)
	\]
	and thus we have
	\[
		\mathcal{L}_n(\vp_n)=\mathcal{L}_n(\ve_n\tilde{\vp}_n)=\ve_n\int_0^1|\dot{\tilde{\vp}}_n|\sqrt{-1+V^{\ve_n}(\ve_n\tilde{\vp}_n)}=\ve_n^{\frac{2-\al}{2}}\int_0^1|\dot{\tilde{\vp}}_n|\sqrt{-\ve_n^\al+V(\tilde{\vp}_n)}=\ve_n^{\frac{2-\al}{2}}\tilde{\mathcal{L}}_n(\tilde{\vp}_n),
	\]
	where we have put
	\[
	\tilde{\mathcal{L}}_n(\tilde{\vp}_n)\uguale\int_0^1|\dot{\tilde{\vp}}_n|\sqrt{-\ve_n^\al+V(\tilde{\vp}_n)},\quad\mbox{for every}\ n\in\N.
	\]
	Notice that, at this point, the function $\tilde{\vp}_n$ is clearly a minimizer for $\tilde{\mathcal{L}}_n$ in the space 
	\[
	S_l^{q^*,q^{**}}\uguale\left\lbrace \vp\in H^1([0,1];\R^2)\,\middle\lvert\,\begin{aligned} &\vp(0)=q^*,\ \vp(1)=q^{**}\ |\vp|\leq R,\\ &\mbox{Ind}(\vp;c_j)\equiv l_j\mod 2,\ \forall\,j=1,\ldots,N\end{aligned}\right\rbrace,
	\]
	where $q^*$ and $q^{**}$ have been defined in \eqref{thm:R_def}. If we furthermore introduce the length functional $\tilde{\mathcal{L}}_0(\vp)\uguale\int_0^1|\dot{\vp}|\sqrt{V(\vp)}$	we have that, for every $n\in\N$ and for every test function $\vp$
	\[
	\tilde{\mathcal{L}}_n(\tilde{\vp}_n)\leq\tilde{\mathcal{L}}_n(\vp)\leq\tilde{\mathcal{L}}_0(\vp)
	\]
	and so, in particular $\tilde{\mathcal{L}}_n(\tilde{\vp}_n)\leq\min_{S_l^{q^{*},q^{**}}}\tilde{\mathcal{L}}_0\leq C$, for some constant $C>0$.	This last inequality finally gives the estimates \eqref{thm:R_stima1} and thus \eqref{thm:R_stima}.
	
	At this point, in order to get the claim \eqref{thm:R_claim}, we prove the reverse inequality, i.e.,
	\begin{equation}\label{thm:R_disug2}
		\mathcal{M}_0(u_0)\leq\liminf\limits_{n\to+\infty}\mathcal{M}_n(u_n).
	\end{equation}
	Since every component of the potential $V^{\ve_n}$ is bounded on $\cerchio$ and $|u_n|\leq R$, together with \eqref{thm:R_stima} we can deduce that there exists $C_1,C_2>0$ such that
	\[
	C_1\geq\mathcal{M}_n(u_n)\geq C_2\int_0^1|\dot{u}_n|^2,
	\]
	at least for $n$ large enough. From this, we deduce a uniform bound on the $H^1$-norm of $(u_n)$ and the existence of a $H^1$-weak and uniform in $[0,1]$ limit $\bar{u}\in H^{p_1,p_2}$. The Fatou's lemma, the semi-continuity of the $H^1$-norm and the a.e. convergence of $V^{\ve_n}$ to $\Wzero$ in $\R^2$ then give
	\[
	\mathcal{M}_0(\bar{u})\leq\liminf_{n\to+\infty}\mathcal{M}_n(u_n).
	\]
	At this point, the minimality of $u_0$ for $\mathcal{M}_0$ in the space $H^{p_1,p_2}$ gives the inequality \eqref{thm:R_disug2} and, together with \eqref{thm:R_stima}, we get the claim \eqref{thm:R_claim}
	\[
	\lim\limits_{n\to+\infty}\mathcal{M}_n(u_n)=\mathcal{M}_0(u_0)
	\]
	with, in particular
	\[
	u_n\to u_0\quad\mbox{uniformly in}\ [0,1].
	\]
	At this point, a bootstrap technique helped by the conservation of the energy for $(u_n)$ leads to a $\mathcal{C}^1$-convergence outside the collision instant $t_0$ of $u_0$; this proves that $|u_0(\bar{t})|=R$. If $\bar t\in(0,1)$ this is a contradiction for Lemma \ref{lem:BCTmin}, because the minimizer $u_0$ cannot leave the cone therein defined; otherwise, if for instance $t_n\to 0^+$, we would find that $\dot{u}_0(0)$ is tangent to $\partial B_R$. This indeed is also a contradiction: up to make $\mathcal{U}_{\xi^*}$ smaller, the unique collision trajectory from $p_1\in\mathcal{U}_{\xi^*}$ must have initial velocity direction close to the initial velocity of the homothetic motion starting from $\xi^*$, which is normal to sphere. 
\end{proof}

\begin{figure}
	\centering
	\includegraphics[scale=1.2]{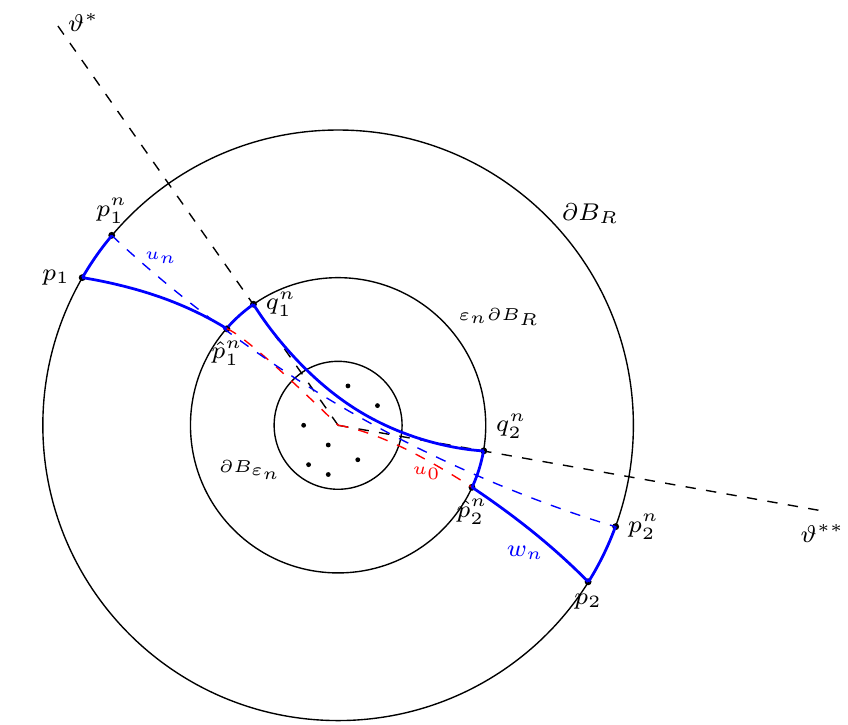}	
	\caption{The proof of Theorem \ref{thm:R}: the blue path $w_n$ piecewise built in the proof belongs to the space $K_l^{p_1^n,p_2^n}$, as well as the blue dashed path $u_n$. This makes it a suitable competitor for $u_n$ and allows to use the minimization argument. In this picture, the red dashed path $u_0$ represents the limit collision path which belongs to the space $H^{p_1,p_2}$ and that actually connects $p_1$ and $p_2$ on $\partial B_R$.}
	\label{fig:R}
\end{figure}

Now, we can finally show that a minimizer of the Maupertuis' functional is actually a reparametrization of a classical solution arc of the inner problem.
\begin{theorem}\label{thm:inner_dyn_wind}
	Assume that $N\geq 2$, $m\geq 1$ and consider a function $V\in\mathcal{C}^2(\R^2\setminus\{c_1,\ldots,c_N\})$ defined as in \eqref{def:potential} and satisfying \eqref{hyp:V}-\eqref{hyp:V_al}. Fix $R>0$ as in \eqref{eq:R} at page \pageref{eq:R}. Then, there exists $\ve_{int}>0$ such that, for any $\vt^*,\vt^{**}\in\Xi$ minimal non degenerate central configurations for $\Wzero$ (they could be equals), defining $\xi^*\uguale Re^{i\vt^*},\xi^{**}\uguale Re^{i\vt^{**}}\in\partial B_R$, there exist two neighbourhoods $\mathcal{U}_{\xi^*},\,\mathcal{U}_{\xi^{**}}$ on $\partial B_R$ with the following property:
	
	for any $\ve\in(0,\ve_{int})$, for any $l\in\mathfrak{I}^N$, for any pair of endpoints $p_1\in\mathcal{U}_{\xi^*}$,$p_2\in\mathcal{U}_{\xi^{**}}$, given the potential $\Veps$ defined in \eqref{def:scaling} at page \pageref{def:scaling}, there exist $T>0$ and a classical (collision-less) solution $y\in \hat{K}_l^{p_1,p_2}([0,T])$ of the inner problem
	\[
	\begin{cases}
		\begin{aligned}
			&\ddot{y}(t)=\nabla \Veps(y(t))  &t\in[0,T] \\
			&\frac{1}{2}|\dot{y}(t)|^2-\Veps(y(t))=-1   &t\in[0,T] \\
			&|y(t)|<R   &t\in(0,T) \\
			&y(0)=p_1,\quad y(T)=p_2. &
		\end{aligned}
	\end{cases}
	\]
	In particular, $y$ is a re-parametrization of a minimizer of the Maupertuis' functional in the space $\hat{K}_l^{p_1,p_2}([0,1])$ and it is free of self-intersections and there exists $\delta>0$
	\[
	\min_{t\in[0,T]}|y(t)-c_j'|>\delta,\quad\mbox{for any}\ j\in\{1,\ldots,N\}.
	\]	
\end{theorem}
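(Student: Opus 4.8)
The plan is to obtain the desired classical solution as a reparametrization of a Maupertuis' minimizer, gluing together the three preparatory results of this section. All the analytic difficulty has already been absorbed into Theorems \ref{thm:cf} and \ref{thm:R}, so what remains is essentially a bookkeeping of thresholds followed by an application of the Maupertuis' principle.

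First I would fix the constants. Let $\ve_{int}>0$ and the neighbourhoods $\mathcal{U}_{\xi^*},\mathcal{U}_{\xi^{**}}$ on $\partial B_R$ be those furnished by Theorem \ref{thm:R}; shrinking $\ve_{int}$ so that $\ve_{int}\le\tilde\ve$ (see \eqref{def:ve_tilde}) guarantees that both Proposition \ref{prop:existence} and Theorem \ref{thm:cf} are applicable for every $\ve\in(0,\ve_{int})$. Now fix $\ve\in(0,\ve_{int})$, $l\in\mathfrak{I}^N$, $p_1\in\mathcal{U}_{\xi^*}$ and $p_2\in\mathcal{U}_{\xi^{**}}$. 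By Proposition \ref{prop:existence} the functional $\mathcal{M}$ admits a minimizer $u\in K_l^{p_1,p_2}([0,1])$ at a positive level.

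Next I would upgrade $u$ from a constrained minimizer to an interior critical point. Theorem \ref{thm:cf}, which requires precisely \eqref{hyp:V}-\eqref{hyp:V_al}, yields a $\delta>0$, independent of $p_1,p_2$, such that $u$ has no self-intersections and $\min_{t}|u(t)-c_j'|>\delta$ for every $j$; in particular $u$ is collision-free, so property $(CF)$ holds and $u\in\hat{K}_l^{p_1,p_2}$. Theorem \ref{thm:R}, using the fact that $p_1,p_2$ lie in the prescribed neighbourhoods of the central configurations, gives $|u(t)|<R$ for all $t\in(0,1)$, i.e. property $(R)$. Consequently both pointwise constraints defining $K_l^{p_1,p_2}$ (the obstacle $|u|\le R$ and the non-collision condition) are inactive on the open interval $(0,1)$, so $u$ is a free critical point:
\[
\frac{d}{d\la}\mathcal{M}(u+\la\vp)\Big|_{\la=0}=0,\qquad\text{for every}\ \vp\in\mathcal{C}_c^\infty((0,1);\R^2).
\]

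Finally I would invoke the Maupertuis' principle. Defining $\omega>0$ through \eqref{defn:omega} with $V$ replaced by $\Veps$ and $h=1$, and setting $T\uguale1/\omega$, Theorem \ref{thm:maupertuis} ensures that $y(t)\uguale u(\omega t)$ is a classical solution of $\ddot y=\nabla\Veps(y)$ on $[0,T]$ satisfying the energy relation $\frac12|\dot y|^2-\Veps(y)=-1$, the boundary conditions $y(0)=p_1,\ y(T)=p_2$, and $|y(t)|<R$ for $t\in(0,T)$. Since the reparametrization $t\mapsto\omega t$ is an orientation-preserving diffeomorphism, it preserves the winding numbers, the absence of self-intersections and the lower bound on the distance from the centres; hence $y\in\hat{K}_l^{p_1,p_2}([0,T])$ and satisfies $\min_t|y(t)-c_j'|>\delta$ for all $j$, as claimed. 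The only genuinely delicate point in this assembly is the compatibility of the two constructions: one must take the smaller of the two thresholds $\ve_{int}$ and intersect the two families of admissible neighbourhoods, so that the \emph{single} minimizer $u$ simultaneously enjoys $(CF)$ from Theorem \ref{thm:cf} and $(R)$ from Theorem \ref{thm:R}; once this is secured, the passage to a classical solution is immediate.
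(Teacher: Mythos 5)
Your proposal is correct and follows exactly the paper's route: the paper's proof consists precisely of combining Theorem \ref{thm:cf} (property $(CF)$), Theorem \ref{thm:R} (property $(R)$) and the Maupertuis' principle (Theorem \ref{thm:maupertuis}), with the minimizer supplied by Proposition \ref{prop:existence}. Your write-up simply makes explicit the threshold bookkeeping and the inactivity of the constraints that the paper leaves implicit in its one-line proof.
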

\begin{proof}
	The proof is a direct consequence of Theorem \ref{thm:cf} (property ($CF$)), Theorem \ref{thm:R} (property ($R$)) and Theorem \ref{thm:maupertuis} (the Maupertuis' Principle).
\end{proof}

In order to conclude the construction of the interior arcs for the $N$-centre problem, we need to give a version of Theorem \ref{thm:inner_dyn_wind} which takes into account the language of partitions. We invite the reader to go back at page \pageref{intro:partitions} and we note that a minimizer $u\in\hat{K}_l^{p_1,p_2}$ which is free of self intersections satisfies the topological constraint of separation of the centres (see Proposition \ref{prop:no_self_int} and Remark \ref{rem:no_self_int} at page \pageref{prop:no_self_int}). In particular, recalling the definition of the set of all the partitions in two non-trivial subsets of the centres
\[
\mathcal{P}=\{P_j:\ j=0,\ldots,2^{N-1}-2\}
\]
a choice of $l\in\mathfrak{I}^N$ will induce a choice of $P_j\in\mathcal{P}$, for some $j$ and this is not 1-1. 
Notice that the lack of the 1-1 property is due to the fact that, for instance, if $N=3$ the winding vectors $(1,0,0)$ and $(0,1,1)$ produce respectively two minimizers that separate the centres with respect to the same partition (see Figure \ref{fig:partitions}).

\begin{figure}
	\centering
	\begin{tikzpicture}
		\coordinate (0) at (0,0);
		\coordinate (p1) at (-0.26,2.99);   		
		\coordinate (p2) at (-1.02,-2.82);    	\coordinate (c1) at (-0.2,0);
		\coordinate (c2) at (0.8,-0.05);   		
		\coordinate (c3) at (0.5,-0.4);    
		
		% sfera
		\draw (0,0) circle (3cm);
		
		% punti
		\fill (p1)
		circle[radius=1.0pt];
		\fill (p2)
		circle[radius=1.0pt];
		\fill (c1)
		circle[radius=1.0pt];
		\fill (c2)
		circle[radius=1.0pt];
		\fill (c3)
		circle[radius=1.0pt];

		% archi interni
		
		\draw[-stealth] plot [smooth, tension=1] coordinates {(p1) (0.05,0.2)  (p2)};
		\draw[-stealth] plot [smooth,tension=0.5] coordinates {(p1) (-1,-0.5) (0,-0.1) (1,0.3) (1,-0.4) (p2)};
		
		% scritte 
		\node[above] at (p1) {$p_1$};
		\node[below] at (p2) {$p_2$};
		\node[left] at (c1) {$c_1$};
		\node[left] at (c2) {$c_2$};
		\node[left] at (c3) {$c_3$};
		\node at (2,2.7) { $\partial B_R$};
		\node[left] at (0.7,1) {$u_1$};
		\node at (1.5,0) {$u_2$};
	\end{tikzpicture}  
	\caption{An example of two minimal arcs which realize the same partition of the centres. Indeed, both the paths divide the centres with respect to the partition $\{\{c_1\},\{c_2,c_3\}\}$, but $u_1\in K_{(0,1,1)}^{p_1,p_2}$ while $u_2\in K_{(1,0,0)}^{p_1,p_2}$.} 
	\label{fig:partitions}		
\end{figure}
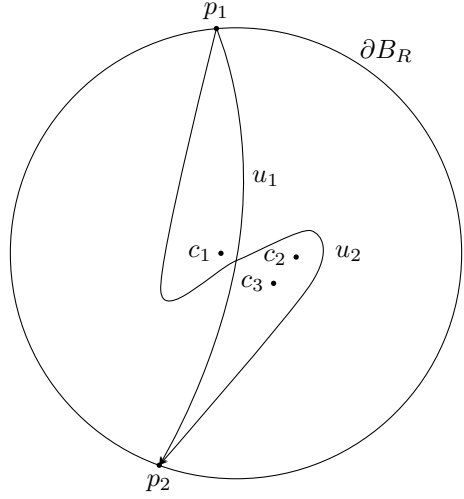

Following the notations introduced in \cite{ST2012}, define the map $\mathcal{A}\colon\mathfrak{I}^N\to\mathcal{P}$ which associates to every winding vector
\[
l=(l_1,\ldots,l_N),\quad\mbox{with the property}\ \begin{cases}
	\mbox{if}\ l_k=0\ \mbox{then}\ k\in A_0\sset\{1,\ldots,N\} \\
	\mbox{if}\ l_j=1\ \mbox{then}\ j\in A_1\sset\{1,\ldots,N\}
\end{cases} 
\] 
the partition
\[
\mathcal{A}(l)=\left\lbrace\{c_k:\,l_k\in A_0\},\{c_j:\,l_j\in A_1\}\right\rbrace.
\]
As already observed, the map $\mathcal{A}$ is surjective, but not injective, since $\mathcal{A}(l)=\mathcal{A}(\tilde{l})$, for every $l,\tilde{l}\in\mathfrak{I}^N$ such that
\[
l_j+\tilde{l}_j=1,\quad\mbox{for every}\ j=1,\ldots,N.
\]
At this point, for any $j\in\{0,\ldots,2^{N-1}-2\}$ and for any $p_1,p_2\in\partial B_R$. this set are well-defined
\begin{equation}\label{def:K_P}
	\begin{aligned}
		\hat{K}_{P_j}&\uguale\hat{K}_{P_j}^{p_1,p_2}([0,1])=\{u\in \hat{K}_l^{p_1,p_2}([0,1]):\,l=\mathcal{A}^{-1}(P_j)\} \\
		K_{P_j}&\uguale K_{P_j}^{p_1,p_2}([0,1])=\{u\in K_l^{p_1,p_2}([0,1]):\,l=\mathcal{A}^{-1}(P_j)\}.
	\end{aligned}
\end{equation}
The set $K_{P_j}$ is the weak $H^1$-closure of $\hat{K}_{P_j}$ and, if $P_j=\mathcal{A}(l)=\mathcal{A}(\tilde{l})$, it turns out that it is exactly the union of two disjoint connected components, i.e., 
\begin{equation}\label{eq:conn_comp}
	K_{P_j}=K_l\cup K_{\tilde{l}}.
\end{equation}
We can now state the main theorem of this section which is readily proven.
\begin{theorem}\label{thm:inner_dyn}
	Assume that $N\geq 2$, $m\geq 1$ and consider a function $V\in\mathcal{C}^2(\R^2\setminus\{c_1,\ldots,c_N\})$ defined as in \eqref{def:potential} and satisfying \eqref{hyp:V}-\eqref{hyp:V_al}. Fix $R>0$ as in \eqref{eq:R} at page \pageref{eq:R}. Then, there exists $\ve_{int}>0$ such that, for any $\vt^*,\vt^{**}\in\cerchio$ minimal non degenerate central configurations for $\Wzero$ (they could be equals), defining $\xi^*\uguale Re^{i\vt^*},\xi^{**}\uguale Re^{i\vt^{**}}\in\partial B_R$, there exist two neighbourhoods $\mathcal{U}_{\xi^*},\,\mathcal{U}_{\xi^{**}}$ on $\partial B_R$ with the following property:
	
	for any $\ve\in(0,\ve_{int})$, for any $P_j\in\mathcal{P}$, for any pair of endpoints $p_1\in\mathcal{U}_{\xi^*}$,$p_2\in\mathcal{U}_{\xi^{**}}$, given the potential $\Veps$ defined in \eqref{def:scaling} at page \pageref{def:scaling}, there exist $T_1,T_2>0$ and two classical (collision-less) solutions $y_1\in \hat{K}_{P_j}^{p_1,p_2}([0,T_1])$ and $y_2\in \hat{K}_{P_j}^{p_1,p_2}([0,T_2])$ of the inner problems
	\[
	\begin{cases}
		\begin{aligned}
			&\ddot{y}(t)=\nabla \Veps(y(t))  &t\in[0,T_i] \\
			&\frac{1}{2}|\dot{y}(t)|^2-\Veps(y(t))=-1   &t\in[0,T_i] \\
			&|y(t)|<R   &t\in(0,T_i) \\
			&y(0)=p_1,\quad y(T)=p_2 &
		\end{aligned}
	\end{cases}
	\]
	for $i=1,2$. In particular, $y_1$ and $y_2$ are re-parametrizations of two minimizers of the Maupertuis' functional, every one of them in a different connected component of $K_{P_j}$ (see \eqref{eq:conn_comp}). Moreover, $y_1$ and $y_2$ are free of self-intersections and there exists $\delta>0$ such that
	\[
	\min_{t\in[0,T_i]}|y_i(t)-c_j'|>\delta,\quad\mbox{for any}\ j\in\{1,\ldots,N\},
	\]
	for $i=1,2$.
\end{theorem}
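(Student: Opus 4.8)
The plan is to reduce everything to the winding-vector version already established in Theorem \ref{thm:inner_dyn_wind}, exploiting the two-to-one structure of the map $\mathcal{A}\colon\mathfrak{I}^N\to\mathcal{P}$. First I would fix a partition $P_j\in\mathcal{P}$ and recall that, by the surjectivity of $\mathcal{A}$ discussed around \eqref{eq:conn_comp}, the fibre $\mathcal{A}^{-1}(P_j)$ consists of exactly two admissible winding vectors $l,\tilde{l}\in\mathfrak{I}^N$, characterised by $l_i+\tilde{l}_i=1$ for every $i=1,\dots,N$. Both of them genuinely belong to $\mathfrak{I}^N$: since $P_j$ splits the centres into two non-empty subsets, neither $l$ nor $\tilde{l}$ can be constant, so each of them has (at least) two components that differ, which is exactly the admissibility condition in \eqref{def:w_vectors}.

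Next I would simply invoke Theorem \ref{thm:inner_dyn_wind} twice, once for the winding vector $l$ and once for $\tilde{l}$. For each choice the theorem produces, for every $\ve\in(0,\ve_{int})$ and every pair $p_1\in\mathcal{U}_{\xi^*}$, $p_2\in\mathcal{U}_{\xi^{**}}$, a time $T_i>0$ and a classical collision-less solution of the inner problem, obtained as a reparametrization of a Maupertuis minimizer in $\hat{K}_l^{p_1,p_2}$ (respectively $\hat{K}_{\tilde{l}}^{p_1,p_2}$), which is free of self-intersections and keeps distance at least $\delta$ from every centre. Calling these solutions $y_1$ and $y_2$ and recalling the decomposition $K_{P_j}=K_l\cup K_{\tilde{l}}$ from \eqref{eq:conn_comp}, the two solutions automatically lie in the two distinct connected components of $K_{P_j}$, which is precisely the asserted conclusion.

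Since $\mathfrak{I}^N$ is a finite set, the threshold $\ve_{int}$ and the neighbourhoods $\mathcal{U}_{\xi^*},\mathcal{U}_{\xi^{**}}$ may be taken uniform in $l$ (replacing them, if needed, by the smallest $\ve_{int}$ and the intersection of the finitely many neighbourhoods produced by Theorem \ref{thm:inner_dyn_wind}); the constant $\delta>0$ is handled in the same way. The only point that deserves a line of care — and the closest thing to an obstacle in an otherwise immediate argument — is the bookkeeping that the two winding vectors in the fibre $\mathcal{A}^{-1}(P_j)$ are distinct and both admissible, combined with the observation (Proposition \ref{prop:no_self_int} together with Remark \ref{rem:no_self_int}) that a self-intersection-free minimizer realises exactly the separation of the centres dictated by its winding parities, so that $y_1$ and $y_2$ really separate the centres according to $P_j$. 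Granting this, the statement follows verbatim from Theorem \ref{thm:inner_dyn_wind}.
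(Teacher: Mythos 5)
Your proposal is correct and is essentially the paper's own argument: the paper presents Theorem \ref{thm:inner_dyn} as ``readily proven'' precisely because it follows by applying Theorem \ref{thm:inner_dyn_wind} to the two complementary winding vectors $l,\tilde{l}\in\mathcal{A}^{-1}(P_j)$ (with $l_i+\tilde{l}_i=1$ for all $i$), which produces one Maupertuis minimizer in each of the two disjoint connected components of $K_{P_j}=K_l\cup K_{\tilde{l}}$ from \eqref{eq:conn_comp}. The only remark is that your closing uniformity argument over $\mathfrak{I}^N$ is superfluous, since the quantifier structure of Theorem \ref{thm:inner_dyn_wind} already provides $\ve_{int}$ and the neighbourhoods $\mathcal{U}_{\xi^*},\mathcal{U}_{\xi^{**}}$ uniformly in $l$.
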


We conclude this section with a property of the inner solution arcs found in Theorem \ref{thm:inner_dyn}, i.e., we show that there exists a uniform bound on the time intervals of such solutions

\begin{lemma}\label{lem:bound_int}
	Let $\ve\in(0,\ve_{int})$, let $\vt^*,\vt^{**}\in\cerchio$ be two minimal non-degenerate central configurations for $\Wzero$ and $\mathcal{U}^*,\mathcal{U}^{**}$ be their suitable neighbourhoods, let $p_1\in\mathcal{U}^*$ and $p_2\in\mathcal{U}^{**}$, let $P_j\in\mathcal{P}$ and let $y_{P_j}(\cdot;p_1,p_2;\ve)$ be one of the two classical solutions found in Theorem \ref{thm:inner_dyn}, defined in its time interval $[0,T_{P_j}(p_1,p_2;\ve)]$. Then, there exist $c,C>0$ such that
	\[
	c\leq T_{P_j}(p_1,p_2;\ve)\leq C.
	\]
	Such constants do not depend on the choice of $p_1,p_2$ and $P_j$.
\end{lemma}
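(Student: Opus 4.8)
The plan is to read the time $T_{P_j}$ directly off the Maupertuis' principle and to control it through uniform two-sided pointwise bounds on the rescaled potential along the minimizing trajectory. Recall from Theorem \ref{thm:maupertuis} that if $u$ is a minimizer of $\mathcal{M}$ on $[0,1]$ (as provided, for either connected component of $K_{P_j}$, by Theorem \ref{thm:inner_dyn}) and $\omega$ is defined by \eqref{defn:omega}, then the associated classical solution is $y(t)=u(\omega t)$, defined on $[0,T_{P_j}]$ with $T_{P_j}=1/\omega$. Hence it suffices to bound $\omega^2=\frac{\int_0^1(-1+\Veps(u))}{\frac12\int_0^1|\dot u|^2}$ from above and below by positive constants independent of $p_1,p_2$ and $P_j$.

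First I would establish the two-sided pointwise estimate $0<C_0\leq\Veps(u(t))-1\leq M-1$ for every $t\in[0,1]$. The lower bound is obtained exactly as in the lemma preceding Proposition \ref{prop:existence}: since $|u(t)|\leq R$ and $R<\mathfrak{m}^{1/\al}-\tilde{\ve}$, one has $\Veps(u(t))\geq\mathfrak{m}/(R+\ve)^\al\geq 1+C_0$ with $C_0>0$ independent of all parameters. The upper bound is where the qualitative analysis of Section \ref{sec:inner} enters: by Theorem \ref{thm:inner_dyn} the minimizer satisfies $|u(t)-c_j'|>\delta$ for every $j$; combining this with $|u(t)|\leq R$ and the homogeneity degrees $\al_j\geq\al$, every term in \eqref{def:scaling} is bounded (the factors $\ve^{\al_j-\al}$ are bounded on $(0,\tilde{\ve})$), so $\Veps(u(t))\leq M$ for a constant $M$ independent of $p_1,p_2$ and $P_j$. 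Integrating gives $C_0\leq\int_0^1(\Veps(u)-1)\leq M-1$.

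Next I would control the Dirichlet term $D\uguale\frac12\int_0^1|\dot u|^2$ through the Maupertuis value. Writing $\mathcal{M}(u)=D\cdot\int_0^1(\Veps(u)-1)$ and using the already-available two-sided bound $C\leq\mathcal{M}(u)\leq C'$ — the lower bound $\mathcal{M}(u)\geq C>0$ is the lemma preceding Proposition \ref{prop:existence}, while a uniform upper bound follows by testing $\mathcal{M}$ against a fixed competitor in $K_{P_j}^{p_1,p_2}$ of uniformly bounded Euclidean length that separates the centres according to $P_j$ and stays at fixed distance from all of them (a competitor of precisely the type built in the proof of Theorem \ref{thm:R}; since $\mathcal{P}$ is finite and the neighbourhoods $\mathcal{U}_{\xi^*},\mathcal{U}_{\xi^{**}}$ are fixed, $C'$ can be taken uniform) — together with the previous paragraph I obtain $\frac{C}{M-1}\leq D\leq\frac{C'}{C_0}$. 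Substituting into the expression for $\omega^2$ yields $\frac{C_0^2}{C'}\leq\omega^2\leq\frac{(M-1)^2}{C}$, so that $T_{P_j}=1/\omega$ lies between two positive constants depending only on $C,C',C_0,M$; uniformity over the finitely many partitions $P_j$ is then automatic.

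The only genuinely delicate point is the upper bound on $\Veps$ along the trajectory: a priori a minimizer could pass arbitrarily close to a centre, where $\Veps$ blows up, which would destroy the lower bound on $T_{P_j}$ (the particle would traverse that portion with arbitrarily large speed, by the energy relation). This is exactly why the uniform collision-avoidance estimate $|u(t)-c_j'|>\delta$ of Theorem \ref{thm:inner_dyn} is indispensable here; granting it, everything else is a direct computation with the energy relation and the bounds on the Maupertuis' functional already proved in Subsection \ref{subsec:dir_meth}. I note that the resulting bounds are in fact also independent of $\ve$, provided the separation constant $\delta$ is chosen uniformly in $\ve\in(0,\ve_{int})$.
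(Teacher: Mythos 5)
Your proposal is correct and is, in substance, exactly the argument that the paper delegates to its one-line citation of Lemma 5.2 in \cite{ST2012}: read $T_{P_j}=1/\omega$ off the Maupertuis' principle, then bound $\omega^2$ two-sidedly by combining the uniform lower bound on $\mathcal{M}$ from Subsection \ref{subsec:dir_meth}, a competitor-based upper bound on $\mathcal{M}$, and pointwise two-sided bounds on $\Veps$ along the minimizer, where the upper pointwise bound rests on the uniform collision distance $\delta$ — correctly identified by you as the one genuinely delicate ingredient, and supplied (uniformly in $p_1,p_2$ for fixed $\ve$ and over the finitely many winding vectors) by Theorem \ref{thm:cf} and hence Theorem \ref{thm:inner_dyn}. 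Since the paper's own proof consists solely of that citation, your writeup correctly supplies the details it omits, with the same ingredients the cited argument uses.
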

\begin{proof}	
	The proof is an easy generalization of Lemma 5.2 in \cite{ST2012}.
\end{proof}

\begin{remark}\label{rem:notations_int} In the following, when the partition of the centres will not have a relevance, we will denote one of the internal arcs provided in Theorem \ref{thm:inner_dyn} in this way
	\[
	\intarc{\cdot}{1}{2}{\ve},
	\]
	in order to highlight that it is an arc lying inside $B_R$ that connects $p_1$ and $p_2$. Moreover, the corresponding neighbourhoods on $\partial B_R$ will be denoted as
	\[
	\mathcal{U}_{int}(\xi^*),\ \mathcal{U}_{int}(\xi^{**}).
	\]
\end{remark}

\section{Glueing pieces and multiplicity of periodic solutions}\label{sec:glueing}

Since we have proved the existence of outer and inner fixed-ends solution arcs, respectively in Section \ref{sec:outer} and Section \ref{sec:inner}, this section is devoted to build periodic trajectories which solve 
\[
\begin{cases}
	\ddot{y}=\nabla\Veps(y) \\
	\frac12|\dot{y}|^2-\Veps(y)=-1,
\end{cases}
\]
glueing together solution pieces on $\partial B_R$. We will focus on the non-collisional case, in order to succeed in the proof of Theorem \ref{th:main1} and thus we will require hypotheses \eqref{hyp:V}-\eqref{hyp:V_al} on $\Veps$. An equivalent result for possibly colliding periodic solutions will be given in Subsection \ref{subsec:symb_dyn_coll} (see Theorem \ref{thm:per_coll}), but it is worth noticing that the glueing process is not affected by collisions, since it concerns the behaviour of trajectories far from the singularity set. 

Let $\ve\in(0,\min\{\ve_{int},\ve_{ext}\})$, with $\ve_{int},\ve_{ext}>0$ provided in Theorem \ref{thm:outer_dyn} and Theorem \ref{thm:inner_dyn}, and let $n\in\N_{\geq 1}$ be the number of pairs of inner and outer arcs; the idea is to relate a periodic trajectory in the punctured plane with a double sequence of this kind
\begin{equation}\label{eq:partition}
	(P_0,\xi_0^*),(P_1,\xi_1^*),\ldots,(P_
	{n-1},\xi_{n-1}^*)
\end{equation}
where $P_j\in\mathcal{P}$ is a partition of the centres and $\xi_j^*=Re^{i\vt_j^*}$, with $\vt_j^*\in\Xi$ minimal non-degenerate central configuration for $\Wzero$, for every $j=0,\ldots, n-1$. Note that we admit the case when two or more elements of the sequence could be equals. 

From Theorem \ref{thm:outer_dyn} we know that, for every $j=0,\ldots,n-1$ there exists a neighbourhood $\mathcal{U}_{ext}(\xi_j^*)\sset\partial B_R$ of $\xi_j^*$ such that, for every $(p_{2j},p_{2j+1})\in\mathcal{U}_{ext}(\xi_j^*)\times\mathcal{U}_{ext}(\xi_j^*)$ there exists an outer arc $\extarc{\cdot}{2j}{2j+1}{\ve}$, defined on a suitable time interval $[0,T_{\ve,2j}]$, which starts in $p_{2j}$ and arrives in $p_{2j+1}$. We then select $2n$ points on $\partial B_R$ 
\[
\{p_0,p_1,p_2,\ldots,p_{2n-2},p_{2n-1}\},
\]
so that $p_{2j},p_{2j+1}\in\mathcal{U}_{ext}(\xi_j^*)$ are connected through an outer solution arc, for every $j=0,\ldots,n-1$.

Now, for any $j=1,\ldots,n-1$, thanks to Theorem \ref{thm:inner_dyn} and in view of the notations introduced in Remark \ref{rem:notations_int}, if $p_{2j-1}\in\mathcal{U}_{int}(\xi_{j-1}^*)$ and $p_{2j}\in\mathcal{U}_{int}(\xi_j^*)$, we can connect them through a minimizing inner arc $\intarc{\cdot}{2j-1}{2j}{\ve}$ verifying the partition $P_j$. Up to time re-parametrizations, the inner arc will be defined in the interval $[0,T_{\ve,2j-1}]$.

At this point, in order to build a closed orbit, for any $j=0,\ldots,n-1$ we introduce a smaller neighbourhood of $\xi_j^*$ 
\[
\mathcal{U}_j\uguale\mathcal{U}_{ext}(\xi_j^*)\cap\mathcal{U}_{int}(\xi_j^*)
\]
and we select an ordinate sequence of pairs $(p_{2j},p_{2j+1})\in\mathcal{U}_j\times\mathcal{U}_j$. Moreover, to close the orbit, we add a last inner arc, joining $p_{2n-1}$ and $p_{2n}\uguale p_0$ and that realizes the partition $P_0$. We denote this arc as $\intarc{\cdot}{2n-1}{2n}{\ve}$ and we parametrize it on the interval $[0,T_{\ve,2n-1}]$. 

In addition, it is useful to define
\[
\mathbf{U}\uguale(\mathcal{U}_0\times\mathcal{U}_0)\times(\mathcal{U}_1\times\mathcal{U}_1)\times\ldots\times(\mathcal{U}_{n-1}\times\mathcal{U}_{n-1})\times\mathcal{U}_0\sset(\partial B_R)^{2n+1}
\]
and thus to introduce the following closed set
\[
\mathcal{S}\uguale\left\lbrace\mathbf{p}=(p_0,p_1,\ldots,p_{2n})\in\overline{\mathbf{U}}:\, p_0=p_{2n}\right\rbrace\sset(\partial B_R)^{2n+1},
\]
which describes all the possible cuts on the sphere $\partial B_R$ which an orbit could do, once sequence \eqref{eq:partition} is fixed. In this way, for every $\mathbf{p}\in\mathcal{S}$, we can define the periodic trajectory $\gamma_{\ve,\mathbf{p}}$ as the alternating juxtaposition of $n$ outer arcs and $n$ inner arcs, up to time re-parametrizations. This curve will be $\mathbf{T}_{\ve}$-periodic, where $\mathbf{T}_{\ve}\uguale\sum\limits_{j=0}^{2n-1}T_{\ve,j}$ and piecewise-differentiable thanks to Theorem \ref{thm:outer_dyn} and Theorem \ref{thm:inner_dyn}. In general, the function $\gamma_{\ve,\mathbf{p}}$ is not $\mathcal{C}^1$ in the junction points and, indeed, the main result of this section is to prove this differentiability through a variational technique.

\begin{remark}\label{rem:w_convex}
	We are going to minimize a length functional over $\mathcal{S}$ in order to provide the smoothness of the junctions. Indeed, we have defined $\mathcal{S}$ as a subset of the closure of $\mathbf{U}$ to induce compactness. However, we know from Theorem \ref{thm:outer_dyn} and Theorem \ref{thm:inner_dyn} that the construction of outer and inner arcs works just for the interior points of $\mathcal{U}_j$. For this reason, we might make such neighbourhoods smaller, keeping the same notations. Furthermore, without loss of generality, we assume that the non-degeneracy of every central configuration $\vt_j^*$ of $\Wzero$ is preserved along its corresponding neighbourhood $\mathcal{U}_j$.
\end{remark}

In order to proceed, let us first fix a sequence \eqref{eq:partition} and $\ve\in(0,\min\{\ve_{ext},\ve_{int}\})$. Define the total Jacobi length function $\mathbf{L}\colon\mathcal{S}\to\R$ as
\[
\begin{aligned}
	\mathbf{L}(\mathbf{p})&\uguale\mathcal{L}([0,\mathbf{T}_\ve];\gamma_{\ve,\mathbf{p}}) \\
	&=\sum\limits_{j=0}^{n-1}\mathcal{L}([0,T_{\ve,2j}];\extarc{t}{2j}{2j+1}{\ve})+\sum\limits_{j=1}^{n}\mathcal{L}([0,T_{\ve,2j-1}];\intarc{t}{2j-1}{2j}{\ve}) \\
	&=\sum\limits_{j=0}^{n-1}\int_0^{T_{\ve,2j}}|\dextarc{t}{2j}{2j+1}{\ve}|\sqrt{(\Veps(\extarc{t}{2j}{2j+1}{\ve})-1)}\,dt \\
	&\quad+\sum\limits_{j=1}^{n}\int_0^{T_{\ve,2j-1}}|\dintarc{t}{2j-1}{2j}{\ve}|\sqrt{(\Veps(\intarc{t}{2j-1}{2j}{\ve})-1)}\,dt.
\end{aligned}
\]
The compactness of the set $\mathcal{S}$ implies the following result.
\begin{lemma}\label{lem:glueing_exist}
	There exists $\mathbf{\bar{p}}\in\mathcal{S}$ that minimizes $\mathbf{L}$. 
\end{lemma}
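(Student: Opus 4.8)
The plan is to prove this by the most elementary instance of the direct method: $\mathbf{L}$ is a (lower semi-)continuous function on the nonempty compact set $\mathcal{S}$, so by Weierstrass' theorem it attains its infimum. Two things must be checked, compactness of $\mathcal{S}$ and continuity of $\mathbf{L}$, and essentially all the work is in the second one.

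First I would record that $\mathcal{S}$ is compact and nonempty. The sphere $\partial B_R$ is compact, hence so is the finite product $(\partial B_R)^{2n+1}$; since $\mathcal{S}$ is the intersection of the closed set $\overline{\mathbf{U}}$ with the closed diagonal condition $\{p_0=p_{2n}\}$, it is a closed subset of a compact space and therefore compact. It is nonempty because the ``diagonal'' configuration $(\xi_0^*,\xi_0^*,\xi_1^*,\xi_1^*,\ldots,\xi_{n-1}^*,\xi_{n-1}^*,\xi_0^*)$ lies in $\mathcal{S}$, each $\xi_j^*$ being the centre of its own neighbourhood $\mathcal{U}_j$. Up to shrinking the $\mathcal{U}_j$ as in Remark \ref{rem:w_convex}, the outer and inner arcs, and hence $\mathbf{L}$, are well defined at every point of $\mathcal{S}$.

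The substantial point is the continuity of $\mathbf{L}$, which I would establish termwise on the finite sum. The outer contributions are the easy ones: by Theorem \ref{thm:outer_dyn} each arc $\extarc{\cdot}{2j}{2j+1}{\ve}$ and its time of definition $T_{\ve,2j}$ depend in a $\mathcal{C}^1$ manner on the endpoints $(p_{2j},p_{2j+1})$, so the Jacobi length $\mathcal{L}([0,T_{\ve,2j}];\extarc{\cdot}{2j}{2j+1}{\ve})$ is continuous in $(p_{2j},p_{2j+1})$, Lemma \ref{lem:bound_ext} ensuring that the time interval stays bounded away from $0$ and $\infty$ so that no degeneracy occurs. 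For the inner contributions, the chosen arc $\intarc{\cdot}{2j-1}{2j}{\ve}$ is a minimizer, so by the correspondence $\mathcal{L}=\sqrt{2\mathcal{M}}$ on minimizers (Proposition \ref{prop:app1}) its length equals the minimal value $\ell(p_{2j-1},p_{2j})\uguale\min\mathcal{L}$ over the fixed admissible class prescribed by the partition $P_j$, and I would prove that $\ell$ is continuous in its endpoints. Upper semicontinuity follows by exhibiting competitors: for endpoints near $(p_{2j-1},p_{2j})$ one prepends and appends short boundary arcs of $\partial B_R$ to a minimizer for $(p_{2j-1},p_{2j})$, and since these correcting arcs have length $\mathcal{O}(\text{dist})$, the minimal value cannot jump upward. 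Lower semicontinuity follows from compactness: a sequence of minimizers with converging endpoints has uniformly bounded length (Lemma \ref{lem:bound_int}), hence, as in Subsection \ref{subsec:dir_meth}, a weakly $H^1$-convergent and uniformly convergent subsequence; the weak lower semicontinuity of $\mathcal{L}$ then gives $\ell(p_{2j-1},p_{2j})\le\liminf\ell$.

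Summing the finitely many continuous outer and inner terms yields the continuity of $\mathbf{L}$ on $\mathcal{S}$, and Weierstrass' theorem produces the minimizer $\mathbf{\bar p}\in\mathcal{S}$. The main obstacle is precisely the inner term: unlike the outer arcs, which come with smooth dependence on their data, the inner arcs are only variational minimizers, so the continuity of their length rests on the compactness of minimizing sequences together with the uniform separation from the centres, namely the bound $\min_t|y(t)-c_j'|>\delta$ of Theorems \ref{thm:cf} and \ref{thm:inner_dyn}. It is exactly this bound that guarantees the weak limit of minimizers stays uniformly away from every centre, hence remains collision-free and in the prescribed partition class, thereby preventing a loss of the topological constraint (and a consequent drop in length) in the limit.
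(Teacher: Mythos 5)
Your proposal is correct and follows essentially the same route as the paper: the paper's proof of Lemma \ref{lem:glueing_exist} simply defers to Step 1) of Theorem 5.3 in \cite{ST2012}, which is exactly this direct-method argument (compactness of $\mathcal{S}$, continuity of the outer lengths via the $\mathcal{C}^1$-dependence of Theorem \ref{thm:outer_dyn}, and continuity of the minimal inner lengths via boundary-arc competitors plus compactness of minimizing sequences). The only difference is that you spell out the verification the paper leaves to the citation, correctly identifying the uniform separation $\delta$ from the centres as the ingredient that keeps the topological constraint from being lost in the limit.
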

\begin{proof}
	The proof goes exactly as in Step 1) of Theorem 5.3. of \cite{ST2012}.
\end{proof}
The aim of this section is thus to prove through several steps the following result, which then will induce the proof of Theorem \ref{th:main1} (and Theorem \ref{thm:per_coll} for the possibly collisional case).
\begin{theorem}\label{thm:glueing}
	There exists $\bar\ve>0$ such that, for every $\ve\in(0,\bar\ve)$, for every $n\geq 1$ there exists $\mathbf{\bar{p}}=(\bar{p}_0,\bar{p}_1,\ldots,\bar{p}_{2n})\in\mathring{\mathcal{S}}$ such that
	\begin{itemize}
		\item[$(i)$] the following holds
		\[
		\min\limits_{\mathbf{p}\in\mathcal{S}}\mathbf{L}(\mathbf{p})=\mathbf{L}(\mathbf{\bar{p}});
		\]
		\item[$(ii)$] the corresponding function $\gamma_{\ve,\mathbf{\bar{p}}}$ is a periodic solution in $[0,\mathbf{T}_\ve]$ of the $N$-centre problem
		\begin{equation}\label{pb:gamma_N}
			\begin{cases}
				\ddot{\gamma}=\nabla\Veps(\gamma) \\
				\frac12|\dot{\gamma}|^2-\Veps(\gamma)=-1.
			\end{cases}
		\end{equation}
	\end{itemize}	
\end{theorem}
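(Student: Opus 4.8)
The plan is to combine a first-variation/matching argument with a compactness argument that keeps the junction points in the interior of the admissible set. Existence of a minimizer $\mathbf{\bar p}\in\mathcal{S}$ is already Lemma \ref{lem:glueing_exist}; the real content is to upgrade this to a minimizer lying in $\mathring{\mathcal{S}}$ and then to read off $C^1$-regularity at the $2n$ junctions from the Euler--Lagrange conditions. As a preliminary, I would record that $\mathbf{L}$ is a $C^1$ function of $\mathbf{p}$ on the open set $\mathring{\mathbf{U}}$: this follows from the $C^1$-dependence of the outer arcs on their endpoints (Theorem \ref{thm:outer_dyn}) and of the inner arcs on their endpoints (Theorem \ref{thm:inner_dyn}). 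Moreover, since every arc is a reparametrized solution, i.e. a geodesic of the Jacobi metric $(\Veps-1)|dy|^2$ with fixed endpoints, the interior part of the first variation vanishes and the derivative of a single arc's length with respect to a displacement $\delta p$ of one endpoint reduces to the boundary term. Using the energy relation $\tfrac12|\dot y|^2=\Veps(y)-1$ (Lemma \ref{lem:energy}), this boundary term equals $\tfrac{1}{\sqrt2}\langle\dot y,\delta p\rangle$ at that endpoint, so that at a junction $p_s\in\partial B_R$ where an incoming arc (velocity $\dot y_{in}$) meets an outgoing arc (velocity $\dot y_{out}$), tangential stationarity reads $\langle\dot y_{in}-\dot y_{out},w\rangle=0$ for every $w\in T_{p_s}\partial B_R$.

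The delicate point, which I expect to be the main obstacle, is to prove that $\mathbf{\bar p}$ does not lie on $\partial\mathcal{S}$, that is, that no cut point $\bar p_s$ sits on the boundary $\partial\mathcal{U}_j$; otherwise only one-sided tangential variations are admissible and the stationarity condition above cannot be invoked. I would argue by compactness as $\ve\to0^+$. Suppose, for a sequence $\ve_n\to0^+$, the minimizer had a cut point on the boundary of some $\mathcal{U}_j$. By Proposition \ref{prop:pert}, as $\ve_n\to0$ the outer arcs converge to the homothetic-shadowing solutions of the limit problem driven by $\Wzero$ (Theorem \ref{thm:outer_dyn}), which emanate from the central configuration $\xi_j^*$, while the inner arcs converge to the minimal collision arcs of the $\Wzero$-Kepler problem, which by Lemma \ref{lem:BCTmin} are anchored in the cone around $\vt_j^*$. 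Hence $\mathbf{L}$ converges to a limit length functional whose dependence on the angular position of each cut point is, thanks to the non-degeneracy $U''(\vt_j^*)>0$ from \eqref{hyp:V}, \emph{strictly} minimized at $\vt_j^*$. Thus the limiting minimizing cut points are exactly the $\xi_j^*$, which lie in the interior of the $\mathcal{U}_j$ (Remark \ref{rem:w_convex}); by continuity the minimizer is interior for $n$ large, a contradiction. This is precisely where the assumed non-degeneracy of the central configurations is indispensable, and it simultaneously pins down the threshold $\bar\ve>0$ and yields claim $(i)$ together with $\mathbf{\bar p}\in\mathring{\mathcal{S}}$.

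With interiority secured, the matching is then a short computation. At the interior minimizer the stationarity condition gives that the components of $\dot y_{in}$ and $\dot y_{out}$ tangent to $\partial B_R$ agree at each of the $2n$ junctions. Energy conservation (Lemma \ref{lem:energy}) forces $|\dot y_{in}|=|\dot y_{out}|=\sqrt{2(\Veps(p_s)-1)}$, so the normal components have equal modulus; and since at each junction the trajectory crosses $\partial B_R$ transversally, passing from the exterior to the interior or vice versa (the incoming and outgoing normal velocities therefore point to the same side), the two normal components share the same sign. Consequently $\dot y_{in}=\dot y_{out}$, so $\gamma_{\ve,\mathbf{\bar p}}$ is $C^1$ across every junction, and the closing condition $\bar p_{2n}=\bar p_0$ gives $C^1$-periodicity by the same argument at the last junction.

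Finally, I would promote this broken curve to a genuine solution. Since $\gamma_{\ve,\mathbf{\bar p}}$ solves \eqref{pb:gamma_N} on each open arc, stays uniformly away from the centres (by the $\delta$-bounds of Theorems \ref{thm:outer_dyn} and \ref{thm:inner_dyn}, where $\nabla\Veps$ is locally Lipschitz), and has matching position and velocity at the junctions, uniqueness for the Cauchy problem of $\ddot y=\nabla\Veps(y)$ shows that the two arcs meeting at each junction are restrictions of a single $C^2$ solution through that point. Hence $\gamma_{\ve,\mathbf{\bar p}}$ is a global $C^2$ periodic solution of \eqref{pb:gamma_N} on $[0,\mathbf{T}_\ve]$, which is claim $(ii)$. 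The only genuinely hard step is the interiority argument of the second paragraph: identifying the limit functional and proving that its angular profile has a strict non-degenerate minimum at each central configuration is exactly what keeps the junction points away from $\partial\mathcal{U}_j$ and makes the free first-variation legitimate.
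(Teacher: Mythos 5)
Your overall skeleton agrees with the paper's: existence of a minimizer from Lemma \ref{lem:glueing_exist}, first-variation formulas reducing to the boundary terms $\pm\tfrac{1}{\sqrt 2}\langle\dot y,\cdot\rangle$ (Lemmas \ref{lem:diff_L_ext} and \ref{lem:diff_L_int}), tangential stationarity plus energy conservation plus the same-side-normal observation to get $\mathcal{C}^1$ matching, and the equation/Cauchy uniqueness to get $\mathcal{C}^2$. However, the step you yourself call ``the only genuinely hard step'' --- interiority of the minimizer --- is where your argument has a genuine gap. You replace the paper's quantitative variational estimates by a compactness argument whose key claim, namely that the limit ($\ve=0$) length functional is \emph{strictly} minimized in the cut points at the central configurations $\xi_j^*$, is asserted rather than proved, and it does not follow from non-degeneracy alone. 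Observe that at the configuration where all cut points equal $\xi_j^*$ the first variation of the limit functional vanishes: the outer homothetic arc and the minimal collision arc both meet $\partial B_R$ radially, so the tangential jump is zero. Strict minimality therefore requires exactly the kind of monotonicity estimates away from $\xi_j^*$ that the paper establishes: Lemma \ref{lem:var_ext} (a Sturm-type comparison for the variational equation along the homothetic orbit, where $U''(\vt^*)>0$ is used to build the comparison function $u^\gamma$) and Lemma \ref{lem:var_int} (a McGehee-coordinates, stable-manifold analysis from \cite{BCTmin} controlling the direction of the collision arc's terminal velocity). These two lemmas are the technical heart of the section; your proposal presupposes their conclusion.

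There is a second, structural problem: your compactness argument is run at fixed $n$, so a contradiction along a sequence $\ve_n\to0^+$ of minimizers in $\mathcal{S}$ only produces a threshold $\bar\ve(n)$ depending on $n$, whereas the theorem asserts a single $\bar\ve$ valid for every $n\geq1$ (and this uniformity is what the symbolic dynamics in Section \ref{sec:symb_dyn} ultimately requires). The paper obtains an $n$-independent threshold because its estimates are local at each junction --- they involve only the two arcs meeting there --- and are uniform both in $\ve$ and in the position of the opposite endpoints; this is the point of the constant $C=(1-\gamma)|v_{\xi^*}|$ and of Remark \ref{rem:uniformity}, and it is why Lemma \ref{lem:var_ext} is proved uniformly for $|\phi_1|\le|\phi_0|$. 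A repaired version of your argument would have to be localized per junction and made uniform in the free endpoint, at which point it essentially reproduces Lemmas \ref{lem:var_ext}--\ref{lem:var_int} and Theorems \ref{thm:var_ext}--\ref{thm:var_int}. A further minor point: the differentiability of $\mathcal{L}_{int}$ in its endpoints is not provided by Theorem \ref{thm:inner_dyn} (Maupertuis minimizers need not be unique); the paper handles this separately (Lemma \ref{lem:length_par_der_exist}) by restricting to strictly convex neighbourhoods as in \cite{ST2012,ST2013}, and your proof should acknowledge the same reduction.
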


The idea is to provide global smoothness of $\gamma_{\ve,\mathbf{\bar{p}}}$ as a consequence of the Euler-Lagrange equation
\[
\nabla\mathbf{L}(\mathbf{\bar{p}})=0.
\]
In order to compute the partial derivatives of $\mathbf{L}$ we need the uniqueness for each of the $2n$ pieces that compose the juxtaposition $\gamma_{\ve,\mathbf{\bar{p}}}$. The $\mathcal{C}^1$-dependence on initial data guarantees this property for the outer arcs (see Theorem \ref{thm:outer_dyn}). On the contrary, the Maupertuis' principle that we have used so far to find internal solution arcs, does not provide the uniqueness of such paths (see indeed \ref{thm:inner_dyn}). To overcome this, it would be necessary to proceed as in \cite{ST2012,ST2013} and to restrict again the neighbourhoods $\mathcal{U}_j$ in order to work inside a strictly convex neighbourhood. Indeed, it is known that there exists a unique geodesic that connects two points which belong to some neighbourhoods with such property. Since a rigorous treatment in this direction would be very technical and, actually, a repetition of what has been made in the quoted addendum, we will assume that the neighbourhoods $\mathcal{U}_j$ fit this uniqueness property and thus we will compute directly the partial derivatives and we assume the validity of this lemma without further details.
\begin{lemma}\label{lem:length_par_der_exist}
	The function $\mathbf{L}$ admits partial derivatives in $\mathring{\mathcal{S}}$.
\end{lemma}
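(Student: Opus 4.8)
The plan is to exploit the additivity of the Jacobi length together with the fact that each of its summands depends only on the two consecutive endpoints joined by the corresponding arc. Parametrising $\partial B_R$ by the angle, I write $p_s=Re^{i\vt_s}$, so that $\mathcal{S}$ is locally described by the free angles $\vt_0,\ldots,\vt_{2n-1}$ (with the convention $\vt_{2n}=\vt_0$ imposed by the constraint $p_0=p_{2n}$). Then
\[
\mathbf{L}(\mathbf{p})=\sum_{j=0}^{n-1}\ell^{ext}_j(\vt_{2j},\vt_{2j+1})+\sum_{j=1}^{n}\ell^{int}_j(\vt_{2j-1},\vt_{2j}),
\]
where $\ell^{ext}_j$ and $\ell^{int}_j$ denote the Jacobi lengths of the outer and inner arcs respectively. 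Since every $\vt_s$ enters exactly two of these summands, it suffices to prove that each $\ell^{ext}_j$ and each $\ell^{int}_j$ is a $\mathcal{C}^1$ function of its two endpoints, whence the existence of $\partial\mathbf{L}/\partial\vt_s$ on $\mathring{\mathcal{S}}$ follows immediately.

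For the outer contributions I would first remove the only apparent loss of regularity, which occurs at the turning points where $|\dot{y}|=0$ and $\sqrt{\Veps-1}$ is not smooth. Along any solution at energy $-1$ the constraint in \eqref{pb:pert} gives $\Veps(y(t))-1=\tfrac12|\dot{y}(t)|^2$, whence
\[
\ell^{ext}_j=\mathcal{L}([0,T_{\ve,2j}];\extarc{\cdot}{2j}{2j+1}{\ve})=\frac{1}{\sqrt{2}}\int_0^{T_{\ve,2j}}\left|\dextarc{t}{2j}{2j+1}{\ve}\right|^2\,dt.
\]
By Theorem \ref{thm:outer_dyn} the outer arc, its velocity and its time of definition $T_{\ve,2j}$ all depend in a $\mathcal{C}^1$ manner on $p_{2j},p_{2j+1}$, and the integrand $|\dot{y}|^2$ is continuous in $t$; differentiating under the integral sign and accounting for the $\mathcal{C}^1$-dependence of the endpoint $T_{\ve,2j}$ via the Leibniz rule shows that $\ell^{ext}_j$ is $\mathcal{C}^1$ in $(\vt_{2j},\vt_{2j+1})$.

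The delicate point, and the technical heart of the argument, is the inner arcs: the Maupertuis' principle produces them by minimisation and, as noted after Theorem \ref{thm:inner_dyn}, does not by itself guarantee uniqueness nor $\mathcal{C}^1$-dependence on the endpoints. Following Remark \ref{rem:w_convex} and the strategy of \cite{ST2012,ST2013}, I would shrink the neighbourhoods $\mathcal{U}_j$ so that any two of their points are joined inside $B_R$, within the prescribed homotopy class, by a \emph{unique} geodesic of the Jacobi metric $\sqrt{\Veps-1}\,|\cdot|$; this is legitimate because, by Theorem \ref{thm:inner_dyn}, the inner arcs are collision-free and stay uniformly away from the centres, so the metric is smooth along them, and one works inside a strictly convex neighbourhood for this metric. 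Once uniqueness holds, the same rewriting $\ell^{int}_j=\tfrac{1}{\sqrt{2}}\int|\dot{y}|^2$, together with the $\mathcal{C}^1$-dependence on initial data of the Cauchy problem and the implicit function theorem applied to the corresponding shooting map, yields that $\ell^{int}_j$ is $\mathcal{C}^1$ in $(\vt_{2j-1},\vt_{2j})$.

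Putting the two families of estimates together, $\mathbf{L}$ is a finite sum of functions each of class $\mathcal{C}^1$ in the angles on which it depends, so $\partial\mathbf{L}/\partial\vt_s$ exists for every $s$ throughout $\mathring{\mathcal{S}}$. I would also record, for the subsequent proof of Theorem \ref{thm:glueing}, the explicit first-variation formula: differentiating $\tfrac{1}{\sqrt{2}}\int_0^T|\dot{y}|^2\,dt$ along a geodesic and using the Euler--Lagrange equation, all interior terms cancel and only the boundary term survives, so that $\partial_{\vt_s}\mathbf{L}$ is given by the difference of the tangential components along $\partial B_R$ of the velocities of the two arcs meeting at $p_s$. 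This is exactly the quantity whose vanishing at the minimiser $\mathbf{\bar{p}}$ will force the $\mathcal{C}^1$-matching of adjacent arcs.
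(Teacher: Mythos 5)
Your proposal follows essentially the same route as the paper: outer-arc regularity from the $\mathcal{C}^1$-dependence on endpoints in Theorem \ref{thm:outer_dyn}, and inner-arc regularity by shrinking the $\mathcal{U}_j$ to strictly convex neighbourhoods in which the Maupertuis/Jacobi minimizer is unique, following \cite{ST2012,ST2013}. In fact the paper gives even less detail---after sketching exactly this strategy it explicitly assumes the lemma's validity without proof---so your write-up, with the explicit decomposition of $\mathbf{L}$, the rewriting of each length as $\tfrac{1}{\sqrt{2}}\int|\dot y|^2$, and the boundary first-variation formula, is a faithful (and somewhat fuller) rendering of the intended argument.
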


\subsection{Partial derivatives of the Jacobi length with respect to the endpoints}

In this paragraph we make the explicit computations of the partial derivatives of $\mathbf{L}$. The only non-trivial contributions involved in the computation of the partial derivative of $\mathbf{L}$ with respect to some $p_k$ are given by the length of a selected pair of outer and inner arcs, i.e., the ones that share the contact point $p_k$. Therefore, for the sake of simplicity, in the following proofs we are going to consider only the length of the first outer arc $\extarc{t}{0}{1}{\ve}$ and the last of the inner arcs $\intarc{t}{2n-1}{2n}{\ve}=\intarc{t}{2n-1}{0}{\ve}$, which connect in $p_0$. 

From Theorem \ref{thm:outer_dyn} and following the notations introduced at the beginning of this section, given $\vt_0^*\in\Xi$ and  $\xi_0^*=Re^{i\vt_0^*}\in\partial B_R$, we know that, for every $\ve\in(0,\min\{\ve_{ext},\ve_{int}\})$ and for every $p_0,p_1\in\mathcal{U}_0$, there exists a unique outer arc $y_{ext}(t)=y_{ext}(t;p_0,p_1;\ve)$ which solves the problem
\begin{equation}\label{pb:bvp}
	\begin{cases}
		\begin{aligned}
			&\ddot{y}_{ext}(t)=\nabla \Veps(y_{ext}(t))  &t\in[0,T_{\ve,0}]&\\
			&\dfrac{1}{2}|\dot{y}_{ext}(t)|^2-\Veps(y_{ext}(t))=-1  &t\in[0,T_{\ve,0}]& \\
			&|y_{ext}(t)|>R  &t\in(0,T_{\ve,0})&\\
			&y_{ext}(0)=p_0,\quad y_{ext}(T_{\ve,0})=p_1.	&&
		\end{aligned}
	\end{cases}
\end{equation}
It is important to remark that both $T_{\ve,0}$ and $y_{ext}$, with its first and second derivatives, depend on $p_0$ and $p_1$, while $\ve$ does not depend on $p_0$ and $p_1$. In particular, from the proof of Theorem \ref{thm:outer_dyn}, we have that $T_{\ve,0}=T_{\ve,0}(\ve,p_0,p_1)\uguale T(p_0,\eta(\ve,p_0,p_1))$, where $\eta$ is the implicit function defined by the shooting map. 

Consider the length of the external arc $\mathcal{L}_{ext}\colon\mathcal{U}_0\times\mathcal{U}_0\to\R_0^+$ such that
\[
\mathcal{L}_{ext}(p_0,p_1)\uguale\int_0^{T_{\ve,0}}|\dot{y}_{ext}(t)|\sqrt{(\Veps(y_{ext}(t))-1)}\,dt,
\]
which, using the conservation of energy, can be written in the following two equivalent forms
\begin{equation}\label{defn:length}
	\mathcal{L}_{ext}(p_0,p_1)=\frac{1}{\sqrt{2}}\int_0^{T_{\ve,0}}|\dot{y}_{ext}(t)|^2\,dt=\frac{1}{\sqrt{2}}\int_0^{T_{\ve,0}}\left(\frac12|\dot{y}_{ext}(t)|^2+\Veps(y_{ext}(t))-1\right)\,dt.
\end{equation}

\begin{lemma}\label{lem:diff_L_ext}
	The function $\mathcal{L}_{ext}\in\mathcal{C}^1(\mathcal{U}_0\times\mathcal{U}_0)$ and its differential, for every $(p_0,p_1)\in\mathcal{U}_0\times\mathcal{U}_0$, is 
	\[d\mathcal{L}_{ext}(p_0,p_1)\colon\mathcal{T}_{p_0}(\partial B_R)\times\mathcal{T}_{p_1}(\partial B_R)\to\R
	\]
	such that
	\[
		d\mathcal{L}_{ext}(p_0,p_1)[\varphi,\psi]=-\frac{1}{\sqrt{2}}\langle\dot{y}_{ext}(0),\vp\rangle+\frac{1}{\sqrt{2}}\langle\dot{y}_{ext}(T_{\ve,0}),\psi\rangle.
	\]
\end{lemma}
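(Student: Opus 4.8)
The plan is to exploit the ``action'' representation of the length on the right of \eqref{defn:length},
\[
\mathcal{L}_{ext}(p_0,p_1)=\frac{1}{\sqrt 2}\int_0^{T_{\ve,0}}\left(\frac12|\dot{y}_{ext}(t)|^2+\Veps(y_{ext}(t))-1\right)\,dt,
\]
and to compute its first variation with respect to the endpoints by a classical free-endpoint, moving-time argument. By Theorem \ref{thm:outer_dyn} (together with Proposition \ref{prop:implicit}) both $y_{ext}(\cdot;p_0,p_1;\ve)$ and $T_{\ve,0}=T_{\ve,0}(p_0,p_1)$ depend in a $\mathcal{C}^1$-fashion on the endpoints; hence differentiation under the integral sign is legitimate, and once the formula for the partial derivatives is obtained, the continuity of the derivatives will follow from the continuous dependence of $\dot{y}_{ext}(0)$ and $\dot{y}_{ext}(T_{\ve,0})$ on $(p_0,p_1)$, whence $\mathcal{L}_{ext}\in\mathcal{C}^1$.

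First I would compute the derivative along a direction $\psi\in\mathcal{T}_{p_1}(\partial B_R)$. Setting $y_s\uguale y_{ext}(\cdot;p_0,p_1+s\psi;\ve)$, $T(s)\uguale T_{\ve,0}(p_0,p_1+s\psi)$ and $h(t)\uguale\partial_s y_s(t)|_{s=0}$, differentiating the integral with a variable upper limit produces a boundary contribution $|\dot{y}_{ext}(T_{\ve,0})|^2\,T'(0)$ — using, crucially, that on the energy shell one has $\frac12|\dot{y}_{ext}|^2+\Veps(y_{ext})-1=|\dot{y}_{ext}|^2$ — plus the term $\int_0^{T_{\ve,0}}(\langle\dot{y}_{ext},\dot h\rangle+\langle\nabla\Veps(y_{ext}),h\rangle)\,dt$. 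Integrating the latter by parts and invoking the equation $\ddot{y}_{ext}=\nabla\Veps(y_{ext})$ annihilates the interior term and leaves $\langle\dot{y}_{ext}(T_{\ve,0}),h(T_{\ve,0})\rangle-\langle\dot{y}_{ext}(0),h(0)\rangle$. Here $h(0)=0$ since $p_0$ is frozen, while differentiating the identity $y_s(T(s))=p_1+s\psi$ at $s=0$ gives $h(T_{\ve,0})=\psi-\dot{y}_{ext}(T_{\ve,0})\,T'(0)$.

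The key cancellation is then immediate: the two occurrences of $T'(0)$ cancel, the boundary term $|\dot{y}_{ext}(T_{\ve,0})|^2\,T'(0)$ being exactly matched by the contribution $-\langle\dot{y}_{ext}(T_{\ve,0}),\dot{y}_{ext}(T_{\ve,0})\rangle\,T'(0)$ arising from $h(T_{\ve,0})$, so that $\partial_{p_1}\mathcal{L}_{ext}[\psi]=\frac{1}{\sqrt2}\langle\dot{y}_{ext}(T_{\ve,0}),\psi\rangle$. The computation along $\vp\in\mathcal{T}_{p_0}(\partial B_R)$ is entirely analogous, this time with $h(0)=\vp$ and, from $y_s(T(s))=p_1$, with $h(T_{\ve,0})=-\dot{y}_{ext}(T_{\ve,0})\,T'(0)$; the same cancellation leaves $\partial_{p_0}\mathcal{L}_{ext}[\vp]=-\frac{1}{\sqrt2}\langle\dot{y}_{ext}(0),\vp\rangle$. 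Summing the two contributions yields the claimed expression for $d\mathcal{L}_{ext}(p_0,p_1)[\vp,\psi]$.

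The only genuinely delicate point — the one I would single out as the heart of the matter — is the role of energy conservation. It is precisely the identity $\frac12|\dot{y}_{ext}|^2+\Veps(y_{ext})-1=|\dot{y}_{ext}|^2$ on the shell that makes the endpoint-time derivative $T'(0)$, which is not explicitly computable since it is hidden inside the implicit function $\eta$ of Theorem \ref{thm:outer_dyn}, disappear from the final formula. Without this cancellation one would be stuck with the unknown $T'(0)$; with it, the differential depends only on the boundary velocities, which is exactly what is needed to later impose $\nabla\mathbf{L}=0$ and read off the $\mathcal{C}^1$-matching of velocities at the junction points.
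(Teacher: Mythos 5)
Your proposal is correct and follows essentially the same route as the paper: both differentiate the action-type representation of $\mathcal{L}_{ext}$ in \eqref{defn:length} with a moving upper limit, use energy conservation to identify the boundary integrand with $|\dot{y}_{ext}|^2$, integrate by parts against the equation $\ddot{y}_{ext}=\nabla\Veps(y_{ext})$, and let the differentiated endpoint conditions cancel the unknown $T'(0)$ terms. The only difference is notational — you phrase the computation via one-parameter variations $h(t)$, while the paper works with the Jacobian matrices $\partial y_{ext}/\partial p_0$ and the relations \eqref{eq:in_cond}--\eqref{eq:fin_cond} — and your emphasis on the role of energy conservation in eliminating $T'(0)$ matches the paper's mechanism exactly.
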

\begin{proof}
	Thanks to the $\mathcal{C}^1$-dependence of problem \eqref{pb:bvp} on the initial data and time, the function $\mathcal{L}_{ext}$ is of class $\mathcal{C}^1$ in $\mathcal{U}_0\times\mathcal{U}_0$. Moreover, from \eqref{defn:length} and integrating by parts we obtain
	\begin{equation}\label{eq:length_der}	
			\frac{\partial}{\partial p_0}\mathcal{L}_{ext}(p_0,p_1)=\frac{1}{\sqrt{2}}\left(|\dot{y}_{ext}(T_{\ve,0})|^2\frac{\partial T_{\ve,0}}{\partial p_0}+\left[\dot{y}_{ext}(t)\frac{\partial y_{ext}(t)}{\partial p_0}\right]_{0}^{T_{\ve,0}}\right).
	\end{equation}
	Note that the term $\dot{y}_{ext}(t)\frac{\partial\dot{y}_{ext}(t)}{\partial p_0}$ is actually a $(1\times 2)\cdot(2\times 2)$ matrix product, so that we would have to transpose the vector $\dot{y}_{ext}(t)$ firstly. Anyway, we are going to omit this and other transpositions in order to ease the notations. 
	
	Now, we can also compute the total derivative of the boundary conditions in \eqref{pb:bvp} with respect to $p_0$, obtaining
	\begin{equation}\label{eq:in_cond}
		\frac{d}{dp_0}y_{ext}(0)=\frac{\partial y_{ext}(0)}{\partial p_0}=I_2
	\end{equation}
	and
	\begin{equation}\label{eq:fin_cond}
		\frac{d}{dp_0}y_{ext}(T_{\ve,0})=\dot{y}_{ext}(T_{\ve,0})\frac{\partial T_{\ve,0}}{\partial p_0}+\frac{\partial y_{ext}(T_{\ve,0})}{\partial p_0}=0_2.   
	\end{equation}
	Moreover, multiplying both sides of \eqref{eq:fin_cond} by $\dot{y}_{ext}(T_{\ve,0})$, we have
	\[
	|\dot{y}_{ext}(T_{\ve,0})|^2\frac{\partial T_{\ve,0}}{\partial p_0}+\dot{y}_{ext}(T_{\ve,0})\frac{\partial y_{ext}(T_{\ve,0})}{\partial p_0}=0.
	\]
	This, together with \eqref{eq:length_der} and \eqref{eq:in_cond}, leads to
	\[
	\frac{\partial}{\partial p_0}\mathcal{L}_{ext}(p_0,p_1)=-\frac{1}{\sqrt{2}}\dot{y}_{ext}(0)=-\frac{1}{\sqrt{2}}v_0(\ve,p_0,p_1).
	\]
	With the same computations, one obtains	
	\[
	\frac{\partial}{\partial p_1}\mathcal{L}_{ext}(p_0,p_1)=\frac{1}{\sqrt{2}}\dot{y}_{ext}(T_{\ve,0})=\frac{1}{\sqrt{2}}v_1(\ve,p_0,p_1). \qedhere
	\]
\end{proof}

As before, for $\ve\in(0,\min\{\ve_{ext},\ve_{int}\})$, we can consider the length of the inner arc $y_{int}(t)\uguale y_{int}(t;p_{2n-1},p_0;\ve)$ as the function $\mathcal{L}_{int}\colon\mathcal{U}_{2n-1}\times\mathcal{U}_0\to\R_0^+$ such that
\[
\mathcal{L}_{int}(p_{2n-1},p_0)\uguale\int_0^{T_{\ve,2n-1}}|\dot{y}_{int}(t)|\sqrt{(\Veps(y_{int}(t))-1)}\,dt
\]
and prove the following lemma.
\begin{lemma}\label{lem:diff_L_int}
	The function $\mathcal{L}_{int}\in\mathcal{C}^1(\mathcal{U}_{2n-1}\times\mathcal{U}_0)$ and its differential, for every $(p_{2n-1},p_0)\in\mathcal{U}_{2n-1}\times\mathcal{U}_0$, is \[
	d\mathcal{L}_{int}(p_{2n-1},p_0)\colon\mathcal{T}_{p_{2n-1}}(\partial B_R)\times\mathcal{T}_{p_0}(\partial B_R)\to\R
	\]
	such that
	\[
	d\mathcal{L}_{int}(p_{2n-1},p_0)[\nu,\varphi]=-\frac{1}{\sqrt{2}}\langle \dot{y}_{int}(0),\nu\rangle+\frac{1}{\sqrt{2}}\langle \dot{y}_{int}(T_{\ve,2n-1}),\varphi\rangle.
	\]
\end{lemma}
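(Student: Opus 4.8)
The plan is to reproduce almost verbatim the argument used for the outer arc in Lemma~\ref{lem:diff_L_ext}, the only genuinely new ingredient being the differentiable dependence of the inner arc on its endpoints. Since the inner solution $y_{int}(\cdot;p_{2n-1},p_0;\ve)$ has constant energy $-1$ on $[0,T_{\ve,2n-1}]$, the Jacobi length admits, exactly as in \eqref{defn:length}, the two equivalent expressions
\[
\mathcal{L}_{int}(p_{2n-1},p_0)=\frac{1}{\sqrt{2}}\int_0^{T_{\ve,2n-1}}|\dot{y}_{int}(t)|^2\,dt=\frac{1}{\sqrt{2}}\int_0^{T_{\ve,2n-1}}\left(\frac12|\dot{y}_{int}(t)|^2+\Veps(y_{int}(t))-1\right)dt.
\]
To get started I would invoke the uniqueness and $\mathcal{C}^1$-dependence on the endpoints granted by the strictly convex neighbourhood assumption discussed just before Lemma~\ref{lem:length_par_der_exist}: this ensures that $y_{int}$ together with its first and second derivatives, as well as the transit time $T_{\ve,2n-1}=T_{\ve,2n-1}(p_{2n-1},p_0)$, depend in a $\mathcal{C}^1$ fashion on $(p_{2n-1},p_0)$, whence $\mathcal{L}_{int}\in\mathcal{C}^1(\mathcal{U}_{2n-1}\times\mathcal{U}_0)$.

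Next I would differentiate, say with respect to $p_0$, applying the Leibniz rule to the variable upper limit and then integrating by parts, so as to obtain a boundary contribution
\[
\frac{\partial}{\partial p_0}\mathcal{L}_{int}=\frac{1}{\sqrt{2}}\left(|\dot{y}_{int}(T_{\ve,2n-1})|^2\frac{\partial T_{\ve,2n-1}}{\partial p_0}+\left[\dot{y}_{int}(t)\frac{\partial y_{int}(t)}{\partial p_0}\right]_0^{T_{\ve,2n-1}}\right),
\]
where the interior integral drops out precisely because $y_{int}$ solves $\ddot{y}_{int}=\nabla\Veps(y_{int})$. The key step is then to use the boundary conditions $y_{int}(0)=p_{2n-1}$ and $y_{int}(T_{\ve,2n-1})=p_0$. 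Differentiating the latter with respect to $p_0$ gives $\dot{y}_{int}(T_{\ve,2n-1})\,\partial_{p_0}T_{\ve,2n-1}+\partial_{p_0}y_{int}(T_{\ve,2n-1})=I_2$, while the former yields $\partial_{p_0}y_{int}(0)=0_2$; multiplying the final-condition identity by $\dot{y}_{int}(T_{\ve,2n-1})$ and substituting recombines the $T_{\ve,2n-1}$-derivative term and leaves $\partial_{p_0}\mathcal{L}_{int}=\tfrac{1}{\sqrt{2}}\dot{y}_{int}(T_{\ve,2n-1})$. By the symmetric computation with respect to $p_{2n-1}$ (now the initial point varies and the final point is frozen, so the roles of $I_2$ and $0_2$ are interchanged) one finds $\partial_{p_{2n-1}}\mathcal{L}_{int}=-\tfrac{1}{\sqrt{2}}\dot{y}_{int}(0)$. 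Assembling the two partials into the differential evaluated on $(\nu,\varphi)\in\mathcal{T}_{p_{2n-1}}(\partial B_R)\times\mathcal{T}_{p_0}(\partial B_R)$ yields exactly the claimed expression.

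This computation is entirely routine once differentiability is in hand; accordingly, the only real obstacle is the regularity input. Unlike the outer arc, whose uniqueness and $\mathcal{C}^1$-dependence follow directly from the Implicit Function Theorem in the proof of Theorem~\ref{thm:outer_dyn}, the inner arc is produced by minimizing the Maupertuis' functional, an argument that a priori guarantees neither uniqueness nor smooth dependence on the endpoints. This gap is exactly the one bridged by confining the endpoints to strictly convex neighbourhoods $\mathcal{U}_j$, where the minimizing inner arc is the unique connecting geodesic of the Jacobi metric and hence depends smoothly on $(p_{2n-1},p_0)$, as recorded in Lemma~\ref{lem:length_par_der_exist}. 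Granting that statement, the proof is complete.
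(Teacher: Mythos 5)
Your proposal is correct and follows essentially the same route as the paper: the authors likewise take the $\mathcal{C}^1$-regularity of the inner arc from the strictly-convex-neighbourhood discussion preceding Lemma~\ref{lem:length_par_der_exist} (i.e.\ the results of \cite{ST2013}, after shrinking $\mathcal{U}_{2n-1}$ and $\mathcal{U}_0$), and then state that the computation of the differential goes exactly as in Lemma~\ref{lem:diff_L_ext}. Your write-up simply makes that computation explicit, with the roles of the initial and final endpoint conditions ($I_2$ versus $0_2$) correctly interchanged relative to the outer case, which is precisely what the paper leaves implicit.
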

\begin{proof}
	As we have already remarked at page \pageref{lem:length_par_der_exist}, the differentiability of this length function is a consequence of the results contained in \cite{ST2013}, up to restrict the neighbourhoods $\mathcal{U}_{2n-1}$ and $\mathcal{U}_0$. Concerning the computation of the differential, the proof goes exactly as in Lemma \ref{lem:diff_L_ext}.
\end{proof}

\subsection{The minimizing points of the Jacobi length are not in the boundary}

The purpose of this section is to prove the first statement of Theorem \ref{thm:glueing}. From Lemma \ref{lem:glueing_exist} it sufficient to show that the minimizer $\mathbf{\bar{p }}$ does not occur on the boundary of $\mathcal{S}$. In the next two rather technical paragraphs we will study the local behaviour of the external and internal arcs with respect to small variations on the endpoints. In short, what happens is that, if $\mathbf{\bar{p}}\in\partial\mathcal{S}$, then a particular variation on the endpoints gives a contradiction against the minimality of $\mathbf{\bar{p}}$.

\subsubsection{An explicit variation on the external path}

In order to do this, we first associate to the outer problem \eqref{pb:bvp} its flow $\Phi^t(p_0,v_0)$ which actually depends on $\ve$ too; we omit this dependence to ease the notations. Moreover, keeping in mind the notations of Theorem \ref{thm:outer_dyn}, we have that
\[
y_{ext}(T_{\ve,0};p_0,p_1;\ve)=\pi_x\Phi^{T_{\ve,0}}(p_0,v_0),
\]
where $v_0=v_0(\ve,p_0,p_1)=\dot{y}_{ext}(0;p_0,p_1;\ve)$ and 
\begin{equation}\label{eq:b_cond}
	\Phi^0(p_0,v_0)=(p_0,v_0),\quad	\Phi^{T_{\ve,0}}(p_0,v_0)=(p_1,v_1),
\end{equation}
with $v_1=v_1(\ve,p_0,p_1)$. Finally, we observe that the outer problem \eqref{pb:bvp} is time-reversible, since it is not difficult to prove that
\begin{equation}\label{eq:time_rev}
		y_{ext}(t;p_0,p_1;\ve)=y_{ext}(T_{\ve,0}-t;p_1,p_0;\ve)\quad\mbox{for every}\ t\in[0,T_{\ve,0}].
\end{equation}
In order to see how the solution $y_{ext}$ behaves under small variations on $p_1$ let us start by defining the matrix function $M$ as the derivative of $y_{ext}$ with respect to the arriving point (cf. Remark \ref{rem:var_eq} at page \pageref{rem:var_eq}), i.e.,
\[
M(t)\uguale\frac{\partial}{\partial p_1}y_{ext}(t;p_0,p_1;\ve)=\pi_x\frac{\partial}{\partial p_1}\Phi^t(p_0,v_0),
\]
which, from \eqref{pb:bvp}, satisfies 
\[
\ddot{M}(t)=\nabla^2\Veps(y_{ext}(t))M(t)\quad\mbox{for every}\ t\in[0,T_{\ve,0}].
\]
Moreover, from \eqref{eq:b_cond}, we have that
\[
	M(0)=\pi_x\frac{\partial}{\partial p_1}\Phi^0(p_0,v_0)=0_2,\quad
	M(T_{\ve,0})=\pi_x\frac{\partial}{\partial p_1}\Phi^{T_{\ve,0}}(p_0,v_0)=I_2,
\]
so that $M(t)$ is a solution of the boundary value problem
\begin{equation}\label{pb:lin_W}
	\begin{cases}
		\ddot{M}(t)=\nabla^2\Veps(y_{ext}(t;p_0,p_1;\ve))M(t),\quad t\in[0,T_{\ve,0}] \\
		M(0)=0_2,\ M(T_{\ve,0})=I_2.
	\end{cases}
\end{equation}

\begin{lemma}\label{lem:jac_v}
	Let $M=M(t)$ be a solution of \eqref{pb:lin_W}. Then,
	\[
	J_{p_0,p_1}v_0(\ve,p_0,p_1)=\left(-\dot{M}(T_{\ve,0}),\dot{M}(0)\right).
	\]
\end{lemma}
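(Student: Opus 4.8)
The plan is to compute the two $2\times 2$ blocks of $J_{p_0,p_1}v_0$ separately, using that $v_0=v_0(\ve,p_0,p_1)=\dot y_{ext}(0;p_0,p_1;\ve)$ is the initial velocity of the \emph{unique} outer arc joining $p_0$ to $p_1$ (Theorem \ref{thm:outer_dyn}), and that the sensitivity matrices of $y_{ext}$ with respect to its two endpoints both solve the linear boundary value problem \eqref{pb:lin_W}. Throughout I would regard $y_{ext}(\cdot;p_0,p_1;\ve)$ as the solution of the two‑point problem on the fixed interval $[0,T_{\ve,0}]$, so that differentiating the boundary identities $y_{ext}(0)=p_0$ and $y_{ext}(T_{\ve,0})=p_1$ in the endpoints is legitimate and produces exactly the boundary data appearing in \eqref{pb:lin_W}.

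For the block $\partial v_0/\partial p_1$ I would argue directly. By definition $M(t)=\partial_{p_1}y_{ext}(t;p_0,p_1;\ve)$, and along the $\mathcal{C}^1$ family of solutions the time derivative commutes with $\partial_{p_1}$ (Remark \ref{rem:var_eq}); hence differentiating $v_0=\dot y_{ext}(0)$ gives $\partial_{p_1}v_0=\partial_{p_1}\dot y_{ext}(0)=\dot M(0)$. This is the second column of the claimed Jacobian and uses nothing beyond the definition of $M$ and its initial condition $M(0)=0_2$.

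The first block $\partial v_0/\partial p_0$ is the substantial point, and here I would bring in the time‑reversibility \eqref{eq:time_rev}. I introduce the companion matrix $N(t)=\partial_{p_0}y_{ext}(t;p_0,p_1;\ve)$; exactly as for $M$ it solves $\ddot N=\nabla^2\Veps(y_{ext})N$, but now with $N(0)=I_2$ (from $y_{ext}(0)=p_0$) and $N(T_{\ve,0})=0_2$ (since $y_{ext}(T_{\ve,0})=p_1$ does not depend on $p_0$), and the same commutation argument yields $\partial_{p_0}v_0=\dot N(0)$. It then suffices to prove $\dot N(0)=-\dot M(T_{\ve,0})$. To obtain this I would differentiate \eqref{eq:time_rev}, namely $y_{ext}(t;p_0,p_1;\ve)=y_{ext}(T_{\ve,0}-t;p_1,p_0;\ve)$, with respect to $p_0$: on the right‑hand side $p_0$ is the \emph{arriving} endpoint of the reversed arc, so its endpoint‑sensitivity is again governed by a matrix of $M$‑type, evaluated at time $T_{\ve,0}-t$. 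Since $\nabla^2\Veps$ along the reversed arc coincides with $\nabla^2\Veps$ along the forward arc read backwards (again by \eqref{eq:time_rev}), the map $t\mapsto$ (that sensitivity at $T_{\ve,0}-t$) solves the same linear equation as $N$ with the boundary values of $N$; uniqueness for \eqref{pb:lin_W} then forces $N(t)=M(T_{\ve,0}-t)$, and differentiating at $t=0$ gives $\dot N(0)=-\dot M(T_{\ve,0})$. Assembling the two blocks yields $J_{p_0,p_1}v_0=\bigl(-\dot M(T_{\ve,0}),\,\dot M(0)\bigr)$.

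The main obstacle I anticipate is precisely the bookkeeping in this last step: making the identification $N(t)=M(T_{\ve,0}-t)$ rigorous requires keeping careful track of which endpoint is perturbed on each side of \eqref{eq:time_rev}, of the fact that reversal exchanges the two endpoints and flips velocities (so that $\dot y_{ext}(0;p_1,p_0;\ve)=-\dot y_{ext}(T_{\ve,0};p_0,p_1;\ve)$, cf. \eqref{eq:b_cond}), and of the coincidence of the linearised coefficient $\nabla^2\Veps(y_{ext})$ for the two arcs. For this reason I would not compare the two matrix solutions entry by entry, but instead verify the boundary conditions $N(0)=I_2$, $N(T_{\ve,0})=0_2$ against those of $t\mapsto M(T_{\ve,0}-t)$ and close the argument by uniqueness for \eqref{pb:lin_W}. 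The remaining ingredients — the $\mathcal{C}^1$‑dependence on the data from Theorem \ref{thm:outer_dyn} and the variational equation of Remark \ref{rem:var_eq} — are routine.
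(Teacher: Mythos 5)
Your proposal is correct at the same level of rigour as the paper's own proof and takes essentially the same route: the $p_1$-block by commuting $\partial_{p_1}$ with $d/dt$, the $p_0$-block by differentiating the reversibility identity \eqref{eq:time_rev}; the paper compresses your matrix $N$ into a single chain of equalities, but the computation is the same. The one delicate point is exactly the bookkeeping you flag, and it is worth being precise about what your uniqueness argument actually yields. Differentiating \eqref{eq:time_rev} in $p_0$ produces the arrival-point sensitivity of the \emph{reversed} arc $y_{ext}(\cdot;p_1,p_0;\ve)$, call it $M^{\mathrm{rev}}$; the map $t\mapsto M^{\mathrm{rev}}(T_{\ve,0}-t)$ does solve the equation in \eqref{pb:lin_W} (by \eqref{eq:time_rev} the reversed coefficient read backwards is the forward coefficient) with the boundary data of $N$, so uniqueness gives $N(t)=M^{\mathrm{rev}}(T_{\ve,0}-t)$ and hence $\partial_{p_0}v_0=-\dot M^{\mathrm{rev}}(T_{\ve,0})$. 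What it does \emph{not} give is $N(t)=M(T_{\ve,0}-t)$ with $M$ the forward solution of \eqref{pb:lin_W}: the map $t\mapsto M(T_{\ve,0}-t)$ solves the linearised equation with coefficient $\nabla^2\Veps(y_{ext}(T_{\ve,0}-t))$, which differs from the coefficient in \eqref{pb:lin_W} unless the Hessian along the arc is symmetric under $t\mapsto T_{\ve,0}-t$, i.e.\ unless $M^{\mathrm{rev}}=M$. The paper makes this identification silently (it writes the reversed-arc sensitivity directly as $M(T_{\ve,0}-t)$), so your proof and the paper's share the same tacit step; it is harmless where the lemma is actually invoked, namely at $(\ve,p_0,p_1)=(0,\xi^*,\xi^*)$ in Corollary \ref{coro:Wwv}, because there the underlying arc is the homothetic motion, which is symmetric in time, so that $M^{\mathrm{rev}}=M$ indeed holds.
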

\begin{proof}
	Recalling that $v_0(\ve,p_0,p_1)=\dot{y}_{ext}(0;p_0,p_1;\ve)$, since problem \eqref{pb:bvp} is time-reversible (see \eqref{eq:time_rev}), we have that
	\[    
	\begin{aligned}
		\frac{\partial}{\partial p_0}v_0(\ve,p_0,p_1)&=\frac{\partial}{\partial p_0}\left(\frac{d }{d t}y_{ext}(t;p_0,p_1;\ve)\Big\rvert_{t=0}\right)=\frac{d}{d t}\left(\frac{\partial }{\partial p_0}y_{ext}(t;p_0,p_1;\ve)\right)_{t=0}
		\\
		&=\frac{d}{dt}\left(\frac{\partial}{\partial p_1}y_{ext}(T_{\ve,0}-t;p_1,p_0;\ve)\right)_{t=0}=\frac{d}{dt}M(T_{\ve,0}-t)\Big\rvert_{t=0}
		=-\dot{M}(T_{\ve,0}). 
	\end{aligned}
	\]
	\[
	\frac{\partial}{\partial p_1}v_0(\ve,p_0,p_1)=\frac{\partial}{\partial p_1}\left(\frac{d }{d t}y_{ext}(t;p_0,p_1;\ve)\Big\rvert_{t=0}\right)=\frac{d}{d t}\left(\frac{\partial}{\partial p_1} y_{ext}(t;p_0,p_1;\ve)\right)_{t=0} 
	=\dot{M}(0). 
	\qedhere
	\]
\end{proof}

\label{discussion}Let us focus on some details for a moment. The function $M(t)$ is actually the solution of the variational equation (see Remark \ref{rem:var_eq} at page \pageref{rem:var_eq}) around an external arc $y_{ext}$, which gives information on how the flow associated to such solution changes under infinitesimal variations on the boundary conditions. Moreover, we know that $y_{ext}$ (and thus, $M$) depends on $\ve$ in a $\mathcal{C}^1$ manner, since we have shown that the anisotropic $N$-centre problem is a perturbation of a Kepler problem driven by $\Wzero$ (see Proposition \ref{prop:pert} in Section \ref{sec:scaling}). Therefore, it makes sense to simplify the proof and to consider a particular problem, i.e., the one around the homothetic solution emanating from some $\xi^*=Re^{i\vt^*}$ and to put $\ve=0$. The characterization of the Hessian matrix of $\Wzero$ given in Remark \ref{rem:hess} at page \pageref{rem:hess} will be very useful in this context. Finally, the non-degeneracy of $\Wzero$, which is fundamental in the forthcoming proof, is guaranteed also in a neighbourhood of $\xi^*$, so that the argument can be easily extended near $\xi^*$ (see Remark \ref{rem:w_convex}). Therefore, let us consider the homothetic trajectory $\homy{t}{\xi^*}=y_{ext}(t;\xi^*,\xi^*;0)$. Around this solution, problem \eqref{pb:lin_W} becomes
\begin{equation}\label{pb:lin_hom}
	\begin{cases}
		\ddot{M}(t)=\nabla^2\Wzero(\homy{t}{\xi^*})M(t),\quad t\in[0,T] \\
		M(0)=0_2,\ M(T)=I_2,
	\end{cases}
\end{equation}
where clearly $T>0$ is the first return time of the homothetic motion. Following the approach of Section \ref{sec:outer}, we study the projection of \eqref{pb:lin_hom} on the tangent space, thus reducing to a 1-dimensional setting. 

\begin{lemma}\label{lem:Wwv}
	Let $M=M(t)$ be a solution of \eqref{pb:lin_hom} and let us define $s_\xi\uguale\xi^*/|\xi^*|$ and its unitary orthogonal vector $s_\tau=s_\xi^\perp$. Moreover, define the 1-dimensional functions
	\[
	\begin{aligned}
		w(t)&\uguale\langle M(t)s_\tau,s_\tau\rangle,\quad v(t)\uguale\langle M(t)s_\tau,s_\xi\rangle \\
		c(t)&\uguale-\langle\nabla^2\Wzero(\homy{t}{\xi^*})s_\tau,s_\tau\rangle=|\homy{t}{\xi^*}|^{-\al-2}\left(\al U(\vt^*)-U''(\vt^*)\right),
	\end{aligned}
	\]
	where the last equality has been proven in \eqref{eq:tang_hessian_tau} at page \pageref{rem:hess}. Then $w$ solves
	\begin{equation}\label{pb:1dim}
		\begin{cases}
			\ddot{w}+c(t)w=0 \\
			w(0)=0,\ w(T)=1
		\end{cases}
	\end{equation}
	and 
	\[
	v\equiv 0\ \mbox{in}\ [0,T].
	\]
\end{lemma}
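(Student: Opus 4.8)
The plan is to exploit the fact that, along the homothetic trajectory $\homy{t}{\xi^*}$, the Hessian $\nabla^2\Wzero$ keeps $s_\xi$ and $s_\tau$ as eigenvectors, so that the matrix problem \eqref{pb:lin_hom} decouples into two independent scalar second-order equations. First I would recall that $\homy{t}{\xi^*}=\la(t)\xi^*$ is a purely radial motion which never leaves the central-configuration direction $\vt^*$, whence $|\homy{t}{\xi^*}|=\la(t)R$. Since $\Wzero$ is $-\al$-homogeneous, its Hessian is $(-\al-2)$-homogeneous, i.e.\ $\nabla^2\Wzero(\la\xi^*)=\la^{-\al-2}\nabla^2\Wzero(\xi^*)$; combined with Remark \ref{rem:hess}, this shows that for every $t\in[0,T]$ the vectors $s_\xi,s_\tau$ are eigenvectors of $\nabla^2\Wzero(\homy{t}{\xi^*})$, with eigenvalues
\[
\la_\xi(t)=\al(\al+1)|\homy{t}{\xi^*}|^{-\al-2}U(\vt^*),\qquad \la_\tau(t)=|\homy{t}{\xi^*}|^{-\al-2}\bigl(-\al U(\vt^*)+U''(\vt^*)\bigr)=-c(t).
\]

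Next I would set $\eta(t)\uguale M(t)s_\tau$ and observe that, applying both sides of the equation in \eqref{pb:lin_hom} to the \emph{constant} vector $s_\tau$, the curve $\eta$ solves $\ddot{\eta}=\nabla^2\Wzero(\homy{t}{\xi^*})\eta$. Decomposing $\eta=v\,s_\xi+w\,s_\tau$ in the orthonormal frame $\{s_\xi,s_\tau\}$ (which matches the definitions, as $w=\langle\eta,s_\tau\rangle$ and $v=\langle\eta,s_\xi\rangle$) and using that $s_\xi,s_\tau$ are constant eigenvectors, the equation splits into
\[
\ddot{v}=\la_\xi(t)\,v,\qquad \ddot{w}=\la_\tau(t)\,w=-c(t)\,w.
\]
The boundary data follow at once from $M(0)=0_2$ and $M(T)=I_2$: indeed $\eta(0)=0$ forces $v(0)=w(0)=0$, while $\eta(T)=s_\tau$ gives $w(T)=\langle s_\tau,s_\tau\rangle=1$ and $v(T)=\langle s_\tau,s_\xi\rangle=0$. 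This already produces problem \eqref{pb:1dim} for $w$.

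It then remains to prove $v\equiv 0$, and this is where the sign of $\la_\xi$ is decisive. Since $\al>0$ and $U(\vt^*)>0$, the coefficient $\la_\xi(t)$ is strictly positive on $[0,T]$, so $v$ satisfies $\ddot{v}=\la_\xi v$ with $v(0)=v(T)=0$. A maximum-principle argument closes the proof: were $v$ to attain a positive interior maximum at some $t_0$, one would have $\ddot{v}(t_0)\le 0$ while $\ddot{v}(t_0)=\la_\xi(t_0)v(t_0)>0$, a contradiction, and symmetrically for a negative interior minimum; hence $v\equiv 0$. Equivalently, multiplying by $v$ and integrating by parts yields $-\int_0^T\dot{v}^2\,dt=\int_0^T\la_\xi v^2\,dt$, forcing both sides to vanish.

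The only genuinely delicate point is the first step, namely verifying that the radial and angular directions remain eigendirections of $\nabla^2\Wzero$ \emph{all along} the homothetic orbit; this hinges on the homogeneity of $\Wzero$ together with the invariance of the central-configuration angle under the homothetic flow. Once the system is decoupled into the two scalar equations above, everything else is routine, and the positivity of $\la_\xi$ is exactly what rules out the off-diagonal component $v$.
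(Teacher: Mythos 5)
Your proof is correct and follows essentially the same route as the paper: both decouple the matrix problem \eqref{pb:lin_hom} along the constant eigenframe $\{s_\xi,s_\tau\}$ of $\nabla^2\Wzero(\homy{t}{\xi^*})$ (the paper via projection and symmetry of the Hessian, you via the decomposition $M(t)s_\tau=v\,s_\xi+w\,s_\tau$), read the boundary data off $M(0)=0_2$, $M(T)=I_2$, and kill $v$ using the strict positivity of $\la_\xi$. Your added details --- the homogeneity argument showing the eigenframe persists along the homothetic ray, and the explicit maximum-principle/integration-by-parts step for $v\equiv 0$ --- are points the paper leaves implicit, not a different method.
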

\begin{proof}
	From the definition of $w$ and since $M(t)$ solves \eqref{pb:lin_hom} it is clear that 
	\[
	\ddot{w}(t)-\langle\nabla^2\Wzero(\homy{t}{\xi^*})M(t)s_\tau,s_\tau\rangle=0\quad\mbox{for every}\ t\in[0,T].
	\]
	Now, since $\nabla^2\Wzero(\homy{t}{\xi^*})$ is symmetric and admits $s_\tau$ as eigenvector with eigenvalue $\la_\tau(t)=\langle\nabla^2\Wzero(\homy{t}{\xi^*})s_\tau,s_\tau\rangle$ (see \eqref{eq:tang_hessian} at page \pageref{rem:hess}), we can write
	\[
		\langle\nabla^2\Wzero(\homy{t}{\xi^*})M(t)s_\tau,s_\tau\rangle=\langle M(t)s_\tau,\nabla^2\Wzero(\homy{t}{\xi^*})s_\tau\rangle=\langle\nabla^2\Wzero(\homy{t}{\xi^*})s_\tau,s_\tau\rangle w(t).
	\]
	Hence, considering the boundary conditions satisfied by $M(t)$ in \eqref{pb:lin_hom} we conclude that $w$ solves \eqref{pb:1dim}. 
	
	Concerning $v$, using the same argument, we deduce that it solves
	\[
	\begin{cases}
		\ddot{v}=\la_\xi(t)v \\
		v(0)=0=v(T),
	\end{cases}
	\]
	with $\la_\xi(t) = |\homy{t}{\xi^*}|^{-\al-2}\al(\al+1)U(\vt^*)$ (see \eqref{eq:tang_hessian_xi} at page \pageref{eq:tang_hessian_xi}). Since $\la_\xi(t)>0$ then $v\equiv 0$ in $[0,T]$.
\end{proof}
As a consequence of Lemma \ref{lem:jac_v} and Lemma \ref{lem:Wwv}, we can directly deduce this useful result.
\begin{corollary}\label{coro:Wwv}
	Let $M$, $w$ and $v$ as in Lemma \ref{lem:Wwv} and recall the notations of Lemma \ref{lem:jac_v}. Then
	\[
	J_{p_0,p_1}v_0(0,\xi^*,\xi^*)=\left(-\dot{M}(T),\dot{M}(0)\right)
	\]
	and, for every $k\in[-1,1]$, 
	\begin{equation*}%\label{eq:jac_v_xi}
		\left\langle J_{p_0,p_1}v_0(0,\xi^*,\xi^*)\begin{bmatrix}
			s_\tau \\
			ks_\tau
		\end{bmatrix},s_\tau\right\rangle = -\langle\dot{M}(T)s_\tau,s_\tau\rangle+k\langle\dot{M}(0)s_\tau,s_\tau\rangle=-\dot{w}(T)+k\dot{w}(0),
	\end{equation*}
	and 
	\begin{equation}\label{eq:jac_v_xi}
		\left\langle J_{p_0,p_1}v_0(0,\xi^*,\xi^*)\begin{bmatrix}
			s_\tau \\
			ks_\tau
		\end{bmatrix},s_\xi\right\rangle = -\dot{v}(T) + k\dot{v}(0) = 0.
	\end{equation}
\end{corollary}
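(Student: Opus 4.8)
The plan is to read both identities straight off Lemma \ref{lem:jac_v} and Lemma \ref{lem:Wwv}, so this is a genuine corollary rather than an independent argument. No new estimate is needed; everything is bookkeeping of a block Jacobian.

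First I would specialize Lemma \ref{lem:jac_v} to the homothetic configuration by setting $\ve=0$ and $p_0=p_1=\xi^*$. With this choice the outer arc $y_{ext}(\cdot;\xi^*,\xi^*;0)$ coincides with the homothetic trajectory $\homy{\cdot}{\xi^*}$, its first return time equals $T$, and the variational matrix $M$ solves the boundary value problem \eqref{pb:lin_hom}. Lemma \ref{lem:jac_v} then yields at once the first displayed identity
\[
J_{p_0,p_1}v_0(0,\xi^*,\xi^*)=\left(-\dot{M}(T),\dot{M}(0)\right),
\]
where the two entries are the $2\times 2$ blocks corresponding to variations of $p_0$ and of $p_1$, respectively.

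Next I would evaluate this block Jacobian on the concatenated variation $(s_\tau,ks_\tau)$. Since the $p_0$-block acts on the first slot and the $p_1$-block on the second, the image is simply $-\dot{M}(T)s_\tau+k\dot{M}(0)s_\tau$. Pairing with $s_\tau$ and recalling from Lemma \ref{lem:Wwv} that $w(t)=\langle M(t)s_\tau,s_\tau\rangle$, so that $\dot{w}(t)=\langle\dot{M}(t)s_\tau,s_\tau\rangle$ (the projection vectors being constant in $t$, differentiation passes through the inner product), gives the middle identity $-\dot{w}(T)+k\dot{w}(0)$. Pairing instead with $s_\xi$ and using $v(t)=\langle M(t)s_\tau,s_\xi\rangle$, hence $\dot{v}(t)=\langle\dot{M}(t)s_\tau,s_\xi\rangle$, produces $-\dot{v}(T)+k\dot{v}(0)$; this vanishes because Lemma \ref{lem:Wwv} shows $v\equiv 0$ on $[0,T]$, whence $\dot{v}\equiv 0$ and in particular equation \eqref{eq:jac_v_xi} holds.

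There is no substantial obstacle here. The only points that require a little care are the bookkeeping of which block of the $2\times 4$ Jacobian contracts against which component of the test vector, and the elementary observation that time-differentiation commutes with the (constant) inner products against $s_\tau$ and $s_\xi$. The entire content of the statement is carried by the two preceding lemmas, which is why it is phrased as a direct consequence.
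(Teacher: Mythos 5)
Your proposal is correct and matches the paper's reasoning: the paper itself offers no separate proof, stating the corollary is a direct consequence of Lemma \ref{lem:jac_v} (specialized at $\ve=0$, $p_0=p_1=\xi^*$, where \eqref{pb:lin_W} becomes \eqref{pb:lin_hom}) and of Lemma \ref{lem:Wwv}, which is exactly the block-Jacobian bookkeeping you carry out. The only details needed — that the blocks $-\dot M(T)$ and $\dot M(0)$ contract against the two slots of $(s_\tau,ks_\tau)$, that differentiation commutes with pairing against the constant vectors $s_\tau,s_\xi$, and that $v\equiv 0$ forces \eqref{eq:jac_v_xi} — are precisely the ones you supply.
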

At this point, our aim is to prove a result which actually gives an estimate of the tangential component of the velocity $v_0=v_0(0,p_0,p_1)$, with respect to oscillations of $p_0$ and $p_1$ around the central configuration $\xi^*$. In order to do this, it is useful to introduce polar coordinates to provide an explicit dependence of $p_0$ and $p_1$ on an angular variation. Indeed, we characterize every point $p\in\mathcal{U}_0=\mathcal{U}_0(\xi^*)$ as a function of the counter-clockwise angle $\phi\in(-\pi,\pi)$ joining $p$ and $\xi^*$, so that
\begin{equation}\label{eq:p_phi}
	p(0) = \xi^* = Rs_{\xi},
	\quad \text{and} \quad
	p(\phi) = R\cos\phi \, s_\xi+R\sin\phi \, s_\tau,
\end{equation}
(see Figure \ref{fig:polar}). We furthermore remark that when we write the orthogonal of a vector we mean a counter-clockwise rotation of $\pi/2$ of such vector. 
\begin{figure}
	\centering
	\begin{tikzpicture}[scale=0.9]
		\coordinate (O) at (0,0);
		\coordinate (x) at (6,0);
		\coordinate (p0) at (5.48,2.45);
		\coordinate (p0m) at (5.48,-2.45);
		\coordinate (p1) at (5.91,-1.04);
		
		% segmenti
		
		\draw [dashed] (-2,0)--(8,0);
		\draw [dashed] (4.5,4.64)--(6.1,1.06);
		\draw [dashed,red] (6.7,1.9)--(5.9,1.51);
		\draw  (O)--(x);
		\draw  (O)--(p0);
		\draw  (O)--(p1);
		\draw  (O)--(p0m);
		
		% vettori
		
		\draw [thick,-stealth,blue] (6,0)--(7,0);
		\draw [thick,-stealth,blue] (6,0)--(6,1);
		\draw [thick,-stealth,red] (p0)--(6.7,1.9);
		
		% archi 
		
		\draw [thick,domain=-25:25] plot ({6*cos(\x)}, {6*sin(\x)});
		\draw [dashed,domain=-40:40] plot ({6*cos(\x)}, {6*sin(\x)});
		\draw [-stealth](3,0) arc (0:24:3);
		\draw [-stealth](4,0) arc (0:-13:3);
		\draw [-stealth](3,0) arc (0:-24:3); 
		
		% punti
		\fill (O) circle[radius=1.5pt];
		\fill (x) circle[radius=1.5pt];
		\fill (p0) circle[radius=1.5pt];
		\fill (p1) circle[radius=1.5pt];
		
		% scritte
		\node[right] at (6,-0.3) {$\xi^*$};
		\node[left] at (0,-0.3) {$0$};
		\node[right] at (5.4,2.8) {$p_0(\phi_0)$};
		\node[blue,left] at (7,0.3) {$s_{\xi}$};
		\node[blue,right] at (6,0.7) {$s_{\tau}$};
		\node[right] at (2.9,0.6) {$\phi_0$};
		\node[right] at (2.8,-0.8) {$-\phi_0$};
		\node[right] at (4,-0.3) {$\phi_1$};
		\node[right] at (6,-1.1) {$p_1(\phi_1)$};
		\node[right,red] at (6.7,2) {$v_0(p_0,p_1)$};
		\node at (4.5,5) {$p_0(\phi_0)^{\perp}$};	
		\node[right] at (5.2,-3) {$\partial B_R$};
		
	\end{tikzpicture}
	\caption{The notations of Lemma \ref{lem:var_ext} and the behaviour of the initial velocity with respect to variations on the endpoints $p_0$ and $p_1$.}
	\label{fig:polar}
\end{figure}
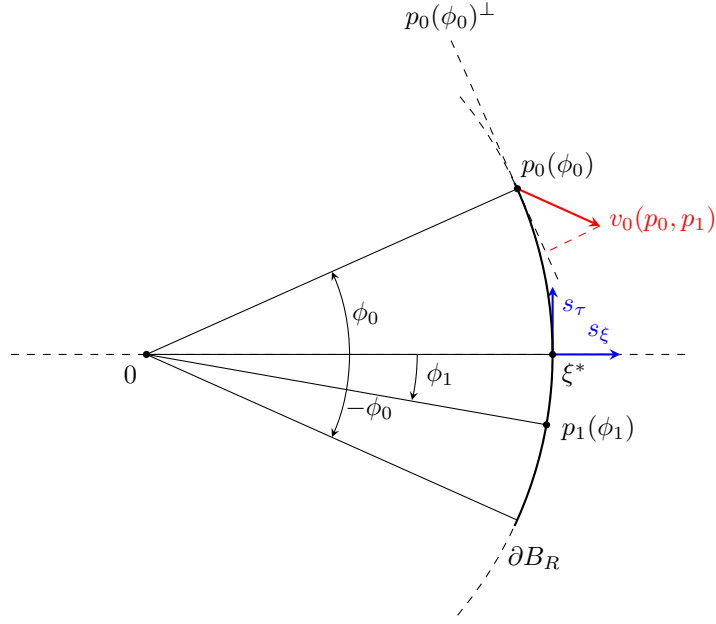

\begin{lemma}\label{lem:var_ext}
	There exists $\delta=\delta(\vt^*),\,C=C(\vt^*)>0$ such that, for any $\phi_0,\phi_1\in\R$ verifying $0<|\phi_0|<\delta$ and  $|\phi_1|\leq|\phi_0|$, the following holds
	\[
	\frac{-\langle v_0(0,p_0(\phi_0),p_1(\phi_1)),p_0(\phi_0)^\perp\rangle}{\phi_0}\geq C,
	\]
	(see Figure \ref{fig:polar}).
\end{lemma}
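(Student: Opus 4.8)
The plan is to regard the quantity in the statement as a $\mathcal{C}^1$ function of the two angular variables, reduce the claim to a strict inequality between the first-order coefficients of its Taylor expansion, and then establish that inequality by a Wronskian comparison against the radial homothetic motion. Set
\[
F(\phi_0,\phi_1)\uguale-\langle v_0(0,p_0(\phi_0),p_1(\phi_1)),p_0(\phi_0)^\perp\rangle .
\]
By the $\mathcal{C}^1$-dependence on the endpoints (Theorem~\ref{thm:outer_dyn}), $F$ is $\mathcal{C}^1$ near $(0,0)$; since $v_0(0,\xi^*,\xi^*)=v_{\xi^*}=V_0\,s_\xi$ is radial, with $V_0\uguale|v_{\xi^*}|=\sqrt{2(\Wzero(\xi^*)-1)}>0$, while $p_0(0)^\perp=Rs_\tau$, we have $F(0,0)=0$. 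Writing $A\uguale\partial_{\phi_0}F(0,0)$, $B\uguale\partial_{\phi_1}F(0,0)$ and $k\uguale\phi_1/\phi_0\in[-1,1]$, a first-order expansion that is uniform because $|\phi_1|\le|\phi_0|$ gives
\[
\frac{F(\phi_0,\phi_1)}{\phi_0}=A+Bk+o(1)\qquad\text{as }\phi_0\to0,
\]
uniformly in $k$. As $\min_{k\in[-1,1]}(A+Bk)=A-|B|$, it suffices to prove $A-|B|>0$: then $C\uguale(A-|B|)/2$ together with a sufficiently small $\delta$ closes the argument.

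The coefficients are read off from Corollary~\ref{coro:Wwv}. Using $p_i(\phi_i)-\xi^*=R\phi_i\,s_\tau+O(\phi_i^2)$ and differentiating $p_0(\phi_0)^\perp=R\cos\phi_0\,s_\tau-R\sin\phi_0\,s_\xi$, a direct computation yields
\[
A=V_0R+R^2\dot w(T),\qquad B=-R^2\dot w(0),
\]
where $w$ is the solution of the Sturm--Liouville problem from Lemma~\ref{lem:Wwv}, namely $\ddot w+c(t)w=0$, $w(0)=0$, $w(T)=1$, with $c(t)=|\homy{t}{\xi^*}|^{-\al-2}(\al U(\vt^*)-U''(\vt^*))$ and $T$ the first return time of the homothetic orbit. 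By the non-degeneracy established in Lemma~\ref{lem:invert_no_pert} (no conjugate instant occurs before $T$), $w>0$ on $(0,T]$; hence $\dot w(0)>0$ and $|B|=R^2\dot w(0)$.

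The core step is the inequality $A>|B|$, equivalently $R\dot w(T)+V_0>R\dot w(0)$. To prove it I compare $w$ with the radial function $u(t)\uguale|\homy{t}{\xi^*}|$, which by \eqref{pb:u} solves $\ddot u+\tilde c(t)u=0$ with $\tilde c(t)=\al U(\vt^*)|\homy{t}{\xi^*}|^{-\al-2}$ and satisfies $u(0)=u(T)=R$, $\dot u(0)=V_0=-\dot u(T)$ (the homothetic motion returns to $\xi^*$ with opposite velocity). For the Wronskian-type quantity $\mathcal{W}\uguale\dot w\,u-w\,\dot u$ one computes
\[
\dot{\mathcal{W}}=\ddot w\,u-w\,\ddot u=(\tilde c-c)\,u\,w=U''(\vt^*)\,|\homy{t}{\xi^*}|^{-\al-2}\,u\,w,
\]
which is strictly positive on $(0,T)$ since $U''(\vt^*)>0$ ($\vt^*$ being a minimal non-degenerate central configuration) and $u,w>0$ there. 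Thus $\mathcal{W}$ is strictly increasing, so $\mathcal{W}(T)>\mathcal{W}(0)$; evaluating the endpoint data gives $\mathcal{W}(0)=R\dot w(0)$ and $\mathcal{W}(T)=R\dot w(T)+V_0$, whence $R\dot w(T)+V_0>R\dot w(0)$, and multiplying by $R>0$ yields $A=V_0R+R^2\dot w(T)>R^2\dot w(0)=|B|$.

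The delicate point—and the only place where the full force of $U''(\vt^*)>0$ enters—is precisely this strict monotonicity of $\mathcal{W}$: it allows me to avoid any sign hypothesis on $c(t)$ (equivalently on $\al U(\vt^*)-U''(\vt^*)$), which would otherwise force a case split, and it is exactly the anisotropic non-degeneracy that provides the strict sign $\tilde c-c>0$. The two remaining ingredients—the linear expansion via Corollary~\ref{coro:Wwv} and the uniform control of the remainder as $\phi_0\to0$ for $|\phi_1|\le|\phi_0|$—are routine, the latter following from the $\mathcal{C}^1$-regularity of $F$ and the bound $|\phi_0|+|\phi_1|\le 2|\phi_0|$.
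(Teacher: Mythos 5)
Your proof is correct for the lemma as literally stated, and its skeleton is the same as the paper's: you linearize through Corollary \ref{coro:Wwv} (your coefficients $A=V_0R+R^2\dot{w}(T)$, $B=-R^2\dot{w}(0)$ agree with the paper's expansion), and your Wronskian $\mathcal{W}=\dot{w}\,u-w\,\dot{u}$ is literally the paper's quantity $u^2\dot{f}$ with $f=w/u$: your identity $\dot{\mathcal{W}}=(\tilde{c}-c)\,u\,w$ is the paper's $\frac{d}{dt}\left(u^2\dot{f}\right)=(d(t)-c(t))u^2f$, and your conclusion $\mathcal{W}(T)>\mathcal{W}(0)$, i.e. $R\dot{w}(T)+V_0>R\dot{w}(0)$, is exactly the paper's inequality \eqref{eq:gamma_1}. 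Two small points of precision: positivity of $w$ on $(0,T)$ does not follow verbatim from the statement of Lemma \ref{lem:invert_no_pert}, which only excludes a nontrivial solution vanishing at both $0$ and $T$; you need to rerun the same Sturm comparison on $[0,t^*]$ for a hypothetical interior zero $t^*$ (or argue as the paper does, via the sign of $f$). Also, only $\dot{w}(0)\geq 0$ is needed, and that is what the paper proves in \eqref{eq:w_geq_0}.

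Where you and the paper genuinely part ways is after \eqref{eq:gamma_1}: you stop there and take $C=(A-|B|)/2$, a constant defined through the linearized-flow data $\dot{w}(0),\dot{w}(T)$. The paper deliberately does not stop there. Remark \ref{rem:uniformity} states that precisely this estimate ``would not be enough for the purposes of this work'', because the lemma is subsequently transferred by perturbation to the $\ve$-problem in Theorem \ref{thm:var_ext}, and for that the authors want a constant of explicit, $\ve$-robust form rather than one expressed through the solution of the variational equation. The paper therefore performs a second comparison, against $u_\gamma=u^\gamma$ with $\gamma\in(0,1)$ chosen so that $1-\gamma<U''(\vt^*)/(\al\,U(\vt^*))$, which upgrades \eqref{eq:gamma_1} to $R\dot{w}(T)-R\dot{w}(0)+\gamma|v_{\xi^*}|\geq 0$ and yields $C=(1-\gamma)|v_{\xi^*}|$, a constant depending only on $U(\vt^*)$, $U''(\vt^*)$, $\al$ and the homothetic initial speed; note that this is where the hypothesis $U''(\vt^*)>0$ is used a second time, now quantitatively, not merely as a strict sign. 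So your argument proves the statement, but it reproduces exactly the shortcut the authors warn against: spliced into the paper, it would leave the uniformity claim underlying Theorem \ref{thm:var_ext} unsupported, or at least in need of a separate continuity-in-$\ve$ argument that the paper does not make.
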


\begin{proof}
	For the sake of simplicity we introduce these notations
	\[
		v_0(p_0,p_1)\uguale v_0(0,p_0,p_1),\  v_{\xi^*}\uguale v_0(p_0(0),p_1(0)).
	\]
	Furthermore, we prove the statement for $\phi_0>0$; if $\phi_0$ is negative, the proof is the same up to minor changes. Since $v_{\xi^*}$ is the initial velocity of the homothetic motion along a central configuration, it is orthogonal to $s_\tau$, then $\langle v_{\xi^*},p_0(0)^\perp \rangle=0$ and so we can write
	\[
	\langle v_0(p_0(\phi_0),p_1(\phi_1)),p_0(\phi_0)^\perp\rangle=\langle v_0(p_0(\phi_0),p_1(\phi_1))-v_{\xi^*},p_0(\phi_0)^\perp\rangle +\langle v_{\xi^*},p_0(\phi_0)^\perp-p_0(0)^\perp\rangle.
	\]
	From the notations in \eqref{eq:p_phi}, if $\phi \to 0^+$, than we have
	\[
	p(\phi)-p(0) = R\phi s_\tau +o(\phi)
	\quad \text{and} \quad 
	p(\phi)^\perp - p(0)^\perp = -R \phi s_{\xi} +o(\phi),
	\]
	hence, as $\phi_0\to 0^+$,
	\[
	\langle v_{\xi^*},p_0(\phi_0)^\perp-p_0(0)^\perp\rangle = -R \lvert v_{\xi^*}\rvert \phi_0 +o(\phi_0).
	\]
	Furthermore, again as $\phi_0 \to 0^+$ (and thus, as $\phi_1\to 0^+$)
	\[
	\begin{aligned}
		v_0(p_0(\phi_0),p_1(\phi_1))-v_{\xi^*} 
		&= J _{p_0,p_1} v_0(\xi^*,\xi^*)\begin{bmatrix}
			p_0(\phi_0)-p_0(0)\\
			p_1(\phi_1)-p_1(0)
		\end{bmatrix}  + o(\phi_0) \\
		&= R\,J _{p_0,p_1} v_0(\xi^*,\xi^*)\begin{bmatrix}
			\phi_0 s_\tau\\
			\phi_1 s_\tau		\end{bmatrix}  + o(\phi_0).
	\end{aligned}
	\]
	Assuming now $\phi_1 = k\phi_0$ for some $k\in[-1,1]$, and using \eqref{eq:jac_v_xi} in Corollary \ref{coro:Wwv} and the fact that $p_0(0)^\perp= Rs_\tau$, we have
	\[
	\left\langle v_0(p_0(\phi_0),p_1(\phi_1))-v_{\xi^*},p_0(\phi_0)^\perp\right\rangle = 
	R^2\phi_0\left\langle J_{p_0,p_1}v_0(\xi^*,\xi^*)\begin{bmatrix}
		s_\tau \\
		ks_\tau   	
	\end{bmatrix},s_\tau\right\rangle + o(\phi_0).
	\]
	so that
	\[
	\langle v_0(p_0(\phi_0),p_1(\phi_1)),p_0(\phi_0)^\perp\rangle
	= 
	R\phi_0\left(R\left\langle J_{p_0,p_1}v_0(\xi^*,\xi^*)\begin{bmatrix}
		s_\tau \\
		ks_\tau   	
	\end{bmatrix},s_\tau\right\rangle - \lvert v_{\xi_0^*}\rvert\right)  + o(\phi_0).
	\]
	In order to conclude we need to prove the existence of a positive constant $C$, depending on $\phi_0$ and $k$, such that 
	\[
	-\left(R\left\langle J_{p_0,p_1}v_0(\xi^*,\xi^*)\begin{bmatrix}
		s_\tau \\
		ks_\tau   	
	\end{bmatrix},s_\tau\right\rangle - \lvert v_{\xi^*}\rvert\right) \geq C >0.
	\]
	By means of Corollary \ref{coro:Wwv}, this is equivalent to prove that
	\begin{equation}\label{eq:w}
	R\dot{w}(T) - kR\dot{w}(0)
	+\lvert v_{\xi^*}\rvert \geq C >0,
	\end{equation}
	where $w(t)$ solves \eqref{pb:1dim}.
	
	Let now $u(t) = |\homy{t}{\xi^*}|$; then $u$ solves the 1-dimensional problem
	\begin{equation}\label{pb:hom}
		\begin{cases}
			\displaystyle
			\ddot{u} + \frac{\al U(\vt^*)}{u^{\al+2}}u=0 \\
			u(0) = R = u(T).
		\end{cases}
	\end{equation}
	Since by assumption $U''(\vt^*)> 0$ we have
	\begin{equation}\label{d_c}
		d(t)\uguale\frac{\al U(\vt^*)}{u(t)^{\al+2}}> \frac{\al U(\vt^*)-U''(\vt^*)}{u(t)^{\al+2}}\uguale c(t)\quad\mbox{in}\ [0,T],
	\end{equation}
	recalling that the function $c(t)$ has been introduced in Lemma \ref{lem:Wwv}.\\
	Since $u(t) \neq 0$ for any $t \in [0,T]$, we can define $f(t)=\frac{w(t)}{u(t)}$ and compute
	\[
		\ddot f =-2\frac{\dot u}{u}\left(\frac{\dot w}{u}-\frac{w\dot u}{u^2}\right)+\frac{w}{u}\left(\frac{\ddot w}{w}-\frac{\ddot u}{u}\right).
	\]	
	Multiplying both sides by $u^2$ we deduce that $f$ solves the problem
	\begin{equation}\label{pb:omega}
		\begin{cases}
			\displaystyle
			\frac{d}{dt}\left(u^2\dot{f}\right)= (d(t)-c(t)) u^2f \\
			f(0)=0,\; f(T)=1/Rf.
		\end{cases}
	\end{equation}
	We want to prove that $f$ is strictly positive in $(0,T]$; hence, suppose by contradiction that there exists $t^*\in(0,T)$ such that $f(t^*)=0$. Then, it is clear that there exists $t_m\in(0,t^*]$ such that
	\[
	f(t_m)\leq 0,\ \dot{f}(t_m)=0,\ \ddot{f}(t_m)>0.
	\]
	In this way, considering the equation in \eqref{pb:omega} and the inequality \eqref{d_c}, we get
	\[
	0<u^2(t_m)\ddot{f}(t_m)=(d(t_m)-c(t_m))u^2(t_m)f(t_m)\leq 0,
	\]
	which is a contradiction. Therefore, $f$ is strictly positive in the interval $(0,T]$.
	
	Now, integrating the equation in \eqref{pb:omega} in $[0,T]$ we get
	\[
	u^2(T)\dot{f}(T)-u^2(0)\dot{f}(0)>0
	\]
	and, using the explicit expression of $\dot{f}$, we have
	\begin{equation}\label{eq:gamma_1}
		R\dot{w}(T)-R\dot{w}(0)>\dot{u}(T)=\frac{1}{R}\langle\homy{T}{\xi^*},\dot{\hat{y}}_{\xi^*}(T)\rangle=-|v_{\xi^*}|.
	\end{equation}
	Moreover, since $f$ can not vanish in $(0,T]$, we have that necessarily $\dot{f}(0)\geq 0$ and thus 
	\begin{equation}\label{eq:w_geq_0}
		\dot{w}(0)\geq\frac{w(0)\dot{u}(0)}{u^2(0)}=0.
	\end{equation}
	
	%    	\GianMarco{In particular we have shown that 
	%    	\[
	%    	\dot{w}(T_\xi)-\dot{w}(0)+|v_\xi|\geq 0,
	%    	\]
	%    	so that, if we prove that
	%    	\[
	%    	\dot{w}(0)>0
	%    	\]
	%    	we are done. Indeed, for every $k\in(-1,1)$ we could write
	%    	\[
	%    	\dot{w}(T_\xi)-(1+k-k)\dot{w}(0)+|v_\xi|\geq 0
	%    	\]
	%        and so
	%        \[
	%        \dot{w}(T_\xi)-k\dot{w}(0)+|v_\xi|\geq(1-k)\dot{w}(0)\uguale c>0,
	%        \]
	%        which would conclude the proof.
	%        
	%        From previous computations, we know that
	%        \[
	%        \dot{w}(0)=R\dot{\omega}(0)+\frac{w(0)\dot{u}(0)}{R}=R\dot{\omega}(0);
	%        \]
	%        therefore, we suppose by contradiction that $\dot{\omega}(0)=0$. This means that, in particular, $\omega(t)$ satisfies the Cauchy problem
	%        \[
	%        \begin{cases}
	%        \displaystyle
	%        \frac{d}{dt}(u^2\dot{\omega})=(d(t)-c(t))u^2\omega \\
	%        \omega(0)=0 \\
	%        \dot{\omega}(0)=0
	%        \end{cases}
	%        \]
	%        so that, if we define the function $v(t)\uguale u^2\dot{\omega}(t)$, we obtain the following problem
	%        \[
	%        \begin{cases}
	%        \displaystyle
	%        \dot{\omega}=\frac{v}{u^2} \\
	%        \dot{v}=(d(t)-c(t))u^2\omega \\
	%        \omega(0)=0,\ v(0)=0.
	%        \end{cases}
	%        \]
	%        Since the previous problem admits only the trivial solution, we have that necessarily $\omega(0)=0$ on the whole interval $[0,T_\xi]$, but this is impossible since $\omega(T_\xi)=1$ (see \eqref{pb:omega}).
	%        }
	%    
	At this point, let us consider  $\gamma\in(0,1)$ and define the function
	\[
	\begin{aligned}
		u_\gamma\colon[0,&T]\to\R^+ \\
		&t\longmapsto u_\gamma(t)\uguale u(t)^\gamma,
	\end{aligned}
	\]
	which verifies  
	\[
	\ddot{u}_\gamma=\gamma(\gamma-1)u^{\gamma-2}\dot{u}^2+\gamma u^{\gamma-1}\ddot{u}.    	
	\]
	In this way, by \eqref{pb:hom} we have
	\[
	-\frac{\ddot{u}_\gamma}{u_\gamma}=\gamma\left[-\frac{\ddot{u}}{u}+(1-\gamma)\frac{\dot{u}^2}{u^2}\right]=\gamma\left[\frac{\alpha U(\vt^*)}{u^{\alpha+2}}+(1-\gamma)\frac{\dot{u}^2}{u^2}\right];
	\]
	in other words, $u_\gamma$ solves the problem
	\[
	\begin{cases}
		\ddot{u}_\gamma+d_\gamma(t)u_\gamma=0 \\
		u_\gamma(0)=R^\gamma=u_\gamma(T),
	\end{cases}
	\]
	where 
	\begin{equation}\label{defn:d_gamma}
		d_\gamma(t)=\gamma\left[\frac{\al U(\vt^*)}{u(t)^{\al+2}}+(1-\gamma)\frac{\dot{u}(t)^2}{u(t)^2}\right].
	\end{equation}
	Moreover 
	\[
	\frac{\dot{u}_\gamma}{u_\gamma}=\frac{\gamma u^{\gamma-1}\dot{u}}{u^\gamma}=\gamma\frac{\dot{u}}{u};
	\]
	therefore, if we show that there exists $\gamma\in(0,1)$ such that $d_\gamma(t)\geq c(t)$ in $[0,T]$, we can repeat the previous argument and, as in \eqref{eq:gamma_1}, show that for such $\gamma$
	\begin{equation}\label{eq:no_k}
		R\dot{w}(T)-R\dot{w}(0)+\gamma|v_{\xi^*}|\geq 0.
	\end{equation}
	By \eqref{defn:d_gamma}, such inequality is satisfied if there exists $\gamma\in(0,1)$ such that, for every $t\in[0,T]$
	\[
	d_\gamma(t)=\gamma\left[\frac{\al U(\vt^*)}{u(t)^{\al+2}}+(1-\gamma)\frac{\dot{u}(t)^2}{u(t)^2}\right]\geq\frac{\al U(\vt^*)}{u(t)^{\al+2}}-\frac{U''(\vt^*)}{u(t)^{\al+2}}=c(t)
	\]
	or, equivalently, if
	\[
	\gamma(1-\gamma)\frac{\dot{u}(t)^2}{u(t)^2}\geq(1-\gamma)\frac{\alpha U(\vt^*)}{u^{\alpha+2}}-\frac{U''(\vt^*)}{u^{\alpha+2}}.
	\]
	Since the left-hand side is always non-negative, it is enough to find a $\gamma\in(0,1)$ such that 
	\[
	\frac{U''(\vt^*)}{\al U(\vt^*)}>1-\gamma,
	\]
	but such $\gamma$ clearly exists since $U''(\vt^*)>0$ and $U(\vt^*)>0$ and, moreover, does not depends on $t$. Since now \eqref{eq:no_k} is proved, using \eqref{eq:w_geq_0} we easily deduce that
	\[
	\dot{w}(T)-k\dot{w}(0)+\gamma|v_{\xi^*}|\geq 0\quad \mbox{for every}\ k\in[-1,1]
	\]
	and, choosing $C=(1-\gamma)|v_{\xi^*}|>0$, then \eqref{eq:w} is proven, together with the lemma.
\end{proof}

\begin{remark}\label{rem:uniformity}
	One could think that the proof of Lemma \ref{lem:var_ext} could be concluded with the inequality \eqref{eq:gamma_1} since, for every $k\in[-1,1]$
	\[
	\dot{w}(T)-k\dot{w}(0)+|v_{\xi^*}|\geq\dot{w}(T)-\dot{w}(0)+|v_{\xi^*}|=C_1>0.
	\]
	However, this estimate would not be enough for the purposes of this work, since we need a uniform estimate which does not depend on $\ve$. On the other hand, the constant $C=(1-\gamma)|v_{\xi^*}|$ provided at the end of the lemma depends only on $\vt_0^*$ and so it joins this uniformity and allows us to extend this argument also for the $N$-centre problem driven by $\Veps$ when $\ve$ is sufficiently small.
\end{remark}

As a consequence of Lemma \ref{lem:var_ext}, of Remark \ref{rem:uniformity} and of the uniform behaviour of the dynamical system when $\ve$ is small and $p_0$ and $p_1$ are not far from $\xi^*$ (see the discussion at page \pageref{discussion}), we can obtain the same result for the $N$-centre problem. Note that here we will refer to the notations of Lemma \ref{lem:diff_L_ext} and thus we will consider only the Jacobi length from $p_0$ to $p_1$; of course, an equivalent result holds for any pair $(p_{2j},p_{2j+1})$, for $j=1,\ldots,n-1$.

\begin{theorem}\label{thm:var_ext}
	There exists $\bar{\ve}_{ext}>0$ such that, for any $\ve\in(0,\bar{\ve}_{ext})$, if
	\[
	\mathbf{\bar{p}}=(\bar{p}_0,\bar{p}_1,\ldots,\bar{p}_{2n})\in\mathcal{S}
	\]
	is a minimizer of $\mathbf{L}$ provided in Lemma \ref{lem:glueing_exist}, then there exists $\psi\in\mathcal{T}_{p_0}(\partial B_R)$ and there exists $C_{ext}>0$ such that
	\[
	\frac{\partial\mathcal{L}_{ext}}{\partial p_0}(\bar{p}_0,\bar{p}_1)[\psi]=-\frac{1}{\sqrt{2}}\langle\dot{y}_{ext}(0),\psi\rangle\leq -C_{ext}<0.
	\]
\end{theorem}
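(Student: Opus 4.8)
The plan is to reduce the claim to the tangential velocity estimate already proved for the unperturbed problem in Lemma~\ref{lem:var_ext}, and then to propagate it to small $\ve>0$ by the perturbation argument underlying the whole section. By Lemma~\ref{lem:diff_L_ext}, for any tangent vector $\psi\in\mathcal{T}_{p_0}(\partial B_R)$ one has
\[
\frac{\partial\mathcal{L}_{ext}}{\partial p_0}(\bar p_0,\bar p_1)[\psi]=-\frac{1}{\sqrt2}\langle v_0(\ve,\bar p_0,\bar p_1),\psi\rangle,\qquad v_0=\dot y_{ext}(0),
\]
so it is enough to exhibit a unit tangent direction along which $\langle v_0,\psi\rangle$ stays bounded below by a positive constant independent of $\ve$. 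Writing $\bar p_0=p_0(\phi_0)$ in the polar notation of~\eqref{eq:p_phi}, I would take $\psi=-\operatorname{sgn}(\phi_0)\,p_0(\phi_0)^\perp$, the unit tangent pointing back toward the central configuration $\xi_0^*$; by the time reversibility~\eqref{eq:time_rev} there is no loss in assuming that $\bar p_0$ is the endpoint of the outer arc with the larger angular deviation, so that the hypothesis $|\phi_1|\le|\phi_0|$ of Lemma~\ref{lem:var_ext} is met.

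With this choice, Lemma~\ref{lem:var_ext} at $\ve=0$ yields $\langle v_0(0,\bar p_0,\bar p_1),\psi\rangle\ge C\,|\phi_0|>0$, where, by Remark~\ref{rem:uniformity}, the constant $C=(1-\gamma)|v_{\xi_0^*}|$ depends only on $\vt_0^*$ and not on $\ve$ nor on the endpoints. The transfer to $\ve>0$ rests on the fact, guaranteed by Theorem~\ref{thm:outer_dyn} together with Proposition~\ref{prop:pert}, that the outer arc and its initial velocity depend in a $\mathcal{C}^1$ (hence uniformly continuous) manner on $\ve\in[0,\tilde\ve)$ and on $(p_0,p_1)$ over the compact set $\overline{\mathcal{U}_0}\times\overline{\mathcal{U}_0}$, the limiting case $\ve=0$ being precisely the homothetic problem analysed in Lemma~\ref{lem:var_ext}. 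Consequently there is a modulus $\omega(\ve)\to 0$ with $|v_0(\ve,\cdot)-v_0(0,\cdot)|\le\omega(\ve)$ uniformly, and hence
\[
\langle v_0(\ve,\bar p_0,\bar p_1),\psi\rangle\ge C\,|\phi_0|-\omega(\ve).
\]

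It remains to absorb the error $\omega(\ve)$. In the boundary exclusion application $\bar p_0$ lies at a definite angular distance from $\xi_0^*$, namely $|\phi_0|$ equals the fixed half-width $\delta_0$ of $\mathcal{U}_0$, which was selected independently of $\ve$; choosing $\bar\ve_{ext}$ so small that $\omega(\ve)\le\tfrac12 C\delta_0$ for every $\ve\in(0,\bar\ve_{ext})$ then gives $\langle v_0(\ve,\bar p_0,\bar p_1),\psi\rangle\ge\tfrac12 C\delta_0$, that is, the assertion with $C_{ext}=\tfrac{C\delta_0}{2\sqrt2}>0$, uniform in $\ve$. I expect the main obstacle to be exactly this uniformity: the lower bound of Lemma~\ref{lem:var_ext} scales linearly in $|\phi_0|$ while the perturbative error is additive, so the estimate survives only because $|\phi_0|$ is bounded below at the boundary and because $C$ is $\ve$-independent by Remark~\ref{rem:uniformity}. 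Secondary care goes into the reduction securing $|\phi_1|\le|\phi_0|$ and into checking that the $\mathcal{C}^1$-dependence on $\ve$ genuinely extends down to $\ve=0$, which is the perturbative picture recorded earlier in this subsection.
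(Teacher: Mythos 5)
Your proposal is correct and takes essentially the same route as the paper: the paper's (very terse) proof of Theorem \ref{thm:var_ext} consists precisely in combining Lemma \ref{lem:var_ext} at $\ve=0$, the $\ve$-independence of the constant $C=(1-\gamma)|v_{\xi^*}|$ stressed in Remark \ref{rem:uniformity}, and the uniform $\mathcal{C}^1$-dependence of the outer arc and its initial velocity on $(\ve,p_0,p_1)$ down to $\ve=0$ guaranteed by the shooting construction of Theorem \ref{thm:outer_dyn}. Your explicit choice $\psi=-\operatorname{sgn}(\phi_0)\,p_0(\phi_0)^\perp$, the absorption of the perturbative error using that $|\phi_0|$ is pinned at the fixed width of $\mathcal{U}_0$ in the boundary-exclusion application, and the check of $|\phi_1|\le|\phi_0|$ simply spell out the details the paper leaves implicit.
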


\subsubsection{An explicit variation on the internal path}

To conclude this section and to finally prove that a minimizer of $\mathbf{L}$ is actually an inner point of $\mathcal{S}$, we need another preliminary result. Indeed, it is necessary to give an estimate of the final velocity of the inner arc $y_{int}=y_{int}(p_{2n-1},p_0)$ defined in $[0,T_{0,2n-1}]$, with respect to the tangent space spanned by $p_0^\perp$. As for the external arc, we are going to provide a result for $\ve=0$ and then we will extend it for $\ve$ sufficiently small by uniformity. In order to do this, consider again the notation introduced before the Lemma \ref{lem:var_ext}. We prove the following result.

\begin{lemma}\label{lem:var_int}
	There exists $\delta=\delta(\vt^*),C=C(\vt^*)>0$ such that, for any $\phi\in\R$ verifying $0<|\phi|<\delta$ , the following holds
	\[
	\frac{\langle\dot{y}_{int}(T_{0,2n-1}),p_0(\phi)^\perp\rangle}{\phi}\geq C,
	\]
	where $p_0(\phi)$ follows the notations in \eqref{eq:p_phi} at page \pageref{eq:p_phi}.
\end{lemma}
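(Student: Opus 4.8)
The plan is to mirror the strategy of Lemma \ref{lem:var_ext}, replacing the outer homothetic trajectory with the inner collision one and varying only the final endpoint $p_0$. First I would reduce to $\ve=0$ exactly as in the passage from Lemma \ref{lem:var_ext} to Theorem \ref{thm:var_ext}: by Proposition \ref{prop:pert} the potential $\Veps$ converges to $\Wzero$ away from the origin, and by Theorem \ref{thm:cf} (together with the uniform bound of Lemma \ref{lem:uniform_curved}) every minimizing inner arc stays at distance $\geq\delta$ from the centres. Hence, near the endpoint $p_0\in\partial B_R$, which is far from the singularity, the arc $y_{int}$ converges in $\mathcal{C}^1$ to the homothetic collision trajectory of the $\al$-Kepler problem driven by $\Wzero$ emanating from $\xi^*=Re^{i\vt^*}$, as established in the proof of Theorem \ref{thm:R}. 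It therefore suffices to prove the estimate at $\ve=0$ with a constant $C=C(\vt^*)$ depending only on $\vt^*$, and then to invoke the uniform behaviour of the dynamical system for small $\ve$ as in Remark \ref{rem:uniformity}.

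At $\ve=0$ I would exploit time-reversibility: the final velocity $\dot{y}_{int}(T_{0,2n-1})$ at $p_0$ equals the opposite of the initial velocity of the collision arc leaving $p_0$ and running inward to the origin. Writing this arc as $\homy{t}{\xi^*}$ with $u(t)\uguale|\homy{t}{\xi^*}|$ decreasing from $R$ to $0$ — the monotonicity being guaranteed by the Lagrange--Jacobi inequality \cite[Lemma 4.3]{BCTmin} — I would introduce the variational matrix $M(t)=\partial\,\homy{t}{\xi^*}/\partial p_0$, solving $\ddot{M}=\nabla^2\Wzero(\homy{t}{\xi^*})M$ with $M=I_2$ at the endpoint on $\partial B_R$ and $M=0_2$ at the collision, the collision point being pinned at the origin for every nearby endpoint thanks to the confinement of the foliation in Lemma \ref{lem:BCTmin}. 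Projecting onto the tangential direction $s_\tau$ exactly as in Lemma \ref{lem:Wwv} and using Remark \ref{rem:hess}, the scalar $w(t)\uguale\langle M(t)s_\tau,s_\tau\rangle$ solves $\ddot{w}+c(t)w=0$ with the very same coefficient $c(t)=|\homy{t}{\xi^*}|^{-\al-2}(\al U(\vt^*)-U''(\vt^*))$ appearing in \eqref{pb:1dim}, the radial component vanishing again because $\la_\xi>0$. Expanding $\langle\dot{y}_{int}(T_{0,2n-1}),p_0(\phi)^\perp\rangle$ to first order in $\phi$ as in Lemma \ref{lem:var_ext} (using \eqref{eq:p_phi} and Corollary \ref{coro:Wwv}) reduces the claim to showing that the Wronskian-type quantity $W(t)\uguale u(t)\dot{w}(t)-\dot{u}(t)w(t)$, which equals $R\dot{w}(0)+|v_{\xi^*}|$ at the endpoint, is negative there with a quantitative gap.

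For the sign I would reuse the comparison device of Lemma \ref{lem:var_ext}. Setting $f=w/u$ and using that $u$ solves the $\al$-Kepler equation with coefficient $d(t)=\al U(\vt^*)u^{-\al-2}>c(t)$ — strict since $U''(\vt^*)>0$ — the same contradiction argument shows $f>0$, hence $w>0$, on the open interval; then $\frac{d}{dt}W=(d-c)uw\geq 0$, so $W$ is monotone, and integrating from the endpoint up to the collision fixes the sign of $W$ at $p_0$. To upgrade this to a uniform constant I would repeat the argument with $u_\gamma=u^\gamma$ for a suitable $\gamma\in(0,1)$ chosen so that $d_\gamma\geq c$ — possible precisely because $U''(\vt^*)>0$ and $U(\vt^*)>0$ — obtaining a constant of the form $C=(1-\gamma)|v_{\xi^*}|$, which depends only on $\vt^*$ and hence survives the limit back to small $\ve$.

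The hard part, and the genuinely new difficulty relative to Lemma \ref{lem:var_ext}, is the collision endpoint: the scalar equation is singular there because $c(t)\to+\infty$ as $u\to 0$, so I must control the boundary behaviour of $w$, $\dot{w}$ and of the Wronskian $W$ at the collision. I expect to handle this through the self-similar collision asymptotics of the $\al$-homogeneous problem, which render $c(t)$ asymptotically of Euler type, combined with the non-degeneracy $U''(\vt^*)>0$: this selects the decaying tangential Jacobi field, forcing $w\to 0$ and $W\to 0$ at the collision, while the confinement of the minimal collision arcs to a cone (Lemma \ref{lem:BCTmin}) is what rules out the competing growing solution and pins down the sign. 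Once the limit $W\to 0$ at the collision is secured, the monotonicity of $W$ forces $W<0$ at the endpoint, and the $u_\gamma$-refinement delivers the uniform constant $C$, completing the estimate.
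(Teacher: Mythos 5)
Your overall scheme (reduce to $\ve=0$, time-reverse, expand to first order so that the claim becomes a negative upper bound, with a gap, for $W(0)=R\dot w(0)+|v_{\xi^*}|$) is a genuinely different route from the paper's: the paper never linearizes along the collision arc, but instead passes to McGehee coordinates $(r,\vt,\vp)$, identifies $(0,\vt^*,\vt^*+\pi)$ as a hyperbolic equilibrium whose two-dimensional stable manifold carries all the minimal collision arcs, and reads the constant $C=-c\la^-/(2U(\vt^*))$ off the slope of the eigenvector $v^-$, the required regularity ($\vp(0)$ a $\mathcal{C}^2$ function of $\vt(0)$) coming from \cite[Lemma 3.2]{BCTmin}. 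That regularity is exactly what your proposal is missing. Your central boundary condition at the singular endpoint --- $M\to 0_2$, equivalently ``$w\to0$ and $W\to0$ at collision'' --- is not a consequence of anything you cite. Lemma \ref{lem:BCTmin} gives existence, uniqueness and cone confinement of the minimal collision arcs; it gives neither differentiability of the family with respect to $p_0$ nor any control of that derivative near the collision (uniform bounds on a family do not bound its parameter-derivative), and the naive ``pinning'' chain rule fails because the collision time varies with $p_0$ while $|\dot y|\to\infty$ there. Moreover the selection issue is subtler than ``decay'': near collision $c(t)\sim K(T_{coll}-t)^{-2}$ is of Euler type with indicial roots $\rho_\pm=\tfrac12\pm\tfrac{1}{2(\al+2)}\sqrt{(2-\al)^2+8U''(\vt^*)/U(\vt^*)}$, and whenever $U''(\vt^*)<\al U(\vt^*)$ both roots are positive, so \emph{every} tangential Jacobi field satisfies $w\to0$; what pins down the field generated by the family is the finer rate $w/u\to0$ (equivalently $W\to0$), which holds only along the $\rho_+$ branch (one checks $\rho_+>2/(\al+2)$ iff $U''(\vt^*)>0$). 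Proving that the endpoint-derivative of the family of minimal arcs exists and follows precisely that branch \emph{is} the smooth-stable-manifold statement in McGehee variables: your argument does not avoid the paper's machinery, it silently assumes its conclusion.

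There is also a concrete error in your quantitative step: it points the wrong way. With $\gamma\in(0,1)$ and $d_\gamma\geq c$, and even granting $W_\gamma\to0$ at the collision, monotonicity gives $W_\gamma(0)\leq0$, i.e. $R\dot w(0)\leq-\gamma|v_{\xi^*}|$, hence only $W(0)\leq(1-\gamma)|v_{\xi^*}|$ --- an upper bound by a \emph{positive} number, weaker than the qualitative $W(0)<0$ you already had and useless for the lemma. The trick helps in Lemma \ref{lem:var_ext} because there the target is a lower bound; here you need the comparison tilted the other way, e.g. $\gamma>1$ close to $1$: then $d_\gamma-c\geq u^{-\al-2}\left[(\gamma-1)\al U(\vt^*)+U''(\vt^*)-2\gamma(\gamma-1)U(\vt^*)\right]\geq0$ by the energy bound $\dot u^2\leq 2U(\vt^*)u^{-\al}$, still $W_\gamma\to0$, and one gets $W(0)\leq-(\gamma-1)|v_{\xi^*}|<0$. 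Alternatively, integrating $W'=(d-c)uw$ up to the collision gives $W(0)=-U''(\vt^*)\int_0^{T_{coll}}u^{-\al}(w/u)\,dt<0$, a quantity depending only on $\vt^*$. So the second defect is repairable; the first is the real gap, and closing it essentially amounts to redoing the McGehee/stable-manifold analysis that the paper's proof is built on.
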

\begin{proof}
	We have put $\ve=0$, therefore we are now studying an anisotropic Kepler problem driven by $\Wzero$ (see Proposition \ref{prop:scaling}) and, in particular, $y_{int}$ in this setting is exactly one of the collision trajectories studied in \cite{BCTmin}. Actually, $y_{int}$ is a trajectory which emanates from collision; therefore, thanks to the time reversibility, we can consider $w(t)\uguale y_{int}(T_{0,2n-1}-t)$ which is defined again in $[0,T_{2n-1}]$, starts from $\partial B_R$ and finishes in collision with the origin (see also Figure \ref{fig:var_int}). For a vector $y\in\R^2$ we will denote by $\widehat{y}$ its angle with the horizontal axis with respect to the canonical basis of $\R^2$. As a starting point, without loss of generality we assume $\phi>0$ (as in the proof of Lemma \ref{lem:var_ext}) and we note that 
	\begin{equation}\label{eq:angoli}
		\begin{aligned}
			\langle\dot{y}_{int}(T_{0,2n-1}),p_0(\phi)^\perp\rangle&=\langle-\dot{w}(0),p_0(\phi)^\perp\rangle \\
			&=|\dot{w}(0)||p_0(\phi)^\perp|\cos\left(\widehat{\dot{w}(0)}-\left(\vt_0^*+\phi-\frac{\pi}{2}\right)\right) \\
			&=|\dot{w}(0)||p_0(\phi)^\perp|\sin\left(\widehat{\dot{w}(0)}-\left(\vt_0^*+\phi\right)-\pi\right)
		\end{aligned}
	\end{equation}
	and so, since $|\dot{w}(0)|$ and $|-p_0(\phi)^\perp|$ are far from $0$ when $\phi\to 0^+$, our proof reduces to study the behaviour of the angles (cf Figure \ref{fig:var_int}). Actually, it is clear that the angle $\widehat{\dot{w}(0)}$ depends on $\phi$ and, in particular
	\[
	\widehat{\dot{w}(0)}(\phi)\to\vt^*+\pi\quad\mbox{as}\ |\phi|\to 0^+,
	\]
	since the inner arc tends to the collision homothetic motion when $|\phi|\to 0^+$. This suggests to follow the approach of \cite{BCTmin} and to take into account McGehee coordinates. The change of variables, which of course again depends on $\phi$, with respect to the trajectory $w$ then reads    
	\[
	\begin{cases}
		r(t)=|w(t)| \\
		\vt(t)=\widehat{w(t)} \\
		\vp(t)=\widehat{\dot{w}(t)}
	\end{cases}
	\quad\mbox{with}\quad\begin{cases}
		r(0)=|w(0)|=R \\
		\vt(0)=\widehat{w(0)}=\vt^*+\phi \\
		\vp(0)=\widehat{\dot{w}(0)}
	\end{cases}.
	\]
	On the other hand, since $w$ is a collision solution at energy $-1$ of the anisotropic Kepler problem driven by $\Wzero(w)=r^{-\al}U(\vt)$, following Section 2 of \cite{BCTmin} $(r,\vt,\vp)$, after a time rescaling, solves
	\begin{equation}\label{eq:mcg}
		\begin{cases}
			r'=2r(U(\vt)-r^\al)\cos(\vp-\vt) \\
			\vt'=2(U(\vt)-r^\al)\sin(\vp-\vt) \\
			\vp'=U'(\vt)\cos(\vp-\vt)+\al U(\vt)\sin(\vp-\vt).
		\end{cases}
	\end{equation}
	Following again Section 2 of \cite{BCTmin}, since $\vt^*$ is such that $U'(\vt^*)=0$ and $U''(\vt^*)$, then the triplet $(0,\vt^*,\vt^*+\pi)$ is a hyperbolic equilibrium point for \eqref{eq:mcg} such that:
	\begin{itemize}
		\item its stable manifold is two-dimensional;
		\item the two eigendirections that span its stable manifold are 
		\[
		\begin{aligned}
			v_r&=(1,0,0) \\
			v^-&=\left(0,1,\frac12+\frac{\al}{4}+\frac14\sqrt{(2-\al)^2+8\frac{U''(\vt^*)}{U(\vt^*)}}\right)
		\end{aligned}
		\]
		and the corresponding eigenvalues are
		\[
		\begin{aligned}
			\la_r&=-2U(\vt^*)<0 \\
			\la^-&=\frac{2-\al}{2}U(\vt^*)-\frac12\sqrt{(2-\al)^2U(\vt^*)^2+8U(\vt^*)U''(\vt^*)}<0.
		\end{aligned}
		\]    	
	\end{itemize}
	Note that the third component of $v^-$ is greater than 1. Moreover, by the main result (Theorem 5.2) in \cite{BCTmin}, we have that $w=(r,\vt,\vp)$ belongs to the stable manifold of $(0,\vt^*,\vt^*+\pi)$ and in particular  $\vp(0)$ is a $\mathcal{C}^2$ function of $\vt(0)=\vt^*+\phi$. This is one of consequences of Lemma 3.2 in \cite{BCTmin}, together with the fact that, when $\vt(0)\to\vt^*$ in some way, the growth ratio of $\vp=\vp(0)$ as a function of $\vt=\vt(0)$ tends to the slope of $v^-$ projected on $(\vt,\vp(\vt))$. In other words, recalling that $\vp(\vt^*)=\vt^*+\pi$, we have that
	\[
	\frac{\vp(\vt)-(\vt^*+\pi)}{\vt-\vt^*}\to 1-\frac{\la^-}{2U(\vt^*)},\quad\mbox{as}\ \vt\to\vt^*
	\]
	and thus
	\[
	\frac{\vp(\vt)-(\vt+\pi)}{\vt-\vt^*}\to-\frac{\la^-}{2U(\vt^*)}>0,\quad\mbox{as}\ \vt\to\vt^*.
	\]
	Now, from \eqref{eq:angoli} and the above limiting behaviour, since $\vt\to\vt^*$ as $\phi\to 0^+$, we have that 
	\[
	\begin{aligned}
		\frac{\langle\dot{y}_{int}(T_{0,2n-1}),p_0(\phi)^\perp\rangle}{\phi}&=|\dot{w}(0)||p_0(\phi)^\perp|\frac{\sin\left(\widehat{\dot{w}(0)}-(\phi+\vt^*)-\pi\right)}{\phi} \\
		&=R|\dot{w}(0)|\frac{\sin\left(\vp(\vt)-(\vt+\pi)\right)}{\vt-\vt^*} \\
		&\sim R|\dot{w}(0)|\frac{\vp(\vt)-(\vt+\pi)}{\vt-\vt^*}.
	\end{aligned}
	\]
	To conclude the proof, note that $\dot{w}(0)$ is uniformly bounded in $\phi$ by a constant $c>0$ which depends on the initial velocity of the homothetic collision motion. Therefore, the proof is concluded choosing the constant
	\[
	C=C(\vt^*)\uguale-\frac{c\la^-}{2U(\vt^*)}>0.\qedhere
	\]
\end{proof}
As we have said, it is possible to extend the previous result for $\ve$ sufficiently small. Indeed, from Proposition \ref{prop:scaling} we know that the potential $\Veps$ converges uniformly to $\Wzero$ on every compact set of $\R^2\setminus\{0\}$; moreover, we have already seen in Lemma \ref{lem:uniform} and Theorem \ref{thm:R} that when $\ve\to 0^+$, a sequence of minimizers of the Maupertuis' functional $\mathcal{M}$ converges uniformly to a minimizer of the Maupertuis' functional 
\[
\mathcal{M}_0(u)\uguale\frac12\int_0^1|\dot{u}|^2\int_0^1\left(-1+\Wzero(u)\right).
\]
As for Theorem \ref{thm:var_ext}, this is enough for the proof of the next result.
\begin{theorem}\label{thm:var_int}
	There exists $\bar{\ve}_{int}>0$ such that, for any $\ve\in(0,\bar{\ve}_{int})$, if \[
	\mathbf{\bar{p}}=(\bar{p}_0,\bar{p}_1,\ldots,\bar{p}_{2n-1})\in\mathcal{S}
	\]
	is a minimizer of $\mathbf{L}$ provided in Lemma \ref{lem:glueing_exist}, then there exists $\psi\in\mathcal{T}_{p_0}(\partial B_R)$ and there exists $C_{int}>0$ such that
	\[
	\frac{\partial\mathcal{L}_{int}}{\partial p_0}(\bar{p}_{2n-1},\bar{p}_0)[\psi]=\frac{1}{\sqrt{2}}\langle\dot{y}_{int}(T_{\ve,2n-1}),\psi\rangle\leq -C_{int}<0.
	\]
\end{theorem}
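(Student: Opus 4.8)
The plan is to mirror the proof of Theorem~\ref{thm:var_ext}: I would transfer the quantitative estimate of Lemma~\ref{lem:var_int}, established at $\ve=0$ for the anisotropic Kepler problem driven by $\Wzero$, to the perturbed $N$-centre problem driven by $\Veps$ for $\ve$ small, exploiting once more the perturbative nature of the dynamics. By Lemma~\ref{lem:diff_L_int} the quantity to be controlled is
\[
\frac{\partial\mathcal{L}_{int}}{\partial p_0}(p_{2n-1},p_0)[\psi]=\frac{1}{\sqrt{2}}\langle\dot{y}_{int}(T_{\ve,2n-1}),\psi\rangle ,
\]
so the whole statement reduces to exhibiting a tangent direction $\psi\in\mathcal{T}_{p_0}(\partial B_R)$ along which the arrival velocity $\dot{y}_{int}(T_{\ve,2n-1})$ of the inner arc has a uniformly negative component.

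First I would treat the unperturbed case $\ve=0$. Writing $\bar{p}_0=p_0(\phi_0)$ with $\phi_0\neq0$ the angular coordinate of $\bar p_0$ relative to $\xi^*$ (this is the relevant situation, since the aim of the subsection is to exclude minimizers whose junction points drift away from the central configurations), I would choose $\psi$ to be the tangent vector at $\bar p_0$ pointing back towards $\xi^*$, namely $\psi=-\operatorname{sign}(\phi_0)\,p_0(\phi_0)^{\perp}$. Lemma~\ref{lem:var_int} then yields
\[
\operatorname{sign}(\phi_0)\,\langle\dot{y}_{int}(T_{0,2n-1}),p_0(\phi_0)^{\perp}\rangle\ge C\,|\phi_0|>0 ,
\]
with $C=C(\vt^*)>0$ depending only on the central configuration, whence
\[
\frac{1}{\sqrt{2}}\langle\dot{y}_{int}(T_{0,2n-1}),\psi\rangle\le-\frac{C\,|\phi_0|}{\sqrt{2}}<0 .
\]
When $\bar{\mathbf{p}}\in\partial\mathcal{S}$ the point $\bar p_0$ lies on $\partial\overline{\mathcal{U}_0}$, so $|\phi_0|$ equals the fixed angular radius of the neighbourhood $\mathcal{U}_0$; hence the $\ve=0$ derivative is bounded above by a strictly negative constant depending only on $\vt^*$ and on $\mathcal{U}_0$, exactly as in Remark~\ref{rem:uniformity}.

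The last and decisive step would be to promote this strict inequality to $\ve>0$. Here I would invoke the perturbative machinery already used in Section~\ref{sec:inner}: by Proposition~\ref{prop:pert} the potential $\Veps$ converges to $\Wzero$ uniformly on compact subsets of $\R^2\setminus\{0\}$, and by the blow-up/compactness arguments of Lemma~\ref{lem:uniform} and Theorem~\ref{thm:R} the minimizing inner arcs $y_{int}(\cdot;p_{2n-1},p_0;\ve)$ converge, as $\ve\to0^+$, to the corresponding collision trajectory of the $\Wzero$-Kepler problem, uniformly and in $\mathcal{C}^1$ away from the origin, uniformly with respect to the endpoints in the neighbourhoods. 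Since the arrival point $p_0\in\partial B_R$ lies far from the singularity and the arrival time $T_{\ve,2n-1}$ stays in a fixed compact interval by Lemma~\ref{lem:bound_int}, the arrival velocities converge, $\dot{y}_{int}(T_{\ve,2n-1})\to\dot{y}_{int}(T_{0,2n-1})$; thus the component $\langle\dot{y}_{int}(T_{\ve,2n-1}),\psi\rangle$ differs from its $\ve=0$ value by $o(1)$ as $\ve\to0^+$. Choosing $\bar\ve_{int}$ so small that this error is at most half of the $\ve=0$ bound and setting $C_{int}\uguale C\,|\phi_0|/(2\sqrt2)$ would complete the argument.

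The hard part will be precisely this last transfer, i.e.\ the uniform $\mathcal{C}^1$-convergence of the arrival velocities as $\ve\to0^+$. The subtlety is that the inner arcs are collision-free for $\ve>0$ but degenerate onto a genuine collision arc in the limit; one must therefore argue that the convergence is $\mathcal{C}^1$ on compact subsets away from the (interior) collision instant, which sits far from $\partial B_R$. This is exactly the regularity encoded in the bootstrap-plus-energy-conservation argument already carried out at the end of the proof of Theorem~\ref{thm:R}, combined with the uniform time bounds of Lemma~\ref{lem:bound_int}. Ensuring the convergence, and hence the threshold $\bar\ve_{int}$, is uniform over the endpoints ranging in the neighbourhoods --- so that it does not depend on the particular minimizer $\bar{\mathbf{p}}$ --- is the genuine technical point, entirely parallel to the uniformity discussion behind Theorem~\ref{thm:var_ext}.
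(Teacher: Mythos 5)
Your proposal is correct and follows essentially the same route as the paper: the paper's own proof of this theorem consists precisely of the $\ve=0$ estimate of Lemma \ref{lem:var_int} (with the tangent direction $\psi$ chosen to point from $\bar{p}_0$ back towards $\xi^*$, exactly as you do) combined with the perturbative transfer to small $\ve>0$ via the uniform convergence $\Veps\to\Wzero$ and the compactness of sequences of Maupertuis minimizers from Lemma \ref{lem:uniform} and Theorem \ref{thm:R}. Your expansion of the final step --- the $\mathcal{C}^1$-convergence of arrival velocities away from the collision instant, uniformly in the endpoints, using Lemma \ref{lem:bound_int} and the bootstrap argument of Theorem \ref{thm:R} --- is exactly the technical content the paper leaves implicit.
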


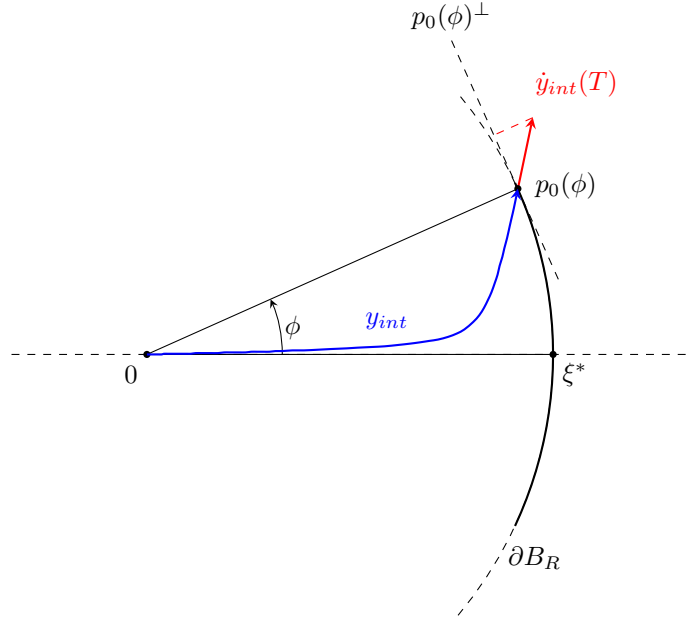
\begin{figure}
	\centering
	\begin{tikzpicture}[scale=0.9]
		\coordinate (O) at (0,0);
		\coordinate (x0) at (6,0);
		\coordinate (p0) at (5.48,2.45);
		
		% segmenti
		\draw [dashed] (-2,0)--(8,0);
		\draw [dashed] (4.5,4.64)--(6.1,1.06);
		\draw  (O)--(x0);
		\draw  (O)--(p0);	
		\draw [dashed,red] (5.7,3.5)--(5.12,3.24);
		
		% vettori
		\draw [thick,-stealth,red] (p0)--(5.7,3.5); 
		
		% archi 
		\draw [thick,domain=-25:25] plot ({6*cos(\x)}, {6*sin(\x)});
		\draw [dashed,domain=-40:40] plot ({6*cos(\x)}, {6*sin(\x)});
		\draw [-stealth](2,0) arc (0:24:2); 	
		
		% punti
		\fill (O) circle[radius=1.5pt];
		\fill (x) circle[radius=1.5pt];
		\fill (p0) circle[radius=1.5pt];
		
		% curva
		\draw[thick,blue,-stealth] plot [smooth] coordinates {(0,0) (0.06,0) (0.09,0) (0.12,0) (0.14,0) (0.17,0) (0.22,0) (0.25,0.01) (0.31,0.01) (0.34,0.01) (0.39,0.01) (0.42,0.01) (0.46,0.01) (0.49,0.01) (0.53,0.01) (0.55,0.01) (0.6,0.01) (0.65,0.01) (0.72,0.02) (0.81,0.02) (0.89,0.02) (0.94,0.02) (1.05,0.02) (1.14,0.03) (1.22,0.03) (1.3,0.03) (1.36,0.03) (1.46,0.03) (1.54,0.04) (1.63,0.04) (1.72,0.04) (1.81,0.04) (1.9,0.05) (2,0.05) (2.15,0.06) (2.32,0.06) (2.53,0.07) (2.72,0.08) (2.95,0.09) (3.11,0.1) (3.32,0.11) (3.51,0.12) (3.71,0.14) (3.89,0.16) (4.04,0.18) (4.17,0.2) (4.3,0.22) (4.45,0.26) (4.58,0.31) (4.69,0.37)	(4.82,0.48) (4.91,0.58) (4.98,0.69) (5.04,0.81) (5.09,0.92) (5.14,1.06) (5.18,1.18) (5.21,1.31) (5.25,1.44) (5.28,1.57) (5.32,1.72) (5.36,1.9) (5.39,2.03) (5.42,2.16) (5.48,2.45)};
		
		% scritte
		\node[right] at (6,-0.3) {$\xi^*$};
		\node[right,red] at (5.6,4) {$\dot{y}_{int}(T)$};
		\node[right] at (5.2,-3) {$\partial B_R$};
		\node[left] at (0,-0.3) {$0$};
		\node[right] at (5.6,2.5) {$p_0(\phi)$};
		\node[right] at (1.9,0.4) {$\phi$};
		\node at (4.5,5) {$p_0(\phi)^{\perp}$};
		\node[left,blue] at (4,0.5) {$y_{int}$};
		
	\end{tikzpicture}
	\caption{Situation of Lemma \ref{lem:var_int}: the behaviour of the final velocity of an internal arc.}
	\label{fig:var_int}
\end{figure}

\subsubsection{Proof of (i) of Theorem \ref{thm:glueing}}

Define $\bar{\ve}\uguale\min\{\bar{\ve}_{ext},\bar{\ve}_{int}\}$, with $\bar{\ve}_{ext}$ and $\bar{\ve}_{int}$ introduced respectively in Theorem \ref{thm:var_ext} and Theorem \ref{thm:var_int}, and take $\ve\in(0,\bar{\ve})$. Assume by contradiction that the minimizer $\mathbf{\bar{p}}=(\bar{p}_0\,\bar{p}_1,\ldots,\bar{p}_{2n})$ of $\mathbf{L}$ provided in Lemma \ref{lem:glueing_exist} belongs to the boundary $\partial S$. To accomplish this absurd hypothesis it is not restrictive to assume that $\bar{p}_0\in\partial\mathcal{U}_0$ and thus to produce a variation on $\bar{p}_0$ such that the total length $\mathbf{L}$ decreases along this variation. This would lead to a contradiction and would conclude the proof. 

As a consequence of Theorem \ref{thm:var_ext} and Theorem \ref{thm:var_int} there exist a variation $\psi\in\mathcal{T}_{p_0}(\partial B_R)$ and a constant $C>0$ such that
\[
\begin{aligned}
	\frac{\partial\mathbf{L}}{\partial p_0}(\bar{p}_0,\bar{p}_1,\ldots,\bar{p}_{2n-1})[\psi]&=\frac{\partial\mathcal{L}_{ext}}{\partial p_0}(\bar{p}_0,\bar{p}_1)[\psi]+\frac{\partial\mathcal{L}_{int}}{\partial p_0}(\bar{p}_{2n-1},\bar{p}_0)[\psi] \\
	&=-\frac{1}{\sqrt{2}}\langle\dot{y}_{ext}(0),\psi\rangle+\frac{1}{\sqrt{2}}\langle\dot{y}_{int}(T_{\ve,2n-1}),\psi\rangle \\
	&\leq -2C<0.	
\end{aligned}
\]
Therefore, the minimality of $\mathbf{\bar{p}}$ is in contradiction with the above inequality and thus $\mathbf{\bar{p}}$ is necessarily an inner point of $\mathcal{S}$.

\subsection{Smoothness of the minimizers and existence of the corresponding periodic solutions: proof of (ii) of Theorem \ref{thm:glueing} and of Theorem \ref{th:main1}}

We conclude this section with the proof of the smoothness of the trajectory $\gamma_{\ve,\mathbf{\bar{p}}}$ and thus we provide the existence of a periodic solution of the anisotropic $N$-centre problem driven by $\Veps$.  

\begin{proof}[Proof of $(ii)$ of Theorem \ref{thm:glueing}]
	Since point (i) of Theorem \ref{thm:glueing} has been proved in the previous paragraph, for $\ve\in(0,\bar{\ve})$ we can consider a minimizer $\mathbf{\bar{p}}=(\bar{p}_0,\bar{p}_1,\ldots,\bar{p}_{2n})\in\mathring{\mathcal{S}}$ for $\mathbf{L}$ and now we know that 
	\[
	\frac{\partial\mathbf{L}}{\partial p_k}(\mathbf{\bar{p}})=0,\quad\mbox{for every}\ k=0,\ldots,2n.
	\] 
	Again, we can assume $k=0$ and thus, Lemma \ref{lem:diff_L_ext} and Lemma \ref{lem:diff_L_int} give that, for every $\psi\in\mathcal{T}_{p_0}(\partial B_R)$ we have
	\[
	\frac{\partial\mathbf{L}}{\partial p_0}(\mathbf{\bar{p}})[\psi]=\frac{\partial\mathcal{L}_{int}}{\partial p_0}(\bar{p}_{2n-1},\bar{p}_0)[\psi]+\frac{\partial\mathcal{L}_{ext}}{\partial p_0}(\bar{p}_0,\bar{p}_1)[\psi]=\frac{1}{\sqrt{2}}\langle\dot{y}_{int}(T_{2n-1})-\dot{y}_{ext}(0),\psi\rangle=0.
	\]
	The tangent space $\mathcal{T}_{p_0}(\partial B_R)$ is one-dimensional and it is spanned by a unit vector $\nu$ which is orthogonal to $p_0$. Therefore, if we denote by  $(\widehat{y,\nu})$ the angle included between the two vectors, we can deduce that 
	\[
	|\dot{y}_{int}(T_{\ve,2n-1})|\cos(\widehat{\dot{y}_{int}(T_{\ve,2n-1}),\nu})=|\dot{y}_{ext}(0)|\cos(\widehat{\dot{y}_{ext}(0),\nu})
	\]
	and the conservation of the energy both for $y_{int}$ and $y_{ext}$ at the point $\bar{p}_0$ leads to the equality
	\begin{equation}\label{eq:glueing_energy}
		|\dot{y}_{int}(T_{\ve,2n-1})|=|\dot{y}_{ext}(0)|.
	\end{equation}
	This implies that
	\[
	\cos(\widehat{\dot{y}_{int}(T_{\ve,2n-1}),\nu})=\cos(\widehat{\dot{y}_{ext}(0),\nu})
	\]
	and thus, since $\dot{y}_{int}(T_{\ve,2n-1})$ and $\dot{y}_{ext}(0)$ point outside $B_R$, we obtain that
	\[
	(\widehat{\dot{y}_{int}(T_{\ve,2n-1}),\nu})=(\widehat{\dot{y}_{ext}(0),\nu}).
	\]
	This, together with \eqref{eq:glueing_energy}, shows that $\dot{y}_{int}(T_{\ve,2n-1})=\dot{y}_{ext}(0)$ and thus $\gamma_{\ve,\mathbf{\bar{p}}}$ is $\mathcal{C}^1([0,\mathbf{T}_\ve])$.
	
	At this point, we have shown that if $\mathbf{\bar{p}}\in\mathring{\mathcal{S}}$ is a minimizer of $\mathbf{L}$, then the corresponding periodic trajectory $\gamma_{\ve,\mathbf{\bar{p}}}(t)$ is a classical solution of the $N$-centre problem \eqref{pb:gamma_N} at energy -1 when $t\in[0,\mathbf{T}_{\ve}]\setminus\{0,T_{\ve,0},\ldots,T_{\ve,2n-1}\}$ and it is a $\mathcal{C}^1$ function in $[0,\mathbf{T}_\ve]$. Since the junctions on $\partial B_R$ are smooth, we can extend the trajectory $\gamma_{\ve,\mathbf{\bar{p}}}$ by $\mathbf{T}_\ve$-periodicity to all $\R$. To conclude the proof of Theorem \ref{thm:glueing} we need to show that $\gamma_{\ve,\mathbf{\bar{p}}}$ is $\mathcal{C}^2(\R)$. In order to do that, let us consider again the solution arcs $y_{int}$ and $y_{ext}$ which glue on the point $\bar{p}_0$ (the same argument applies for all the other building blocks) and let us compute
	\[
	\begin{aligned}
		\lim\limits_{t\to T_{\ve,2n-1}^-}\ddot{\gamma}_{\ve,\mathbf{\bar{p}}}(t)&=\lim\limits_{t\to T_{\ve,2n-1}^-}\ddot{y}_{int}(t)=\lim\limits_{t\to T_{\ve,2n-1}^-}\nabla\Veps(y_{int}(t))\\
		&=\lim\limits_{t\to 0^+}\nabla\Veps(y_{ext}(t))=\lim\limits_{t\to 0^+}\ddot{y}_{ext}(t) \\ &=\lim\limits_{t\to 0}\ddot{\gamma}_{\ve,\mathbf{\bar{p}}}(t).
	\end{aligned}
	\] 
	This shows that $\gamma_{\ve,\mathbf{\bar{p}}}\in\mathcal{C}^2(\R)$ and concludes the proof of Theorem \ref{thm:glueing}.
\end{proof}

At this point, it remains to show that the existence of multiple periodic solutions holds also for the original $N$-centre problem, i.e., the problem
\begin{equation}\label{pb:N}
	\begin{cases}
		\ddot{x}=\nabla V(x) \\
		\frac12|\dot{x}|^2-V(x)=-h,
	\end{cases}
\end{equation}
where $V$ is the potential referred to the original centres $c_1,\ldots,c_N$ (see \eqref{def:potential} at page \eqref{def:potential}) and $h$ has to be chosen small enough. We recall that the multiplicity of periodic solutions for problem \eqref{pb:N} is determined both by a choice of a partition of the centres and by a minimal non-degenerate central configuration for $\Wzero$. As we have already discussed at page \pageref{intro:partitions}, we can describe all the possible behaviours of a periodic solution choosing a finite sequence of elements in the set
\[
\mathcal{Q}=\{Q_j:\,j=0,\ldots,m(2^{N-1}-1)-1\}.
\]
We need to link a sequence of $n$ elements in $\mathcal{Q}$ with a double sequence, composed by $n$ partitions and $n$ minimal non-degenerate central configurations of $\Wzero$. This can be done using Remark \ref{rem:division} at page \pageref{rem:division}, which yields the following correspondence
\[
(P_{l_0},\ldots,P_{l_{n-1}}),\,(\vt_{l_0}^*,\ldots,\vt_{l_{n-1}}^*) \longleftrightarrow (Q_{j_0},\ldots Q_{j_{n-1}})
\]
with $j_k=l_km+r_k$, for $l_k\in\{0,\ldots,2^{N-1}-2\}$, $r_k\in\{0,\ldots,m-1\}$, and thus $j_k\in\{0,\ldots,m(2^{N-1}-1)-1\}$. Finally, it is useful to characterize a solution provided in Theorem \ref{thm:glueing} with respect to its dependence on $P_{l_k}$ and $\vt_{r_k}$. Once $n\geq 1$ and $\ve\in(0,\bar\ve)$ are fixed, we have a periodic solution $\gamma_{\ve,\mathbf{\bar{p}}}$, with $\mathbf{\bar{p}}=(\bar{p}_0,\ldots,\bar{p}_{2n})\in\mathring{\mathcal{S}}$. Actually, it is clear from the discussion at page \pageref{eq:partition} that this solution depends on a choice of $n$ partitions and $n$ minimal non degenerate central configurations, i.e.,
\[
\gamma_{\ve;\mathbf{\bar{p}}}=\gamma(\ve;P_{l_0},\ldots,P_{l_{n-1}};\vt_{r_0}^*,\ldots,\vt_{r_{n-1}}^*).
\]

\begin{proof}[Proof of Theorem \ref{th:main1}]\label{proof:per}
	First of all, from Proposition \ref{prop:scaling}, in order to obtain a solution of \eqref{pb:N} as a rescaling of a solution of the problem driven by $\Veps$ at energy $-1$, the energy $h$ has to be in $(0,\bar{h})$, with $\bar{h}\uguale\bar{\ve}^\al$ and $\bar{\ve}>0$ is the one defined in Theorem \ref{thm:glueing}. Moreover, when such $h$ is fixed, a unique $\ve=h^{1/\al}$ is determined such that $B_\ve$ contains all the scaled centres, as well as a ball $B_R$ which is included in the Hill's region of $\Veps$ and that allows to build periodic solutions for the $\ve$-problem (as in \eqref{eq:R} at page \pageref{eq:R}, replacing $\tilde{\ve}$ with $\bar{\ve}$).  To such $R$, via Proposition \ref{prop:scaling} at page \pageref{prop:scaling}, we can associate a radius $\bar{R}\uguale h^{-1/\al}R>0$ which plays the same role for problem \eqref{pb:N}. Therefore, again by Proposition \ref{prop:scaling} and Remark \ref{rem:division}, when $n\geq 1$ and $(Q_{j_0},\ldots,Q_{j_{n-1}})\in\mathcal{Q}^n$ are fixed, we can define the function $x=x(Q_{j_0},\ldots,Q_{j_{n-1}};h)$ as the rescaling via $h$ of the solution $\gamma(\ve;P_{l_0},\ldots,P_{l_{n-1}};\vt_{r_0}^*,\ldots,\vt_{r_{n-1}}^*)$, with the rule
	\[
	j_k=l_km+r_k,\ \mbox{for every}\ k=0,\ldots,n-1.
	\]
	This $x$ will be clearly a classical and periodic solution of problem \eqref{pb:N} that crosses $2n$-times the circle $\partial B_{\bar{R}}$ in chosen neighbourhoods of the points $\bar{R}e^{i\vt_{r_k}^*}$.
\end{proof}

\section{Construction of a symbolic dynamics}\label{sec:symb_dyn}

In this final section we apply the results obtained in the previous ones in order to prove the existence of a symbolic dynamics for the anisotropic $N$-centre problem. In particular, we will firstly see that the multiplicity of periodic trajectories found in Theorem \ref{th:main1} allows to prove the existence of a collision-less symbolic dynamics as stated in Theorem \ref{cor:symb}. Exploiting similar techniques, such result will be obtained both for collision-orbits (Theorem \ref{thm:symb_dyn}) and for the anisotropic 2-centre problem with a unique minimal non degenerate central configuration (Theorem \ref{thm:symb_dyn2}).

\subsection{Collision-less symbolic dynamics, proof of Theorem \ref{cor:symb}}\label{subsec:sym_dyn_nocoll}
	
To start with, we recall that $N$ is the number of the centres, while $m$ represents the number of minimal non-degenerate central configurations for the leading potential $\Wzero$ far from the singularity. We assume that $N\geq 3$ and $m\geq 1$ or, equivalently, that $N\geq 2$ and $m\geq 2$, and we require again hypotheses \eqref{hyp:V}-\eqref{hyp:V_al} on the potential $V$ (see Remark \ref{rem:division}). Moreover, we fix $h\in(0,\bar{h})$, where the threshold $\bar{h}$ has been determined in the previous section. By means of Theorem \ref{th:main1} we have that, for any $n\geq 1$ and for any finite sequence $(Q_{j_0},\ldots,Q_{j_{n-1}})\sset\mathcal{Q}^n$, there exists a classical periodic solution $x=x(Q_{j_0},\ldots,Q_{j_{n-1}};h)$ of the equation $\ddot{x}=\nabla V(x)$ at energy $-h$ and there exists $\bar{R}=\bar{R}(h)>0$ such that the solution $x$ crosses the circle $\partial B_{\bar{R}}$ $2n$-times in one period at the instants $(t_{k})_{k=0}^{2n-1}$. In particular, for any $k=0,\ldots,n-1$ there exists a neighbourhood $\mathcal{U}_{r_k}\uguale\mathcal{U}(\bar{R}e^{i\vt_{r_k}^*})$ on $\partial B_{\bar{R}}$ such that, if we define $x_{k}\uguale x(t_k)$, we have that
\begin{itemize}
	\item when $t\in(t_{2k},t_{2k+1})$ the solution stays outside $B_{\bar{R}}$ and
	\[
	x_{2k},\, x_{2k+1}\in\mathcal{U}_{r_k}
	\]
	\item in the interval $(t_{2k+1},t_{2k+2})$ (we clearly set $t_{2n}\uguale t_0$ to close the trajectory) the solution stays inside $B_{\bar{R}}$, it separates the centres according to the partition $P_{l_k}$ and
	\[
	x_{2k+2}\in\mathcal{U}_{r_{k+1}},
	\]
\end{itemize}
keeping in mind the correspondence
\[
Q_{j_k}\longleftrightarrow(P_{l_k},\vt_{r_k}^*),\ \mbox{with}\ j_k=l_km+r_k.
\]
We recall that this piecewise solution has been determined with several steps in the previous sections, working with a normalized version of the $N$-centre problem, driven by $\Veps$. In the same way, thanks to Theorem \ref{thm:outer_dyn}, Theorem \ref{thm:inner_dyn} and Proposition \ref{prop:scaling}, we can distinguish between the solution arcs outside and inside $B_{\bar{R}}$ in this way:
\begin{itemize}
	\item we denote by $x_{ext}(\cdot;x_{2k},x_{2k+1};h)$ the piece of outer solution which connects $x_{2k}$ and $x_{2k+1}$, defined on its re-parametrized interval $[0,T_{ext}(x_{2k},x_{2k+1};h)]$;
	\item we denote by $x_{P_{j_k}}(\cdot;x_{2k+1},x_{2k+2};h)$ the piece of inner solution which connects $x_{2k+1}$ and $x_{2k+2}$ and separates the centres with respect to the partition $P_{l_k}$, defined, up to reparametrizations, on the interval $[0,T_{P_{j_k}}(x_{2k+1},x_{2k+2};h)]$. 	
\end{itemize}
We recall that the inner arc for the $\ve$-problem has been determined as a re-parametrization of a minimizer of the Maupertuis' functional in Section \ref{sec:inner}. On the other hand, we know that the Maupertuis' principle (Theorem \ref{thm:maupertuis}) joins also a vice-versa, i.e., if we set $\omega(x_{2k+1},x_{2k+2};h)\uguale 1/T_{P_{j_k}}(x_{2k+1},x_{2k+2};h)$, the function
\[
v_{P_{j_k}}(t;x_{2k+1},x_{2k+2};h)\uguale x_{P_{j_k}}(t/\omega(x_{2k+1},x_{2k+2};h);x_{2k+2},x_{2k+2};h)
\]
will be a critical point of the Maupertuis' functional $\mathcal{M}_h$ at a positive level, in a suitable space. In particular, we can introduce the set of $H^1$-paths
\[
\hat{\mathcal{H}}_{x_{2k+1},x_{2k+2}}([0,1])\uguale\left\lbrace v\in H^1([0,1];\R^2)\ \middle\lvert\ \begin{aligned} 
&v(0)=x_{2k+1},\ v(1)=x_{2k+2}, \\ &v(t)\neq c_j\ \forall\,t\in[0,1]
\forall\,j
\end{aligned}\right\rbrace
\]
and its $H^1$-closure
\[
\mathcal{H}_{x_{2k+1},x_{2k+2}}([0,1])\uguale\{v\in H^1([0,1];\R^2):\,v(0)=x_{2k+1},\ v(1)=x_{2k+2}\}.
\]
Now, recalling that in the $\ve$-problem we have studied the existence of inner solutions inside $B_R$, with $R=h^{1/\al}\bar{R}$, for any $p_{2k+1},p_{2k+2}\in\partial B_R$ we can consider the points $x_{2k+1}=h^{-1/\al}p_{2k+1}$, $x_{2k+2}=h^{-1/\al}p_{2k+2}\in\partial B_{\bar{R}}$. Moreover, we recall the analogue of the space defined above in the $\ve$-context, i.e., the spaces $\hat{H}_{p_{2k+1},p_{2k+2}}([0,1])$ and $H_{p_{2k+1},p_{2k+2}}([0,1])$ introduced in Section \ref{sec:inner} and we consider the bijective map
\[
J\colon H_{p_{2k+1},p_{2k+2}}([0,1])\to\mathcal{H}_{x_{2k+1},x_{2k+2}}([0,1])
\] 
such that $J(u)=h^{1/\al}u$, for any $u\in H_{p_{2k+1},p_{2k+2}}([0,1])$. It is clear that the topological behaviour of an arc in $H_{p_{2k+1},p_{2k+2}}([0,1])$ with respect to the centres $c_j'$ naturally translates on the same behaviour for its image through $J$ with respect to the centres $c_j$. In light of this, for any $P_j\in\mathcal{P}$, we recall the definition \eqref{def:K_P} of the minimization space $\hat{K}_{P_j}^{p_1,p_2}$ and its $H^1$-closure $K_{P_j}^{p_1,p_2}$ given at page \pageref{def:K_P}, and we set
\[
\begin{aligned}
	\hat{\mathcal{K}}_{P_j}^{x_{2k+1},x_{2k+2}}([0,1])&=J\left(\hat{K}_{P_j}^{p_{2k+1},p_{2k+2}}([0,1])\right), \\
	\mathcal{K}_{P_j}^{x_{2k+1},x_{2k+2}}([0,1])&=J\left(K_{P_j}^{p_{2k+1},p_{2k+2}}([0,1])\right).
\end{aligned}
\]
Now, since the inner arc $y_{int}(\cdot;p_{2k+1},p_{2k+2};\ve)$ with respect to the partition $P_j$ provided in Theorem \ref{thm:inner_dyn} re-parametrizes a minimizer of the Maupertuis functional in $K_{P_j}^{p_{2k+1},p_{2k+2}}([0,1])$, we can immediately conclude that $v_{P_j}
(\cdot;x_{2k+1},x_{2k+2};h)$ will be a minimizer of the Maupertuis' functional $\mathcal{M}_h$ in $\mathcal{K}_{P_j}^{x_{2k+1},x_{2k+2}}([0,1])$.

The rest of this section is devoted to the proof of Theorem \ref{cor:symb} as a consequence of Theorem \ref{th:main1}, i.e., to prove the existence of a symbolic dynamics with set of symbols $\mathcal{Q}$. For this reason, we start with the definition of a suitable subset $\Pi_h$ of the energy shell 
\[
\mathcal{E}_h=\left\lbrace(x,v)\in(\R^2\setminus\{c_1,\ldots,c_N\})\times\R^2:\,\frac12|v|^2-V(x)=-h\right\rbrace
\]
which is a 3-dimensional submanifold of $\R^2\setminus\{c_1,\ldots,c_N\})\times\R^2$ and it is invariant for the flow $\Phi^t$ induced by the vector field 
\[
\begin{aligned}
	F\colon\R^2\setminus\{c_1,\ldots,c_N\})&\times\R^2\to\R^2\times\R^2 \\
	&(x,v)\mapsto F(x,v)=(v,\nabla V(x)).
\end{aligned}
\]
As a starting point, for a neighbourhood $\mathcal{U}_r=\mathcal{U}(\bar{R}e^{i\vt_r^*})$ provided in Theorem \ref{th:main1} ($r=0,\ldots,m-1$), we define the sets of pairs $(x,v)$ such that $x\in\mathcal{U}_r$ and $v$ is not tangent to the same circle, i.e.,
\[
\mathcal{E}_{h,\bar{R},r}^{\pm}\uguale\left\lbrace (x,v)\in\mathcal{E}_h:\,x\in\mathcal{U}_r,\ \langle x,v\rangle\gtrless 0\right\rbrace.
\]
We note that for a pair in $\mathcal{E}_{h,\bar{R},r}^+$ the velocity points towards the outer of $B_{\bar{R}}$ (the converse holds for $\mathcal{E}_{h,\bar{R},r}^-$) and that both sets are included in the 2-dimensional inertial surface 
\[
\Sigma_h=\{(x,v)\in\mathcal{E}_h:\,|x|=\bar{R}\}.
\] 
Therefore, it is clear that, if we consider the restriction $F_h\uguale F|_{\mathcal{E}_h}$ of the vector field, it is transverse to $\mathcal{E}_{h,\bar{R},r}^+$ (for more details we refer to Section \ref{sec:outer}). 

For every $(x,v)\in\mathcal{E}_{h,\bar{R},r}^+$ we introduce the sets
\[
\mathbb{T}^\pm(x,v)\uguale\left\lbrace t\in(0,+\infty):\,\Phi^t(x,v)\in\mathcal{E}_{h,\bar{R},s}^\pm,\ \mbox{for some}\ s\in\{0,\ldots,m-1\}\right\rbrace
\]
and the sets 
\[
\left(\mathcal{E}_{h,\bar{R},r}^+\right)^\pm\uguale\left\lbrace (x,v)\in\mathcal{E}_{h,\bar{R},r}^+:\,\mathbb{T}^\pm(x,v)\neq\emptyset\right\rbrace.
\]
Note that in general the sets $\mathbb{T}^\pm(x,v)$ could be empty, since the piece of trajectory which starts in a neighbourhood $\mathcal{U}_r$ and points towards the outer of $B_{\bar{R}}$ needs to have an initial velocity $v$ which satisfies a behaviour well described in Lemma \ref{lem:var_ext}. Beside that, note that Theorem \ref{th:main1} ensures that the sets $\left(\mathcal{E}_{h,\bar{R},r}^+\right)^\pm$ are non-empty, since the theorem provides periodic solutions of the equation $\ddot{x}=\nabla V(x)$ that cross the circle $\partial B_{\bar{R}}$ an infinite number of times, exactly inside the neighbourhoods $\mathcal{U}_r$, in which the transversality condition $\langle\dot{x},x\rangle\gtrless 0$ is clearly satisfied. Moreover, the continuous dependence on initial data, together with the transversality of $\mathcal{E}_{h,\bar{R},r}^+$ with respect to the vector field $F$ guarantee that the set $\left(\mathcal{E}_{h,\bar{R},r}^+\right)^+$is open. At this point, for $(x,v)\in\left(\mathcal{E}_{h,\bar{R},r}^+\right)^+$ we define
\[
T_{\text{min}}^+(x,v)\uguale\inf\mathbb{T}^+(x,v),
\]
while for $x\in\left(\mathcal{E}_{h,\bar{R},r}^+\right)^-$ we set
\[
T_{\text{min}}^-(x,v)\uguale\inf\mathbb{T}^-(x,v).
\]
If we take $(x,v)\in\left(\mathcal{E}_{h,\bar{R},r}^+\right)^+\cap\left(\mathcal{E}_{h,\bar{R},r}^+\right)^-$ such that $T_{\text{min}}^-(x,v)<T_{\text{min}}^+(x,v)$, we can consider the piece of the orbit emanating from $(x,v)$ between the first two instants in which it crosses again $\partial B_{\bar{R}}$ in two of the admissible neighbourhoods, which is exactly the following restriction of the flow
\[
\{\Phi^t(x,v):\,t\in[T_{\text{min}}^-,T_{\text{min}}^+]\},
\]
where we have omitted the dependence on $(x,v)$ to ease the notations (see Figure \ref{fig:tmin}).

\begin{figure}
	\centering
	\begin{tikzpicture}[scale=1.2]
		\coordinate (0) at (0,0);
		\coordinate (p0) at (-0.26,1.477);
		\coordinate (p1) at (0.26,1.477);   		
		\coordinate (p2) at (-0.964,-1.149); 
		\coordinate (c1) at (-0.2,0.25);
		\coordinate (c2) at (-0.3,0.15);   		
		\coordinate (c3) at (-0.4,-0.1);    		
		\coordinate (c4) at (-0.1,-0.2); 
		\coordinate (c5) at (0.1,0);
		\coordinate (c6) at (0.15,0.2);
		
		% vettori
		
		\draw[-stealth,thick] (p0)--(-0.27,2.2);
		\draw[-stealth,thick] (p1)--(0.27,0.7);
		\draw[-stealth,thick] (p2)--(-1.5,-1.8);
		
		% sfera
		\draw (0,0) circle (1.5cm);
		
		% c.c.
		
		\draw[dashed] (0)--(0,3);
		\draw[dashed] (0)--(-2.298,-1.928);
		
		% punti 		
		\fill (p0)
		circle[radius=1.0pt]; 	
		\fill (p1)
		circle[radius=1.0pt];
		\fill (p2)
		circle[radius=1.0pt];
		\fill (c1)
		circle[radius=1.0pt];
		\fill (c2)
		circle[radius=1.0pt];
		\fill (c3)
		circle[radius=1.0pt];
		\fill (c4)
		circle[radius=1.0pt];
		\fill (c5)
		circle[radius=1.0pt];
		\fill (c6)
		circle[radius=1.0pt];

		% archi esterni		
		\draw[-stealth] plot [smooth, tension=1.5] coordinates {(p0) (0,3) (p1)}; 
		
		% archi interni		
		\draw[-stealth,thick,blue] plot [smooth, tension=1] coordinates {(p1) (-0.05,0.2)  (p2)};   
		
		% scritte 
		\node[left] at (-0.25,1.56) {\scriptsize $\boldsymbol{(x,v)}$};
		\node[right] at (0.27,1.6) {\scriptsize$\boldsymbol{\Phi^{T_\text{min}^-}(x,v)}$};
		\node[right] at (-0.92,-1.149) {\scriptsize $\boldsymbol{\Phi^{T_\text{min}^+}(x,v)}$};
		\node[above] at (0,3) {\scriptsize $\vt_r^*$};
		\node[left] at (-2.298,-1.928) {\scriptsize $\vt_s^*$};
		\node[left] at (c1) {\scriptsize $c_1$};
		\node[left] at (c2) {\scriptsize $c_2$};
		\node[left] at (c3) {\scriptsize $c_3$};
		\node[below] at (c4) {\scriptsize $c_4$};
		\node[below] at (c5) {\scriptsize $c_5$};
		\node[right] at (c6) {\scriptsize $c_6$};
		\node at (1.5,1) {\footnotesize $\partial B_{\bar{R}}$};
	\end{tikzpicture}
	\caption{This picture illustrates the notations introduced above. Indeed, we have drawn in bold the velocity vectors associated to every position in the configuration space, so that it is possible to visualize the flow associated to an initial data $\boldsymbol{(x,v)}\in\left(\mathcal{E}_{h,\bar{R},r}^+\right)^+\cap\left(\mathcal{E}_{h,\bar{R},r}^+\right)^-$. In particular, the blue arc represents the projection on the configuration space of the restriction of the flow between $T_\text{min}^-$ and $T_\text{min}^+$.}	
	\label{fig:tmin}
\end{figure}
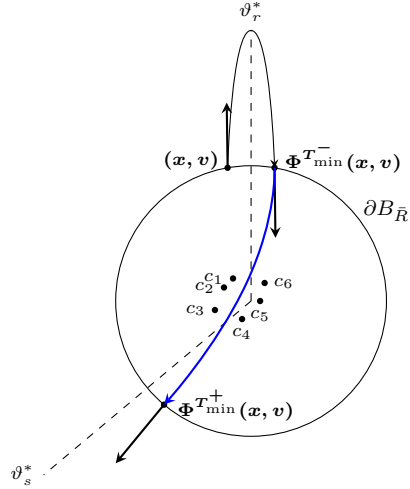

Recalling the set of symbols
\[
\mathcal{Q}=\{Q_j:\ j=0,\ldots,m(2^{N-1}-1)-1\},
\]
and recalling that $\pi_x\Phi^t(x,v)$ denotes the projection on the first component of the flow, let us now define the set
\[
\mathcal{E}_{h,\bar{R}}^{\mathcal{Q}}\uguale\left\lbrace (x,v)\in\left(\mathcal{E}_{h,\bar{R},r}^+\right)^+\cap\left(\mathcal{E}_{h,\bar{R},r}^+\right)^-\ \middle\lvert\
{
	\begin{aligned}
		& r\in\{0,\ldots,m-1\},\ T_{\text{min}}^-<T_{\text{min}}^+ \\
		& \{\pi_x\Phi^t(x,v)\}_{t\in[T_\text{min}^-,T_\text{min}^+]} \text{is the re-param.} \\
		&\text{of a minimizer of}\ \mathcal{M}_h\ \text{in the space} \\
		& \hat{\mathcal{K}}_{P_l}^{\pi_x\Phi^{T_\text{min}^-}(x,v),\pi_x\Phi^{T_\text{min}^+}(x,v)}([T_\text{min}^-,T_\text{min}^+]), \\ 
		&\text{for some}\ l\in\{0,\ldots,2^{N-1}-2\}, \\ 
		&\text{with}\ Q_j=Q_{lm+r}\in\mathcal{Q}
\end{aligned}}\right\rbrace.
\]
The above set is non-empty since Theorem \ref{th:main1} proves the existence of periodic solutions for the $N$-centre problem, which then identify an infinite number of points that belong to the set $\mathcal{E}_{h,\bar{R}}^\mathcal{Q}$. Indeed, the $x$-components of these points are nothing but the crosses that the periodic trajectories make on the circle $\partial B_{\bar{R}}$ when they start their motion outside the ball. We can then define a first return map on $\mathcal{E}_{h,\bar{R}}^\mathcal{Q}$ in this way
\[
\begin{aligned}
	\mathcal{R}\colon&\mathcal{E}_{h,\bar{R}}^\mathcal{Q}\to\mathcal{E}_{h,\bar{R}}^\mathcal{Q} \\
	&(x,v)\mapsto\mathcal{R}(x,v)\uguale\Phi^{T_\text{min}^+}(x,v),
\end{aligned}
\]
which is continuous as a consequence of our construction. Moreover, we can also define another map $\chi\colon\mathcal{E}_{h,\bar{R}}^\mathcal{Q}\to\mathcal{Q}$ such that
\[
\chi(x,v)=Q_j,\quad\mbox{if}\ \begin{cases} (x,v)\in\left(\mathcal{E}_{h,\bar{R},r}^+\right)^+ \\ \{\pi_x\Phi^t(x,v)\}_{t\in[T_\text{min}^-,T_\text{min}^+]}\in\hat{\mathcal{K}}_{P_l}^{\pi_x\Phi^{T_\text{min}^-}(x,v),\pi_x\Phi^{T_\text{min}^+}(x,v)}([T_\text{min}^-,T_\text{min}^+]) \\
	j=lm+r
\end{cases}.
\]
We can finally define the set $\Pi_h$ in this way
\[
\Pi_h\uguale\bigcap\limits_{j\in\Z}\mathcal{R}^j(\mathcal{E}_{h,\bar{R}}^\mathcal{Q}),
\]
which is exactly the set of all the possible initial data which generate solutions that cross the circle $\partial B_{\bar{R}}$ an infinite number of time having velocity directed toward the exterior of $B_{\bar{R}}$; moreover, each of these solutions, every time that travels inside $B_{\bar{R}}$, draws a partition $P_l$ of the centres for some $l$ and minimizing the Maupertuis' functional in the corresponding space $\hat{\mathcal{K}}_{P_l}$. To conclude this preliminary discussion we define also the application $\pi$ which maps every one of this initial data to its corresponding bi-infinite sequence of symbols, i.e., 
\[
\begin{aligned}
	\pi\colon&\Pi_h\to\mathcal{Q}^\Z \\
	&(x,v)\mapsto\pi(x,v)\uguale(Q_{j_k})_{k\in\Z},\ \mbox{with}\ Q_{j_k}\uguale\chi(\mathcal{R}^k(x,v));
\end{aligned}
\]
we also introduce the restriction of the first return map to the invariant submanifold $\Pi_h$ as $\mathfrak{R}\uguale\mathcal{R}|_{\Pi_h}$. At this point, we proceed with the proof of Theorem \ref{cor:symb}, i.e., we need to prove that the map $\pi$ that we have just defined is surjective and continuous. In order to do that, we need to prove some preliminary property on the pieces of solutions. The first one consists on showing that their definition intervals are uniformly bounded from above and from below.

\begin{lemma}\label{lem:bound}
	There exist two constants $c,C>0$ such that, for any $x_0,x_1\in\partial B_{\bar{R}}$ for which $x_{ext}(\cdot;x_0,x_1;h)$ exists, for any $x_2,x_3\in\partial B_{\bar{R}}$ and for any $P_j\in\mathcal{P}$ for which $x_{P_j}(\cdot;x_2,x_3;h)$ exists, we have
	\[
	\begin{aligned}
		&c\leq T_{ext}(x_0,x_1;h)\leq C \\
		&c\leq T_{P_j}(x_2,x_3;h)\leq C.
	\end{aligned}
	\]	
\end{lemma}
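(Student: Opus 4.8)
The plan is to deduce this statement directly from the uniform bounds already established for the rescaled $\ve$-problem, namely Lemma \ref{lem:bound_ext} for the outer arcs and Lemma \ref{lem:bound_int} for the inner arcs, transporting them to the original problem \eqref{pb:N} through the change of variables of Proposition \ref{prop:scaling}. The key point to exploit is that, once $h\in(0,\bar h)$ is fixed, the parameter $\ve=h^{1/\al}$ is fixed as well, so the scaling acts as a single fixed dilation in both space and time; any bound that is uniform with respect to the endpoints and the partition for the $\ve$-problem therefore produces a bound with the same uniformity for the $h$-problem.

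The first step is to make the correspondence of arcs explicit. By Proposition \ref{prop:scaling}, every solution arc $x(\cdot)$ of \eqref{pb:N} at energy $-h$ arises from a solution arc $y(\cdot)$ of \eqref{pb:pert} at energy $-1$ via $x(t)=h^{-1/\al}y\!\left(h^{\frac{\al+2}{2\al}}t\right)$, with $\ve=h^{1/\al}$. Under this correspondence a point $x_i\in\partial B_{\bar R}$, where $\bar R=h^{-1/\al}R$, matches the point $p_i\uguale h^{1/\al}x_i\in\partial B_R$, and the topological separation of the centres $c_j$ by $x$ translates into the separation of the rescaled centres $c_j'=\ve c_j$ by $y$, so that the partition $P_j$ is preserved. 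I would then observe that $x_{ext}(\cdot;x_0,x_1;h)$ is exactly the image of $y_{ext}(\cdot;p_0,p_1;\ve)$ and $x_{P_j}(\cdot;x_2,x_3;h)$ is the image of the corresponding inner arc $y_{int}(\cdot;p_2,p_3;\ve)$ realizing $P_j$.

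The second step is the elementary computation of how time intervals transform: since the time variable is dilated by the fixed factor $h^{-\frac{\al+2}{2\al}}$, an arc defined on an interval of length $T^\ve$ for the $\ve$-problem corresponds to an arc defined on an interval of length $h^{-\frac{\al+2}{2\al}}T^\ve$ for the $h$-problem. Hence $T_{ext}(x_0,x_1;h)=h^{-\frac{\al+2}{2\al}}T_{ext}(p_0,p_1;\ve)$ and $T_{P_j}(x_2,x_3;h)=h^{-\frac{\al+2}{2\al}}T_{P_j}(p_2,p_3;\ve)$. Invoking the constants $0<c_{ext}\le C_{ext}$ of Lemma \ref{lem:bound_ext} (independent of $p_0,p_1$) and $0<c_{int}\le C_{int}$ of Lemma \ref{lem:bound_int} (independent of $p_2,p_3$ and of $P_j$), the claim follows by setting $c\uguale h^{-\frac{\al+2}{2\al}}\min\{c_{ext},c_{int}\}$ and $C\uguale h^{-\frac{\al+2}{2\al}}\max\{C_{ext},C_{int}\}$.

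I do not expect a genuine obstacle here; the proof is essentially bookkeeping of the scaling. The only subtlety worth stating carefully is that, in this section, $h$ is fixed throughout, so the constants $c,C$ are allowed to depend on $h$ (indeed they carry the factor $h^{-\frac{\al+2}{2\al}}$) but must not depend on the endpoints $x_0,\dots,x_3$ nor on the chosen partition $P_j$; this is precisely the uniformity already guaranteed by Lemma \ref{lem:bound_ext} and Lemma \ref{lem:bound_int}, which is what makes the transport through the dilation legitimate.
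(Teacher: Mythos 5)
Your proposal is correct and follows exactly the paper's own route: the paper's proof likewise invokes Lemma \ref{lem:bound_ext} and Lemma \ref{lem:bound_int} for the $\ve$-problem and concludes via Proposition \ref{prop:scaling}. Your version merely spells out the bookkeeping (the fixed time-dilation factor $h^{-\frac{\al+2}{2\al}}$ and the preservation of endpoints and partitions under the rescaling) that the paper leaves implicit.
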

\begin{proof}
	Lemma \ref{lem:bound_ext} and Lemma \ref{lem:bound_int} provide such  uniform bounds for the $\ve$-problem; the conclusion is then a direct consequence of Proposition \ref{prop:scaling}.
\end{proof}

We also need a compactness lemma on sequences of minimizers of $\mathcal{M}_h$ which separate the centres with respect to the same partition. In particular, we want to prove that if the endpoints of the minimizers converge to a limit pair $(\bar{x}_2,\bar{x}_3)$ then the limit path is itself a minimizer of $\mathcal{M}_h$ in the space $\hat{\mathcal{K}}_{P_j}^{\bar{x}_2,\bar{x}_3}([0,1])$, for a fixed partition $P_j$.

\begin{lemma}\label{lem:compactness}
	Let $(x_2^n)\sset\mathcal{U}_2$ and $(x_3^n)\sset\mathcal{U}_3$ such that $(x_2^n,x_3^n)\to(\bar{x}_2,\bar{x}_3)$, with $\bar{x}_2\in\mathcal{U}_3$ and $\bar{x}_3\in\mathcal{U}_3$, where $\mathcal{U}_2$ and $\mathcal{U}_3$ are the neighbourhoods of two minimal non-degenerate central configurations of $\Wzero$ on $\partial B_{\bar{R}}$ which guarantee the existence of the internal arcs. Fix also a partition $P_j\in\mathcal{P}$ and let $v_n$ be a minimizer of $\mathcal{M}_h$ in the space $\hat{\mathcal{K}}_{P_j}^{x_2^n,x_3^n}([0,1])$. Then, there exists a subsequence $(v_{n_k})$ of $(v_n)$ and a minimizer $\bar{v}$ of $\mathcal{M}_h$ in the space $\hat{\mathcal{K}}_{P_j}^{\bar{x}_2,\bar{x_3}}([0,1])$ such that
	\[
	v_{n_k}\wconv\bar{v}\ \mbox{in}\ H^1.
	\]
\end{lemma}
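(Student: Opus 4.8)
The plan is to run the standard direct-method compactness scheme, the only genuine work being to pass both the topological (partition) constraint and the minimality property to the limit. First I would produce a uniform bound on the levels $\mathcal{M}_h(v_n)$. Let $\bar w$ be a minimizer of $\mathcal{M}_h$ in $\hat{\mathcal{K}}_{P_j}^{\bar x_2,\bar x_3}([0,1])$ (whose existence follows from the direct method of Subsection \ref{subsec:dir_meth} together with the collision-avoidance of Theorem \ref{thm:cf}). Pre- and post-composing $\bar w$ with two short arcs of $\partial B_{\bar R}$ joining $x_2^n$ to $\bar x_2$ and $\bar x_3$ to $x_3^n$ produces a competitor $w_n\in\hat{\mathcal{K}}_{P_j}^{x_2^n,x_3^n}$: these arcs have length $O(|x_2^n-\bar x_2|+|x_3^n-\bar x_3|)\to 0$ and, being traced on $\partial B_{\bar R}$, do not alter the homotopy class, so that $\mathcal{M}_h(w_n)\to\mathcal{M}_h(\bar w)$. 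Minimality of $v_n$ then gives $\mathcal{M}_h(v_n)\le\mathcal{M}_h(w_n)\le C$ for $n$ large.

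Next I would extract the weak limit. The coercivity lower bound established in Subsection \ref{subsec:dir_meth} (valid here because $B_{\bar R}$ lies in the Hill region, so that $V-h$ is bounded below by a positive constant along the admissible paths) combined with $\mathcal{M}_h(v_n)\le C$ yields a uniform bound on $\|\dot v_n\|_{L^2}$; together with the pointwise constraint $|v_n|\le\bar R$ this makes $(v_n)$ bounded in $H^1([0,1];\R^2)$. Hence a subsequence $v_{n_k}\wconv\bar v$ in $H^1$, and by the compact embedding $H^1\hookrightarrow C^0$ one has $v_{n_k}\to\bar v$ uniformly on $[0,1]$. Uniform convergence transfers the endpoint conditions $\bar v(0)=\bar x_2$, $\bar v(1)=\bar x_3$ and the bound $|\bar v|\le\bar R$, while the weak closedness of $\mathcal{K}_{P_j}$ (the proposition stated after \eqref{def:K_P}) guarantees $\bar v\in\mathcal{K}_{P_j}^{\bar x_2,\bar x_3}([0,1])$.

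To conclude that $\bar v$ is a \emph{minimizer} I would establish the two matching inequalities. The reverse inequality $\liminf_k\mathcal{M}_h(v_{n_k})\le\min_{\hat{\mathcal{K}}_{P_j}^{\bar x_2,\bar x_3}}\mathcal{M}_h$ is precisely the competitor estimate of the first step. For the direct inequality $\mathcal{M}_h(\bar v)\le\liminf_k\mathcal{M}_h(v_{n_k})$ I would use the weak lower semicontinuity of $u\mapsto\int_0^1|\dot u|^2$ and the convergence $\int_0^1(-h+V(v_{n_k}))\to\int_0^1(-h+V(\bar v))$, both factors being nonnegative so that the product passes to the $\liminf$. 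This last convergence is legitimate because the uniform collision-avoidance estimate of Theorem \ref{thm:cf} forces $\min_t|v_{n_k}(t)-c_j|>\delta$ for a $\delta>0$ independent of $k$, keeping $V(v_{n_k})$ uniformly bounded and making the limit $\bar v$ itself collision-free. Combining the two inequalities gives $\mathcal{M}_h(\bar v)=\min_{\hat{\mathcal{K}}_{P_j}^{\bar x_2,\bar x_3}}\mathcal{M}_h$, and since $\bar v$ is admissible and collision-free it lies in $\hat{\mathcal{K}}_{P_j}^{\bar x_2,\bar x_3}$ and is the desired minimizer.

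The main obstacle is the persistence of the partition (winding) class under weak convergence: a priori the uniform limit could develop a tangency to $\partial B_{\bar R}$, collapse an arc onto a centre, or otherwise jump to a different topological sector, which would void the conclusion. I expect to control this through the uniform lower bounds already available---the distance-from-centres estimate $\min_t|v_n(t)-c_j|>\delta$ of Theorem \ref{thm:cf} rules out collision in the limit, while the absence of self-intersections (Proposition \ref{prop:no_self_int}) together with the weak closedness of $\mathcal{K}_{P_j}$ pins down the homotopy class. These are exactly the structural facts that upgrade the otherwise routine semicontinuity argument into the existence of a minimizer in the \emph{same} class $P_j$.
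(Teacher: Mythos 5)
Your proposal is correct, and it is essentially a self-contained rendition of the argument the paper merely cites: the paper's own proof of this lemma is two lines, observing that the analogous compactness for minimizers was already established in the $\ve$-problem (Lemma \ref{lem:uniform} and Theorem \ref{thm:R}) and transferring it to the $h$-problem via the rescaling of Proposition \ref{prop:scaling}, whereas you re-run the direct-method scheme (competitor construction to bound the levels, coercivity, weak/uniform convergence, weak closedness of $\mathcal{K}_{P_j}$, matching $\liminf$ inequalities, and the uniform distance-from-centres bound of Theorem \ref{thm:cf} to keep the limit in the open class) directly at energy $-h$. What the paper's route buys is economy and automatic inheritance of the uniform estimates already proved; what your route buys is an argument tailored to the actual situation here (fixed potential, varying endpoints — simpler than the varying-potential compactness of Theorem \ref{thm:R}) with every structural ingredient made explicit. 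Two points you should patch when writing it up: first, $\mathcal{M}_h$ is neither additive nor parametrization-invariant, so the competitor estimate ``$\mathcal{M}_h(w_n)\to\mathcal{M}_h(\bar w)$'' for the juxtaposed path should be routed through the Jacobi length $\mathcal{L}_h$ (which is additive and parametrization-invariant) together with the identity $\mathcal{L}_h=\sqrt{2\mathcal{M}_h}$ at optimally parametrized paths (Proposition \ref{prop:app1}), exactly as done in the proof of Theorem \ref{thm:R}; second, Theorem \ref{thm:cf} and the weak-closedness proposition are stated for the $\ve$-problem, so invoking them at energy $-h$ requires the bijection $J$ of Subsection \ref{subsec:sym_dyn_nocoll} (equivalently Proposition \ref{prop:scaling}), which is precisely the role that proposition plays in the paper's citation-style proof. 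Both are routine fixes, not gaps in the idea.
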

\begin{proof}
	An analogous compactness property has been proved for sequences of minimizers of $\mathcal{M}_{-1}$ in the $\ve$-problem in Lemma \ref{lem:uniform} and Theorem \ref{thm:R}. Again, Proposition \ref{prop:scaling} gives the proof.
\end{proof}

We are now ready to give the proof of Theorem \ref{cor:symb} which follows the same technique employed in \cite{ST2012}.
\begin{proof}[Proof of Theorem \ref{cor:symb}]
	\textbf{Surjectivity of} $\boldsymbol{\pi}:$ Consider a bi-infinite sequence $(Q_{j_n})_{n\in\Z}\sset\mathcal{Q}^{\Z}$ and the sequence of finite sequences
	\[
	(Q_{j_0}),\ (Q_{j_{-1}},Q_{j_0},Q_{j_1}),\ \ldots\ (Q_{j_{-n}},Q_{j_{-n+1}},\ldots,Q_{j_{-1}},Q_{j_0},Q_{j_1},\ldots,Q_{j_{n-1}}Q_{j_n}),\ \ldots
	\] 
	If we fix $h\in(0,\bar{h})$, through Theorem \ref{th:main1} we can associate to each of these sequence a corresponding periodic solution; this will be made using this notation that takes into account the length of the finite sequence
	\[
	(Q_{j_{-n}},\ldots,Q_{j_{-1}},Q_{j_0},Q_{j_1},\ldots,Q_{j_n})\longleftrightarrow x^n(\cdot).
	\] 
	Without loss of generality, we can define $(x^n(0),\dot{x}^n(0))\in\Pi_h$ as the initial data such that the first symbol determined by $x^n$ is $Q_{j_0}$, for every $n\in\N$. In particular, we know that $j_0=l_0m+r_0$ and thus this symbol will refer to a first piece of solution, composed by an outer arc with endpoints in the neighbourhood $\mathcal{U}_{r_0}$ and inner arc that  agrees with the partition $P_{l_0}$ and that arrives in the neighbourhood $\mathcal{U}_{r_1}$. In this way, for every $n$ we can find a sequence of points $(x_k^n)_{k\in\Z}\sset\partial B_{\bar{R}}$ which correspond to the crosses of the periodic trajectory $x^n$ with the circle $\partial B_{\bar{R}}$. Note that since the trajectory is periodic, the sequence of points will be periodic too. We can now take into account the sequence of sequences
	\[
	\left\lbrace(x_k^n)_{k\in\Z}\right\rbrace_{n\in\N}
	\]
	in order to start a diagonal process that will imply a convergence on these sequences. If we fix $k=0$, since $\partial B_{\bar{R}}$ is compact we can extract a subsequence $(x_0^{n_0})_{n_0\in\N}$ such that $x_0^{n_0}\to\bar{x}_0$ when $n_0\to+\infty$. In the same way, we can fix $k=1$ and consider the subsequence $(x_1^{n_0})_{n_0\in\N}\sset\partial{B_{\bar{R}}}$ and extract a sub-subsequence $(x_1^{n_1})_{n_1\in\N}$ such that $x_1^{n_1}\to\bar{x}_1$ as $n_1\to+\infty$. This can be made for every $k\in\Z$, in order to find a subsequence $(x_k^{n_k})_{n_k\in\N}$ such that $x_k\to\bar{x}_k$ as $n_k\to+\infty$. At this point we can consider the diagonal sequence $(x_k^{n_n})_{n\in\N}$ and relabel it as $(x_k^n)_{n\in\N}$, so that
	\[
	\lim\limits_{n\to+\infty}x_k^n=\bar{x}_k,\ \mbox{for all}\ k\in\Z.
	\]	
	Note that all these limit points belong to $\partial B_{\bar{R}}$ so that we can connect two of them with an inner or outer arc; this would actually require that the points are inside the neighbourhoods $\mathcal{U}_{r_k}$ found in Theorem \ref{th:main1}, but up to restrict these neighbourhoods we can repeat the previous argument so that the limits would still be inside a neighbourhood in which the existence is guaranteed. Once this is clear, we can connect the points $\bar{x}_{2k},\bar{x}_{2k+1}\in\mathcal{U}_{r_k}$ with a unique outer arc using the technique illustrated in Theorem \ref{thm:outer_dyn}; we can also connect the point $\bar{x}_{2k+1}\in\mathcal{U}_{r_k}$ and the point $\bar{x}_{2k+2}\in\mathcal{U}_{r_{k+1}}$ following the procedure of Theorem \ref{thm:inner_dyn} so that the induced path would separate the centres according to the partition $P_{l_k}$. Repeating this procedure for every $k\in\Z$ we can glue together all these pieces to obtain a continuous function $\bar{x}\colon\R\to\colon\R^2$, using the same technique provided in Section \ref{sec:glueing}. We point out that $\bar{x}$ is not unique, since the inner pieces, coming from Maupertuis' minimizers, are not unique. In the following, we are going to show that $\bar{x}$ is a classical periodic solution of the equation $\ddot{x}=\nabla V(x)$ and verifies
	\[
	\begin{cases}
		(\bar{x}(0),\dot{\bar{x}}(0))\in\Pi_h \\
		\pi((\bar{x}(0),\dot{\bar{x}}(0)))=(Q_{j_k})_{k\in\Z}.
	\end{cases}
	\]
	If we introduce the set of collision instants of $\bar{x}$ as
	\[
	T_c(\bar{x})\uguale\left\lbrace t\in\R:\,\bar{x}(t)=c_j,\ \mbox{for some}\ j\in\{1,\ldots,N\}\right\rbrace,
	\]
	due to the nature of the sequence $(x^n)$ it is enough to show that $x^n\to\bar{x}$ in a $\mathcal{C}^2$ manner on every compact subset of $\R\setminus T_c(\bar{x})$. To start with, note that if we take $[a,b]\sset\R$ such that $\bar{x}(a)=\bar{x}_{2k}$ and $\bar{x}(b)=\bar{x}_{2k+1}$, then the outer arc connecting these two points depends on a continuous manner on the endpoints (see Theorem \ref{thm:outer_dyn}) and so $x^n\to\bar{x}$ uniformly on $[a,b]$. Moreover, if we take $[c,d]\sset\R$ such that $\bar{x}(c)=\bar{x}_{2k+1}$ and $\bar{x}(d)=\bar{x}_{2k+2}$, then the uniform convergence on $[c,d]$ is a straightforward consequence of Lemma \ref{lem:compactness}. This convergence also determines a unique choice for the inner solution that connects $\bar{x}_{2k+1}$ and $\bar{x}_{2k+2}$, so that now the function $\bar{x}$ is uniquely determined. Moreover, since the internal arcs provided in Theorem \ref{thm:inner_dyn} have a uniform distance $\delta$ from the centres, this actually proves that the uniform convergence of $x^n$ to $\bar{x}$ takes place in $\R\setminus T_c(\bar{x})$, i.e., $\bar{x}$ has no collision with the centres.
	
	At this point, the function $\nabla V(\bar{x}(\cdot))$ is continuous on the whole $\R$ and, since $x^n$ is a $\mathcal{C}^2$ solution of the equation $\ddot{x}=\nabla V(x)$ by the uniform convergence of $x^n$ to $\bar{x}$ on $[a,b]$ we have that
	\[
	\lim\limits_{n\to+\infty}\ddot{x}^n(t)=\lim\limits_{n\to+\infty}\nabla V(x^n(t))=\nabla V(\bar{x}(t)),
	\]
	for every $t\in[a,b]$. This means that the sequence $(\dot{x}^n(t))$ is equi-continuous in $[a,b]$; moreover, the energy equation implies that
	\[
	|\dot{x}^n(t)|=\sqrt{V(x^n(t))-h}\leq C
	\]
	for every $t\in[a,b]$ and for every $n\in\N$, i.e., the sequence $(\dot{x}^n(t))$ is also equi-bounded in $[a,b]$. This fact, together with the uniform convergence, finally shows that the sequence $(x^n(t))$ $\mathcal{C}^2$-converges to $\bar{x}(t)$ in $[a,b]$, for every compact set $[a,b]\sset\R$. As a consequence, $\bar{x}$ is a $\mathcal{C}^2$ solution of the equation $\ddot{x}=\nabla V(x)$ at energy $-h$ on every compact set of $\R$. As a final remark, note that the uniform convergence also implies the conservation of the topological constraint, i.e., the piece of  $\bar{x}$ between the points $x_{2k+1}$ and $x_{2k+2}$ will separate the centres with respect to $P_{l_k}$. This finally proves that $\pi((\bar{x}(0),\dot{\bar{x}}(0)))=(Q_{j_k})_{k\in\Z}$, where $j_k=l_km+r_k$.
	
	\textbf{Continuity of} $\boldsymbol{\pi}$: We recall that we can endow the set of bi-infinite sequences $\mathcal{Q}^\Z$ with the distance
	\[
	d((Q_m),(\tilde{Q}_m))\uguale\sum\limits_{m\in\Z}\frac{\rho(Q_m,\tilde{Q}_m)}{2^{|m|}},\ \forall\,(Q_m),(\tilde{Q}_m)\in\mathcal{Q}^\Z,
	\]
	where $\rho$ is the discrete metric defined through the Kronecker delta. Moreover, for every $m\in\Z$ we define the map 
	\[
	\begin{aligned}
		\pi_m\colon&\Pi_h\to\mathcal{Q} \\
		&(x,v)\mapsto \pi_m(x,v)\uguale\chi(\mathfrak{R}^m(x,v)),
	\end{aligned}
	\]
	i.e., it associates to $(x,v)$ the symbol corresponding to the $m$-th piece (composed by an outer arc and an inner arc) of the solution with initial data $(x,v)$. Given this, if we fix $(x_0,v_0)\in\Pi_h$, we need to show that for $\la>0$ there exists $\delta>0$ such that 
	\begin{equation}\label{eq:continuous}
		\forall\,(x,v)\in\Pi_h\ \mbox{s.t.}\ \|(x,v)-(x_0,v_0)\|<\delta\implies\sum\limits_{m\in\Z}\frac{\rho(\pi_m(x,v),\pi_m(x_0,v_0))}{2^{|m|}}<\la
	\end{equation}
	It is clear that we can find $m_0\in\N$ such that 
	\[
	\sum\limits_{|m|>m_0,m\in\Z}\frac{1}{2^{|m|}}<\la.
	\]
	For this reason and for the definition of the metric $d$ in the space $(\mathcal{Q},d)$, in order to prove \eqref{eq:continuous} it is enough to show that two initial data sufficiently close are mapped through $\pi_m$ to the same symbol $Q_m$, for any $m\in\{-m_0,\ldots,m_0\}$. Therefore to \eqref{eq:continuous} it is equivalent to prove that, for any $m_0\in\N$ there exists $\eta>0$ such that 
	\[
	\forall\,(x,v)\in\Pi_h\ \mbox{s.t.}\ \|(x,v)-(x_0,v_0)\|<\eta\implies\pi_m(x,v)=\pi_m(x_0,v_0),\ \forall\,|m|\leq m_0.
	\]
	If we take $m_0\in\N$, by means of Lemma \ref{lem:bound} there exists a time interval $[-a,a]$ such that every solution with initial data in $\Pi_h$ detects at least $2m_0+1$ symbols in $[-a,a]$, i.e., it determines at least $4m_0+2$ crosses on $\partial B_{\bar{R}}$. Moreover, the solution which emanates from the initial data $(x_0,v_0)$ is collision-free and its projection on the $x$-component has a uniform distance $\delta>0$ from the centres (see Theorem \ref{thm:inner_dyn}), i.e.,
	\[
	|\pi_x\Phi^t(x_0,v_0)-c_j|\geq\delta,\ \forall\,t\in[-a,a],\ \forall\,j\in\{1,\ldots,N\},
	\]
	recalling that $\pi_x$ denotes the projection on the $x$-component of the flow. At this point, if $(x,v)$ is sufficiently close to $(x_0,v_0)$, the continuous dependence on initial data implies that
	\[
	\left\lvert\pi_x\Phi^t(x,v)-\pi_x\Phi^t(x_0,v_0)\right\rvert<\frac{\delta}{2},
	\]
	for every $t\in[-a,a]$. This fact ensures that the flow associated to $(x,v)$ determines the same $2m_0+1$ symbols of the flow associated to $(x_0,v_0)$ so that, in particular
	\[
	\pi_m(x,v)=\pi_m(x_0,v_0),\ \forall\,m\in\{-m_0,\ldots,m_0\}.
	\]
	The proof is then concluded.
\end{proof}

\subsection{Collisional symbolic dynamics, proof of Theorem \ref{thm:symb_dyn}}\label{subsec:symb_dyn_coll}

We assume that $N\geq 2$ and $m\geq 2$ and we consider a potential $V\in\mathcal{C}^2(\R^2\setminus\{c_1,\ldots,c_N\})$ satisfying hypothesis \eqref{hyp:V}, that we recall here for the readers convenience
\[
	\tag{$V$} 
	\fontsize{9pt}{\baselineskip}
	\begin{cases}
		\al<2; \\
		\exists\,(\vt_l^*)_{l=0}^{m-1}\sset\cerchio\,:\,\ \forall\ l=0,\ldots,m-1,\ m>0,\ U''(\vt_l^*)>0,\  U(\vt)\geq U(\vt_l^*)>0,\ \forall\ \vt\in\cerchio; \\
		%\forall\,j=1,\ldots,N\ \exists\,\vt_j\in\cerchio:\,U_j(\vt)\geq U_j(\vt_j)>0,\ \forall\,\vt\in\cerchio,\ U_j''(\vt_j)>0.
	\end{cases}
\]
In order to treat a more general case, we have dropped hypothesis \eqref{hyp:V_al}, which, as it has been clear from Section \ref{sec:inner}, is crucial only in order to obtain collision-less periodic solutions. In this way, collisions with the centres may occur, but this does not prevent to build periodic solutions nor to show that a symbolic dynamics exists. Clearly, slight modifications of the previous proofs are in order and the most relevant change reveals to concern the number of symbols. We recall that collision-less periodic solutions for the anisotropic $N$-centre problem have been found in Section \ref{sec:glueing} as a juxtaposition of outer and inner solution arcs, provided respectively in Section \ref{sec:outer} and Section \ref{sec:inner}. No changes are involved with respect to the outer dynamics, since, when a solution arc travels outside a ball containing the centres, collisions can not exist. For this reason, the results on outer solution arcs provided in Section \ref{sec:outer} (Theorem \ref{thm:outer_dyn} and Lemma \ref{lem:bound_ext}) are valid also in this context. A different situation shows up concerning the inner dynamics since, due to the possible presence of collisions with the centres, Theorem \ref{thm:inner_dyn} or, equivalently, Theorem \ref{thm:inner_dyn_wind} do not hold. Beside that, we can provide a result which guarantees the existence of possible colliding solution arcs which connect two points which belongs to the neighbourhoods of two minimal non-degenerate central configurations of $\Wzero$ on $\partial B_R$ as a collisional counterpart of Theorem \ref{thm:inner_dyn}-\ref{thm:inner_dyn_wind}. Following the notations of Section \ref{sec:inner} (in particular, we refer to \eqref{def:K_P}-\eqref{def:k_l}), given $R>0$ as in Theorem \ref{thm:inner_dyn}-\ref{thm:inner_dyn_wind}, we define the set of all the $H^1$ possible colliding paths connecting two points $p_1,p_2\in\partial B_R$ as 
\[
\begin{aligned}
K^{p_1,p_2}([a,b])&\uguale\bigcup\limits_{P_j\in\mathcal{P}}K_{P_j}^{p_1,p_2}([a,b])=\bigcup\limits_{l\in\mathfrak{I}^N}K_l^{p_1,p_2}([a,b]) \\
&=\{u\in H^1([a,b];\R^2):\,u(a)=p_1,\ u(b)=p_2\}.
\end{aligned}
\]
With these notations, the following result on the existence of possibly colliding internal solution arcs holds (cf. Theorem \ref{thm:inner_dyn}-\ref{thm:inner_dyn_wind}).
\begin{lemma}\label{lem:inner_coll}
	Assume that $N\geq 2$, $m\geq 2$ and consider a function $V\in\mathcal{C}^2(\R^2\setminus\{c_1,\ldots,c_N\})$ defined as in \eqref{def:potential} and satisfying \eqref{hyp:V}. Fix $R>0$ as in \eqref{eq:R} at page \pageref{eq:R}. Then, there exist $\ve_{int}>0$ such that, for any $\vt^*,\vt^{**}\in\Xi$ minimal non degenerate central configurations for $\Wzero$, defining $\xi^*\uguale Re^{i\vt^*},\xi^{**}\uguale Re^{i\vt^{**}}\in\partial B_R$, there exist two neighbourhoods $\mathcal{U}_{\xi^*},\mathcal{U}_{\xi^{**}}$ on $\partial B_R$ with the following property: 
	
	for any $\ve\in(0,\ve_{int})$, for any pair of endpoints $p_1\in\mathcal{U}_{\xi^*}$, $p_2\in\mathcal{U}_{\xi^{**}}$, given the potential $\Veps$ defined in \eqref{def:scaling} at page \pageref{def:scaling}, there exist $T>0$, a possibly empty set $T_c(y)\sset[0,T]$  and a possibly colliding solution $y\in K^{p_1,p_2}([0,T])$ of the inner problem
	\[
	\begin{cases}
		\begin{aligned}
			&\ddot{y}(t)=\nabla \Veps(y(t))  &t\in[0,T]\setminus T_c(y) \\
			&\frac{1}{2}|\dot{y}(t)|^2-\Veps(y(t))=-1   &t\in[0,T]\setminus T_c(y) \\
			&|y(t)|<R   &t\in(0,T) \\
			&y(0)=p_1,\quad y(T)=p_2. &
		\end{aligned}
	\end{cases}
	\]
	In particular, the set $T_c(y)$ is the set of collision instants of $y$, it is discrete and finite. Moreover, the solution $y$ is a re-parametrization of a minimizer of the Maupertuis' functional in the space $K^{p_1,p_2}([0,1])$.
\end{lemma}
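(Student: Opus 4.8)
The plan is to repeat the variational construction of Section \ref{sec:inner}, but now to minimize the Maupertuis' functional over the \emph{entire} space $K^{p_1,p_2}$ instead of inside a single topological class, and to accept that the resulting arc may collide with the centres. The crucial observation making this possible is that every ingredient we shall use, namely the existence Proposition \ref{prop:existence}, the energy and regularity Lemma \ref{lem:conn_comp}, the Lagrange--Jacobi Lemma \ref{lem:coll_time}, and above all the confinement Theorem \ref{thm:R}, is established under hypothesis \eqref{hyp:V} alone and never invokes \eqref{hyp:V_al}. Accordingly I would simply take $\ve_{int}>0$ and the neighbourhoods $\mathcal{U}_{\xi^*},\mathcal{U}_{\xi^{**}}$ on $\partial B_R$ to be exactly those furnished by Theorem \ref{thm:R}.

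\emph{Existence.} Since $K^{p_1,p_2}=\bigcup_{P_j\in\mathcal{P}}K_{P_j}^{p_1,p_2}=\bigcup_{l\in\mathfrak{I}^N}K_l^{p_1,p_2}$ is a \emph{finite} union of sets that are weakly closed in $H^1$, and on each of them $\mathcal{M}$ is coercive, weakly lower semicontinuous and bounded below by the positive constant of the lower-bound lemma, the direct method yields a minimizer $u\in K^{p_1,p_2}([0,1])$ of $\mathcal{M}$ at a positive level. Concretely, one applies Proposition \ref{prop:existence} on each class $K_{P_j}$ (equivalently on its components $K_l$, recall \eqref{eq:conn_comp}) and selects the class realizing the least value of $\mathcal{M}$.

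\emph{Qualitative analysis of $u$.} The minimizer belongs to some $K_l^{p_1,p_2}$ with $l\in\mathfrak{I}^N$, and because $K_l\subseteq K^{p_1,p_2}$ it is a fortiori a minimizer of $\mathcal{M}$ in $K_l$. Hence all the properties proved for minimizers in a fixed class transfer to $u$: by Lemma \ref{lem:coll_time} the collision set $T_c(u)\subset(0,1)$ (the endpoints lie on $\partial B_R$, away from the centres confined in $B_\ve$) is discrete and finite, with $I(t)=|u(t)-c_j'|^2$ strictly convex near each collision; and by Theorem \ref{thm:R}, valid precisely because $p_1,p_2$ lie in the central-configuration neighbourhoods and $\ve<\ve_{int}$, we obtain $|u(t)|<R$ for every $t\in(0,1)$, so that $T_R(u)\subseteq\{0,1\}$. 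Consequently the connected components of $[0,1]\setminus\big(T_c(u)\cup T_R(u)\big)$ are exactly the subintervals of $(0,1)$ lying between consecutive collision instants, and on each of them Lemma \ref{lem:conn_comp} gives $u\in\mathcal{C}^2$ together with $\omega^2\ddot u=\nabla\Veps(u)$ and the conserved energy $\tfrac12|\dot u|^2-\Veps(u)=-1/\omega^2$ almost everywhere.

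\emph{Reparametrization and conclusion.} Applying the Maupertuis' principle (Theorem \ref{thm:maupertuis}) on each maximal collision-free component and setting $\omega$ by \eqref{defn:omega}, $T=1/\omega$ and $y(t)\uguale u(\omega t)$, the arc $y\in K^{p_1,p_2}([0,T])$ solves $\ddot y=\nabla\Veps(y)$ at energy $-1$ on $[0,T]\setminus T_c(y)$, remains inside $B_R$, joins $p_1$ to $p_2$, and has the rescaled collision set $T_c(y)$ still discrete and finite. The single genuine departure from the collision-less Theorem \ref{thm:inner_dyn} is that, having discarded \eqref{hyp:V_al}, one can no longer rule out collisions via Theorem \ref{thm:indice_uno}; I expect the only point deserving care is to confirm that the confinement estimate of Theorem \ref{thm:R} persists in the presence of collisions, which it does, since its blow-up and competitor argument compares $u$ with the limiting $\Wzero$-minimizer (itself a collision arc) and at no stage requires $u$ to be collision-free.
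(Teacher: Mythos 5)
Your proposal is correct and follows essentially the same route as the paper: the paper's proof likewise obtains the (possibly colliding) arc from Proposition \ref{prop:existence} combined with Theorem \ref{thm:maupertuis} and Lemma \ref{lem:conn_comp}, invokes Theorem \ref{thm:R} for the confinement $|y|<R$ (noting, as you do, that it holds under \eqref{hyp:V} alone), and uses Lemma \ref{lem:coll_time} to characterize $T_c(y)$ as discrete and finite. Your explicit handling of the minimization over the finite union of weakly closed classes $K_l$ is just a fleshed-out version of what the paper leaves implicit.
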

\begin{proof}
	The proof of the existence of such solution $y$, together with the conservation of the energy, is a consequence of Proposition \ref{prop:existence}, Theorem \ref{thm:maupertuis} and Lemma \ref{lem:conn_comp}. Theorem \ref{thm:R} applies also in this context and provides the confinement $|y|<R$. The characterization of the set $T_c(y)$ is given in Lemma \ref{lem:coll_time}.
\end{proof}
Proceeding exactly as in Section \ref{sec:glueing}, this time without taking care of collisions with the centres, we can now state the collisional counterpart of Theorem \ref{th:main1}.
\begin{theorem}\label{thm:per_coll}
    Assume that $N\geq 2$ and $m\geq 2$. Consider a function $V\in\mathcal{C}^2(\R^2\setminus\{c_1,\ldots,c_N\})$ satisfying \eqref{hyp:V}. There exists $h^*>0$ such that, for every $h\in(0,h^*)$, $n\in\N_{\geq 1}$ and $(\vt_{j_0}^*,\ldots,\vt_{j_{n-1}}^*)\in\Xi^n$, there exists a periodic possibly colliding solution $x=x(\vt_{j_0}^*,\ldots,\vt_{j_{n-1}}^*;h)$ of the equation $\ddot{x}=\nabla V(x)$ with energy $-h$ with the following properties:
    \begin{itemize}
    	\item $x$ satisfies the equation $\ddot{x}=\nabla V(x)$ and the energy equation $\frac12|\dot{x}|^2-V(x)=-h$ almost everywhere in one period;
    	\item there exists $R^*=R^*(h)>0$ such that the solution $x$ crosses $2n$ times the circle $\partial B_{R^*}$ in one period, at times $(t_s)_{s=0}^{2n-1}$ in such a way that, for any $k=0,\ldots,n-1$:
    	\begin{itemize}
    		\item in the interval $(t_{2k},t_{2k+1})$ the solution stays outside $B_{R^*}$ and there exists a neighbourhood $\mathcal{U}_{j_k}=\mathcal{U}(R^* e^{i\vt_{j_k}^*})$ on $\partial B_{R^*}$ such that
    		\[
    		x(t_{2k}),x(t_{2k+1})\in\mathcal{U}_{j_k};
    		\]
    		\item in the interval $(t_{2k+1},t_{2k+2})$ the solution stays inside $B_{R^*}$ and may collide with the centres $c_1,\ldots,c_N$.
    	\end{itemize}
    \end{itemize}
\end{theorem}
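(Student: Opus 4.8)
The plan is to run the scheme of Theorem \ref{th:main1} essentially verbatim, with two modifications: the collision-less inner arcs of Theorem \ref{thm:inner_dyn} are replaced by the possibly colliding ones furnished by Lemma \ref{lem:inner_coll}, and the whole construction is indexed only by the central configurations $\vt^*_{j_k}\in\Xi$ rather than by the full alphabet $\mathcal{Q}=\mathcal{P}\times\Xi$ (since, when the orbit hits a centre, the partition of the centres carries no meaning). First I would invoke Proposition \ref{prop:scaling} to pass to the rescaled $\ve$-problem at energy $-1$, with $\ve=h^{1/\al}$, fixing $\ve$ below the thresholds provided by Theorem \ref{thm:outer_dyn}, Lemma \ref{lem:inner_coll} and Theorem \ref{thm:glueing}. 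Given a finite sequence $(\vt^*_{j_0},\ldots,\vt^*_{j_{n-1}})\in\Xi^n$, I would set $\xi^*_{j_k}=Re^{i\vt^*_{j_k}}$ and take the intersection neighbourhoods $\mathcal{U}_k$ of $\xi^*_{j_k}$ on which both the outer arc (Theorem \ref{thm:outer_dyn}) and a possibly colliding inner arc (Lemma \ref{lem:inner_coll}) exist. This produces, for every choice of cut points $\mathbf{p}$ in the compact set $\mathcal{S}\sset(\partial B_R)^{2n+1}$, an alternating juxtaposition $\gamma_{\ve,\mathbf{p}}$ of $n$ outer and $n$ inner arcs; the Jacobi-length functional $\mathbf{L}$ is still finite and additive on such a curve, since $\mathcal{M}(u)<+\infty$ forces the collision set to have null measure.

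The core step is the glueing, and here the decisive observation — already anticipated at the start of Section \ref{sec:glueing} — is that the variational matching argument is insensitive to collisions, because it only probes the behaviour of the arcs in a neighbourhood of $\partial B_R$, which lies far from the singularity set. Concretely, I would minimize $\mathbf{L}$ over $\mathcal{S}$ as in Lemma \ref{lem:glueing_exist}, and then show the minimizer $\bar{\mathbf{p}}$ is interior by the contradiction argument of Theorem \ref{thm:var_ext} and Theorem \ref{thm:var_int}. The external estimate of Theorem \ref{thm:var_ext} is untouched, as outer arcs never collide. The internal estimate of Theorem \ref{thm:var_int} was itself established by reducing to $\ve=0$ and analysing the \emph{collision} homothetic trajectory of the anisotropic Kepler problem driven by $\Wzero$ in McGehee coordinates; thus the control of the final velocity direction of the inner arc at $\partial B_R$ is exactly a statement about collision orbits, and collisions occurring deep inside $B_R$ do not alter it. Consequently the first-variation formulas of Lemma \ref{lem:diff_L_ext} and Lemma \ref{lem:diff_L_int}, whose boundary terms involve only the regular endpoint velocities on $\partial B_R$, remain valid, and the interiority $\bar{\mathbf{p}}\in\mathring{\mathcal{S}}$ follows.

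With $\bar{\mathbf{p}}$ interior, the Euler--Lagrange equation $\nabla\mathbf{L}(\bar{\mathbf{p}})=0$ yields velocity matching at each junction on $\partial B_R$, exactly as in the proof of part $(ii)$ of Theorem \ref{thm:glueing}, so $\gamma_{\ve,\bar{\mathbf{p}}}$ is $\mathcal{C}^1$ across the junctions and, away from the collision instants, solves $\ddot y=\nabla\Veps(y)$ with the energy relation. By Lemma \ref{lem:coll_time} (applied on each inner arc) the collision set is discrete and finite, and Theorem \ref{thm:maupertuis} together with Lemma \ref{lem:conn_comp} guarantees the equation and the energy identity hold almost everywhere, matching the definition of collision solution. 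Rescaling back through Proposition \ref{prop:scaling} with $R^*=h^{-1/\al}R$ then gives the desired periodic, possibly colliding solution $x=x(\vt^*_{j_0},\ldots,\vt^*_{j_{n-1}};h)$ of $\ddot x=\nabla V(x)$ at energy $-h$, crossing $\partial B_{R^*}$ in the prescribed neighbourhoods, with threshold $h^*=\bar\ve^\al$.

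The main obstacle I foresee is the differentiability of the inner length $\mathcal{L}_{int}$ in its endpoints when the minimizing inner arc collides: Lemma \ref{lem:inner_coll} provides a Maupertuis minimizer but not its uniqueness, on which Lemma \ref{lem:diff_L_int} tacitly relied. I would dispose of this exactly as in the collision-less case, by shrinking each $\mathcal{U}_k$ to a strictly convex neighbourhood (following the addendum \cite{ST2013}) so that the relevant directional derivative of $\mathbf{L}$ is well defined and expressed solely through the regular boundary data at $\partial B_R$. Since, as emphasised above, the estimate of Theorem \ref{thm:var_int} does not feel the interior collisions, this restricted differentiability is all that the interiority argument and the subsequent smoothing require, and the proof closes in complete analogy with Theorem \ref{th:main1}.
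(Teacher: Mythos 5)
Your proposal is correct and follows essentially the same route as the paper, whose proof of Theorem \ref{thm:per_coll} consists precisely in repeating the proof of Theorem \ref{th:main1} with Lemma \ref{lem:inner_coll} in place of Theorem \ref{thm:inner_dyn}, observing that the glueing of Section \ref{sec:glueing} is unaffected (indeed simplified, since no partitions are involved) because it only concerns the behaviour of trajectories far from the singularity set. Your supplementary remarks --- that the internal estimate of Theorem \ref{thm:var_int} is already a statement about collision orbits of the limiting Kepler problem, and that the endpoint-differentiability of the inner length is secured by shrinking to strictly convex neighbourhoods as in \cite{ST2013} --- are exactly the points the paper's terse proof relies on implicitly.
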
 
\begin{proof}
	The proof goes exactly as the proof of Theorem \ref{th:main1} (see Section \ref{sec:glueing} and in particular the proof of Theorem \ref{th:main1} at page \pageref{proof:per}), this time using Lemma \ref{lem:inner_coll} in place of Theorem \ref{thm:inner_dyn}. Note indeed that the glueing technique of Section \ref{sec:glueing} becomes actually easier in this case since Lemma \ref{lem:inner_coll} does not involve the partitions of the centres.
\end{proof}
At this point, we are left to show that this dynamical system admits a symbolic dynamics and thus to prove Theorem \ref{thm:symb_dyn}. In order to do that and in light of Theorem \ref{thm:per_coll}, we introduce the set of symbols 
\[
\mathcal{S}\uguale\Xi=\{\vt_1^*,\ldots,\vt_m^*\},
\]
which is exactly the set of minimal non-degenerate central configurations for $\Wzero$. This is not surprising, since we have provided inner arcs with a minimization process in the space $K^{p_1,p_2}$ that has no topological constraint involving partitions nor winding numbers. Therefore, the only way to distinguish sequences of inner and outer arcs is to switch from a strictly minimal non-degenerate central configuration to another. At this point, a completely analogous proof to the one given in Subsection \ref{subsec:sym_dyn_nocoll} allows to prove Theorem \ref{thm:symb_dyn}, once the Bernoulli right shift is defined over $\mathcal{S}^\Z$ in this way
\[
\begin{aligned}
	T_r\colon &\mathcal{S}^\Z\to \mathcal{S}^\Z \\
	&(\vt_s^*)\mapsto T_r((\vt_s^*))\uguale (\vt_{s+1}^*),
\end{aligned}
\]
this time making use of Theorem \ref{thm:per_coll}.
\begin{remark}
	We conclude this treatment with a brief remark concerning the number of symbols. The set $K^{p_1,p_2}$, over which we found inner solution arcs of Lemma \ref{lem:inner_coll}, is exactly the weak $H^1$-closure of the set 
	\[
	\hat{K}^{p_1,p_2}\uguale\bigcup\limits_{P_j\in\mathcal{P}}\hat{K}_{P_j}^{p_1,p_2}=\bigcup\limits_{l\in\mathfrak{I}^N}\hat{K}_l^{p_1,p_2},
	\]
	which actually contains all the possible $H^1$ collision-less paths considered in Theorem \ref{thm:inner_dyn}-\ref{thm:inner_dyn_wind}. This means that, in this context, we are no more forcing a path to separate the centres with respect to a prescribed partition or, equivalently, to a vector of winding numbers. The reader may object that this selection of paths could be too much raw, because one for instance could choose a partition $P_j$ (or, equivalently, a winding vector $l$) and work in the closure $K_{P_j}^{p_1,p_2}$ (eq., $K_l^{p_1,p_2}$). Nonetheless, it could happen that the minimizer found in $K_{P_j}^{p_1,p_2}$ may collide with all the centres and this fact would destroy the topological constraint. As a result, the alphabet involved in Subsection \ref{subsec:sym_dyn_nocoll} is no more suitable in this context and the only actual distinction resides in the choice of at least two elements $\vt^*,\vt^{**}$ of $\Xi$, to which corresponds the juxtaposition of an inner (Lemma \ref{lem:inner_coll}) and an outer (Theorem \ref{thm:outer_dyn}) solution arc. 
\end{remark}

\subsection{Case \texorpdfstring{$N=2$, $m=1$}{}, proof of Theorem \ref{thm:symb_dyn2}}

In conclusion, we analyse the case of the $2$-centre problem, in which the potential with the smallest homogeneity degree $\al$ has a unique minimal non-degenerate central configuration $\vt^*\in\Xi$. Clearly, it could happen that the sum potential $V$ itself is $-\al$-homogeneous, but this case would not change the proof. Our final aim is to provide a topological semi-conjugation of this dynamical system with a suitable shift space. Note that in this case the partition set is a singleton and thus it does not fit as an alphabet. For this reason, we consider as symbols the two centres $c_1,c_2$ and therefore, we introduce the alphabet
\[
\mathcal{B}\uguale\{c_1,c_2\}.
\]
Since we are dealing with a unique minimal non-degenerate central configuration $\vt^*\in\Xi$, it makes sense to assume $\vt^*=0$. Given $R>0$ as in \eqref{eq:R} at page \pageref{eq:R}, we define an open neighbourhood $\mathcal{U}$ on $\partial B_R$ of $\vt^*$ as the biggest neighbourhood such that, given the extremal points $p_{min}$ and $p_{max}$ of $\mathcal{U}$, the area between the arc on $\partial B_R$ and the chord which connect $p_{min}$ and $p_{max}$ does not contain the centres $c_1,c_2$ (see Figure \ref{fig:2centre}).

\begin{figure}
	\centering
	\begin{tikzpicture}[scale=0.9]
		\coordinate (O) at (0,0);
		\coordinate (x0) at (6,0);
		\coordinate (pmin) at (1.27,-2.72);
		\coordinate (pmax) at (1.27,2.72);
		\coordinate (c1) at (0.8,1.4);
		\coordinate (c2) at (0.3,-1.2);
		
		% segmenti
		\draw [dashed] (-1,0)--(4.5,0);
		\draw [dashed] (pmin)--(pmax);
		
		% punti
		\fill (O) circle[radius=1.5pt];
		\fill (pmax) circle[radius=1.5pt];
		\fill (pmin) circle[radius=1.5pt];
		\fill (c1) circle[radius=1.5pt];
		\fill (c2) circle[radius=1.5pt];

		% archi 
		\draw [thick,domain=-65:65] plot ({3*cos(\x)}, {3*sin(\x)});
		\draw [dashed,domain=-90:90] plot ({3*cos(\x)}, {3*sin(\x)});
		
		% scritte
		\node[right] at (4.5,-0.3) {$\vt^*$};
		\node[right] at (0,-3.4) {$\partial B_R$};
		\node[left] at (0,-0.3) {$0$};
		\node[left] at (c1) {$c_1$};
		\node[left] at (c2) {$c_2$};
		\node[below] at (pmin) {$p_{min}$};
		\node[above] at (pmax) {$p_{max}$};
		\node[right] at (2.8,1) {$\mathcal{U}$};
	\end{tikzpicture}
	\caption{The neighbourhood $\mathcal{U}$ of the unique central configuration $\vt^*$.}
	\label{fig:2centre}
\end{figure}
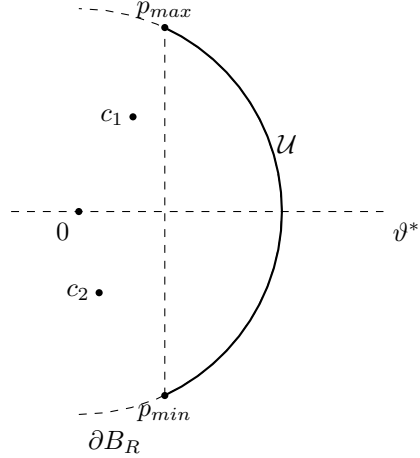

For two points $p_1=Re^{i\vt_1},p_2=Re^{i\vt_2}\in\mathcal{U}$, with $\vt_1,\vt_2\in(-\pi,\pi)$ and a path $u\in H^1([a,b];\R^2)$ which connects $p_1$ and $p_2$, we define the close path $\tilde{\Gamma}_u$ as the glueing between $u$ and the chord connecting $p_1$ and $p_2$. Define moreover $\delta\uguale|c_1-c_2|/2>0$. With these notations, we introduce the following classes of paths:
\[
	\hat{K}_{c_i}^{p_1,p_2}([a,b])\uguale\left\lbrace u\in H^1([a,b];\R^2)\,\middle\lvert\,\begin{aligned} 
		&u(a)=p_1,\ u(b)=p_2, \\ &u(t)\neq c_1,c_2,\ \mbox{for any}\ t\in(a,b), \\ 
		&\tilde{\Gamma}_u\ \mbox{is homotopic to}\ \partial B_\delta(c_i)
		\end{aligned}\right\rbrace
\]
for $i=1,2$ and $K_{c_i}^{p_1,p_2}$ as their $H^1$ weak-closure. 
Note that every path in the above spaces actually belongs to $\hat{K}_{(0,1)}^{p_1,p_2}\cup \hat{K}_{(1,0)}^{p_1,p_2}$ or, respectively, to their $H^1$ weak-closure (cf. \eqref{def:k_l} at page \pageref{def:k_l}). In particular, it is not difficult to see that, for any $p_1,p_2\in\mathcal{U}$
\begin{equation}\label{eq:K}
\begin{aligned}
	&\hat{K}_{c_1}^{p_1,p_2}\cup\hat{K}_{c_2}^{p_1,p_2}=\hat{K}_{(0,1)}^{p_1,p_2}\cup \hat{K}_{(1,0)}^{p_1,p_2} \\
	&K_{c_1}^{p_1,p_2}\cup K_{c_2}^{p_1,p_2}=K_{(0,1)}^{p_1,p_2}\cup K_{(1,0)}^{p_1,p_2}.
\end{aligned}
\end{equation}

Once this is clear, we can then prove a version of Theorem \ref{th:main1} in the presence of 2-centres and a unique minimal non-degenerate central configuration $\vt^*\in\Xi$.
\begin{theorem}\label{thm:per_2}
	Assume that $N=2$, $m=1$ and consider a function $V\in\mathcal{C}^2(\R^2\setminus\{c_1,c_2\})$ defined as in \eqref{def:potential} at page \pageref{def:potential} and satisfying \eqref{hyp:V}-\eqref{hyp:V_al}. There exists $\tilde{h}>0$ such that, for every $h\in(0,\tilde{h})$, $n\in\N_{\geq1}$ and $(c_{j_0},\ldots,c_{j_{n-1}})\in\mathcal{B}^n$, there exists a periodic collision-less and self-intersections-free solution $x=x(c_{j_0},\ldots,c_{j_{n-1}};h)$ of the equation $\ddot{x}=\nabla V(x)$ with energy $-h$, which depends on $(c_{j_0},\ldots,c_{j_{n-1}})$ in this way: there exists $\tilde{R}=\tilde{R}(h)>0$ such that the solution $x$ crosses $2n$ times the circle $\partial B_{\tilde R}$ in one period, at times $(t_s)_{s=0}^{2n-1}$, in such a way that, for any $k=0,\ldots,n-1$:
	\begin{itemize}
		\item in the interval $(t_{2k},t_{2k+1})$ the solution stays outside $B_{\tilde R}$ and there exists a neighbourhood $\mathcal{U}=\mathcal{U}(\tilde R e^{i\vt^*})$ on $\partial B_{\tilde{R}}$ such that
		\[
		x(t_{2k}),x(t_{2k+1})\in\mathcal{U};
		\]
		\item in the interval $(t_{2k+1},t_{2k+2})$ the solution stays inside $B_{\tilde R}$ and it is a reparametrization of a minimizer of the Maupertuis' functional in the space $\hat{K}_{c_{j_k}}^{x(t_{2k+1}),x(t_{2k+2})}$.
	\end{itemize}
\end{theorem}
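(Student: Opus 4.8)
The plan is to reproduce the three-step construction behind Theorem~\ref{th:main1} --- outer arcs, inner arcs, and a length-minimizing glueing on $\partial B_R$ --- replacing only the labelling of the inner arcs. Since $N=2$ and $m=1$ the set $\mathcal{P}$ of partitions is a singleton, but the corresponding minimization space splits into the two disjoint connected components $\hat K_{(1,0)}^{p_1,p_2}$ and $\hat K_{(0,1)}^{p_1,p_2}$ (see~\eqref{def:k_l}). The decisive observation is the identity~\eqref{eq:K}, which shows that these two components coincide with the homotopy classes $\hat K_{c_1}^{p_1,p_2}$ and $\hat K_{c_2}^{p_1,p_2}$: an inner arc whose closure $\tilde\Gamma_u$ is homotopic to $\partial B_\delta(c_i)$ is precisely one that winds around $c_i$ and not around the other centre. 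Thus a word $(c_{j_0},\dots,c_{j_{n-1}})\in\mathcal B^n$ is nothing but a prescription, for each $k$, of which connected component the $k$-th inner arc should live in, and the symbol set $\mathcal B$ takes over the role played by $\mathcal Q$ in Theorem~\ref{th:main1}.

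First I would work in the rescaled problem driven by $\Veps$ and fix the unique central configuration $\vt^*$, $\xi^*\uguale Re^{i\vt^*}$, with $R$ as in~\eqref{eq:R} and $\mathcal U$ the neighbourhood constructed above (chosen so that the chord joining any two of its points does not separate $c_1,c_2$, which is exactly what makes the homotopy class of $\tilde\Gamma_u$ well defined and both classes realizable). For the outer arcs I would invoke Theorem~\ref{thm:outer_dyn} verbatim, since it requires only $m\ge1$ and is insensitive to the topology near the centres. For the inner arcs I would apply Theorem~\ref{thm:inner_dyn_wind} with $\vt^*=\vt^{**}$ and $l$ the winding vector selecting $\hat K_{c_{j_k}}$: this produces a classical, collision-less (Theorem~\ref{thm:cf}), self-intersection-free (Proposition~\ref{prop:no_self_int}) inner arc confined to $B_R$ (Theorem~\ref{thm:R}), lying in the prescribed homotopy class. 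Hypothesis~\eqref{hyp:V_al} with the threshold $\bar\al_j(U_j,\vt_j,\vt_j+4\pi)$ of Lemma~\ref{lem:BTV} is what guarantees collision-freeness for both $j=1,2$.

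Next I would carry out the glueing of Section~\ref{sec:glueing} unchanged: minimize the total Jacobi length $\mathbf L$ over the compact set $\mathcal S$ of cut points on $\partial B_R$, and show the minimizer $\mathbf{\bar p}$ lies in $\mathring{\mathcal S}$. The two boundary-repulsion estimates, Theorem~\ref{thm:var_ext} for the outer arc and Theorem~\ref{thm:var_int} for the inner arc, are local statements near $\vt^*$ governed respectively by the homothetic motion and by the stable manifold of the McGehee equilibrium $(0,\vt^*,\vt^*+\pi)$; they do not see which centre the inner arc encircles, so they apply here without modification. At the interior minimizer the vanishing of $\nabla\mathbf L$ together with Lemmas~\ref{lem:diff_L_ext}--\ref{lem:diff_L_int} and energy conservation forces $C^1$ matching at each junction, and the differential equation upgrades the juxtaposition to a $C^2$ periodic solution. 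Finally, Proposition~\ref{prop:scaling} transports the solution back to the original problem at energy $-h$, with $\tilde R\uguale h^{-1/\al}R$ and $\tilde h\uguale\bar\ve^{\,\al}$.

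The genuinely new point --- and the step I expect to require the most care --- is verifying that the velocity estimate of Theorem~\ref{thm:var_int} holds uniformly over both homotopy classes as $\ve\to0^+$. This is plausible because at $\ve=0$ the two centres merge to a single singularity of $\Wzero$, so both competitors degenerate to collision arcs of the same anisotropic Kepler problem with the common asymptotic direction $\vt^*$; the McGehee computation underlying Lemma~\ref{lem:var_int} is then identical for the two classes, and the uniform continuity in $\ve$ (Proposition~\ref{prop:pert}) extends the estimate to small $\ve>0$ for each class. Once this uniformity is in hand, the interior-minimizer argument closes exactly as in the proof of part~(i) of Theorem~\ref{thm:glueing}, and the remaining assertions (self-intersection freedom of the full orbit and the crossing pattern on $\partial B_{\tilde R}$) follow as in Theorem~\ref{th:main1}.
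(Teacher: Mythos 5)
Your proposal is correct and follows essentially the same route as the paper: the paper's own proof of Theorem~\ref{thm:per_2} likewise reduces to the machinery of Theorem~\ref{th:main1}, invoking Theorem~\ref{thm:outer_dyn} verbatim for the outer arcs, rerunning Theorem~\ref{thm:inner_dyn_wind} in the spaces $\hat{K}_{c_1},\hat{K}_{c_2}$ via the identification \eqref{eq:K}, shrinking $\mathcal{U}$ as needed, and applying the glueing argument of Section~\ref{sec:glueing} together with the rescaling of Proposition~\ref{prop:scaling}. Your extra remarks (component-wise identification of $\hat{K}_{(1,0)},\hat{K}_{(0,1)}$ with $\hat{K}_{c_1},\hat{K}_{c_2}$, and the uniformity in $\ve$ of the boundary-repulsion estimate of Theorem~\ref{thm:var_int} across the two homotopy classes) are points the paper leaves implicit, and your justification of them is sound.
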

\begin{proof}
	Once the notations in the statement are adopted, up to choose a smaller neighbourhood $\mathcal{U}$, the proof goes exactly as the proof of Theorem \ref{th:main1} (see page \pageref{proof:per}). Indeed, the glueing technique of Section \ref{sec:glueing} applies also when $N=2$ and $m=1$, since Theorem \ref{thm:outer_dyn} holds under such assumptions, while the proof of Theorem \ref{thm:inner_dyn_wind} does not change if one works with the spaces $\hat{K}_{c_1},\hat{K}_{c_2}$ instead of working with winding numbers, also in light of \eqref{eq:K}.
\end{proof}

It is left to prove the semi-conjugation of the 2-centre problem with a suitable shift space. As usual, we consider the set of bi-infinite sequences $\mathcal{B}^\Z$ of symbols $c_1,c_2$. The Bernoulli right shift on $\mathcal{B}^\Z$ is then defined as
\[
\begin{aligned}
	T_r\colon &\mathcal{B}^\Z\to \mathcal{B}^\Z \\
	&(c_m)\mapsto T_r((c_m))\uguale (c_{m+1}),
\end{aligned}
\]
and, also in this case, the proof of Theorem \ref{thm:symb_dyn2} goes as in Subsection \ref{subsec:sym_dyn_nocoll}, making use of the previous notations and of Theorem \ref{thm:per_2}.

\begin{remark}
	Let us consider for a while the 2-centre problem driven by isotropic (purely radial) potentials as in \cite{ST2012}. It is well-known that this problem is completely integrable and thus, as expected, the techniques of this section do not apply in this case. Indeed, due to the spherical symmetry of isotropic potentials, every direction is a central configuration and this allows to choose two points $p_1,p_2$ all around $\partial B_R$. This is the reason why it is not possible to distinguish between the result of a functional minimization in $K_{c_1}^{p_1,p_2}$ or in $K_{c_2}^{p_1,p_2}$ (as above), or a minimization in $K_{(0,1)}^{p_1,p_2}$ or $K_{(1,0)}^{p_1,p_2}$ (see Theorem \ref{thm:inner_dyn_wind} or \cite{ST2012}). In fact, moving $p_1$ and $p_2$ around $\partial B_R$, it is always possible to transform a path in $K_{(0,1)}^{p_1,p_2}$ into a path in $K_{(1,0)}^{p_1,p_2}$. This fact, as expected, destroys the symbolic dynamics.
\end{remark}

\appendix

\section{Variational principles} 
\label{app:var}

This short appendix collects some general variational results which involve the different functionals used along this work. In particular, we want to establish some important relations between the Maupertuis' functional, the Lagrange-action functional and the Jacobi-length functional.

Consider an open set $\Omega\sset\R^2$, a potential $V\in\mathcal{C}^2(\Omega)$ and introduce the second order system
\begin{equation}\label{app:var_eq}
	\ddot x=\nabla V(x).
\end{equation}
Note that \eqref{app:var_eq} has Hamiltonian structure and thus we can look for those solutions $x$ which preserve a fixed energy $h\in\R$ along their motion, i.e., such that
\begin{equation}\label{app:var_en}
	\frac12|\dot{x}(t)|^2-V(x(t))=h
\end{equation}
and, in particular, such solutions will be confined inside the open Hill's region 
\[
\mathring{\mathcal{R}}_h\uguale\{x\in\Omega:\,V(x)+h>0\}.
\]
For $T>0$ and $x\in H^1([0,T];\R^2)$ we define the Lagrange-action functional as
\[
\mathcal{A}_T(x)\uguale\int_0^T\left(\frac12|\dot{x}(t)|^2+V(x(t)\right)\,dt
\]
and it is well known that the \emph{Least Action Principle} affirms that a solution $x\colon[0,T]\to\Omega$ of \eqref{app:var_eq} corresponds to a critical point of $\mathcal{A}_T$.

In this work we have mainly used the Maupertuis' functional, which in this setting reads
\[
\mathcal{M}_h(u)\uguale\frac12\int_0^1|\dot{u}|^2\int_0^1(h+V(u))
\]
and it is differentiable in the space 
\[
H_h\uguale\{u\in H^1([0,1];\Omega):\,V(u)+h>0\}.
\]
An equivalent result of the Least Action Principle can be stated for $\mathcal{M}_h$, the so-called \emph{Maupertuis' Principle} (see \cite{AC-Z}, but also Theorem \ref{thm:maupertuis} for a version concerning fixed-ends problems), which also provides a first relation between the critical points of the two functionals. 
\begin{theorem}\label{thm:app_var_maupertuis}
	Let $u\in H_h$ be a critical point of $\mathcal{M}_h$ at a positive level. Define $\omega>0$ such that
	\[
	\omega^2\uguale\frac{\int_0^1(h+V(u))}{\frac12\int_0^1|\dot{u}|^2}.
	\]
	Then, the function $x(t)\uguale u(\omega t)$ solves \eqref{app:var_eq}-\eqref{app:var_en} in the interval $[0,T]$, with $T\uguale 1/\omega$. 
	
	As a consequence, the function $x$ is also a critical point of $\mathcal{A}_{1/\omega}$ in the space $H^1([0,1/\omega];\Omega)$.
\end{theorem}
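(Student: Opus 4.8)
The plan is to reduce the assertion to the Euler--Lagrange equation of the Maupertuis' functional and then exploit the time-rescaling $t\mapsto\omega t$; the only genuinely delicate point, the exact value of the conserved energy, will be forced by the very definition of $\omega$. First I would record that, since $u$ is a critical point at a \emph{positive} level, the product $\mathcal{M}_h(u)=\frac12\left(\int_0^1|\dot u|^2\right)\left(\int_0^1(h+V(u))\right)$ is strictly positive, so both factors are strictly positive; in particular $\int_0^1|\dot u|^2>0$ and the quantity $\omega^2$ is well defined and strictly positive, so the reparametrization makes sense.

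Next I would compute the first variation of $\mathcal{M}_h$ at $u$ along an arbitrary $\varphi\in\mathcal{C}^\infty_c((0,1);\R^2)$. Writing $A\uguale\frac12\int_0^1|\dot u|^2$ and $B\uguale\int_0^1(h+V(u))$, the product rule gives the critical point condition
\[
d\mathcal{M}_h(u)[\varphi]=B\int_0^1\langle\dot u,\dot\varphi\rangle\,dt+A\int_0^1\langle\nabla V(u),\varphi\rangle\,dt=0.
\]
Dividing by $A>0$ and using $B/A=\omega^2$, this is precisely the weak formulation of $\omega^2\ddot u=\nabla V(u)$ on $(0,1)$. A standard bootstrap (continuity of $t\mapsto\nabla V(u(t))$ forces $\ddot u$ continuous) upgrades $u$ to a classical $\mathcal{C}^2$ solution; setting $x(t)\uguale u(\omega t)$ the chain rule then yields $\ddot x(t)=\omega^2\ddot u(\omega t)=\nabla V(u(\omega t))=\nabla V(x(t))$, so $x$ solves \eqref{app:var_eq} on $[0,1/\omega]$.

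It remains to identify the energy, and this is the step I expect to carry the weight of the proof. Since $x$ is a classical solution, $E(t)\uguale\frac12|\dot x(t)|^2-V(x(t))$ has vanishing time derivative and is thus a constant $E_0$; to pin down $E_0$ I would integrate over one period $T=1/\omega$ and change variables $s=\omega t$, using $\dot x(t)=\omega\dot u(\omega t)$, to obtain
\[
\frac{E_0}{\omega}=\int_0^{1/\omega}\!\!\left(\tfrac12|\dot x|^2-V(x)\right)dt=\frac{\omega}{2}\int_0^1|\dot u|^2\,ds-\frac1\omega\int_0^1 V(u)\,ds.
\]
Multiplying by $\omega$ and recalling that $\frac{\omega^2}{2}\int_0^1|\dot u|^2=\omega^2A=B=\int_0^1(h+V(u))$, the potential integrals cancel and leave exactly $E_0=h$, which is \eqref{app:var_en}. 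Finally, the asserted consequence is immediate: a classical solution of $\ddot x=\nabla V(x)$ on $[0,1/\omega]$ is, by the Least Action Principle recalled above, a critical point of $\mathcal{A}_{1/\omega}$. The main obstacle is precisely this energy bookkeeping: the product structure of $\mathcal{M}_h$ makes its first variation a \emph{sum} of two weighted terms, and it is only the normalisation $\omega^2=B/A$ that makes those weights combine so that the conserved energy comes out equal to $h$ rather than to some other constant.
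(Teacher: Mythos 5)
Your proof is correct. Note that the paper never actually proves this statement: both here and for Theorem \ref{thm:maupertuis} it defers to \cite{AC-Z}, so there is no in-paper argument to compare against, and what you have written is the standard proof that the reference supplies. Your chain of steps is exactly right: positivity of the level plus the constraint $V(u)+h>0$ forces $A=\frac12\int_0^1|\dot u|^2>0$, so $\omega^2=B/A$ is well defined; the product rule gives $d\mathcal{M}_h(u)[\varphi]=B\int_0^1\langle\dot u,\dot\varphi\rangle+A\int_0^1\langle\nabla V(u),\varphi\rangle$, and dividing by $A$ yields the weak form of $\omega^2\ddot u=\nabla V(u)$; the bootstrap to $\mathcal{C}^2$ is legitimate since $\nabla V(u(\cdot))$ is continuous; and your energy bookkeeping does collapse to $E_0=h$ precisely because $\omega^2 A=B$ cancels the potential integrals (equivalently, one can multiply $\omega^2\ddot u=\nabla V(u)$ by $\dot u$, observe that $\frac{\omega^2}{2}|\dot u|^2-V(u)$ is constant, and integrate over $[0,1]$ to evaluate the constant; this is the same computation done on $u$ instead of $x$). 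One small point of hygiene, inherited from the paper's loose formulation rather than introduced by you: ``critical point of $\mathcal{M}_h$ in $H_h$'' and ``critical point of $\mathcal{A}_{1/\omega}$ in $H^1([0,1/\omega];\Omega)$'' must be read with respect to the same class of variations. If genuinely free-endpoint variations are meant, criticality of $\mathcal{M}_h$ additionally forces the natural boundary conditions $\dot u(0)=\dot u(1)=0$, which transfer to $\dot x(0)=\dot x(1/\omega)=0$ and then give criticality of $\mathcal{A}_{1/\omega}$ in the free space; if only fixed-endpoint (compactly supported) variations are meant, your argument already delivers the conclusion in that sense. Under either consistent reading your proof goes through unchanged.
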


The next result refines the correspondence between critical points of $\mathcal{M}_h$ and $\mathcal{A}_T$, showing that in particular a critical point of the Maupertuis' functional minimizes the action for every time $T>0$.

\begin{proposition}\label{prop:app}
	Let $u\in H_h$ be a critical point of $\mathcal{M}_h$ at a positive level. If, for every $T>0$, we define
	\[
	x_T(t)\uguale u\left(\frac{t}{T}\right),\quad\mbox{for}\ t\in[0,T],
	\]
	then
	\[
	2\sqrt{\mathcal{M}_h(u)}=\mathcal{A}_{1/\omega}(x_{1/\omega})+\frac{h}{\omega}=\min\limits_{T>0}\left(\mathcal{A}_T(x_T)+Th\right).
	\]
\end{proposition}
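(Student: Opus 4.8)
The plan is to collapse the entire identity into a single elementary one-variable minimization. The key observation is that everything depends on $u$ only through the two positive scalars
\[
A \uguale \frac12\int_0^1|\dot u|^2 \qquad\text{and}\qquad B \uguale \int_0^1\bigl(h+V(u)\bigr),
\]
so that $\mathcal{M}_h(u) = AB$ and, by the definition of $\omega$ in Theorem \ref{thm:app_var_maupertuis}, $\omega^2 = B/A$. Both $A$ and $B$ are strictly positive: $B>0$ because $u\in H_h$ forces $V(u)+h>0$ pointwise, and $A>0$ because otherwise $u$ would be constant and $\mathcal{M}_h(u)$ could not sit at a positive level.

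First I would compute $\mathcal{A}_T(x_T)$ by the change of variables $s=t/T$. Since $\dot x_T(t) = T^{-1}\dot u(t/T)$, this substitution turns $\mathcal{A}_T(x_T)$ into $\tfrac{1}{2T}\int_0^1|\dot u|^2 + T\int_0^1 V(u)$, and hence
\[
\mathcal{A}_T(x_T) + Th = \frac{A}{T} + T\int_0^1\bigl(h+V(u)\bigr) = \frac{A}{T} + TB.
\]
This reduces the claim to analysing the function $g(T) \uguale A/T + TB$ on $(0,+\infty)$.

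Next I would minimize $g$. It is strictly convex, with $g'(T) = -A/T^2 + B$ vanishing at the unique point $T_* = \sqrt{A/B}$; equivalently one may invoke the arithmetic--geometric mean inequality, which gives $g(T)\ge 2\sqrt{AB}$ with equality exactly when $A/T = TB$. In either case $\min_{T>0} g(T) = 2\sqrt{AB} = 2\sqrt{\mathcal{M}_h(u)}$. Finally I would identify the minimizer: since $\omega^2 = B/A$, one has $T_* = \sqrt{A/B} = 1/\omega$, so the minimum is attained precisely at $T=1/\omega$, where $Th = h/\omega$. Evaluating $g$ at this point yields $\mathcal{A}_{1/\omega}(x_{1/\omega}) + h/\omega = 2\sqrt{\mathcal{M}_h(u)}$, which is the middle member of the asserted chain; combined with the minimization it closes the argument.

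There is essentially no serious obstacle here: the statement is an exact rescaling computation followed by a textbook optimization. The only point deserving care is the verification that $A,B>0$, which guarantees that $T_*$ is an interior point of $(0,+\infty)$ and that the AM--GM equality case is actually realized; this is exactly where the hypotheses $u\in H_h$ and ``$\mathcal{M}_h(u)$ at a positive level'' enter. Note that criticality of $u$ is not in fact used for this particular identity — it is carried over from the statement only to keep the framing consistent with the Maupertuis' principle of Theorem \ref{thm:app_var_maupertuis}.
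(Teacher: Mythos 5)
Your proposal is correct and follows essentially the same route as the paper's own proof: both compute $\mathcal{A}_T(x_T)+Th$ via the rescaling $s=t/T$ and then minimize the resulting one-variable expression in $T$, identifying the minimizer with $1/\omega$. Your version merely makes explicit (via convexity/AM--GM and the positivity of $A$ and $B$) the step the paper dismisses as ``easy to check,'' and your observation that criticality of $u$ plays no role in this identity is accurate.
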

\begin{proof}
	For every $T>0$, we can compute
	\[
	\begin{aligned}
		\mathcal{A}_T(x_T)+Th&=\int_0^T\left(\frac12|\dot{x}_T(t)|^2+V(x_T(t))+Th\right)\,dt \\
		&=\int_0^T\left(\frac{1}{2T^2}|\dot{u}(t/T)|^2+V(u(t/T))+h\right)\,dt \\
		&=\int_0^1\left(\frac{1}{2T}|\dot{u}(s)|^2+TV(u(s))+Th\right)\,ds.
	\end{aligned}
	\]
	Since $u$ is fixed, the previous quantity depends only on $T$ and it is easy to check that it attains its minimum at 
	\[
	T=\left(\frac{\int_0^1|\dot{u}|^2}{2\int_0^1(h+V(u))}\right)^{1/2}=\frac{1}{\omega}.
	\]
\end{proof}

The previous result also shows a well-known property of the Maupertuis' functional, i.e., that this functional is invariant under time reparametrizations. However, $\mathcal{M}_h$ is not additive, and this suggests the introduction of another geometric functional. The Jacobi-length functional is defined as
\[
\mathcal{L}_h(u)=\int_0^1|\dot{u}(t)|\sqrt{h+V(u(t))}\,dt,
\]
for every $u\in H_h$. Note that Theorem \ref{thm:app_var_maupertuis} could be rephrased for $\mathcal{L}_h$ and thus classical solutions will be suitable reparametrizations of critical points of $\mathcal{L}_h$ (see for instance \cite{MoMoVe2012}). Moreover, the Jacobi-length functional is also a fundamental tool in differential geometry since $\mathcal{L}_h(u)$ is exactly the Riemannian length of the curve parametrized by $u$ with respect to the Jacobi metric
\[
g_{ij}(x)\uguale (-h+V(x))\delta_{ij},
\]
$\delta_{ij}$ being the Kronecker delta. As the Maupertuis' functional, $\mathcal{L}_h$ is invariant under time reparametrizations and, being a length, it is also additive. 

Note that, if $u\in H_h$, the Cauchy-Schwarz inequality easily gives
\[
\mathcal{L}_h(u)\leq\sqrt{2\mathcal{M}_h(u)},
\]
with the occurrence of the equality if and only if the quotient
\[
\frac{|\dot{u}(t)|^2}{V(u(t))+h}
\]
is constant for a.e. $t\in[0,1]$. This shows that $\mathcal{M}_h$ and $\mathcal{L}_h$ share the same critical points $u$ such that $\mathcal{M}_h(u)>0$. This is sufficient to give the (easy) proof of the next result.

\begin{proposition}\label{prop:app1}
	Let $u\in H_h$ be a critical point of $\mathcal{M}_h$ at a positive level, let $\omega$ be defined as in Theorem \ref{thm:app_var_maupertuis} and let $x_T$ be defined for every $T>0$ as in Proposition \ref{prop:app}. Then, the following chain of equalities hold
	\[
	\mathcal{A}_{1/\omega}(x_{1/\omega})+\frac{h}{\omega}=2\sqrt{\mathcal{M}_h(u)}=\frac{2}{\sqrt{2}}\mathcal{L}_h(u).
	\]
	We can say that, up to constant factors and time re-parametrizations, the three functionals coincide on the non-constant critical points of $\mathcal{M}_h$.
\end{proposition}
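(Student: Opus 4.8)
The plan is to observe that the left-hand equality in the chain is exactly the content of Proposition \ref{prop:app}, already proven, so that only the right-hand equality $2\sqrt{\mathcal{M}_h(u)}=\frac{2}{\sqrt{2}}\mathcal{L}_h(u)$ requires a separate argument; and this reduces to verifying the equality case of the Cauchy--Schwarz inequality recorded in the discussion immediately preceding the statement. First I would recall that for any $u\in H_h$ one has $\mathcal{L}_h(u)\leq\sqrt{2\mathcal{M}_h(u)}$, with equality precisely when the quotient $|\dot{u}(t)|^2/(V(u(t))+h)$ is constant for a.e.\ $t\in[0,1]$. Thus the whole proposition follows once I show that a critical point $u$ of $\mathcal{M}_h$ at a positive level realizes this equality case.

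To establish that the quotient is constant, I would invoke the Maupertuis' principle in the form of Theorem \ref{thm:app_var_maupertuis}: the reparametrized curve $x(t)\uguale u(\omega t)$ solves the equation of motion and conserves the energy, i.e.\ $\frac{1}{2}|\dot{x}(t)|^2-V(x(t))=h$ on its interval. Substituting $x(t)=u(\omega t)$ and setting $s=\omega t$ turns this into $\frac{\omega^2}{2}|\dot{u}(s)|^2=V(u(s))+h$ for a.e.\ $s$, whence $|\dot{u}(s)|^2/(V(u(s))+h)=2/\omega^2$ is indeed constant. Consequently the equality $\mathcal{L}_h(u)=\sqrt{2\mathcal{M}_h(u)}$ holds, and since $\frac{2}{\sqrt{2}}=\sqrt{2}$ we obtain $\frac{2}{\sqrt{2}}\mathcal{L}_h(u)=\sqrt{2}\cdot\sqrt{2\mathcal{M}_h(u)}=2\sqrt{\mathcal{M}_h(u)}$. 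Chaining this identity with Proposition \ref{prop:app} yields the full string of equalities.

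There is no serious obstacle here: the argument is entirely routine, and the text itself flags the proof as \emph{easy}. The only points deserving a moment of care are the bookkeeping of the energy sign convention adopted in this appendix (namely $+h$ rather than the $-h$ of the main body) and the fact that, when collisions are admitted, the energy identity for $u$ holds only almost everywhere. Since, however, Theorem \ref{thm:app_var_maupertuis} is stated for genuine critical points of $\mathcal{M}_h$ at a positive level in $H_h$, where $V(u)+h>0$, the quotient is well defined a.e.\ and the Cauchy--Schwarz equality case applies verbatim, closing the proof.
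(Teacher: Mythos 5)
Your proposal is correct and follows exactly the route the paper intends: the left equality is Proposition \ref{prop:app}, and the right equality is the equality case of the Cauchy--Schwarz bound $\mathcal{L}_h(u)\leq\sqrt{2\mathcal{M}_h(u)}$, verified because the energy relation from Theorem \ref{thm:app_var_maupertuis} makes the quotient $|\dot{u}|^2/(V(u)+h)$ constant (equal to $2/\omega^2$) almost everywhere. This is precisely the ``easy'' argument the paper sketches in the discussion preceding the statement, so there is nothing to add.
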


\bibliography{vivinabibliog}
\bibliographystyle{plain}

\end{document}